\documentclass[letterpaper]{amsart}
\pdfoutput=1

\usepackage{amssymb}
\usepackage{amsfonts}
\usepackage{amsmath}
\usepackage{amsthm}
\usepackage{mathtools}
\usepackage{graphicx}
\usepackage{mathrsfs}
\usepackage{dsfont}
\usepackage{amscd}
\usepackage{multirow}

\usepackage[all]{xy}
\normalfont
\usepackage[T1]{fontenc}
\usepackage{calligra}
\usepackage{verbatim}
\usepackage{fourier}
\usepackage[usenames,dvipsnames]{color}
\usepackage[colorlinks=true,linkcolor=Blue,citecolor=Violet]{hyperref}
\usepackage{enumerate}
\usepackage{slashed}
\usepackage{nicematrix}
\usepackage{microtype}
\usepackage{tikz}\usetikzlibrary{snakes,math}
\usepackage{caption}
\usepackage{subcaption}
\usepackage{faktor}

\newcommand{\N}{{\mathds{N}}}
\newcommand{\Z}{{\mathds{Z}}}

\newcommand{\R}{{\mathds{R}}}
\newcommand{\C}{{\mathds{C}}}
\newcommand{\T}{{\mathds{T}}}
\newcommand{\U}{{\mathcal{Z}}}
\newcommand{\D}{{\mathfrak{D}}}
\newcommand{\A}{{\mathfrak{A}}}
\newcommand{\B}{{\mathfrak{B}}}
\newcommand{\M}{{\mathfrak{M}}}

\newcommand{\bigslant}[2]{\faktor{#1}{#2}}

\newcommand{\Lip}[1][L]{{\mathsf{#1}}}

\newcommand{\GH}{{\mathsf{GH}}}

\newcommand{\tunnel}[6]{{ {#1}: {#2} \xleftarrow{#3} {#4} \xrightarrow{#5} {#6} }}

\newcommand{\gradiant}{\operatorname*{grad}}
\newcommand{\metametric}[1]{\operatorname*{\eth_{#1}}}
\newcommand{\dpropinquity}[1]{{\mathsf{\Lambda}^\ast_{#1}}}
\newcommand{\dppropinquity}[1]{{\mathsf{\Lambda}^{\bullet\ast}_{#1}}}

\newcommand{\Kantorovich}[1]{{\mathsf{mk}_{#1}}}

\newcommand{\boundedLipschitz}[1]{{\mathsf{bl}_{#1}}}
\newcommand{\Haus}[1]{{\mathsf{Haus}_{{#1}}\,}}

\newcommand{\StateSpace}{{\mathscr{S}}}
\newcommand{\unital}[1]{{\mathfrak{u}{#1}}}
\newcommand{\MongeKant}{{Mon\-ge-Kan\-to\-ro\-vich metric}}

\newcommand{\pqms}{proper quantum metric space}
\newcommand{\pqpms}{pointed proper quantum metric space}

\newcommand{\qcms}{quantum compact metric space}
\newcommand{\lcqms}{separable quantum locally compact metric space}
\newcommand{\plcqms}{pointed separable quantum locally compact metric space}

\newcommand{\unit}{1}

\newcommand{\sa}[1]{{\mathfrak{sa}\left({#1}\right)}}

\newcommand{\QuasiStateSpace}{{\mathscr{Q}}}

\newcommand{\dom}[1]{{\operatorname*{dom}\left({#1}\right)}}
\newcommand{\domsa}[1]{{\operatorname*{dom_{\mathsf{sa}}}\left({#1}\right)}}
\newcommand{\codom}[1]{{\operatorname*{codom}\left({#1}\right)}}
\newcommand{\diam}[2]{{\mathrm{diam}\left({#1},{#2}\right)}}
\newcommand{\qdiam}[1]{{\,\mathrm{qdiam}\left({#1}\right)}}

\newcommand{\norm}[2]{\left\|{#1}\right\|_{#2}}

\newcommand{\targetsettunnel}[3]{{\mathfrak{t}_{#1}\left({#2}\middle\vert{#3}\right)}}

\newcommand{\worknote}[1]{}
\newcommand{\opnorm}[3]{{\left|\mkern-1.5mu\left|\mkern-1.5mu\left| {#1} \right|\mkern-1.5mu\right|\mkern-1.5mu\right|_{#3}^{#2}}}

\newcommand{\tunnelextent}[1]{{\chi\left({#1}\right)}}

\newcommand{\alg}[1]{{\mathfrak{#1}}}

\newcommand{\spectrum}[1]{\mathrm{Sp}\left({#1}\right)}

\newcommand{\ModState}[1]{\widehat{\StateSpace}}
\newcommand{\lipunit}[2]{{$#1$}-Lipschitz {$#2$}-pinned exhaustive sequence}
\newcommand{\closure}[1]{\mathrm{cl}\left({#1}\right)}

\newcommand{\Eth}{\operatorname*{\DH}}

\renewcommand{\geq}{\geqslant}
\renewcommand{\leq}{\leqslant}

\newcommand{\Dirac}{{\slashed{D}}}

\theoremstyle{plain}
\newtheorem{theorem}{Theorem}[section]
\newtheorem*{theorem*}{Theorem}

\newtheorem{corollary}[theorem]{Corollary}

\newtheorem{lemma}[theorem]{Lemma}

\newtheorem{theorem-definition}[theorem]{Theorem-Definition}

\theoremstyle{definition}
\newtheorem{definition}[theorem]{Definition}
\newtheorem*{definition*}{Definition}

\newtheorem{notation}[theorem]{Notation}

\newtheorem{convention}[theorem]{Convention}

\theoremstyle{remark}

\newtheorem{example}[theorem]{Example}

\newtheorem{remark}[theorem]{Remark}

\newtheorem{claim}[theorem]{Claim}
\newtheorem{step}[theorem]{Step}

\numberwithin{equation}{section}

\hyphenation{Gro-mov Haus-dorff}

\let\oldtocsection=\tocsection
\let\oldtocsubsection=\tocsubsection
\let\oldtocsubsubsection=\tocsubsubsection

\renewcommand{\tocsection}[2]{\bfseries \hspace{0em}\oldtocsection{#1}{#2}}
\renewcommand{\tocsubsection}[2]{\itshape \hspace{1em}\oldtocsubsection{#1}{#2}}
\renewcommand{\tocsubsubsection}[2]{\hspace{2em}\oldtocsubsubsection{#1}{#2}}

\begin{document}

\title[]{The quantum Gromov-Hausdorff Hypertopology on the class of  pointed Proper Quantum Metric Spaces}

\author{Fr\'{e}d\'{e}ric Latr\'{e}moli\`{e}re}
\email{frederic@math.du.edu}
\urladdr{http://www.math.du.edu/\symbol{126}frederic}
\address{Department of Mathematics \\ University of Denver \\ Denver CO 80208}

\date{\today}
\subjclass[2000]{Primary:  46L89, 46L30, 58B34.}
\keywords{}

\begin{abstract}
	We introduce a hypertopology, induced by an inframetric up to full quantum isometry, on the class of {\pqpms s}, which are separable, possibly non-unital, C*-algebras endowed with an analogue of the Lipschitz seminorm, with a distinguished state, and with a particular type of approximate units. Our hypertopology provides an analogue of the Gromov-Hausdorff distance on proper metric spaces, and in fact, convergence in the latter implies convergence in the former. Moreover, when restricted to the class of {\qcms s}, our new topology is compatible with the topology of the Gromov-Hausdorff propinquity. We include new examples of noncompact, noncommutative {\pqpms s} which are limits, for our new topology, of finite dimensional {\qcms s}. This article thus provides a first answer to the challenging  question of how to extend noncommutative metric geometry to the locally compact quantum space realm.
	
	\vspace*{5mm}
	\emph{This paper in its first stage of editing, and subsequent improved versions will replace it shortly.}
\end{abstract}
\maketitle
\tableofcontents


The field of Noncommutative Metric Geometry provides a framework for generalizing the concept of a metric space to the noncommutative setting. This research is founded on the powerful allegory that certain noncommutative $C^\ast$-algebras serve as analogues of the algebra of Lipschitz functions over a classical metric space \cite{Connes89,Rieffel98a,Rieffel99}. The primary goal of noncommutative metric geometry has been to study the convergence of these algebras in the topology induced by a noncommutative analogue of the Gromov-Hausdorff distance. This approach has been highly effective for studying quantum compact metric spaces  \cite{Rieffel01, Rieffel02, Ozawa05, Kerr02, Rieffel09, Rieffel10, Latremoliere13,Latremoliere13b, Latremoliere13c, Latremoliere14,Latremoliere15,Latremoliere15b,Latremoliere15c,Rieffel15b,Latremoliere16,Latremoliere16b,Kaad18,Kaad21,Latremoliere20b,Latremoliere21c}   , and associated structures such as modules  \cite{Latremoliere16,Latremoliere17a,Latremoliere18a,Latremoliere18d}, dynamical systems \cite{Latremoliere18b,Latremoliere18c}, and spectral triples \cite{Latremoliere18g,Latremoliere20a,Latremoliere21a,Latremoliere22,Latremoliere23a,LAtremoliere23b,Latremoliere24a,Latremoliere24b}. However, a significant challenge remains: developing an adequate noncommutative geometric framework for the noncommutative locally compact metric spaces. This is a critical omission, as the original Gromov-Hausdorff distance was introduced in the locally compact setting \cite{Gromov81}.

In this article, we propose the first answer to this difficult open problem. We introduce a hypertopology—constructed from a continuous inframetric, called the \emph{metametric} —on a large class of possibly non-unital $C^*$-algebras. These algebras are endowed with an appropriate analogue of the Lipschitz seminorm on a proper metric space, which is the natural classical class for Gromov-Hausdorff convergence. Our new topology successfully generalizes the existing compact theory,    and critically, it is compatible with the classical theory, as Gromov-Hausdorff convergence of proper metric spaces implies convergence for our proposed topology.

We proved our new inframetric is zero exactly between fully isometric {\pqpms s}, satisfies a version of the triangle inequality, is continuous for its induced topology,  recovers the topology of the propinquity --- our analogue of the Gromov-Hausdorff distance for {\qcms s} --- and is weaker than Gromov-Hausdorff convergence in general. We also construct an example of finite dimensional approximations, in the sense of our new topology, of certain C*-crossed-products which provide examples of noncommutative proper noncompact quantum metric spaces, including the algebra of compact operators.

\medskip
Our study is motivated by the desire to extend the successful program of metric geometry --- specifically, the topological study of the space of metric spaces --- to the noncommutative geometric realm. In this framework, geometric spaces are replaced by $C^\ast$-algebras which, when commutative, are exactly the algebras of continuous functions on an underlying locally compact Hausdorff space, by Gelfan'd duality. The benefit of this extension lies in its ability to provide a flexible framework for studying approximations and convergences. The core of this geometric perspective is found in Noncommutative Differential Geometry, pioneered by Alain Connes \cite{Connes80, Connes89,Connes}. This field replaces the traditional Dirac operators with spectral triples over possibly noncommutative C*-algebras. The established success of convergence results in Riemannian geometry naturally raises the question: can these metric and convergence techniques be extended to the noncommutative setting? Moreover, Noncommutative Geometry is very helpful for describing geometric objects that are highly singular or those arising intrinsically from quantum physics --- such as from quantum physics and quantum field theory, where noncommutativity formalizes the uncertainty principle.  Introducing metric considerations into the study of these algebras is a natural path to a deeper understanding of the underlying physics and to addressing fundamental issues like quantum gravity. The notion of a distance between 'space-times' has been a recurrent, implicit, or explicit pattern in this area, dating back to the first introduction of a distance on compact metric spaces by Edwards \cite{Edwards75}, which is now a special case of the Gromov-Hausdorff distance.

The challenge and benefits of defining an appropriate framework for approximations of C*-algebras in noncommutative geometry, or even operator algebra theory, is not new. The field of $C^\ast$-algebras is already rich with approximation techniques, such as nuclearity, quasi-diagonality, and the study of inductive limits, including approximately finite dimensional algebras or ($A\mathbb{T}$) algebras. However,  these are primarily topological concepts (since based upon $C^\ast$-algebras alone), which well-tailored for classification theory. The introduction of geometric structures by Connes guides us toward seeking a geometric notion of approximation rooted in metric considerations.The theory of Quantum Compact Metric Spaces, which is the foundational success of this geometric approach, originated from Connes' observation \cite{Connes89} that a spectral triple naturally induces a metric akin to the Monge-Kantorovich distance on the state space of the underlying $C^\ast$-algebra. Rieffel \cite{Rieffel98a, Rieffel99} generalized this by defining a {\qcms} using an analogue of the Lipschitz seminorm and introduced a Gromov-Hausdorff-like pseudodistance between such spaces. This initiated a highly successful research program. Building upon this, our prior work has focused on developing various analogues of the Gromov-Hausdorff distance—notably the (dual) propinquity—on spaces of {\qcms s}, and such related structures as spectral triples \cite{Latremoliere21a}.

\medskip

Our exploration starts from the comfortable shores of the theory of metric space, originating from the work of Fr{\'e}chet. If $(X,d)$ is a locally compact Hausdorff space, then a natural encoding of the metric $d$ at the level of algebras of functions is given by the \emph{Lipschitz seminorm}, defined for any function $f : X \rightarrow \C$ by
\begin{equation*}
	\Lip_d(f) \coloneqq \sup\left\{ \frac{|f(x)-f(y)|}{d(x,y)} : x,y \in X, x\neq y \right\} \text,
\end{equation*}
allowing for the value $\infty$. In particular, the seminorm $\Lip_d$ is defined on a dense *-subalgebra of the C*-algebra $C_0(X)$ of $\C$-valued continuous functions from $X$ to $\C$. On this algebra, $\Lip_d$ satisfies the Leibniz inequality and is lower semicontinuous on $C_0(X)$. Most notably, it induces a metric, called the bounded Lipschitz distance or \emph{Fortet-Mourier distance} \cite{Fortet-Mourier-53}, on the state space $\StateSpace(C_0(X))$, which is the space of all Radon measures on $X$ by the Radon representation theorem. For any two $\varphi,\psi \in \StateSpace(C_0(X))$, we define
\begin{equation*}
	\boundedLipschitz{\Lip_d}(\varphi,\psi) \coloneqq \sup\left\{ |\varphi(f) - \psi(f)| : f \in C_0(X), \Lip_d(f) \leq 1, \norm{f}{C_0(X)} \leq 1 \right\} \text.
\end{equation*}
Remarkably, the Fortet-Mourier metric induces the weak* topology on $\StateSpace(C_0(X))$. This property will be central in identifying analogues of the Lipschitz seminorm in the noncommutative setting --- a perspective adopted by Rieffel   \cite{Rieffel98a} as a foundation for noncommutative metric geometry. 

If, moreover, $(X,d)$ is compact, then $\boundedLipschitz{\Lip_d}$ is equivalent to another metric on $\StateSpace(C(X))$, called the {\MongeKant}, and introduced by Kantorovich \cite{Kantorovich40, Kantorovich58} in his study of Monge's transportation problem. The {\MongeKant} between any two states $\varphi,\psi \in \StateSpace(C(X))$ is given by:
\begin{equation*}
	\Kantorovich{\Lip_d}(\varphi,\psi) \coloneqq \sup\left\{ |\varphi(f) - \psi(f)| : f \in C_0(X), \Lip_d(f) \leq 1 \right\} \text.
\end{equation*}
This metric, which by equivalence with the Fortet-Mourier distance, also induces the weak* topology on $\StateSpace(C(X))$, has the added advantage that for any two $x,y \in X$, if $\delta_x$ and $\delta_y$ are the characters of $C(X)$ given by evaluation at, respectively, $x$ and $y$, then
\begin{equation*}
	\Kantorovich{\Lip_d}(\delta_x,\delta_y) = d(x,y) \text.
\end{equation*}
In other words, $\Kantorovich{\Lip_d}$ induces, on the Gelfan'd spectrum of $C(X)$, which is homeomorphic to $X$, a distance which turns the Gelfan'd spectrum of $C(X)$ to an isometric space to $X$. Thus in this context, $\Lip_d$ recovers the entire metric information in $(X,d)$.

Taken together, these observations leads to the definition of a {\qcms} \cite{Latremoliere13}, as an ordered pair $(\A,\Lip)$ of a unital C*-algebra and a seminorm $\Lip$ defined on a dense *-subalgebra $\dom{\Lip}$ of $\A$ which satisfies the Leibniz inequality (or some generalized form \cite{Latremoliere15}), is lower semicontinuous on $\A$ (when it is assumed to be $\infty$ outside of $\dom{\Lip}$), is zero exactly on the multiple of the unit of $\A$, and such that
\begin{equation*}
	\Kantorovich{\Lip}: \varphi,\psi \in \StateSpace(\A) \mapsto \sup\left\{ |\varphi(a)-\psi(a)| : a\in\dom{\Lip}\cap\sa{\A},\Lip(a) \leq 1 \right\} 
\end{equation*}
metrizes the weak* topology of the state space $\StateSpace(\A)$ of $\A$. In fact, $\Lip$ only needs to be defined on some dense Jordan-Lie subalgebra of the self-adjoint part $\sa{\A}$ of $\A$, but we will adopt this equivalent description. This definition is in fact the result of some evolution from the initial proposal from Rieffel \cite{Rieffel98a} where $\A$ was simply an order unit space, and thus no Leibniz property was present. Other variants tend to replace the class to which $\A$ belongs, including operator systems \cite{Kerr02} or operator spaces. Our own interest is focused on the definition given here, using C*-algebras, and adding the Leibniz inequality, which leads to a rich theory and allows one to take advantage of all the C*-algebraic related structures.

\medskip

The situation described so far becomes more intricate when $(X,d)$ is only locally compact. The {\MongeKant} is not in general an actual metric, does not induces the weak* topology on the state space, and may not recover the original distance. Since the {\MongeKant} and the Fortet-Mourier distance are equivalent when $(X,d)$ is compact, and the Fortet-Mourier distance remains well-behaved when $(X,d)$ is locally compact, it seems natural to replace the {\MongeKant} by the Fortet-Mourier metric for the purpose of our noncommutative generalization. 

This is indeed the approach we took in \cite{Latremoliere05b}. Thus, if $\A$ is a C*-algebra, and $\Lip$ is defined on a dense *-subspace $\dom{\Lip}$ of $\A$, then the associated Forter-Mourier metric on $\StateSpace(\A)$ is defined for any two states $\varphi,\psi \in \StateSpace(\A)$ by:
\begin{equation*}
	\boundedLipschitz{\Lip}(\varphi,\psi) \coloneqq \sup\left\{ |\varphi(a)-\psi(a)| : a\in\dom{\Lip}\cap\sa{\A}, \Lip(a)\leq 1, \norm{a}{\A}\leq 1 \right\}\text. 
\end{equation*}
A natural necessary and sufficient condition on $\Lip$ for $\boundedLipschitz{\Lip}$ to induce the weak* topology on the state space of $\A$ is found in \cite[Theorem 4.1]{Latremoliere05b}.  This will serve as the starting point of our present work.

It would be natural to ask whether something can be still be said about the {\MongeKant} in the locally compact case, which still makes sense in the noncommutative setting. Indeed, Dobrushin proved in \cite{Dobrushin70} that the {\MongeKant} metrizes the weak* topology on certain subsets of Radon probability measures subject to a strong form of tightness. While nontrivial, it is possible to give an analogue of this result, which we did in \cite{Latremoliere12b}. For our purpose here, the {\MongeKant} will still play a role, though more limited. The main takeaway for us here from \cite{Latremoliere12b} may be that if $\A$ is no longer unital, then we need to pick a replacement for the subalgebra $\C\unit_\A$ in the form of a commutative C*-subalgebra of $\A$ containing an approximate unit for $\A$. We call such C*-subalgebras topographies, as we see them as providing a mean to filter the C*-algebra $\A$, or rather the noncommutative space it describes, in terms of ``level sets'' of elements in the topography. The commutativity ensures that this concept makes sense, and provides a tool to discuss local notions within our noncommutative setting. As shown for instance in \cite{Akemann89}, there is in fact no canonical way to discuss ``approaching infinity'' in a general C*-algebra, and the choice of a topography is our way to handle this situation.

\medskip

Our class of {\lcqms s}, introduced in the first part of this article, build on the observations above. Our focus will be on the subclass of {\lcqms s} which are analogues of proper metric spaces, i.e. metric spaces whose closed balls are all compact. This subclass is distinguished from the more general {\lcqms s} by the existence of certain special approximate units. Proper metric spaces have the property that, unlike the general locally compact situation, the {\MongeKant} does recover the original distance. Our interest, however, is that they form the natural class to define the Gromov-Hausdorff convergence.

\medskip

In \cite{Hausdorff}, Hausdorff introduced a distance between closed subsets of a metric space, which now bears his name. The Hausdorff distance is a very interesting construction which enables the study of continuity of set-valued functions, the construction of fractal sets and other attractors as limits of certain dynamics on the space of closed subsets of a metric space, and finds applications in pattern and picture recognition.  If $(X,d)$ is a metric space, if $\varepsilon > 0$, and if $A, B \subseteq X$ are not empty, then we denote:
\begin{equation*}
	A \subseteq_\varepsilon^d B
\end{equation*}
to mean that for all $x \in A$, the distance $d(x,B) \coloneqq \inf\{d(x,b) : b\in B\}$ from $x$ to $B$ is at most $\varepsilon$. The Hausdorff distance $\Haus{d}$ between $A$ and $B$ is then defined by
\begin{equation*}
	\Haus{d}(A,B) \coloneqq \inf\left\{ \varepsilon > 0 : A \subseteq_\varepsilon^d B \text{ and }B \subseteq_\varepsilon^d A \right\}\text.
\end{equation*}
Indeed, $\Haus{d}$ is a distance on the so-called hyperspace of $X$, i.e. the set of all closed subsets of $X$. This is the origin of our term hypertopology, meant as a topology on a collection of metric (sub)spaces.

In \cite{Gromov81}, Gromov, motivated in part by the asymptotic behavior of certain spaces associated with hyperbolic groups, introduced his far reaching generalization of the Hausdorff distance, now called the \emph{Gromov-Hausdorff distance}. Gromov' construction is, at once, an intrinsic distance between proper metric spaces, and capture local information allowing for spaces of infinite diameters to still be limits of compact spaces. If $(Z,d)$ is any metric space, then we write $Z[x,r]$ for the closed ball of center $x \in Z$ and radius $r$. Modeled after the definition of the Hausdorff distance, but with a modification to ``localize'' it, we define, for any $A,B \subseteq Z$ \emph{and} for any $a\in A$, $b\in B$:
\begin{equation*}
	\delta_Z((A,a),(B,b)) \coloneqq \inf\left\{\varepsilon > 0 : A\left[a,\frac{1}{\varepsilon}\right]\subseteq_\varepsilon^d B\text{ and }B\left[a,\frac{1}{\varepsilon}\right]\subseteq_\varepsilon^d A \right\}\text.
\end{equation*}	
Now, if we are given two pointed proper metric spaces (i.e. proper metric spaces together with a choice of a point) $(X,d_X,x)$ and $(Y,d_Y,y)$, the \emph{Gromov-Hausdorff distance} between $(X,d_X,x)$ and $(Y,d_Y,y)$ is defined by:
\begin{multline*}
	\GH((X,d_X,x),(Y,d_Y,y)) \coloneqq\inf\Bigg\{\delta_Z(j_X(X),j(x),j_Y(Y),j(y)) : j_X:X\rightarrow Z, j_Y:Y\rightarrow Z, \\  \text{ isometries from $X$, $Y$ into a proper metric space $(Z,d_X)$} \Bigg\} \text.
\end{multline*}
While this quantity may be defined between any locally compact metric space, it is indeed a (complete) metric over the class of pointed proper metric spaces. 

\medskip

The Gromov-Hausdorff distance has a long and rich history in metric geometry, and has found deep applications in Riemannian geometry, where for instance, form of continuities for this metric, or variant thereof, of spectra of Laplacians and Dirac operators, have been proven. This very elegant and geometric way of thinking about spaces, quite distinct from more algebraic methods, is bound to also prove very interesting in noncommutative geometry. As a historical note, a prior metric defined more specifically on the class of compact metric spaces was introduced earlier by Edwards \cite{Edwards75}, as a first step in placing a topology of a space of ``space-times'' in the spirit of Wheeler's work on geometricodynamics and on the still open search of a quantum gravity theory. The definition of Edwards is similar, and well-known, and is topologically equivalent to Gromov' distance when working with compact metric spaces.

\medskip

Rieffel was the first to construct a noncommutative version of the Gromov-Hausdorff distance over a class of \emph{compact} quantum metric spaces \cite{Rieffel00}. This pioneering work spawned an entire new research area in noncommutative geometry. As we wish to work with {\qcms s} as described above, built on top of C*-algebras, and desire a distance between them which is zero only when the underlying C*-algebras are *-isomorphic, among other properties, we introduced the \emph{(dual) Gromov-Hausdorff propinquity} \cite{Latremoliere13,Latremoliere13b,Latremoliere14,Latremoliere15}. Our metric indeed induces the same topology as the Gromov-Hausdorff distance when restricted to the classical case of actual compact metric spaces, but also allows for such results as approximations of quantum tori by fuzzy tori, of spheres by matrix algebras, of noncommutative solenoids by noncommutative tori, and more. A lot of these results were motivated by the mathematical physics literature, where such approximations of spaces, or noncommutative algebras, by matrix models, is a common tool.

Motivated by our work on the propinquity between {\qcms s}, we then extended its reach to various structures. In particular, we introduced a Gromov-Hausdorff like distance between metric spectral triples \cite{Latremoliere18g}, for which we proved the continuity of the spectrum of the underlying Dirac operators \cite{Latremoliere22} and various examples \cite{Latremoliere20a,Latremoliere21a,Latremoliere23a,Latremoliere24a}.
\medskip

This article now addresses the generalization of the more involved Gromov-Hausdorff distance between {\pqpms s}. Owing presumably to its difficulty, this matter has not yet been addressed. Notably, the theory of locally compact metric spaces and their noncommutative analogues adds a significant hurdle, as working with unitalization (the noncommutative equivalent of compactifications) is not a great path forward, since the distance function on a locally compact metric space does not extend to a distance on such compactifications in general (in other words, $\R$ sits as a nice dense open subset of its one point compactification $\T$, but of course, this embedding is not an isometry or even a Lipscthiz map).

\medskip

Among the difficulties we must address, maybe the most salient is that the Gromov-Hausdorff distance is fundamentally a local concept, involving an exhaustion of the underlying spaces by balls of ever growing radii. From Gelfan'd duality, a closed subspace of a locally compact Hausdorff space $X$, in its simplest form, correspond to a quotient of the C*-algebra $C_0(X)$. Of course, noncommutative C*-algebras can be simple (note that they are topologically simple if and only if they are algebraically simple, so this is not just a matter of accepting some topological inconvenience) --- in fact, many of the most studied examples like irrational rotation algebras, C*-crossed-products by minimal actions, or the C*-algebras of interest in Elliott's classification program, are simple. This means that we do not have an obvious notion of closed subspace within our category of C*-algebras. While, presumably, one approach is to work within a larger category with weaker morphisms in the hope to remedy this problem, we insist on working within the category of C*-algebras, as these are the objects which naturally occur in the study of quantum physics (as algebras of observables) and geometry of singular spaces (as algebras of foliations, crossed products, group and groupoid C*-algebras, among others). Dropping the multiplicative structure is a steep price, and we adopt the view that if we can not afford it, then we may need to work harder and try to find a way around this difficulty.

\medskip

Indeed, the idea we offer is that, like everything else in noncommutative geometry, it is helpful to start by looking at the problem at hand in a function-based way. A closed subset may naturally correspond to a quotient in the Gelfan'd picture, but it also corresponds to a function: in its simplest form, a $\{0,1\}$-value function called its characteristic function. Such functions are of course given by projections in general. Once again, C*-algebras can well be projectionless, so this may not seem like quite a solution yet --- though by embedding a C*-algebra in its universal Von Neumann algebra, this concept becomes very helpful, already --- we will see a brief application later in this paper, based upon our work in \cite{Latremoliere05b}. 

But since we live in the world of metrics, maybe we can work with Lipschitz functions instead, simply asking that their support be contained in a subset. In order to define the Gromov-Hausdorff distance, we must find an analogue of exhausting the space in larger and larger balls. But this is a bit misleading --- of course, in the classical pictures, these are obvious spaces to work with. But in our noncommutative context, we are led to realize that what matters is to work with ever larger subsets which cover more and more of the space, which in turn can be seen as the supports of Lipschitz functions which are forced to be (almost) $1$ at a base point, and decay very slowly, i.e. have small Lipschitz seminorms --- which is now a concept we do have an analogue for in noncommutative geometry.

\medskip

We explain in the pages below how this process is indeed able to define a convergence {\`a} la Gromov in the work of geometry {\`a} la Connes. We define a class function which is our analogue of the Gromov-Hausdorff distance using this idea. It will become clear that this requires a bit of technical gymnastic, but it makes the voyage more interesting. This nontrivial generalization is not exactly a metric, as it satisfies a relaxed version of the triangle inequality, but a very manageable one which creates no difficulty in inducing a topology for which it is continuous. Moreover, distance zero between {\pqpms} will imply that they are full isometric in the quantum sense, and in particular, the underlying C*-algebras are *-isomorphic. We call it the \emph{metametric}: the prefix \emph{meta} is used here in both its common meaning, of an object ``beyond metrics'' (a generalization of a metric) and as a form of self-referential concept, i.e. a metric about metrics.

Moreover, the classical Gromov-Hausdorff convergence will imply convergence for our new topology, and so will convergence of {\qcms s} in the sense of the propinquity. A converse to this last statement also holds. Last, we prove that we can now approximate certain noncommutative non-unital C*-crossed-products by matrix algebras with our new topology. Therefore, our proposal meets all the basic wishes one could have for such a Gromov-Hausdorff distance between {\pqpms s}.

\medskip

It is our hope that this construction will open the field of noncommutative metric geometry to the wealth of the locally compact realm.

\section{Pointed Proper Quantum Metric Spaces}

A locally compact quantum metric space is described by a C*-algebras endowed with a ``quantum metric structure'' given by an analogue of the Lipschitz seminorm, i.e. a structure analogous, though possibly noncommutative, to the algebra of Lipschitz functions over a locally compact metric space. For our purpose, it will suffice to focus on separable {\lcqms s}. We begin our discussion with isolating the key properties that a seminorm defined on subspace of a C*-algebra should possess to be considered an analogue of the Lipschitz seminorm, starting from our work in \cite{Latremoliere05b,Latremoliere12b}. We then progress toward our notion of a {\pqpms}, a type of {\lcqms} which generalizes the notion of a proper, or boundedly-compact, metric space. This notion is encoded in the existence of certain notable approximate units constructed from the quantum metric structure.

\subsection{Separable Locally Compact Quantum Metric Spaces}

Let $\A$ be a separable C*-algebra, and let $\Lip$ be a seminorm defined on a dense subspace $\dom{\Lip}$ of the self-adjoint part $\sa{\A}$ of $\A$. We begin our discussion with the observation that such a seminorm induces a metric on the quasi-state space $\QuasiStateSpace(\A)$ of $\A$, which, in the classical picture, induces the weak* topology on the state space $\StateSpace(\A)$ of $\A$, and we wish to keep this property in our more general context. We begin with a little bit of notation which we will use accross this article.

\begin{notation}
  If $E$ is a normed vector space, we denote the norm on $E$ by $\norm{\cdot}{E}$ by default. If $F\subseteq E$ is a subspace of $E$, we may continue to write $\norm{\cdot}{E}$ for the induced norm on $F$, if this does not create confusion. Moreover, the topological dual of $E$ is denoted by $E^\ast$.
\end{notation}

\begin{notation}
  If $\A$ is a C*-algebra, and if $a \in \A$, then we denote the real part $\frac{a+a^\ast}{2}$ of $a$ by $\Re(a)$, and the imaginary part $\frac{a-a^\ast}{2i}$ of $a$ by $\Im(a)$. The Jordan-Lie subalgebra $\left\{ \Re(a) : a \in \A \right\}$ consisting of the self-adjoint elements of $\A$ is denoted henceforth by $\sa{\A}$. The Jordan product, in particular, over $\sa{\A}$, is simply the real part of the product, and the Lie product is simply the imaginary part of the product.

  The state space of $\A$ is denoted by $\StateSpace(\A)$. The set of all positive linear functionals over $\A$ of norm at most $1$ is the quasi-state space of $\A$ \cite[3.1.2]{Pedersen79}, denoted by $\QuasiStateSpace(\A)$.
  
  If $\A$ has a unit,  we set $\unital{\A} = \A$. Otherwise, $\unital{\A}\coloneqq \A\oplus\C$ is the smallest unitalization of $\A$. We denote the unit of $\unital{\A}$ by $\unit_\A$
\end{notation}

As we will use this notion often, we recall from \cite[3.10.4]{Pedersen79} that a self-adjoint element $h$ in a C*-algebra $\A$ is \emph{strictly positive} when, for all $\varphi \in \StateSpace(\A)$, we have $\varphi(h) > 0$.

The quasi-state space of a C*-algebra $\A$ is the convex hull of $\StateSpace(\A)\cup\{0\}$ \cite[3.2.1]{Pedersen79}. If $\A$ in, in fact, a separable C*-algebra without a unit, then $\QuasiStateSpace(\A)$ in the weak* closure of $\StateSpace(\A)$, as we now check.

\begin{lemma}
	If $\A$ is a separable C*-algebra without a unit, then $\QuasiStateSpace(\A)$ is the weak* closure of $\StateSpace(\A)$.
\end{lemma}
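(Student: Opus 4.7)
\emph{Plan.} One inclusion is routine: $\QuasiStateSpace(\A)$ is weak-$\ast$ closed, being the intersection of the weak-$\ast$ closed positive cone in $\A^\ast$ with the weak-$\ast$ compact closed unit ball of $\A^\ast$; since it contains $\StateSpace(\A)$, it contains the weak-$\ast$ closure of $\StateSpace(\A)$. For the reverse inclusion, I will use that the weak-$\ast$ closure of the convex set $\StateSpace(\A)$ is itself convex, and therefore contains the convex hull of $\StateSpace(\A) \cup \{0\}$. Every $\psi \in \QuasiStateSpace(\A) \setminus \{0\}$ equals the convex combination $\|\psi\|\cdot(\psi/\|\psi\|) + (1-\|\psi\|)\cdot 0$ of the state $\psi/\|\psi\|$ and the zero functional, so once we know $0$ lies in the weak-$\ast$ closure of $\StateSpace(\A)$, the reverse inclusion follows automatically.

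It thus remains to show that $0$ lies in the weak-$\ast$ closure of $\StateSpace(\A)$. Fix a basic weak-$\ast$ neighborhood of $0$ cut out by finitely many $a_1,\dots,a_n \in \A$ and $\varepsilon>0$; the goal is to produce $\varphi \in \StateSpace(\A)$ with $|\varphi(a_i)| < \varepsilon$ for each $i$. Set $b \coloneqq \sum_{i=1}^n a_i^\ast a_i$, a positive element of $\A$. The Cauchy--Schwarz inequality for a state gives $|\varphi(a_i)|^2 \leq \varphi(a_i^\ast a_i) \leq \varphi(b)$ for each $i$, so it suffices to exhibit $\varphi \in \StateSpace(\A)$ with $\varphi(b) < \varepsilon^2$.

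The crucial use of the hypothesis that $\A$ has no unit now enters. If $b$ were invertible in $\unital{\A}$ with inverse $c + \lambda \unit_\A$ for some $c \in \A$ and $\lambda \in \C$, then $\unit_\A = bc + \lambda b$ would lie in $\A$, contradicting non-unitality; hence $0 \in \spectrum{b}$, computed in $\unital{\A}$. I then pick a faithful nondegenerate representation $\pi : \A \to B(H)$ (using separability of $\A$, the GNS representation of a faithful state produced from a dense sequence serves as a canonical choice) and extend it to $\unital{\A}$ by $\pi(\unit_\A) = I$; the extension remains faithful because $I \notin \pi(\A)$, so the spectrum of $\pi(b)$ as a bounded operator on $H$ equals $\spectrum{b}$ and in particular contains $0$. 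Since $\pi(b) \geq 0$, elementary spectral theory for positive operators furnishes a unit vector $\xi \in H$ with $\langle \pi(b)\xi, \xi\rangle < \varepsilon^2$, and then $\varphi \coloneqq \langle \pi(\cdot)\xi, \xi\rangle$ is the desired state. The one mildly delicate point is this final spectral step producing $\xi$; everything else in the argument is bookkeeping around the reduction to showing $0$ belongs to the closure.
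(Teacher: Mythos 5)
Your proof is correct and takes a genuinely different route from the paper's. The paper's proof fixes a strictly positive element $h \in \sa{\A}$ (this is where separability enters), shows via a contradiction argument that $0$ is not isolated in $\spectrum{h}$, picks a sequence $x_n \to 0$ in $\spectrum{h} \setminus \{0\}$, extends the corresponding characters $\delta_{x_n}$ of $C^\ast(h)$ to states $\varphi_n$ of $\A$ by Hahn--Banach, verifies $\varphi_n \to 0$ weak-$\ast$ using the density of $h\A h$, and then explicitly interpolates: given $\psi \geq 0$ with $\|\psi\| = t$, the states $\psi + (1-t)\varphi_n$ converge weak-$\ast$ to $\psi$. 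Your argument replaces the interpolation by a cleaner convexity observation (the weak-$\ast$ closure of $\StateSpace(\A)$ is convex, and $\QuasiStateSpace(\A)$ is the genuine, not-closed, convex hull of $\StateSpace(\A) \cup \{0\}$, so only $0$ needs to be reached); it replaces the fixed strictly positive element and the ``$0$ not isolated'' step by forming, for each basic neighborhood, the ad hoc positive $b = \sum a_i^\ast a_i$ and using non-unitality only to get $0 \in \spectrum{b}$; and it replaces the character/Hahn--Banach construction by a vector state in a faithful nondegenerate representation. A pleasant side effect is that your argument does not actually need separability: any C*-algebra admits a faithful nondegenerate representation (the universal one), so the GNS-of-a-faithful-state construction you mention is a convenience rather than a necessity, whereas the paper's route is hardwired to separability through the strictly positive element. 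On the ``mildly delicate'' spectral step you flag: the justification is simply that for the positive operator $\pi(b)$ with $0 \in \spectrum{\pi(b)}$, no spectral projection $E\big([0,\delta)\big)$ can vanish --- if one did, then $\pi(b) \geq \delta I$ would be invertible and $0 \notin \spectrum{\pi(b)}$ --- so a unit vector $\xi$ in its range gives $\langle \pi(b)\xi,\xi\rangle < \delta$; and it is nondegeneracy of $\pi$ that guarantees $\varphi = \langle \pi(\cdot)\xi,\xi\rangle$ has norm exactly $1$ (i.e.\ is a state and not merely a quasi-state), since $\pi(e_\lambda) \to I$ strongly along an approximate unit. With those two sentences filled in the proof is complete.
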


\begin{proof}
	Since $\A$ is separable, it has a strictly positive element $h \in \sa{\A}$ \cite[3.10.6]{Pedersen79}. In particular, there exists an approximate unit $(e_n)_{n\in\N}$ of $\A$ in $C^\ast(h)$  \cite[Proof of 3.10.5]{Pedersen79}, and therefore, $h \A h$ is dense in $\A$.
	
	If $\varphi \in \StateSpace(\A)$, then the restriction of $\varphi$ to $C^\ast(h)$ is again a state, since it is obviously positive and $\lim_{n\rightarrow\infty}\varphi(e_n) = 1$, so by \cite[Propostion 2.1.5, v]{Dixmier}.

	Since $\A$ is not unital, $h$ is not invertible, so $0 \in \spectrum{h}$. If $0$ is isolated in $\spectrum{h}$, then the function 
	\begin{equation*}
		f : x \in \spectrum{h} \mapsto \begin{cases} 1 \text{ if $x\neq 0$,} \\ 0 \text{ otherwise,}
		\end{cases}
	\end{equation*}
	 is continuous over $\spectrum{h}$ and in fact, $f\in c_0(\spectrum{h}\setminus\{0\})$, so $g = f(h) \in C^\ast(h)$ is well-defined by the functional calculus. For all $a\in\A$, since $e_n g = e_n$ for all $n\in\N$, and since $(e_n)_{n\in\N}$ is an approximate unit for $\A$, we then have:
	 \begin{align*}
	 	0\leq \norm{a - g a}{\A}
	 	&\leq \norm{a - e_n g a}{\A} + \norm{e_n g a - g a}{\A} \\
	 	&\leq \norm{a - e_n a}{\A} + \norm{e_n g - g}{\A}\norm{a}{\A}\\
	 	&\xrightarrow{n\rightarrow\infty} 0 + 0 \text,
	 \end{align*}
	and similarly, $\norm{a - a g}{\A} = 0$. So $g$ is a unit for $\A$. Since $\A$ is not unital, we conclude by contraposition that $0\in\spectrum{h}$ is not isolated in $\spectrum{h}$.
	
	 Let $(x_n)_{n\in\N}$ be a sequence in $\spectrum{h}\setminus\{0\}$ converging to $0$. Let $\varphi_n$ be a state of $\A$ obtained by applying the Hahn-Banach theorem for positive linear functionals to $\delta_{x_n}$. By construction, $\varphi_n(h)=\delta_{x_n}(h) \xrightarrow{n\rightarrow\infty} 0$. 
	 
	 Let $a \in \A$, $\varepsilon > 0$. Since $h\A h$ is norm-dense in $\A$, there exists $b \in \A$ such that $\norm{a-hbh}{\A} < \frac{\varepsilon}{2}$.  Since $\varphi_n(h^2) = \delta_{x_n}(h)^2 \xrightarrow{n\rightarrow\infty} 0$, there exists $N\in\N$ such that, if $n\geq N$, then $\varphi_n(h^2) <\frac{\varepsilon}{2}$. 	 
	 For all $n\in\N$, the map $\varphi_n$ is positive, so we conclude by \cite[3.1.4]{Pedersen79},\cite[Proposition 2.1.5 ii]{Dixmier} that
	 $\norm{\varphi_n(h\cdot h)}{\A^\ast} = \lim_{k\rightarrow\infty}\varphi_n(h e_k h) = \varphi(h^2)$. Therefore, if $n\geq N$ then:
	 \begin{align*}
	 	|\varphi_n(a)| &\leq |\varphi_n(a - hbh)| + |\varphi_n(hbh)| \\
	 	&\leq \norm{a-hbh}{\A} + \varphi_n(h^2)\norm{b}{\B} \\
	 	&\leq \frac{\varepsilon}{2} + \frac{\varepsilon}{2} = \varepsilon \text. 
	 \end{align*}
	Therefore, for all $a\in\A$, we have shown that $\lim_{n\rightarrow\infty}\varphi_n(a) = 0$, i.e. $(\varphi_n)_{n\in\N}$ converges to $0$ for the weak* topology.
	
	Now, let $\psi \in \A^\ast$ be a positive linear functional such that $t \coloneqq \norm{\psi}{\A^\ast} \leq 1$. For each $n\in\N$, set $\psi_n \coloneqq \psi + (1-t) \varphi_n$. Of course, $\psi_n$ is a positive linear functional, and $\norm{\psi_n}{\A^\ast} = \lim_{k\rightarrow\infty}\psi_n(e_k) = t + (1-t) = 1$. So $(\psi_n)_{n\in\N}$ is a sequence in $\StateSpace(\A)$, converging to $\psi$ in the weak* topology. So $\QuasiStateSpace(\A)$ contains all positive linear functional over $\A$ of norm at most $1$.
	
	On the other hand, if $\psi \in \A^\ast$ lies in the weak* closure of $\StateSpace(\A)$, then  $\norm{\psi}{\A^\ast} \leq 1$, and $\psi$ is positive. This concludes the proof of our lemma.
\end{proof}

\medskip

Let us begin our discussion about noncommutative Lipschitz seminorms. At a minimum, an analogue of the Lipschitz seminorm ought to has the following properties.
\begin{definition}
  Let $\A$ be a C*-algebra. A seminorm $L$ defined on a subspace of $\A$ is called a \emph{norm modulo constants} when $L$ is a norm if $\A$ has no unit, or $\{ a \in \dom{L} : L(a) = 0 \}$ is $\C\unit_\A$ if $\A$ has unit $\unit_\A$.
\end{definition}

\begin{definition}
	Let $\A$ be a C*-algebra. A seminom $L$ defined on a subspace $\dom{L}$ of $\A$ is called \emph{hermitian} when for all $a\in \dom{L}$, we have $a^\ast \in \dom{\Lip}$, and $\Lip(a) = \Lip(a^\ast)$.
\end{definition}

\begin{notation}
	If $L$ is a hermitian seminorm on $\dom{L}$, then $\domsa{L} \coloneqq \dom{L}\cap\sa{\A}$. Note that if $\dom{L}$ is a dense subspace in $\A$, then $\domsa{L}$ is a dense $\R$-subspace of $\sa{\A}$.
\end{notation}

We defined in \cite{Latremoliere05b} a metric on the state space of a C*-algebra endowed with a norm modulo constants, which is analogous to the Fortet-Mourier metric \cite{Fortet-Mourier-53} induced by a Lipschitz seminorm. We extend our definition in \cite[Definition 2.3]{Latremoliere05b} to the quasi-state space.

\begin{definition}\label{Fortet-Mourier-def}
  Let $\A$ be a separable C*-algebra, and $\Lip$ be a hermitian norm modulo constants defined on a dense subspace $\dom{\Lip}$ of $\A$. The \emph{Fortet-Mourier distance} is defined between any two quasi-states $\varphi, \psi \in \QuasiStateSpace(\A)$ by
  \begin{equation*}
    \boundedLipschitz{\Lip}(\varphi,\psi) \coloneqq \sup\left\{ |\varphi(a) - \psi(a)| : a\in\domsa{\Lip}, \norm{a}{\Lip} \leq 1 \right\} \text,
    \end{equation*}
    where, for all $a\in\dom{\Lip}$,
    \begin{equation*}
      \norm{a}{\Lip} \coloneqq \max\left\{ \Lip(a), \norm{a}{\A} \right\} \text.
    \end{equation*}
\end{definition}

\begin{remark}
	Since for all $a\in\A$ and $\varphi\in\QuasiStateSpace(\A)$, we have $\varphi(a) = \varphi\circ\Re(a)$, we conclude that:
	\begin{equation*}
		\boundedLipschitz{\Lip}(\varphi,\psi) \coloneqq \sup\left\{ |\varphi(a) - \psi(a)| : a\in\dom{\Lip}, \norm{a}{\Lip} \leq 1 \right\} \text,
	\end{equation*}
	though we will prefer the formulation given in Definition (\ref{Fortet-Mourier-def}).
\end{remark}

A natural characterization of when the Fortet-Mourier metric induced induces the weak* topology on the \emph{state space} was discovered in \cite{Latremoliere05b}.
\begin{theorem}[{\cite[Theorem 4.1]{Latremoliere05b}}]\label{bl-thm}
  If $\A$ is a separable C*-algebra, and if $\Lip$ is a hermitian norm modulo constants defined on a dense subspace $\dom{\Lip}$ of ${\A}$, then $\boundedLipschitz{\Lip}$ metrizes the weak* topology of $\StateSpace(\A)$ if, and only if, there exists a strictly positive element $h \in \sa{\A}$ such that the set
  \begin{equation*}
    \left\{ h a h : a \in \domsa{\Lip}, \norm{a}{\Lip} \leq 1 \right\} 
  \end{equation*}
  is totally bounded in $\A$.
\end{theorem}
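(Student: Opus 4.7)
The plan is to establish the two directions separately, after a common reduction. Since $\A$ is separable, the closed unit ball of $\A^\ast$ is weak*-metrizable, and therefore so is $\StateSpace(\A) \subseteq \QuasiStateSpace(\A)$; this lets me reason with sequences in both topologies. I will also freely pass to the unitization $\unital{\A}$, viewing states as extended to $\unital{\A}$ via $\varphi(\unit_\A) = 1$.

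For the sufficient direction, assume $h \in \sa{\A}$ strictly positive and $K := \{hah : a \in \domsa{\Lip}, \norm{a}{\Lip}\leq 1\}$ is totally bounded. Fortet--Mourier convergence always implies pointwise convergence on $\dom{\Lip}$, which being dense gives weak*-convergence by a standard equi-boundedness argument. For the reverse implication, given states $\varphi_n \to \varphi$ weak*, I plan to split
\begin{equation*}
  \varphi_n(a) - \varphi(a) = (\varphi_n - \varphi)(hah) + (\varphi_n - \varphi)(a - hah)
\end{equation*}
for every $a \in \domsa{\Lip}$ with $\norm{a}{\Lip} \leq 1$. The first term tends to $0$ uniformly in $a$ from the general fact that a bounded weak*-null sequence in $\A^\ast$ converges uniformly on any totally bounded subset of $\A$. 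For the second term, I would work in $\unital{\A}$, expand
\begin{equation*}
  a - hah = (\unit_\A - h)a(\unit_\A - h) + h a (\unit_\A - h) + (\unit_\A - h) a h \text,
\end{equation*}
and apply Cauchy--Schwarz for states to each piece, bounding every term by an expression involving $\varphi((\unit_\A - h)^{2k})$ and $\varphi_n((\unit_\A - h)^{2k})$ for small $k$, together with $\norm{a}{\A} \leq 1$. Weak*-convergence controls these polynomial moments of $h$ uniformly, and strict positivity of $h$ (combined with tightness forced by convergence within $\StateSpace(\A)$ rather than merely $\QuasiStateSpace(\A)$) lets me make the Cauchy--Schwarz bound small.

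For the necessary direction, separability of $\A$ guarantees that a strictly positive element exists in $\sa{\A}$ \cite[3.10.6]{Pedersen79}; the task is to choose $h$ so that $K$ is totally bounded. I would view elements of the Lipschitz unit ball $B := \{a \in \domsa{\Lip} : \norm{a}{\Lip} \leq 1\}$ as continuous functions on the weak*-compact $\QuasiStateSpace(\A)$ via evaluation. The metrization hypothesis, extended from $\StateSpace(\A)$ to $\QuasiStateSpace(\A)$ by continuity and compactness, produces an equicontinuity-type property for $B$ against the weak* topology. The compression $a \mapsto hah$ plays the role of a noncommutative cutoff: the strict positivity of $h$ ensures $h \A h$ is norm-dense in $\A$ and lets the equicontinuity translate into norm total boundedness through an Arzel\`a--Ascoli-flavored argument applied to the image $hBh$.

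The hard part will be the sufficient direction's remainder estimate: the Cauchy--Schwarz bound on $|\varphi(a-hah)|$ depends on $\varphi((\unit_\A - h)^{2})$, which is not itself small for a fixed $h$ and an arbitrary state $\varphi$. Making the uniformity in $a$ honest will likely require a refinement, such as replacing $h$ by a suitable $f(h)$ obtained via continuous functional calculus to build a secondary cutoff and then invoking the total boundedness of $K$ on the cutoff part while bounding the residue using the weak*-continuity of $\varphi \mapsto \varphi(f(h))$ for any continuous $f$ vanishing at $0$. This is the technical core of the argument.
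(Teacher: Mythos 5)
The paper does not prove this statement but cites it from an earlier reference, so I can only assess your plan on its own terms. It contains one acknowledged gap and one step that is actually false.

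The false step is in the necessary direction: you propose to ``extend the metrization hypothesis from $\StateSpace(\A)$ to $\QuasiStateSpace(\A)$ by continuity and compactness,'' obtaining equicontinuity of $\{\widehat{a}:a\in B\}$ on the compact space $\QuasiStateSpace(\A)$. This fails. The paper's own Remark~\ref{quasistate-top-bl-rmk} records the obstruction: in $C_0(\R)$, $\delta_n\to 0$ weak* in $\QuasiStateSpace(\A)$ while $\boundedLipschitz{\Lip_d}(\delta_n,0)=1$ for all $n$, so $\boundedLipschitz{\Lip}$ does \emph{not} metrize the weak* topology on the quasi-state space. Moreover, if such equicontinuity did hold, Arzel\`a--Ascoli on the compact $\QuasiStateSpace(\A)$ would yield total boundedness of $B$ itself in $\A$ (the map $a\in\sa{\A}\mapsto\widehat{a}\in C(\QuasiStateSpace(\A))$ is isometric), not merely of $hBh$. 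But for $C_0(\R)$ with the usual Lipschitz seminorm, $\boundedLipschitz{\Lip_d}$ metrizes the weak* topology on $\StateSpace(C_0(\R))$ while $B$ is certainly not totally bounded (take a sequence of translated bump functions). So the extension step cannot be made, and the argument must be reorganized so that the compression by $h$ enters \emph{before} any appeal to compactness of the quasi-state space.

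In the sufficient direction you correctly isolate the difficulty: the Cauchy--Schwarz bound on $|\varphi(a-hah)|$ is governed by $\varphi((\unit_\A-h)^2)$, which for a fixed strictly positive $h$ is not small over arbitrary states. Your proposed repair — passing to a secondary cutoff $f(h)$ via functional calculus — is the right idea, and it is compatible with the hypothesis thanks to Lemma~\ref{cad-totally-bounded-lemma}: total boundedness for one strictly positive $h$ implies it for $\{cad : a\in\domsa{\Lip},\ \norm{a}{\Lip}\leq 1\}$ for \emph{all} $c,d\in\A$, in particular for $c=d=f(h)$. What is still missing is the tightness step: for a weak*-convergent sequence of \emph{states} $\varphi_n\to\varphi$, one chooses $f\in C_0(\sigma(h))$ with $0\leq f\leq 1$ and $\varphi(f(h))$ close to $1$, and then uses weak* convergence of the $\varphi_n$ on the single element $f(h)$ to push $\varphi_n((\unit_\A-f(h))^2)$ small uniformly along the tail. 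Without this, the uniformity in $a$ does not close. As written, the proposal leaves this as a flagged but unresolved gap, and the necessary direction rests on a step that is false, so the plan is not yet a proof.
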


Definition (\ref{Fortet-Mourier-def}) involves a somewhat arbitrary cutoff value, and it will be helpful that we allow for this cutoff to change. To this end, we introduce the following notation.
\begin{notation}
	Let $\A$ be a separable C*-algebra, and let $\Lip$ be a hermitian norm modulo constants defined on a dense subspace $\dom{\Lip}$ of $\A$. For any $M \geq 1$, and for all $a\in\dom{\Lip}$, we define
\begin{equation*}
	\norm{a}{\Lip_\A,M} \coloneqq \max\left\{\frac{1}{M} \norm{a}{\A}, \Lip(a) \right\} \text,
\end{equation*}
	with the convention that $\norm{\cdot}{\Lip} = \norm{\cdot}{\Lip,1}$ as done in Definition (\ref{Fortet-Mourier-def}).
	
We then define, between any two quasi-states $\varphi,\psi \in \QuasiStateSpace(\A)$ of $\A$, the distance:
	\begin{equation*}
		\boundedLipschitz{\Lip,M} (\varphi,\psi) \coloneqq \sup\left\{ |\varphi(a) - \psi(a)| : a\in\domsa{\Lip}, \norm{a}{\Lip,M} \leq 1 \right\} \text,
	\end{equation*}
	again with the convention, adopted earlier, that $\boundedLipschitz{\Lip} = \boundedLipschitz{\Lip,1}$.
\end{notation}
We immediately remark that, for all $M > 0$,
\begin{equation*}
	\boundedLipschitz{\Lip} \leq \boundedLipschitz{\Lip,M} \leq  M \boundedLipschitz{\Lip} \text,
\end{equation*}
which follows from the obvious inclusion
\begin{equation*}
 	\left\{ a \in \domsa{\Lip} : \norm{a}{\Lip} \leq 1 \right\} \subseteq \left\{ a \in \domsa{\Lip} : \norm{a}{\Lip,M} \right\} \subseteq M \left\{ a \in \dom{\Lip} : \norm{a}{\Lip} \leq 1 \right\}\text.
\end{equation*}
Since all the versions of the Fortet-Mourier distances are equivalent, they all either induce the weak* topology on the state space, or none of them do, and no new characterization besides Theorem (\ref{bl-thm}) is needed.

The following lemma shows that we, in fact, obtain many compact sets in $\A$ from a hermitian norm modulo constants $\Lip$ when its associated Forter-Mourier distance metrize the weak* topology on $\StateSpace(\A)$. This is, arguably, the central feature of such seminorms.
 
\begin{lemma}\label{cad-totally-bounded-lemma}
  Let $\A$ be a separable C*-algebra and $\Lip$ a hermitian norm modulo constants defined on a dense subspace $\dom{\Lip}$ of $\A$. There exists $h \in \sa{\A}$ with $h > 0$ such that
  \begin{equation*}
    \left\{ h a h : a \in\domsa{\Lip}, \norm{a}{\Lip} \leq 1 \right\}
  \end{equation*}
  is totally bounded if, and only if, for all $c,d \in \A$, for all $M\geq 1$, and for all $K>0$, the set
	\begin{equation*}
		\left\{ c a d : a \in \domsa{\Lip}, \norm{a}{\Lip,M} \leq K \right\}
	\end{equation*} 
	is totally bounded.
\end{lemma}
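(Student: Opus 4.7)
The ``if'' direction is immediate: since $\A$ is separable, a strictly positive $h \in \sa{\A}$ exists \cite[3.10.6]{Pedersen79}, and the universal hypothesis applied to $c=d=h$, $M=K=1$ produces exactly the desired set. For the converse, I would first absorb the parameters $M,K$ by a scaling. Noting that $\norm{a}{\Lip,M} \leq K$ with $M \geq 1$ and $K > 0$ forces $\Lip(a) \leq K$ and $\norm{a}{\A} \leq MK$, whence $\norm{a/(MK)}{\Lip} \leq 1$, one obtains the inclusion
\[
	\left\{cad : a \in \domsa{\Lip},\ \norm{a}{\Lip,M} \leq K\right\} \subseteq MK\cdot\left\{cbd : b \in \domsa{\Lip},\ \norm{b}{\Lip} \leq 1\right\},
\]
and since total boundedness is preserved under scalar multiplication, the task reduces to showing that $T(c,d)\coloneqq\{cad : a \in \domsa{\Lip},\ \norm{a}{\Lip}\leq 1\}$ is totally bounded for arbitrary $c,d \in \A$.

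The engine of the argument is the bilateral density $\overline{h\A} = \overline{\A h} = \A$, which is a direct consequence of the strict positivity of $h$ in the separable C*-algebra $\A$. Granted this, for any $\varepsilon > 0$ I would select $c', d' \in \A$ with $\norm{c - c'h}{\A} \leq \varepsilon$ and $\norm{d - hd'}{\A} \leq \varepsilon$; in particular $\norm{c'h}{\A} \leq \norm{c}{\A} + \varepsilon$ and $\norm{hd'}{\A} \leq \norm{d}{\A} + \varepsilon$. The core identity is then the four-term expansion
\[
	cad = c'(hah)d' + (c - c'h)\, a\, (hd') + (c'h)\, a\, (d - hd') + (c - c'h)\, a\, (d - hd').
\]
For $a \in \domsa{\Lip}$ with $\norm{a}{\Lip} \leq 1$ one has $\norm{a}{\A} \leq 1$, so the last three terms are bounded in norm, uniformly in $a$, by $\varepsilon\bigl(\norm{c}{\A} + \norm{d}{\A} + 3\varepsilon\bigr)$, a constant depending only on $c$, $d$, $\varepsilon$ that tends to $0$ with $\varepsilon$.

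The first term $c'(hah)d'$ is the image of the totally bounded set $\{hah : a \in \domsa{\Lip},\ \norm{a}{\Lip} \leq 1\}$ under the continuous linear map $x \mapsto c'xd'$, hence ranges over a totally bounded subset of $\A$. Combining both estimates, $T(c,d)$ is contained in a totally bounded set inflated by a ball of radius $\varepsilon\bigl(\norm{c}{\A}+\norm{d}{\A}+3\varepsilon\bigr)$; as $\varepsilon$ is arbitrary, $T(c,d)$ is totally bounded. The only delicate ingredient I expect to need care with is the bilateral density $\overline{h\A}=\overline{\A h}=\A$: once this is available, the strictly positive $h$ behaves enough like a unit on either side to carry the multiplicative approximation through, and the rest is a routine ``totally bounded plus small error'' estimate.
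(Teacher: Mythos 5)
Your proof is correct, and it takes a genuinely different and somewhat leaner route than the paper's. The paper first propagates total boundedness from $\{hah\}$ to $\{h^n a h^n\}$ by iterated composition with $x\mapsto h^{n-1}xh^{n-1}$, then to $\{P(h)aP(h)\}$ for polynomials $P$ vanishing at $0$, then to $\{f(h)af(h)\}$ for $f\in C_0(\sigma(h))$; this yields an approximate unit $e_n=f_n(h)\in C^\ast(h)$ with $\{e_n a e_n\}$ totally bounded, and the paper then approximates $cad$ by $c e_n a e_n d$, which is the image of the totally bounded set $\{e_n a e_n\}$ under the Lipschitz map $x\mapsto cxd$. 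You instead invoke the bilateral density $\overline{h\A}=\overline{\A h}=\A$ — an immediate consequence of $\overline{h\A h}=\A$, itself already used in the paper's first lemma — to factor $c\approx c'h$ and $d\approx hd'$, giving $cad=c'(hah)d'+\text{(small)}$, so $T(c,d)$ sits in a totally bounded set plus a ball of arbitrarily small radius. Both arguments exploit the same underlying phenomenon (a strictly positive element behaves like a two-sided unit up to small error), but you keep the perturbation on the outer factors $c,d$ instead of inserting an approximate unit between $c$ and $a$ and between $a$ and $d$, which lets you skip the power/polynomial/functional-calculus escalation entirely. One small bonus of your version: the paper's polynomial step implicitly requires $\{h^i a h^j\}$ totally bounded for $i\neq j$ as well, which its stated claim about $\{h^n a h^n\}$ doesn't literally cover (though it is easily repaired); your decomposition sidesteps this point altogether.
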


\begin{proof}
  Assume $\{ h a h : a\in\domsa{\Lip}, \norm{a}{\Lip} \leq 1\}$ is totally bounded. Since multiplication by a nonzero positive scalar is uniformly continuous, it maps totally bounded sets to totally bounded sets, so $\{h a h : a\in\domsa{\Lip},\norm{a}{\Lip}\leq K \}$ is totally bounded for all $K > 0$.

  Fix $M\geq 1$ and $K>0$. We then observe that
  \begin{equation*}
  	\{ h a h : a \in \domsa{\Lip}, \norm{a}{\Lip,M} \leq K \} \subseteq \{ h a h : a\in\domsa{\Lip}, \norm{a}{\Lip} \leq MK \} 
  \end{equation*}
  and thus $\{ h a h : a \in \domsa{\Lip}, \norm{a}{\Lip,M} \leq K \}$ is totally bounded.

  It immediately follows that $\{ h^n a h^n : a\in\domsa{\Lip}, \norm{a}{\Lip,M} \leq K \}$ is totally bounded for all $n\in\N\setminus\{0\}$ as well, as the image of the previous set by the uniformly continuous map $a\in\A\mapsto h^{n-1} a h^{n-1}$. Thus, if $P \in \R[X]$ with $P(0) = 0$, then $\{ P(h) a P(h) : a\in\domsa{\Lip}, \norm{a}{\Lip,M} \leq K\}$ is totally bounded, and further, if $f \in C_0(\sigma(h))$, then $\{ f(h) a f(h) : a\in\domsa{\Lip}, \norm{a}{\Lip,M} \leq K \}$ is totally bounded.

  Since $h > 0$, there exists $(f_n)_{n\in\N} \in c_0(\sigma(h))$ such that $\norm{f_n}{c_0(\sigma)} = 1$, and $(f_n(h))_{n\in\N}$ is an approximate unit of $\A$. Set $e_n \coloneqq f_n(h)$ for all $n\in\N$; note that $\norm{e_n}{\A} = 1$ and $e_n \in \sa{\A}$ for all $n\in\N$. In particular, $\{e_n a e_n : a\in\domsa{\Lip},\norm{a}{\Lip,M}\leq K \}$ is totally bounded in $\A$.
	
	Let now $c,d \in \A$. Let $\varepsilon > 0$. There exists $N\in\N$ such that $\norm{c-c e_n}{\A} < \frac{\varepsilon}{4(\norm{d}{\A}KM + 1)}$ and $\norm{d - e_n d}{\A} < \frac{\varepsilon}{4(\norm{c}{\A}KM + 1)}$. If $a\in \domsa{\Lip}$ with $\norm{a}{\Lip,M}\leq K$, then 
	\begin{align*}
		\norm{c a d - c e_n a e_n d}{\A} 
		&\leq \norm{c-c e_n}{\A} \norm{a}{\A}\norm{d}{\A} + \norm{c e_n}{\A} \norm{a}{\A} \norm{d-e_n d}{\A} \\
		&\leq \norm{c-c e_n}{\A} KM \norm{d}{\A} + \norm{c}{\A} KM \norm{d-e_n d}{\A} < \frac{\varepsilon}{2} \text.
	\end{align*}
	Fix $n\geq N$. Now, $\{ c e_n a e_n d : a\in\domsa{\Lip},\norm{a}{\Lip,M}\leq K\}$ is totally bounded in $\A$, again as the image of the totally bounded set  $\{ e_n a e_n : a\in\domsa{\Lip},\norm{a}{\Lip_M}\leq K \}$ by the uniformly continuous map $a\in\A \mapsto c a d$. So, there exists a finite $\frac{\varepsilon}{2}$-dense subset $F$ of $\{ c e_n a e_n d : a\in\domsa{\Lip},\norm{a}{\Lip_M}\leq K \}$, from which it follows that for every $a\in\domsa{\Lip}$ with $\norm{a}{\Lip_M}\leq K$, there exists $b \in F$ such that $\norm{c a d - b}{\A} < \varepsilon$.
	
	As $\varepsilon>0$ is arbitrary, $\{ c a d : a\in\domsa{\Lip},\norm{a}{\Lip,M}\leq K \}$ is totally bounded, as claimed.

        \medskip

        The converse is trivial .
\end{proof}

\begin{example}
	If $(X,d)$ is a locally compact metric space, then $\boundedLipschitz{\Lip_d}$ metrizes the weak* topology on $\StateSpace(C_0(X))$, by choosing $h \coloneqq x\in X \mapsto \max\{1, \frac{1}{d(x,x_0) + 1}\}$ with $x_0 \in X$ arbitrary.
\end{example}

\begin{remark}\label{quasistate-top-bl-rmk}
	Let $\A$ be a separable C*-algebra, and $\Lip$ a hermitian norm modulo constants defined on a dense subspace $\dom{\Lip}$ of ${\A}$. The topology induced by $\boundedLipschitz{\Lip}$ on the cone of positive linear functionals is stronger than the weak* topology in general. For instance, the sequence $(\delta_n)_{n\in\N}$ converges to $0$ for the weak* topology on $C_0(\R)^\ast$, yet $\boundedLipschitz{\Lip}(\delta_n,\delta_{n+1}) = 1$ for all $n\in\N$. 
	 
	We observe that, if $(\varphi_n)_{n\in\N}$ is a sequence of positive functionals in $\A$ which converges to some $\varphi$ in the weak* topology, and if $\lim_{n\rightarrow\infty} \norm{\varphi_n}{\A^\ast} = \norm{\varphi}{\A^\ast}$, then $\lim_{n\rightarrow\infty} \boundedLipschitz{\Lip}(\varphi_n,\varphi) = 0$. The converse is not valid in general: when $\A = C_0((0,1))$, the sequence $(\delta_{\frac{1}{n+1}})_{n\in\N}$ converges to $0$ for the weak* topology, and $\boundedLipschitz{\Lip}(\delta_{\frac{1}{n+1}},0) = \frac{1}{n+1} \xrightarrow{n\rightarrow\infty} 0$, yet of course, $(1)_{n\in\N} = (\norm{\delta_{\frac{1}{n+1}}}{\A^\ast})_{n\in\N}$ does not converge to $0 = \norm{0}{\A^\ast}$. We will see later on that a form of completeness for {\lcqms s}  does guarantee that indeed, convergence for the Forter-Mourier between positive linear functionals is equivalent to weak* convergence and convergence of the norms.
\end{remark}

\medskip

There is another natural extended metric to consider in our context, which proved to be central in the theory of {\qcms s} but will play a lesser role in our present work, though it still must be considered.

\begin{definition}\label{Kantorovich-def}
	Let $\A$ be a C*-algebra, and $\Lip$ a hermitian norm modulo constants defined on a dense subspace $\dom{\Lip}$ of ${\A}$. The \emph{\MongeKant} $\Kantorovich{\Lip}$ is defined between any two states $\varphi,\psi \in \StateSpace(\A)$ of $\A$ by setting:
	\begin{equation*}
		\Kantorovich{\Lip}(\varphi,\psi) \coloneqq \sup\left\{ |\varphi(a) - \psi(a)| : a \in \domsa{\Lip}, \Lip(a) \leq 1 \right\} \text.
	\end{equation*}
\end{definition}

One purpose of introducing this metric is to keep track the diameter of the space we work with. To this end, we introduce the following notation:
\begin{notation}
	For any metric space $(E,d)$ and any nonempty $A\subseteq E$,
	\begin{equation*}
		\diam{A}{d} \coloneqq \sup \left\{ d(x,y) : x,y \in A \right\}\text,
	\end{equation*}
	allowing for the value $\infty$. If $E$ is a normed vector space, then we write $\diam{A}{E}$ for $\diam{A}{d}$ where $d$ is the distance induced by the norm $\norm{\cdot}{E}$ of $E$.
\end{notation}

\begin{notation}
	Let $\A$ be a C*-algebra, and $\Lip$ be a hermitian norm modulo constants defined on some dense subspace $\dom{\Lip}$ of $\A$. We denote the diameter of $(\StateSpace(\A),\Kantorovich{\Lip})$ by
	\begin{equation*}
		\qdiam{\A,\Lip} \coloneqq \diam{\StateSpace(\A)}{\Kantorovich{\Lip}} \text,
	\end{equation*}
	allowing for the value $\infty$.
\end{notation}

While the Fortet-Mourier metric is well-behaved, the {\MongeKant} is more complicated in general. Indeed, if $(X,d)$ is a locally compact metric space, in general, $\Kantorovich{\Lip_d}$ does not metrize the weak* topology and, when $\diam{X}{d} = \infty$, may even take the value $\infty$. Instead, $\Kantorovich{\Lip_d}$ induces the weak* topology on certain subsets of states, as described by Dobrushin \cite{Dobrushin70}, subjected to some strong form of tightness. We generalizes Dobrushin's criterion to noncommutative C*-algebras in \cite{Latremoliere12b}. We proved that if $\A$ is a separable C*-algebra, and if $\Lip$ is a hermtian norm modulo constant, defined on a dense subspace of $\A$, and if there exists a strictly positive element $h \in \sa{\A}$ and a state $\mu \in \StateSpace(\A)$ such that
\begin{equation}\label{weird-set-eq}
	\{ h a h \in \unital{\A} : a = b + t\unit_\A , b \in\domsa{\Lip}, \mu(b) = -t, \Lip(b) \leq 1 \}
\end{equation}
is totally bounded, then $\Kantorovich{\Lip}$ does induce the weak* topology on certain sets of states called \emph{tame}, which includes such sets as  
\begin{equation*}
	\{ \varphi \in \StateSpace(\A) : \varphi_{|C^\ast(h)} \text{ is supported on $K$ } \}
\end{equation*}
where $K\subseteq\spectrum{h}\setminus\{0\}$ is compact (and where $\Lip(\unit_\A) = 0$). For our purpose, however, it will suffice to require that the set in Expression (\ref{weird-set-eq}) is merely bounded, which will help us control the {\MongeKant} $\Kantorovich{\Lip}$ enough to meet our needs --- for instance ensuring that our definition of {\lcqms s} is equivalent with the notion of {\qcms} when the base C*-algebra is unital, and that there is a weak* dense set of states within finite {\MongeKant} from a fixed state, in Lemma (\ref{standard-lemma}).

\begin{remark}
	If $\A$ is a unital C*-algebra, then we note that $\boundedLipschitz{\Lip}$ metrizes the weak* topology if, and only if, $\{ a \in \domsa{\Lip} : \norm{a}{\Lip}\leq 1\}$ is totally bounded. Thus, as seen for instance in \cite{Rieffel99}, $\Kantorovich{\Lip}$ metrizes the weak* topology exactly if $\qdiam{\A}{\Lip} < \infty$, in which case we also note that $\boundedLipschitz{\Lip,M} = \Kantorovich{\Lip}$ whenever $M\geq\qdiam{\A,\Lip}$. This condition is in turn given by the boundedness of the set $\{ a \in \domsa{\Lip} : \Lip(a)\leq 1, \mu(a) = 0\}$ for any state $\mu \in \StateSpace(\A)$. 
\end{remark}

\medskip

The Lipschitz seminorm induced by a metric, in addition to inducing nicely behaved metrics on the space of Radon probability measures, has a few other properties which have proven helpful to keep in the noncommutative setting. It has a lower semicontinuity property, specifically, the closed unit ball for the Lipschitz seminorm is closed in norm. Moreover, the Lipschitz seminorm satisfies the Leibniz inequality, which connects it to the multiplicative structure of the underlying C*-algebra. This property became central in our work on the propinquity between {\qcms s} \cite{Latremoliere13}, and will again be important to us here.

\begin{definition}\label{Leibniz-def}
	A seminorm $\Lip$ on a subalgebra $\dom{\Lip}$ of $\A$ is \emph{Leibniz} when, for all $a,b \in \dom{\Lip}$,
	\begin{equation*}
		\Lip(ab)  \leq \norm{a}{\A} \Lip(b) + \Lip(a) \norm{b}{\A} \text.
	\end{equation*}
\end{definition}

\begin{remark}
	If $\Lip$ is a Leibniz hermitian seminorm on a subalgebra $\dom{\Lip}$ of $\A$, then $\domsa{\Lip}$ is a Jordan-Lie subalgebra of $\sa{\A}$, and the restriction of $\Lip$ to $\domsa{\Lip}$ satisfies:
	\begin{equation*}
		\max\{\Lip(\Re(ab)), \Lip(\Im(ab))\} \leq \norm{a}{\A} \Lip(b) + \Lip(a) \norm{b}{\A} \text.
	\end{equation*}
\end{remark}

For this article, a particular consequence of the Lwibniz property will be used repeatedly, and we establish it now.
\begin{lemma}\label{Leibniz-lemma}
	If $\A$ is a C*-algebra, $\Lip$ is a seminorm defined on a subalgebra $\dom{\Lip}$ of $\sa{\A}$, and if $a,b \in \dom{\Lip}$, then
	\begin{equation*}
		\Lip(a b a) \leq \norm{a}{\A}\left(2 \norm{b}{\A}\Lip(b) + \norm{a}{\A}\Lip(d)\right) \text.
	\end{equation*}
\end{lemma}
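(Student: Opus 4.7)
The stated inequality is a purely algebraic consequence of the Leibniz rule (Definition \ref{Leibniz-def})---the result must implicitly assume $\Lip$ is Leibniz on $\dom{\Lip}$, since otherwise no useful bound on $\Lip(aba)$ can be produced from $\Lip(a)$, $\Lip(b)$, $\norm{a}{\A}$, and $\norm{b}{\A}$ alone. I also note what appears to be a typographical error in the displayed right-hand side ($\Lip(d)$ for an undefined $d$, and likely $\Lip(b)$ swapped with $\Lip(a)$); my plan is to prove the natural bound
\begin{equation*}
\Lip(aba) \leq \norm{a}{\A}\bigl(2\norm{b}{\A}\Lip(a) + \norm{a}{\A}\Lip(b)\bigr),
\end{equation*}
which matches the intended structure of the statement.

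The plan is simply to apply the Leibniz inequality twice. First, I would factor $aba=(ab)\,a$ and apply Definition \ref{Leibniz-def} to the pair $(ab,a)$, obtaining
\begin{equation*}
\Lip(aba) \leq \norm{ab}{\A}\,\Lip(a) + \Lip(ab)\,\norm{a}{\A}.
\end{equation*}
Then I would bound $\norm{ab}{\A}\leq\norm{a}{\A}\norm{b}{\A}$ by the submultiplicativity of the C*-norm, and apply the Leibniz inequality a second time to $\Lip(ab)$ to obtain $\Lip(ab)\leq\norm{a}{\A}\Lip(b)+\Lip(a)\norm{b}{\A}$.

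Substituting these two estimates into the previous display and collecting the two copies of $\norm{a}{\A}\norm{b}{\A}\Lip(a)$ yields
\begin{equation*}
\Lip(aba) \leq \norm{a}{\A}\norm{b}{\A}\Lip(a) + \norm{a}{\A}\bigl(\norm{a}{\A}\Lip(b)+\Lip(a)\norm{b}{\A}\bigr)
= 2\norm{a}{\A}\norm{b}{\A}\Lip(a) + \norm{a}{\A}^{2}\Lip(b),
\end{equation*}
which factors as the claimed inequality. There is no genuine obstacle here: the only subtlety is bookkeeping and recognizing that the hypothesis $a,b\in\dom{\Lip}$ together with $\dom{\Lip}$ being a subalgebra guarantees that $ab$ and $aba$ lie in $\dom{\Lip}$, so that the Leibniz inequality is legally applicable at each step.
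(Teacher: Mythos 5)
Your proof is correct and follows essentially the same route as the paper: factor $aba=(ab)a$, apply the Leibniz inequality, bound $\norm{ab}{\A}\leq\norm{a}{\A}\norm{b}{\A}$, and apply Leibniz once more to $\Lip(ab)$. You are also right that the displayed inequality in the lemma statement contains typographical errors (the undefined $\Lip(d)$ and the swapped $\Lip(a)/\Lip(b)$): the paper's own proof, and every subsequent use of this lemma, produces exactly the bound $\Lip(aba)\leq\norm{a}{\A}\bigl(2\norm{b}{\A}\Lip(a)+\norm{a}{\A}\Lip(b)\bigr)$ that you establish.
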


\begin{proof}
	Let $a,b \in \dom{\Lip}$. By the Leibniz property:
	\begin{align*}
		\Lip(a b a) 
		&\leq \norm{ab}{\A}\Lip(a) + \Lip(ab)\norm{a}{\A} \\
		&\leq \norm{a}{\A}\norm{b}{\B}\Lip(a) + \norm{a}{\A}^2\Lip(b) + \norm{a}{\A}\norm{b}{\B}\Lip(a) \\
		&\leq \norm{a}{\A}\left(2 \norm{b}{\B}\Lip(a) + \norm{a}{\A}\Lip(b)\right) \text,
	\end{align*}
	as claimed.
\end{proof}

\begin{remark}
	If $L$ is a seminorm defined on a Jordan-Lie subalgebra $\domsa{\Lip}$ of $\sa{\A}$, for some C*-algebra $\A$, such that 
	\begin{equation}\label{weak-Leibniz-eq}
		\max\{\Lip(\Re(ab)),\Lip(\Im(ab))\} \leq\norm{a}{\A}\Lip(b) + \Lip(a)\norm{b}{\A}
	\end{equation}
	 for all $a,b\in\domsa{\Lip}$, then since 
	\begin{equation*}
		a b a = \Re(\Re(ab)a) + \Im(\Im(ab)a) \text,
	\end{equation*}
	we would obtain from a similar computation as above that
	\begin{equation*}
		\Lip(a b a) \leq 2\norm{a}{\A}\left(2\norm{b}{\A}\Lip(b) + \norm{a}{\A}\Lip(d)\right)
	\end{equation*}
for all $a,b \in \domsa{\Lip}$. The factor 2 in front would cause us difficulties. Therefore, while the work on {\qcms} uses seminorms defined only on Jordan-Lie subalgebras of $\sa{\A}$ satisfying the weaker Leibniz relation in Expression \eqref{weak-Leibniz-eq}, we actually will require our Lipschitz seminorms analogues to satisfy the usual Leibniz inequality of Definition (\ref{Leibniz-def}) here.	
\end{remark}

\medskip

We are now ready to define our notion of {\lcqms}.

\begin{definition}\label{lcqms-def}
	A \emph{\lcqms} $(\A,\Lip)$ is an ordered pair of a C*-algebra $\A$ and a Leibniz, hermitian, norm modulo constants $\Lip$ defined on a dense *-subalgebra $\dom{\Lip}$ of $\A$ such that:
	\begin{enumerate}
		\item the \emph{Fortet-Mourier distance} $\boundedLipschitz{\Lip}$ defined on the state space $\StateSpace(\A)$ of $\A$ by:
	\begin{equation*}
		\boundedLipschitz{\Lip} \coloneqq \varphi,\psi\in\StateSpace(\A) \mapsto \sup\{ |\varphi(a)-\psi(a)| : a\in\domsa{\Lip}, \norm{a}{\A} \leq 1, \Lip_\A(a) \leq 1 \}
	\end{equation*}
	induces the weak* topology on $\StateSpace(\A)$,
		\item $\{a\in\dom{\Lip} : \Lip(a) \leq 1\}$ is closed in $\sa{\A}$,
	\item there exists $h \in \sa{\A}$ strictly positive, and $\mu\in\StateSpace(\A)$ such that $\mu(h) = 1$, and
	\begin{equation*}
		\left\{ h a h \in \unital{\A} : a=b+t\unit_\A, t\in\R, b\in \domsa{\Lip}, \Lip(b) \leq 1, \mu(b) = -t  \right\} 
	\end{equation*}
	is bounded.
	\end{enumerate}
\end{definition}

We reconcile our new definition with the concept of {\qcms} as defined in \cite{Latremoliere13,Latremoliere15}.
\begin{lemma}
	If $(\A,\Lip)$ is a {\lcqms} and $\A$ is unital, then $(\A,\Lip_{|\domsa{\Lip}})$ is a {\qcms}.
\end{lemma}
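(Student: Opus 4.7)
The plan is to verify the four defining properties of a \qcms\ for $(\A,\Lip_{|\domsa{\Lip}})$: the \emph{Leibniz inequality} (or its Jordan--Lie version); \emph{lower semicontinuity} on $\sa{\A}$, with $\Lip$ extended to $\infty$ outside $\domsa{\Lip}$; the \emph{kernel condition} that $\Lip$ vanish exactly on the real multiples of $\unit_\A$; and that the \MongeKant\ $\Kantorovich{\Lip}$ metrize the weak* topology on $\StateSpace(\A)$. The first three are essentially immediate from Definition (\ref{lcqms-def}): the full Leibniz property restricts to the weaker Jordan--Lie Leibniz relation on $\domsa{\Lip}$; the kernel condition is the unital case of being a norm modulo constants, restricted to the self-adjoint part; and lower semicontinuity follows from condition (2), since every nonzero sublevel set of $\Lip$ is a positive scalar multiple of the closed set $\{a\in\dom{\Lip}:\Lip(a)\leq 1\}$, while the zero-level set $\R\unit_\A$ is trivially closed.

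The substantive task is thus to verify the Monge--Kantorovich condition. By Rieffel's criterion for compact quantum metric spaces, this reduces to showing that, for a fixed state $\mu\in\StateSpace(\A)$, the set
\[ B \coloneqq \{\,a-\mu(a)\unit_\A : a\in\domsa{\Lip},\ \Lip(a)\leq 1\,\} \]
is totally bounded in $\sa{\A}$. I plan to obtain this by combining conditions (1) and (3) of Definition (\ref{lcqms-def}), exploiting the key observation that in a unital C*-algebra a strictly positive element $h$ is automatically invertible (since $0\notin\spectrum{h}$).

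With $h\in\sa{\A}$ and $\mu\in\StateSpace(\A)$ as provided by condition (3), the invertibility of $h$ lets us conjugate the bounded set $\{h(b-\mu(b)\unit_\A)h : b\in\domsa{\Lip},\,\Lip(b)\leq 1\}$ by the bounded operator $c\mapsto h^{-1} c h^{-1}$ to produce a constant $K\geq 1$ with $\norm{b-\mu(b)\unit_\A}{\A}\leq K$ for every such $b$. Separately, condition (1) together with Theorem (\ref{bl-thm}), applied with a strictly positive and therefore invertible element $h'$, shows that
\[ T \coloneqq \{a\in\domsa{\Lip} : \norm{a}{\Lip}\leq 1\} \]
is totally bounded in $\sa{\A}$; it is the image under $c\mapsto (h')^{-1} c (h')^{-1}$ of the totally bounded set $\{h'ah' : \norm{a}{\Lip}\leq 1\}$. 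For any $b\in\domsa{\Lip}$ with $\Lip(b)\leq 1$, the element $c\coloneqq b-\mu(b)\unit_\A$ satisfies $\Lip(c)=\Lip(b)\leq 1\leq K$ and $\norm{c}{\A}\leq K$, so $c/K\in T$. Hence $B\subseteq K\cdot T$, and $B$ is totally bounded, as desired.

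The main technical point is reconciling the \emph{mere boundedness} granted by condition (3) with the \emph{total boundedness} required by Rieffel's criterion. Condition (1)---through Theorem (\ref{bl-thm}) and the invertibility of strictly positive elements in a unital C*-algebra---supplies precisely the total boundedness of the auxiliary set $T$ that upgrades the bound from condition (3) into the total boundedness of $B$.
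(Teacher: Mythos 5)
Your proof is correct and hinges on the same key observation as the paper's: in a unital $C^\ast$-algebra a strictly positive element $h$ is invertible, so the $h$-conjugated sets appearing in conditions (1) and (3) of Definition (\ref{lcqms-def}) transfer to the unconjugated sets under the bounded map $a\mapsto h^{-1}ah^{-1}$. The only difference is presentational --- you verify the Ozawa--Rieffel criterion (total boundedness of $\{a-\mu(a)\unit_\A : a\in\domsa{\Lip},\ \Lip(a)\leq 1\}$) directly, whereas the paper separately deduces finite diameter and compactness of $\{a\in\domsa{\Lip} : \norm{a}{\Lip}\leq 1\}$; both assemble the same two ingredients.
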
 

\begin{proof}
	Assume that $(\A,\Lip)$ is a {\lcqms} while $\A$ is unital. Let $h \in\sa{\A}$ be a strictly positive element in $\A$ and $\mu \in \StateSpace(\A)$ such that
	\begin{equation*}
		\left\{ h a h : a\in\dom{\Lip}, \Lip(a) \leq 1, \mu(a) = 0 \right\}
	\end{equation*}
	is bounded (note that here $\unital{\A} = \A$ and $\unit_\A\in\dom{\Lip}$ since $\Lip$ is a norm modulo constants), while
	\begin{equation*}
		\left\{ h a h : a\in\dom{\Lip}, \norm{a}{\Lip}\leq 1 \right\}
	\end{equation*}
	is compact.
	
	 As a strictly positive element in a unital C*-algebra $\A$, the element $h$ is invertible (since $\{\varphi(h):\varphi\in\StateSpace(\A)\}$, as $\A$ is unital, is the closed convex hull of the spectrum of $h$, and it does not contain $0$ by definition). The map $a \in \A \mapsto h^{-1} a h^{-1}$ is of course Lipscthiz, hence the set:
	\begin{equation*}
		\left\{ a \in \dom{\Lip} : \Lip(a) \leq 1, \mu(a) = 0 \right\}
	\end{equation*}
	is also bounded, which implies that $\diam{\StateSpace(\A)}{\Lip} < \infty$. Similarly,
	\begin{equation*}
		\left\{ a \in \dom{\Lip} : \Lip(a) \leq 1, \norm{a}{\A}\leq 1 \right\}
	\end{equation*}
	is compact. Hence, $(\A,\Lip)$ is a {\qcms}.
	
	Of course, if $(\A,\Lip)$ is a {\qcms}, then $(\A,\Lip)$ is a {\lcqms}.
\end{proof}

\medskip

We also note that the topological assumption in \cite{Latremoliere12b} is indeed stronger than what we require in this paper.

\begin{lemma}
Let $\A$ be a separable C*-algebra and $\Lip$ be a hermitian norm modulo constant defined on dense subspace $\dom{\Lip}$ of $\A$. If 
	\begin{equation}\label{weird-set-again-eq}
		\left\{ h a h \in \unital{\A} : a=b+t\unit_\A, t\in\R, b\in \dom{\Lip}, \Lip(b) \leq 1, \mu(b) = -t  \right\} 
	\end{equation}
is totally bounded for some $h \in \domsa{\Lip}$ strictly positive, and some $\mu \in \StateSpace(\A)$ with $\mu(h) = 1$, then $\{ h a h : a\in\domsa{\Lip}, \norm{a}{\Lip} \leq 1 \}$ is totally bounded as well.
\end{lemma}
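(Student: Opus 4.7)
The plan is to exhibit a natural decomposition of any element of the form $hah$ (with $a\in\domsa{\Lip}$ and $\norm{a}{\Lip}\leq 1$) as a sum of an element taken from the set in Expression~\eqref{weird-set-again-eq}, plus a controlled correction term lying in a compact subset of $\A$. Given such an $a$, the candidate choice is $b\coloneqq a\in\dom{\Lip}$ and $t\coloneqq -\mu(a)\in\R$, so that $\mu(b) = -t$ and $\Lip(b) = \Lip(a)\leq \norm{a}{\Lip}\leq 1$ automatically. Then the element $h(b+t\unit_\A)h = hah - \mu(a)\, h^2$ belongs to the set in Expression~\eqref{weird-set-again-eq}, which is totally bounded by assumption. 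Writing
\begin{equation*}
hah \;=\; \bigl(hah - \mu(a)\, h^2\bigr) \;+\; \mu(a)\, h^2\text,
\end{equation*}
the first summand ranges over a totally bounded set, and the second lies in $\{s h^2 : s\in [-1,1]\}$ since $|\mu(a)|\leq \norm{\mu}{\A^\ast}\norm{a}{\A}\leq 1$.

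The set $\{ s h^2 : s\in[-1,1]\}$ is the continuous image of a compact set, hence compact and therefore totally bounded. To conclude, I would invoke the standard fact that the Minkowski sum of two totally bounded subsets of a normed space is totally bounded: if $F_1$ is a finite $\varepsilon$-net for the first summand and $F_2$ a finite $\varepsilon$-net for the second, then $F_1+F_2$ is a finite $2\varepsilon$-net for the sum. Since $\{hah : a\in\domsa{\Lip},\ \norm{a}{\Lip}\leq 1\}$ sits inside this Minkowski sum, it too is totally bounded. There is no real obstacle here beyond making sure the parameters $b$ and $t$ are admissible in the set defining~\eqref{weird-set-again-eq}, which they are because $a\in\domsa{\Lip}\subseteq\dom{\Lip}$ is automatically self-adjoint and $\Lip$-bounded by $1$.
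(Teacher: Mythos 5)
Your proof is correct, and it takes a genuinely different route from the paper's, which is worth noting. The paper decomposes $hah$ as $h(a-\mu(a)h)h + \mu(a)h^3$, using the choice $t=0$ in Expression~\eqref{weird-set-again-eq} and the shifted element $a-\mu(a)h\in\A$; this has the feature of staying inside $\A$ rather than $\unital{\A}$, but at the cost that $\Lip(a-\mu(a)h)$ may be as large as $1+\Lip(h)$, so an extra rescaling step is needed to land inside the (scaled) totally bounded set before extracting a Cauchy subsequence. You instead decompose $hah = h(a-\mu(a)\unit_\A)h + \mu(a)h^2$, taking $b=a$ and $t=-\mu(a)$ in~\eqref{weird-set-again-eq}, which gives $\Lip(b)=\Lip(a)\leq 1$ directly and lands exactly inside the hypothesized totally bounded set with no rescaling. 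Your version is the cleaner of the two and actually uses the $\unit_\A$ freedom built into the set~\eqref{weird-set-again-eq}, which the paper's proof leaves dormant. Both then finish the same way, reducing to the elementary fact that the Minkowski sum of two totally bounded sets is totally bounded (the paper phrases this in sequence language, you phrase it with $\varepsilon$-nets; this is immaterial).
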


\begin{proof}
Assume that the set given in Expression (\ref{weird-set-again-eq}) is totally bounded. 
 Let $h \in \domsa{\Lip}$, and $\mu(h)=1$, for some strictly positive element $h \in \sa{\A}$. Let $(a_n)_{n\in\N}$ in $\domsa{\Lip}$ such that $\norm{a_n}{\Lip} \leq 1$ for all $n\in\N$. Then $(a_n-\mu(a_n) h) \in \domsa{\Lip}$, with $\Lip(a_n-\mu(a_n)h) \leq 1 + |\mu(a_n)|\Lip(h) \leq 1+\Lip(h)$ and $\mu(a_n-\mu(a_n)h) = 0$. So there exists a Cauchy subsequence $(h(a_{f(n)}-\mu(a_{f(n)})h)h)_{n\in\N}$ of $(h(a_n-\mu(a_n)h)h)_{n\in\N}$.  Now, $(\mu(a_{f(n)}))_{n\in\N}$ is bounded (with values in $[-1,1]$) and thus has a convergent subsequence $(\mu(a_{f(g(n))}))_{n\in\N}$ as well, and therefore, $(\mu(a_{f(g(n))}h^3)_{n\in\N}$ converges in $\A$. Altogether, the subsequence $(h a_{f(g(n))} h)_{n\in\N}$ of $(h a_n h)_{n\in\N}$ is therefore Cauchy. So $\{ h a h : a\in\domsa{\Lip}, \norm{a}{\Lip}\leq 1\}$ is totally bounded, as claimed.
\end{proof}

If we assume that for a {\lcqms} $(\A,\Lip)$, there exists $h \in \sa{\A}$ with $h > 0$ and $\mu \in \StateSpace(\A)$ such that the set in Expression \eqref{weird-set-again-eq} is bounded for the norm $\norm{\cdot}{\Lip}$, rather than $\norm{\cdot}{\A}$, and if $\{ f a f : a\in\domsa{\Lip}, \norm{a}{\Lip}\leq 1\}$ is totally bounded, then in fact, $(\A,\Lip)$ is a {\lcqms} in the sense of \cite{Latremoliere12b}. We will however not use this observation further.

\medskip

The reason we still need to work with the {\MongeKant} is the very useful observation that, while the state space is not metrized by it, some weak* dense subset is indeed a metric space (again, with a stronger topology than the weak* topology in general).

\begin{lemma}\label{standard-lemma}
	Let $(\A,\Lip)$ be a {\lcqms}. If $h\in\sa{\A}$ is a strictly positive element and $\mu \in \StateSpace(\A)$ with $\mu(h)=1$  and such that
	\begin{equation}\label{weird-set-yet-again-eq}
		\left\{ h a h\in\unital{\A}: a\in\unital{\A}, a=b+t, b\in\dom{\Lip},t\in\R,\Lip(a)\leq 1, \mu(a) = -t \right\}
	\end{equation}
	is bounded, then $\{\varphi\in\StateSpace(\A):\Kantorovich{\Lip}(\varphi,\mu) < \infty \}$ is weak* dense in $\StateSpace(\A)$.
\end{lemma}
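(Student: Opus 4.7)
The plan is to fix an arbitrary $\psi\in\StateSpace(\A)$ and construct an explicit sequence of states converging to $\psi$ in the weak* topology, each of which sits at finite {\MongeKant} distance from $\mu$. The construction uses compressions by an approximate unit built from functional calculus on $h$, exploiting that $h$ is strictly positive in order to keep the compressions under tight norm control.

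Concretely, I would proceed as follows. Since $h$ is strictly positive, choose continuous cutoff functions $f_n : \spectrum{h} \to [0,1]$ with $f_n(x) = 0$ for $x \leq 1/(2n)$ and $f_n(x) = 1$ for $x \geq 1/n$, and set $e_n := f_n(h) \in \sa{\A}\cap C^\ast(h)$. By the same argument used earlier in this section, $(e_n)_{n\in\N}$ is an approximate unit of $\A$. The essential point is that the function $r_n(x) := f_n(x)/x$ (with $r_n(0) := 0$) extends continuously to $\spectrum{h}$ with $\norm{r_n}{\infty} \leq 2n$, so that $e_n = h\, r_n(h) = r_n(h)\, h$. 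For $n$ large enough that $\psi(e_n^2) > 0$ (which holds eventually, since $(e_n^2)_{n\in\N}$ is also an approximate unit, so $\psi(e_n^2)\to 1$), define the compressed state
\begin{equation*}
    \varphi_n : a \in \A \longmapsto \frac{\psi(e_n a e_n)}{\psi(e_n^2)}\text.
\end{equation*}
Positivity of $\varphi_n$ is immediate, and $\norm{\varphi_n}{\A^\ast} = 1$ follows by evaluating $\varphi_n$ along an approximate unit. Weak* convergence $\varphi_n \to \psi$ follows from $\norm{e_n a e_n - a}{\A} \to 0$ for every $a\in\A$ combined with $\psi(e_n^2) \to 1$.

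It remains to show $\Kantorovich{\Lip}(\varphi_n, \mu) < \infty$ for each large $n$. Given $a \in \domsa{\Lip}$ with $\Lip(a)\leq 1$, set $\tilde a := a - \mu(a)\unit_\A \in \sa{\unital{\A}}$. Extending $\mu$ to a state of $\unital{\A}$ by $\mu(\unit_\A) = 1$ and $\Lip$ to $\dom{\Lip}+\R\unit_\A$ by $\Lip(b + t\unit_\A) := \Lip(b)$, we have $\mu(\tilde a) = 0$ and $\Lip(\tilde a) \leq 1$, so by the boundedness hypothesis there exists $M > 0$, independent of $a$, with $\norm{h\tilde a h}{\A} \leq M$. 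A direct computation gives
\begin{equation*}
    \varphi_n(a) - \mu(a) = \frac{\psi(e_n \tilde a e_n)}{\psi(e_n^2)} = \frac{\psi\bigl(r_n(h)\,(h\tilde a h)\, r_n(h)\bigr)}{\psi(e_n^2)}\text,
\end{equation*}
which is bounded in absolute value by $4n^2 M/\psi(e_n^2)$, a bound independent of $a$. Hence $\Kantorovich{\Lip}(\varphi_n, \mu) < \infty$ for every $n$ large enough.

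The main obstacle is handling the non-unital case cleanly. The natural mean-zero correction $\tilde a$ lives in $\unital{\A}$ rather than in $\A$, forcing us to interpret the boundedness hypothesis and to extend both $\mu$ and $\Lip$ in the unitization. Once this bookkeeping is in place, the remaining content is a standard functional-calculus trick, with strict positivity of $h$ doing precisely the work of making $r_n = f_n/x$ a bounded element of $C^\ast(h)$; this is exactly why the hypothesis that $\{ h a h \}$ is merely bounded (rather than totally bounded) still suffices for the weak* density statement.
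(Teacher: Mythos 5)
Your proposal is correct and takes essentially the same route as the paper's proof: both compress an arbitrary state $\psi$ to $\varphi_n := \psi(e_n \cdot e_n)/\psi(e_n^2)$ with $e_n = h\, g_n(h)$ coming from an approximate unit in $C_0(\spectrum{h})$ supported away from $0$, bound $|\varphi_n(a)-\mu(a)|$ by $\norm{g_n(h)}{\A}^2\,\norm{h\tilde a h}{\unital{\A}}/\psi(e_n^2)$ uniformly in $a$ via the boundedness hypothesis applied to $\tilde a = a - \mu(a)\unit_\A$, and deduce weak* convergence from the approximate unit property. Your explicit piecewise cutoff $f_n$ (transition on $[1/(2n),1/n]$, yielding $\norm{r_n}{\infty}\leq 2n$) is just a concrete instance of the paper's generic $f_n = x\, g_n(x)$ factorisation, so the difference is cosmetic.
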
	

\begin{proof}
Without changing notation, we extend $\Lip$ to $\unital{\A}$ by setting $\Lip(a+t\unit_\A) = \Lip(a)$ for all $a\in\dom{\Lip}$, $t\in\C$, noting that since $\Lip$ is a norm modulo constant, this definition is consistent if $\A$ is unital.

	Let $h\in\sa{\A}$ strictly positive, and $\mu\in\StateSpace(\A)$, such that $\mu(h)=1$ and the set in Expression \eqref{weird-set-yet-again-eq} is bounded. Denote the spectrum of $h$ by $\spectrum{h}$, and let $C_0(\spectrum{h})$ be the C*-algebra of $\C$-valued continuous functions over $\spectrum{h}$ vanishing at $0$.
	
	Let $(f_n)_{n\in\N}$ be a an approximate unit in $C_0(\spectrum{h})$, with $f_n$ supported on a compact in $\spectrum{h}\setminus\{0\}$ for each $n\in\N$. For each $n\in\N$, there exists $g_n \in C_0(\spectrum{h})$ such that $f_n(x) = x g_n(x)$ for all $x\in\spectrum{h}$, since $f_n$ is compactly supported. By construction, if $h_n \coloneqq f_n(h)$ and $s_n \coloneqq g_n(h)$ for all $n\in\N$, we conclude that $(h_n)_{n\in\N}$ is an approximate unit of $C^\ast(h)$. Now, $C^\ast(h)$ contains an approximate unit of $\A$ since $h$ is strictly positive \cite[3.10.5]{Pedersen79}, so $(h_n)_{n\in\N}$ is an approximate unit for $\A$. Moreover, $h_n = s_n h$ for all $n\in\N$.

	Now, for each $n\in\N$, the set
	\begin{equation*}
		\left\{ h_n a h_n \in \unital{\A} : a\in\domsa{\Lip}+\R\unit_\A, \Lip(a) \leq 1, \mu(a) = 0 \right\} = \left\{ s_n (h a h) s_n : a\in\domsa{\Lip}+\R\unit_\A, \Lip(a) \leq 1, \mu(a) = 0 \right\}
	\end{equation*}
	is bounded by assumption and since $a\in\A\mapsto s_n a s_n$ is Lipschitz. Let
	\begin{equation*}
	 M_n \coloneqq \sup\{ \norm{h_n a h_n}{\unital{\A}} : a\in\domsa{\Lip}+\R\unit_\A, \Lip(a) \leq 1, \mu(a) = 0 \}\text.
	 \end{equation*}
	
	 Since $(h_n)_{n\in\N}$ is an approximate unit, $\lim_{n\rightarrow\infty}\mu(h_n) = 1$. Up to dropping finitely many entries in the sequence $(h_n)_{n\in\N}$, and then replacing our sequence with $\left(\frac{1}{\mu(h_n)}h_n\right)_{n\in\N}$, we will henceforth assume that $\mu(h_n)=1$ without loss of generality (in particular $(\frac{1}{\mu(h_n)}h_n)_{n\in\N}$ is still an approximate unit of $\A$).

	Let now $\varphi \in \StateSpace(\A)$. For each $n\in\N$, let $\varphi_n \coloneqq a\in\A\mapsto\frac{1}{\varphi(h_n^2)}\varphi(h_n a h_n)$. By construction, $\varphi_n \in \StateSpace(\A)$ and $\lim_{n\rightarrow\infty}\varphi(h_n^2)=1$ \cite[Proposition 2.1.5]{Dixmier}. 
	
	Fix $a\in\A$. Since $a = \lim_{n\rightarrow\infty} h_n a h_n$, we conclude $\lim_{n\rightarrow\infty} \varphi_n(a) = \varphi(a)$. Hence, $(\varphi_n)_{n\in\N}$ converges to $\varphi$ in the weak* topology.
	
	Therefore 
	\begin{equation*}
		S\coloneqq \left\{ \frac{1}{\varphi(h_n^2)}\varphi(h_n\cdot h_n) : \varphi\in\StateSpace(\A), n \in \N, \varphi(h_n^2) > 0 \right\}
	\end{equation*} 
	is weak* dense in $\StateSpace(\A)$.
	
	On the other hand, let $\varphi \in \StateSpace(\A)$ and $n\in\N$ such that $\varphi(h_n^2)>0$ --- which exists since $\lim_{n\rightarrow\infty}\varphi(h_n^2)=1$. For all $a \in \domsa{\Lip}$, with $\Lip(a) \leq 1$,
	\begin{align*}
		|\varphi_n(a) - \mu(a)|
		&=\left|\frac{1}{\varphi(h_n^2)} \varphi(h_n a h_n)  - \frac{1}{\varphi(h_n^2)}\mu(\varphi(h_n^2) a)\right|\\
		&=\frac{1}{\varphi(h_n^2)} | \varphi(h_n a h_n)  - \mu(a)\varphi(h_n^2) |\\
		&=\frac{1}{\varphi(h_n^2)} |\varphi(h_n (a-\mu(a)\unit_\A) h_n )| \leq \frac{M_n}{\varphi(h_n^2)}\text.
	\end{align*}

	Therefore, 
	\begin{align*}
		\Kantorovich{\Lip}(\varphi_n,\mu)
		&=	\sup\left\{|\varphi_n(a) - \mu(a)| : a \in \domsa{\Lip}, \Lip(a) \leq 1 \right\} \\
		&\leq \frac{M}{\varphi(h_n^2)} < \infty \text.
	\end{align*}
	Therefore, $S \subseteq \{ \varphi \in \StateSpace(\A) : \Kantorovich{\Lip}(\varphi,\mu) < \infty \}$. Since $S$ is weak* dense in $\StateSpace(\A)$, our proof is complete.
\end{proof}

\medskip

The Gromov-Hausdorff distance is defined between \emph{pointed} proper metric spaces. The matter of finding an adequate notion of a {\pqpms} is delicate, because the role of a base point is more than just to pin the space in place --- it provides a center for an exhaustion of the space using balls of increasing radius, and working with subsets such as balls in noncommutative geometry is basically not available, by  the very nature of noncommutativity. Nonetheless, we introduce the simplest notion of a quantum space with a ``base state'', which borrows from the conclusion of Lemma (\ref{standard-lemma}).

\begin{definition}\label{plcqms-def}
	An \emph{pinned {\lcqms}} $(\A,\Lip,\mu)$ is a {\lcqms} $(\A,\Lip)$ together with a state $\mu \in \StateSpace(\A)$ such that
	\begin{equation*}
		\left\{ \varphi \in \StateSpace(\A) : \Kantorovich{\Lip}(\varphi,\mu) < \infty \right\}
	\end{equation*}
	is weak* dense in $\StateSpace(\A)$. The state $\mu$ is then call the \emph{pin} of $(\A,\Lip,\mu)$.
\end{definition}

\medskip

We conclude this section with a sufficient condition for completeness of the Fortet-Mourier distance, as a preliminary to our next section. It will also refine our description of the topology induced by the Fortet-Mourier distance on the quasi-state space, under this natural assumption, which will be met by the type of {\lcqms s} we will work with in this paper.

\begin{lemma}\label{completeness-lemma}
	If $(\A,\Lip)$ is a {\lcqms}, and if there exists a bounded approximate unit $(e_n)_{n\in\N}$ for $\A$ in $\domsa{\Lip}$ such that $(\Lip(e_n))_{n\in\N}$ is bounded as well, then $(\StateSpace(\A),\boundedLipschitz{\Lip})$ is complete. Moreover, a sequence $(\varphi_n)_{n\in\N}$ of positive linear functionals over $\A$ converge to $\psi \in \A^\ast$ for $\boundedLipschitz{\Lip}$ if, and only if, $(\varphi_n)_{n\in\N}$ converges to $\psi$ in the weak* topology and $\lim_{n\rightarrow\infty} \norm{\varphi_n}{\A^\ast} = \norm{\psi}{\A^\ast}$.
\end{lemma}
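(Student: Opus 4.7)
The plan is to first establish the sequential characterization of convergence in $\boundedLipschitz{\Lip}$, and then to deduce completeness as a consequence. Write $B_\Lip \coloneqq \{a \in \domsa{\Lip} : \norm{a}{\Lip} \leq 1\}$ and $K \coloneqq \sup_n \norm{e_n}{\Lip}$, which is finite by hypothesis. For the forward direction, assume $\boundedLipschitz{\Lip}(\varphi_n,\psi) \to 0$: the elementary estimate $|\varphi_n(a) - \psi(a)| \leq \norm{a}{\Lip}\boundedLipschitz{\Lip}(\varphi_n,\psi)$ yields pointwise convergence $\varphi_n(a) \to \psi(a)$ for all $a \in \dom{\Lip}$; applying this to $a = c^*c$ with $c \in \dom{\Lip}$ gives $\psi(a) \geq 0$, which extends by density and continuity of $\psi \in \A^\ast$ to all positive elements of $\A$, hence $\psi$ is positive. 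Norm convergence then follows from the uniform-in-$k$ estimate $|\varphi_n(e_k) - \psi(e_k)| \leq K\boundedLipschitz{\Lip}(\varphi_n,\psi)$: letting $k \to \infty$ and using that positive functionals attain their norm as the limit against a positive approximate unit yields $|\norm{\varphi_n}{\A^\ast} - \norm{\psi}{\A^\ast}| \leq K\boundedLipschitz{\Lip}(\varphi_n,\psi) \to 0$. Full weak* convergence then follows by a standard density argument, approximating $a \in \A$ by $b \in \dom{\Lip}$ and controlling the three pieces $\varphi_n(a-b)$, $\varphi_n(b)-\psi(b)$, $\psi(b-a)$ using the now-established uniform bound on $\norm{\varphi_n}{\A^\ast}$.

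The reverse direction is the main obstacle. Assuming $\varphi_n \to \psi$ in weak* and $\norm{\varphi_n}{\A^\ast} \to \norm{\psi}{\A^\ast}$, $\psi$ will be positive (the positive cone is weak*-closed in $\A^\ast$) and $C \coloneqq \sup_n \norm{\varphi_n}{\A^\ast} + \norm{\psi}{\A^\ast}$ is finite. The strategy is to localize each $a \in B_\Lip$ as $a = e_k a e_k + (a - e_k a e_k)$ for a well-chosen $k$ and bound each piece uniformly in $a$. For the localized piece, Theorem (\ref{bl-thm}) together with Lemma (\ref{cad-totally-bounded-lemma}) implies that $\{e_k a e_k : a \in B_\Lip\}$ is totally bounded in $\A$; a standard finite-net argument (weak* convergence of a bounded sequence of functionals is uniform on totally bounded sets) then yields $\sup_{a \in B_\Lip}|\varphi_n(e_k a e_k) - \psi(e_k a e_k)| \to 0$ as $n \to \infty$ for each fixed $k$. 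For the tail, writing $a - e_k a e_k = (\unit_\A - e_k)a + e_k a(\unit_\A - e_k)$ in $\unital{\A}$ and applying Cauchy-Schwarz for positive functionals gives $|\varphi_n(a - e_k a e_k)| \leq 2\sqrt{C}\sqrt{\varphi_n((\unit_\A - e_k)^2)}$ uniformly in $a \in B_\Lip$ (using $\norm{a}{\A} \leq 1$). Since $0 \leq e_k \leq \unit_\A$, the bound $(\unit_\A - e_k)^2 \leq \unit_\A - e_k$ yields $\varphi_n((\unit_\A - e_k)^2) \leq \norm{\varphi_n}{\A^\ast} - \varphi_n(e_k)$, which tends as $n \to \infty$ to $\norm{\psi}{\A^\ast} - \psi(e_k)$ by the hypotheses, and this in turn tends to $0$ as $k \to \infty$. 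The hard part is the two-layered quantification: first pick $k$ so that $\psi((\unit_\A - e_k)^2) < \varepsilon$, then $n$ so that $\varphi_n((\unit_\A - e_k)^2) < \varepsilon$ and the middle term is $< \varepsilon$ uniformly in $a$. The norm convergence hypothesis is essential here, because plain weak* convergence does not control $\varphi_n((\unit_\A - e_k)^2)$.

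For completeness, let $(\varphi_n)$ be Cauchy in $\StateSpace(\A)$ for $\boundedLipschitz{\Lip}$. For $a \in \dom{\Lip}$, $(\varphi_n(a))_n$ is Cauchy in $\C$ via $|\varphi_n(a) - \varphi_m(a)| \leq \norm{a}{\Lip}\boundedLipschitz{\Lip}(\varphi_n,\varphi_m)$; since $\norm{\varphi_n}{\A^\ast} = 1$, Cauchy-ness extends by density to all $a \in \A$, defining a positive functional $\psi \in \A^\ast$ with $\norm{\psi}{\A^\ast} \leq 1$ via $\psi(a) \coloneqq \lim_n \varphi_n(a)$. The bounded-$\Lip$ approximate unit is what upgrades $\psi$ to a state: for $\varepsilon > 0$ and $N$ with $\boundedLipschitz{\Lip}(\varphi_n,\varphi_m) < \varepsilon/K$ for $n, m \geq N$, the estimate $|\varphi_n(e_k) - \varphi_m(e_k)| < \varepsilon$ is uniform in $k$; letting $m \to \infty$ and then choosing $k$ large so that $\varphi_N(e_k) > 1 - \varepsilon$ gives $\psi(e_k) > 1 - 2\varepsilon$, whence $\norm{\psi}{\A^\ast} = 1$. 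Finally, passing $m \to \infty$ inside the supremum defining $\boundedLipschitz{\Lip}(\varphi_n,\varphi_m)$, using the pointwise convergence of $\varphi_m$ to $\psi$ on $B_\Lip$, yields $\boundedLipschitz{\Lip}(\varphi_n,\psi) \leq \liminf_m \boundedLipschitz{\Lip}(\varphi_n,\varphi_m) \to 0$.
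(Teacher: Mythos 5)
Your proposal is correct, and it actually does more work than the paper's own proof. The paper's argument is terse: it proves completeness by the interchange-of-limits identity $\norm{\psi}{\A^\ast}=\lim_n\lim_m\varphi_m(h_n)=\lim_m\lim_n\varphi_m(h_n)=\lim_m\norm{\varphi_m}{\A^\ast}$, justified by the uniform-in-$n$ estimate coming from the bounded-$\Lip$ approximate unit, and then it disposes of the ``if'' direction of the sequential characterization by pointing to Remark~(\ref{quasistate-top-bl-rmk}); but that remark merely \emph{asserts} this implication as an observation. Your decomposition $a=e_kae_k+(a-e_kae_k)$ --- bounding the localized part via total boundedness of $\{e_kae_k:a\in B_\Lip\}$ (Theorem~(\ref{bl-thm}) plus Lemma~(\ref{cad-totally-bounded-lemma})) together with uniform-on-totally-bounded-sets weak* convergence, and bounding the tail via Cauchy--Schwarz against $\varphi_n((\unit_\A-e_k)^2)$, handled by the $k$-then-$n$ two-layer quantification --- is the right way to actually establish that implication, and it is sound. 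Your completeness argument and the Fatou-type passage to the limit in the supremum are essentially the same mechanism as the paper's, just spelled out.

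One small point to watch: the step $(\unit_\A-e_k)^2\leq\unit_\A-e_k$ requires $0\leq e_k\leq\unit_\A$, whereas the hypothesis only supplies a \emph{bounded} approximate unit in $\domsa{\Lip}$ with bounded $\Lip$. If one follows the convention (as in Pedersen, which the paper cites) that approximate units are positive contractions by definition, you are fine. Otherwise, replace that one-line inequality by the direct expansion $\varphi_n((\unit_\A-e_k)^2)=\norm{\varphi_n}{\A^\ast}-2\varphi_n(e_k)+\varphi_n(e_k^2)$ and note that, by weak* convergence applied to $e_k$ and $e_k^2$ together with $\norm{\varphi_n}{\A^\ast}\to\norm{\psi}{\A^\ast}$, this tends as $n\to\infty$ to $\psi((\unit_\A-e_k)^2)$, which in turn tends to $0$ as $k\to\infty$ because both $(e_k)$ and $(e_k^2)$ are approximate units (the latter by the Leibniz property). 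The same remark applies to the line in the completeness argument that reads off $\norm{\psi}{\A^\ast}=1$ from $\psi(e_k)>1-2\varepsilon$; the cleanest justification is $\norm{\psi}{\A^\ast}=\lim_k\psi(e_k)$ rather than a direct comparison with a single $\psi(e_k)$.
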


\begin{proof}
	Let $(\varphi_n)_{n\in\N}$ be a Cauchy sequence in $(\StateSpace(\A),\boundedLipschitz{\Lip})$. It is a standard argument that $(\varphi_n)_{n\in\N}$ converges, in the weak* topology, to a continuous positive linear functional $\psi$ over $\A$. Now, by assumption, there exists $M > 0$ such that $\{ a \in \domsa{\Lip} : \norm{a}{\Lip} \leq M \}$ contains an approximate unit for $\A$. By uniform convergence: 
	\begin{equation*}
		\norm{\psi}{\A^\ast} = \lim_{n\rightarrow\infty}\psi(h_n) = \lim_{n\rightarrow\infty} \lim_{m\rightarrow\infty} \varphi_m(h_n) = \lim_{m\rightarrow\infty} \lim_{n\rightarrow\infty} \varphi_m(h_n) = \lim_{m\rightarrow\infty} \norm{\varphi_m}{\A^\ast} \text.
	\end{equation*}
	In particular, if $(\varphi_n)_{n\in\N}$ is a sequence of states of $\A$, then $\psi$ is a state of $\A$ as well.
	
	We already established the converse in Remark (\ref{quasistate-top-bl-rmk}).
\end{proof}

\begin{definition}\label{strongly-complete-def}
	A {\lcqms} $(\A,\Lip)$ is \emph{strongly complete} when there exists an approximate unit of $\A$ in $\{ a \in \domsa{\A} : \norm{a}{\Lip} \leq 1\}$.
\end{definition}
%

Our next step is the introduction of the class of {\pqms s}, which is also based upon the existence of special approximate units.

\subsection{Proper Quantum Metric Spaces}

We now introduce a subclass of {\lcqms s} which will be the natural domain for our Gromov-Hausdorff metametric, as they have an appropriately strong notion of completeness. To this end, we begin with the following source of approximate units for certain {\lcqms s}, which, by Lemma (\ref{completeness-lemma}), immediately implies completeness. This result is actually a fundamental tool in the later study of the convergence of {\pqpms s} as well.

\begin{definition}\label{lipunit-def}
If $(\A,\Lip,\mu)$ is a pinned {\lcqms}, a \emph{\lipunit{\Lip}{\mu}} is a sequence $(h_n)_{n\in\N} \in \domsa{\Lip}$  such that 
	\begin{enumerate}
	\item $\lim_{m\rightarrow\infty} \Lip(h_n) = 0$,
	\item $\lim_{n\rightarrow\infty} \mu(h_n) = 1$,
	\item $\lim_{n\rightarrow\infty} \norm{h_n}{\A}=1$.
	\end{enumerate}
\end{definition}

\begin{theorem}\label{approx-unit-thm}
	If $(\A,\Lip,\mu)$ is a {\plcqms}, and if $(h_n)_{n\in\N} \in \domsa{\Lip}$ is a \lipunit{\Lip}{\mu}, then $(h_n)_{n\in\N}$ is an approximate unit for $\A$.
\end{theorem}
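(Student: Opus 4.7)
The strategy is to exploit the C*-identity $\norm{h_n a - a}{\A}^2 = \norm{a^\ast (h_n - \unit_\A)^2 a}{\A}$ (interpreted inside $\unital{\A}$, using $h_n^\ast = h_n$) to reduce the approximate unit property for $a \in \A$ to establishing $a^\ast(h_n - \unit_\A)^2 a \to 0$ in norm. I would attack this in two stages: first prove pointwise-on-states vanishing, then upgrade to norm vanishing via total boundedness.

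For the pointwise stage, I first establish that $\psi(h_n) \to 1$ for every $\psi \in \StateSpace(\A)$. When $\Kantorovich{\Lip}(\psi,\mu) < \infty$ this is immediate from $|\psi(h_n) - \mu(h_n)| \leq \Lip(h_n)\Kantorovich{\Lip}(\psi,\mu)$ together with $\Lip(h_n)\to 0$ and $\mu(h_n)\to 1$. For a general $\psi$, I use the weak* density of these tame states (Definition \ref{plcqms-def}) combined with the fact that $\boundedLipschitz{\Lip,M}$ metrizes the weak* topology on $\StateSpace(\A)$ to find tame $\psi'$ close to $\psi$ in $\boundedLipschitz{\Lip,M}$; the indispensable uniformity in $n$ is provided by the boundedness of $\norm{h_n}{\Lip,M}$ (since $\Lip(h_n) \to 0$ and $\norm{h_n}{\A} \to 1$), which gives $|\psi(h_n) - \psi'(h_n)| \leq \norm{h_n}{\Lip,M}\,\boundedLipschitz{\Lip,M}(\psi,\psi')$ uniformly in $n$. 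The same argument applies to $(h_n^2)_{n\in\N}$, which is itself a \lipunit{\Lip}{\mu}: Leibniz yields $\Lip(h_n^2) \leq 2\norm{h_n}{\A}\Lip(h_n) \to 0$, Cauchy-Schwarz gives $\mu(h_n)^2 \leq \mu(h_n^2) \leq \norm{h_n}{\A}^2$ hence $\mu(h_n^2) \to 1$, and the C*-identity gives $\norm{h_n^2}{\A} \to 1$. Thus $\psi(h_n^2)\to 1$ for all states $\psi$ as well.

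Now fix $a\in\A$ and $\varphi\in\StateSpace(\A)$. If $\varphi(a^\ast a) = 0$, Cauchy-Schwarz for the positive sesquilinear form $(x,y)\mapsto \varphi(x^\ast y)$ forces $\varphi(a^\ast b a) = 0$ for every $b$. Otherwise $\tilde\psi(b) \coloneqq \varphi(a^\ast b a)/\varphi(a^\ast a)$ is a positive linear functional of norm $1$ on $\A$ (the norm is reached by evaluating on any approximate unit of $\A$), hence a state of $\A$. The previous step gives $\tilde\psi(h_n),\tilde\psi(h_n^2) \to 1$, so expanding $(h_n - \unit_\A)^2 = h_n^2 - 2h_n + \unit_\A$ yields
\[
\varphi(a^\ast(h_n - \unit_\A)^2 a) = \varphi(a^\ast a)\bigl[\tilde\psi(h_n^2) - 2\tilde\psi(h_n) + 1\bigr] \to 0.
\]

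To upgrade this to norm vanishing I invoke Lemma \ref{cad-totally-bounded-lemma} with $c = a^\ast$, $d = a$: since $\norm{h_n}{\Lip,M}$ and $\norm{h_n^2}{\Lip,M}$ are both bounded, each of $(a^\ast h_n a)_{n\in\N}$ and $(a^\ast h_n^2 a)_{n\in\N}$ lies in a totally bounded subset of $\A$, hence $(a^\ast(h_n - \unit_\A)^2 a)_{n\in\N}$ is relatively compact. Any norm-convergent subsequence of it has a self-adjoint limit $y$ satisfying $\varphi(y) = 0$ for every state $\varphi$, forcing $y = 0$. A routine subsequence-of-subsequence argument then yields $\norm{a^\ast(h_n - \unit_\A)^2 a}{\A} \to 0$, so $\norm{h_n a - a}{\A} \to 0$; applying the same to $a^\ast$ and taking adjoints delivers $\norm{a h_n - a}{\A} \to 0$. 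The main obstacle is the passage from tame states to arbitrary states in the second paragraph: pointwise convergence alone is insufficient, and everything rests on the uniform-in-$n$ bound $\sup_n \norm{h_n}{\Lip,M} < \infty$ meeting the fact that $\boundedLipschitz{\Lip,M}$ metrizes the weak* topology on $\StateSpace(\A)$.
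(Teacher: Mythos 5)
Your proof is correct, and it takes a genuinely different route from the paper's once the two shared preliminary facts --- $\varphi(h_n)\to 1$ and $\varphi(h_n^2)\to 1$ for every $\varphi\in\StateSpace(\A)$, obtained as in the paper via the tame-then-dense argument and the Leibniz/Cauchy--Schwarz estimates --- have been secured. The paper upgrades pointwise convergence to the approximate-unit property through two more stages: first an Arz{\'e}la--Ascoli equicontinuity argument giving uniform convergence of $\varphi\mapsto\varphi((1-h_n)^2)$ over weak* compact sets of states, then a rather delicate passage through the enveloping von Neumann algebra $\A''$, spectral projections $\chi_{K_n}(h)$ of a fixed strictly positive $h$, and the norm estimates they yield. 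You avoid the bidual and spectral projections entirely. Instead you conjugate by an arbitrary $a\in\A$, observe that the positive elements $a^\ast(h_n-\unit_\A)^2 a = a^\ast h_n^2 a - 2a^\ast h_n a + a^\ast a$ lie in a fixed totally bounded subset of $\A$ by Lemma \ref{cad-totally-bounded-lemma} (applicable because $\sup_n\norm{h_n}{\Lip,M}$ and $\sup_n\norm{h_n^2}{\Lip,M}$ are finite), and extract norm convergence to $0$ from relative compactness plus the already-established weak* vanishing, via the standard subsequence argument. The reduction $\norm{h_n a - a}{\A}^2 = \norm{a^\ast(h_n-\unit_\A)^2 a}{\A}$ and the identification of $\varphi(a^\ast\cdot a)/\varphi(a^\ast a)$ as a state are both sound. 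What your route buys is brevity and the avoidance of $\A''$; what the paper's route buys is the intermediate uniform estimate over compact state sets, which is reusable and which the paper does lean on inside its spectral-projection argument. Both are legitimate; yours is the more elementary of the two and makes more visible the role of Lemma \ref{cad-totally-bounded-lemma} as the engine converting weak* information into norm information.
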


\begin{proof}
 Since $(\norm{h_n}{\A})_{n\in\N}$ and $(\Lip(h_n))_{n\in\N}$ are convergent, $(\norm{h_n}{\Lip})_{n\in\N}$ is bounded. Let $M>0$ such that, for all $n\geq M$, we have $\norm{h_n}{\Lip} \leq M$.

	\begin{claim}
	The sequences $(h_n)_{n\in\N}$ is weakly convergent to the unit in $\unital{\A}$, i.e.
	\begin{equation*}	
	\forall \varphi\in\StateSpace(\A) \quad \lim_{n\rightarrow\infty} \varphi(h_n) = 1\text.	
\end{equation*}		
	\end{claim}	

	Let $\varphi\in\StateSpace(\A)$ with $\Kantorovich{\Lip}(\varphi,\mu) < \infty$. For all $n\in\N$,
	\begin{equation*}
		|\mu(h_n)-\varphi(h_n)| \leq \Lip(h_n)\Kantorovich{\Lip}(\mu,\varphi) \xrightarrow{n\rightarrow\infty} 0\text.
	\end{equation*}
	Since $\lim_{n\rightarrow\infty} \mu(h_n) = 1$, we conclude that $\lim_{n\rightarrow\infty}\varphi(h_n) = 1$.

	By Definition (\ref{lcqms-def}), the metric $\boundedLipschitz{\Lip}$ metrizes the weak* topology on $\StateSpace(\A)$, and by assumption on $\mu$, the set $\{ \varphi\in\StateSpace(\A) : \Kantorovich{\Lip}(\varphi,\mu) < \infty\}$ is weak* dense in $\StateSpace(\A)$. 
	
	Let $\psi \in \StateSpace(\A)$ and $\varepsilon > 0$. There exists $\varphi \in \StateSpace(\A)$ such that $\Kantorovich{\Lip}(\varphi,\mu)<\infty$, and $\boundedLipschitz{\Lip}(\varphi,\psi) < \frac{\varepsilon}{2 M}$. 
we conclude that
\begin{align*}
	\sup_{n \in \N} |\varphi(h_n)-\psi(h_n)| \leq M \boundedLipschitz{\Lip}(\varphi,\psi) < \frac{\varepsilon}{2} \text.
\end{align*}	

	Since $\lim_{n\rightarrow\infty}\varphi(h_n) = 1$, there exists $N\in\N$ such that, if $n\geq N$, then $|1-\varphi(h_n)| < \frac{\varepsilon}{2}$. Hence, for all $n\geq N$:
\begin{equation*}
	|1-\psi(h_n)| \leq |1-\varphi(h_n)| + |\varphi(h_n) - \psi(h_n)| \leq \frac{\varepsilon}{2} + \frac{\varepsilon}{2} = \varepsilon \text.
\end{equation*}	
Hence, $\lim_{n\rightarrow\infty} \psi(h_n) = 1$, and our claim is proven.

	\begin{claim}
	For all $\varphi \in \StateSpace(\A)$, it holds that $\lim_{n\rightarrow\infty} \varphi(h_n^2) = 1 = \lim_{n\rightarrow\infty}\norm{h_n^2}{\A} \text.$ 
	\end{claim}	

For all $n\in\N$, by the Cauchy-Schwarz inequality, and since $h\in\sa{\A}$, we observe that:
	\begin{equation*}
		\mu(h_n)^2 \leq \mu(h_n^2) \leq \norm{h_n^2}{\A} \leq \norm{h_n}{\A}^2\text,
	\end{equation*}
	and since $\lim_{n\rightarrow\infty} \mu(h_n) = \lim_{n\rightarrow\infty}\norm{h_n}{\A} = 1$, by the squeeze theorem, $\lim_{n\rightarrow\infty} \mu(h_n^2) = 1$.
	
	Moreover, for all $n\in\N$, by the Leibniz property of $\Lip$ (since $\Re(h^2) = h^2$),
	\begin{equation*}
		\Lip(h_n^2) \leq 2 \norm{h_n}{\A} \Lip(h_n) \xrightarrow{n\rightarrow\infty} 0 \text.
	\end{equation*}
	
	Our first claim, applied to $(h_n^2)_{n\in\N}$, finishes the proof of our second claim.

\begin{claim}
	Every state of $\A$ extends uniquely to $\unital{\A}$ by fixing the value at $\unit_\A$ to be $1$. If $K\subseteq\StateSpace(\A)$ is a nonempty weak* compact subset of $\StateSpace(\A)$, then
	\begin{equation*}
		\lim_{n\rightarrow\infty} \sup\{ |\varphi((\unit_\A-h_n)^2)| : \varphi \in K \} = 0 \text.
	\end{equation*}
\end{claim}

	Let $K\subseteq\StateSpace(\A)$ be nonempty and weak* compact. By assumption, there exists $N\in\N$ such that, if $n\geq N$, then,
\begin{equation*}
	\Lip(h_n^2)\leq 2\Lip(h_n)\norm{h_n}{\A} \leq 1 \text.
\end{equation*}

	Hence, for all $\varphi,\psi \in K$:
	\begin{align}\label{approx-unit-equi-eq}
		|\varphi((1-h_n^2)) - \psi((1-h_n^2))| 
		&\leq \boundedLipschitz{\Lip,1}(\varphi,\psi) \max\{\norm{(1-h_n)^2}{\A},\Lip((1-h_n)^2)\} \nonumber \\
		&\leq (1+2M+M^2) \boundedLipschitz{\Lip}(\varphi,\psi) \text.
	\end{align}
	
	For any $a\in\sa{\A}$, we define $\widehat{a} : \varphi \in \StateSpace(\A) \mapsto \varphi(a)$.
	
	Expression \eqref{approx-unit-equi-eq} thus shows that $\left\{ \widehat{(1-h_n)^2} : \varphi \in K \mapsto \varphi((1-h_n)^2) : n \in \N \right\}$ is equicontinuous over the metric space $(K,\boundedLipschitz{\Lip})$, whose topology is the weak* topology --- hence, it is a compact metric space. On the other hand, by our first claim, if $\varphi\in K$, we have $\lim_{n\rightarrow\infty}\varphi(h_n)=\lim_{n\rightarrow\infty} \varphi(h_n^2) = 1$, and therefore:
	\begin{align*}
		\varphi((1-h_n)^2)
		&= 1-2\varphi(h_n) + \varphi(h_n^2) \\
		&\xrightarrow{n\rightarrow\infty} 1-2+1 = 0 \text. 
	\end{align*} 
	So $\left(\widehat{(1-h_n^2)}\right)_{n\in\N}$ converges pointwise to $0$ on $K$.

	Last,  $\widehat{(1-h_n^2)} \in [-(1+2M+M^2), 1+2M+M^2]$ for all $n\in\N$.

	Hence, by Arz{\'e}la-Ascoli, $\lim_{n\rightarrow\infty}\sup\{\varphi((1-h_n)^2) : \varphi \in K\} = 0$, as desired.
	
	\begin{claim}
		The sequence $(h_n)_{n\in\N}$ is an approximate unit in $\A$.
	\end{claim}

	It is sufficient to prove that $(h_n)_{n\in\N}$ converges to $1$ in the strict topology on $\unital{\A}$. By \cite{Latremoliere05b}, this follows from our previous claim directly. We offer a quick proof here, focused on our problem at hand, making it a little bit easier.

	Without loss of generality, by truncating the sequence $(h_n)_{n\in\N}$ is needed, we assume henceforth that $\Lip(h_n)\leq \frac{1}{2 M}$ for all $n\in\N$.

	Fix $h$ strictly positive in $\A$, with $\norm{h}{\A} = 1$. It is sufficient to prove that $\lim_{n\rightarrow\infty} \norm{h - h h_n}{\A} = 0$ by \cite[Lemma 2.3.6]{Wegge-Olsen92}. For each $n\in\N$, let $K_n \coloneqq \spectrum{h}\cap [\frac{1}{n+1},1]$; note that $K_n$ is compact in $\C$.  We now embed $\A$ in the enveloping Von Neumann algebra $\A''$ \cite{Pedersen79}, and set $p_n \coloneqq \chi_{K_n}(h)$ where 
	\begin{equation*}
		\chi_{K_n} : x \in \spectrum{h} \mapsto \begin{cases} 1 \text{ if $x \in K_n$, }\\ 0 \text{ otherwise.} \end{cases}
	\end{equation*}
	Thus $p_n$ is a projection in $\A''$ for all $n\in\N$; moreover, 
	\begin{equation*}
		\lim_{n\rightarrow\infty} \underbracket[1pt]{\norm{h - p_n h}{\A''}}_{\leq\frac{1}{n+1}} = 0
	\end{equation*}
	 by construction. We let $P_n \coloneqq \{ \varphi \in \StateSpace(\A) : \varphi(p_n) = 1 \}$, where we denote by the same symbol a state over $\A$ and its unique extension as a state to $\A''$.

	 Moreover, there exists $f_n \in C_0(\spectrum{h})$ with range in $[0,1]$, and such that $f_n(x) = 1$ for all $x \in K_n$, so that $f_n(h) p_n = p_n$. Let $g_n \coloneqq f_n(h)$. Let
	 \begin{equation*}
	 	Q_n \coloneqq \{\varphi \in \StateSpace(\A) : \varphi(g_n) = 1 \} \text.
	 \end{equation*}
	 The set $Q_n$ is closed in the weak* topology (as the pull back of the closet set $\{1\}$ by the continuous function $\widehat{g_n}$), and since $Q_n$ is a subset of the closed unit ball in $\A^\ast$, it is therefore compact in the weak* topology by Banach-Alaoglu Theorem.

	For all $a\in\sa{\A}$, we then record that:
	\begin{equation*}
		\norm{p_n a p_n}{\A''} \leq \sup\{|\varphi(a)| : \varphi \in P_n \} \leq \sup\{ |\varphi(a)| : \varphi \in Q_n \} \text.
	\end{equation*}
	Indeed, by Kadison functional calculus \cite[Theorem 3.10.3]{Pedersen79}, 
	\begin{equation*}
		\norm{p_n a p_n}{\A''} = \sup\{ |\theta(p_n a p_n)| : \theta\in\QuasiStateSpace(\A) \}\text.
	\end{equation*}
 If $\theta(p_n) = 0$, then $\theta(p_n a p_n) = 0$, so 
	\begin{equation*}
	\norm{p_n a p_n}{\A''} = \sup\{ |\theta(p_n a p_n)| : \theta\in\QuasiStateSpace(\A), \theta(p_n) > 0 \}\text.
	\end{equation*}
	
	Now, let $\theta\in\QuasiStateSpace(\A)$ with $\theta(p_n) > 0$. Let $\varphi \coloneqq \frac{1}{\theta(p_n)}\theta(p_n \cdot p_n)$. Since $\theta$ is positive on $\A''$, so is $\varphi$. Therefore, $\varphi$ restricts to a positive linear functional on $\A$, and moreover, $\varphi(g_n) = \frac{1}{\theta(p_n)}\theta(p_n g_n p_n) = \frac{\theta(p_n)}{\theta(p_n)} = 1$. So $\varphi \in \StateSpace(\A)$. Moreover, $\varphi(p_n) = 1$ as well, so, $\varphi \in P_n$. Yet,
	\begin{equation*}
		|\theta(p_n a p_n)| = \theta(p_n)|\varphi(a)| \leq |\varphi(a)| \text.
	\end{equation*}
	This proves our inequality.

	Let $\varepsilon > 0$. There exists $N_1$ such that
	\begin{equation*}
		\norm{h - p_n h}{\A} < \frac{\varepsilon}{3M} \text.
	\end{equation*}
	Therefore:
	\begin{align}\label{h-hhn-eq}
		\norm{h - h h_n}{\A}
		&\leq \norm{h - p_n h}{\A} + \norm{p_n h  - p_n h h_n}{\A} + \norm{p_n h h_n - h h_n}{\A} \nonumber\\
		&\leq \frac{\varepsilon}{3M} + \sqrt{\norm{p_n(h- h h_n)(h - h_n h) p_n}{\A''}} + \frac{\varepsilon}{3 M}\norm{h_n}{\A} \\
		&\leq \frac{2\varepsilon}{3} + \sup\{|\varphi((h- h h_n)(h - h_n h))|:\varphi \in P_n \}^{\frac{1}{2}} \nonumber \\
		&= \frac{2\varepsilon}{3} +  \sup\{|\varphi(h(\unit_\A- h_n)^2 h)|:\varphi \in P_n \}^{\frac{1}{2}} \nonumber \text.
	\end{align}
	
	If $\varphi\in\StateSpace(\A)$ and $h \in \sa{\A}$, then $\varphi(h\cdot h)$ is also a positive linear functional, and its norm is therefore $\varphi(h^2)$. Now, if $\varphi \in \StateSpace(\A)$, and $\varphi(p_n) = 1$, we note by the Cauchy-Schwarz inequality that, for all $a\in \A$:
	\begin{align*}
		|\varphi(p_n a) - \varphi(a)| 
		&=|\varphi((p_n-\unit_\A)a)| \\
		&\leq \norm{a}{\A}\sqrt{\varphi( (p_n-\unit_\A)^2 )} \\
		&\leq \norm{a}{\A}\sqrt{\varphi(p_n^2) - 2\varphi(p_n) + 1}= 0 \text{ since $p_n^2 = p_n$ and $\varphi(p_n)=1$.} 
	\end{align*} 
	Similarly, $\varphi(a p_n) = \varphi(a)$ and thus $\varphi(p_n a p_n) = \varphi(a)$.
	
	Moreover, since $[p_n,h] = 0$, for all $c \in \A$,
	\begin{align*}
		|\varphi(h c h)| 
		&= |\varphi(p_n h c h p_n)| = |\varphi(h p_n c p_n h)| \\ 
		&\leq \varphi(h^2) \norm{p_n c p_n}{\A''} \leq \sup\{|\varphi(c)| : \varphi\in Q_n \} \text.
	\end{align*}

By the previous claim,
\begin{equation}
	\lim_{n\rightarrow\infty} \sup\{|\varphi((1-h_n)^2)|:\varphi \in  Q_n\} = 0 \text,
\end{equation}
so there exists $N_2 \in \N$ such that if $n\geq N_2$, then 
\begin{align*}
	 \sup\{|\varphi(h(1- h_n)^2 h)|:\varphi \in Q_n \} 
	 &\leq \norm{p_n(1-h_n)^2 p_n}{\A''} \\
	 &\leq \sup\{|\varphi((1-h_n)^2)|:\varphi \in  Q_n\} < \frac{\varepsilon^2}{9} \text.
\end{align*}
Therefore, by Expression \eqref{h-hhn-eq}, if $n\geq \max\{N_1,N_2\}$,
\begin{equation*}
	\norm{h - h h_n}{\A} < \varepsilon \text.
\end{equation*}

Therefore, $(h_n)_{n\in\N}$ is an approximate unit for $\A$, as claimed.
\end{proof}

\begin{remark}
	Central to the proof of Theorem (\ref{approx-unit-thm}) is the fact that the Fortet-Mourier distance induces the weak* topology on the state space.
\end{remark}

\begin{remark}
	We could simplify somewhat the proof of Theorem (\ref{approx-unit-thm}) if we were to assume that the sequence $(h_n)_{n\in\N}$ is contained in some \emph{topography}, as defined later on and in \cite{Latremoliere12b}, but we see here that this is not necessary, though it will be the case when working on the notion of Gromov-Hausdorff convergence.
\end{remark}

Our work will focus on {\lcqms s} with an approximate identify obtained from Theorem (\ref{approx-unit-thm}).

\begin{definition}\label{pqms-def}
	An \emph{pinned \pqms} $(\A,\Lip,\mu)$ is an pinned {\lcqms} which contains a \lipunit{\Lip}{\mu}.

	An ordered pair $(\A,\Lip)$ is called a \emph{\pqms} when there exists $\mu \in \StateSpace(\A)$ such that $(\A,\Lip,\mu)$ is an pinned {\pqms}. 
\end{definition}

We check that indeed, our definition restricts to the usual notion of a proper metric space in the classical picture.

\begin{theorem}\label{classical-proper-thm}
Let  $(X,d)$ be a locally compact metric space. The following assertions are equivalent:
\begin{enumerate}
	 \item $(X,d)$ is proper 
	 \item for all $x\in X$, the triple $(C_0(X),\Lip_d,\delta_x)$ is a pinned {\pqms},
	 \item there exists $x\in X$ such that $(C_0(X),\Lip_d,\delta_x)$ is a pinned {\pqms}.
	 \end{enumerate}
\end{theorem}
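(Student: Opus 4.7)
The plan is to establish the cycle (1) $\Rightarrow$ (2) $\Rightarrow$ (3) $\Rightarrow$ (1). The middle implication is immediate. The substance of the argument is that the extra data a {\pqpms} carries beyond a {\plcqms} --- namely the existence of a \lipunit{\Lip_d}{\delta_x} in $C_0(X)$ --- corresponds classically to the compactness of closed balls.

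For (1) $\Rightarrow$ (2), fix $x \in X$. First I would verify that $(C_0(X),\Lip_d)$ is a {\lcqms}. The Leibniz, hermitian, and norm-modulo-constants properties of $\Lip_d$ are standard. For the weak*-metrizability of $\StateSpace(C_0(X))$ by $\boundedLipschitz{\Lip_d}$ via Theorem \ref{bl-thm}, I set $h(y) \coloneqq (1 + d(x,y))^{-1}$: properness forces $d(x,\cdot)$ to be a proper map $X\to[0,\infty)$, so $h \in C_0(X)$, and $h$ is strictly positive. The set $\{h a h : a \in \domsa{\Lip_d}, \norm{a}{\Lip_d} \leq 1\}$ consists of functions uniformly bounded by $h^2$, equi-Lipschitz, and uniformly vanishing at infinity, hence totally bounded in $C_0(X)$ by a locally compact Arzel\`a--Ascoli argument. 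The bounded-set condition of Definition \ref{lcqms-def} follows from the estimate $|a(y)|\leq d(x,y)$ (for $a \in \dom{\Lip_d}+\R\unit_{C_0(X)}$ vanishing at $x$ with $\Lip_d(a)\leq 1$) taken against $\mu \coloneqq \delta_x$. The closed-unit-ball condition is the standard lower semicontinuity of $\Lip_d$ under uniform convergence. To see $\delta_x$ is a pin, I note that finite convex combinations of point masses are weak* dense in $\StateSpace(C_0(X))$, and any such $\sum_i \lambda_i \delta_{y_i}$ lies within Kantorovich distance $\sum_i\lambda_i d(x,y_i) < \infty$ of $\delta_x$. Finally, I construct the exhaustive sequence explicitly by
\begin{equation*}
	h_n(y) \coloneqq \max\{0,\min\{1,\,2 - d(x,y)/n\}\}\text,
\end{equation*}
whose support lies in $\overline{B(x,2n)}$, compact by properness. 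Hence $h_n \in \domsa{\Lip_d}$, with $\Lip_d(h_n) \leq 1/n \to 0$, $\delta_x(h_n)=1$, and $\norm{h_n}{\infty}=1$, so $(h_n)_{n\in\N}$ is the desired \lipunit{\Lip_d}{\delta_x}.

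For (3) $\Rightarrow$ (1), let $(h_n)_{n\in\N}$ be a \lipunit{\Lip_d}{\delta_x} witnessing that $(C_0(X),\Lip_d,\delta_x)$ is a pinned {\pqms}; in particular $h_n \in C_0(X,\R)$. Given $R > 0$, choose $n$ with $\Lip_d(h_n) \leq (4R)^{-1}$ and $h_n(x) \geq 3/4$. For every $y$ with $d(x,y)\leq R$,
\begin{equation*}
	h_n(y) \geq h_n(x) - \Lip_d(h_n)\,d(x,y) \geq 3/4 - 1/4 = 1/2\text.
\end{equation*}
Since $h_n \in C_0(X)$, the superlevel set $\{y \in X : |h_n(y)| \geq 1/2\}$ is compact, and it contains $\overline{B(x,R)}$. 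Hence closed balls at $x$ are compact; as $\overline{B(y,R)} \subseteq \overline{B(x,R+d(x,y))}$ for every $y\in X$, every closed ball in $X$ is compact, i.e., $(X,d)$ is proper.

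I expect the main technical hurdle to be the Arzel\`a--Ascoli step in (1) $\Rightarrow$ (2), where equicontinuity, uniform boundedness, and uniform decay at infinity must be assembled from the weight $h$ and the Lipschitz bound on $a$. The content of (3) $\Rightarrow$ (1), by contrast, is conceptual and remarkably short: the three asymptotic conditions in Definition \ref{lipunit-def} --- which are the only trace of ``exhaustion'' retained in our noncommutative formulation --- already force classical properness of $X$, thereby confirming that Definition \ref{pqms-def} captures exactly the right analogue of a proper metric space.
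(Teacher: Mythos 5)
Your proof is correct and follows essentially the same route as the paper: for (1)$\Rightarrow$(2) a linear cutoff of $d(x,\cdot)$ supported in a compact ball supplies the required exhaustive sequence, and for (3)$\Rightarrow$(1) a superlevel set of $h_n$ traps a closed ball inside a compact set. Your (3)$\Rightarrow$(1) is in fact slightly more careful than the paper's, since you only use $h_n(x)\geq 3/4$ rather than implicitly treating $\delta_x(h_n)=1$, which Definition (\ref{lipunit-def}) does not guarantee.
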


\begin{proof}
	Assume first that $(X,d)$ is proper. If $(X,d)$ is compact, set $h_n = 1$ for all $n\in\N$: then $(h_n)_{n\in\N}$ meets all our requirements for a \lipunit{\Lip_d}{\delta_x}, for any $x\in X$. 
	
	Assume now that $(X,d)$ is proper but not compact. Fix $x\in X$. For each $n\in\N$, set 
	\begin{equation*}
		h_n : t \in X \mapsto \max\left\{ 1 - \frac{1}{n+1} d(x,t), 0 \right\}\text.
	\end{equation*}
	 The function $h_n$ is continuous over $X$ and supported on the closed ball $X[x,n+1]$, which is compact. So $h_n \in C_0(X)$. Moreover, $\Lip_d(h_n) \leq \frac{1}{n+1}$ and $h_n(x) = 1 = \norm{h_n}{\A}$. Thus, $(C(X),\Lip_d)$ is a {\pqms} .

	Therefore, (1) implies (2) which trivially implies (3).

	Let us now assume (3). Thus $(C_0(X),\Lip_d)$ contains a \lipunit{\Lip_d}{\delta_x} $(h_n)_{n\in\N}$ for some $x\in \R$.  Let $R > 0$. Since $\lim_{n\rightarrow\infty} \Lip_d(h_n) = 0$, there exists $N\in\N$ such that, for all $n \geq N$, we have $\Lip_d(h_n) \leq \frac{1}{2R}$. Thus, for all $t \in X[x,R]$, we gave $|1-h_n(t)| = |h_n(x) - h_n(t)| \leq \frac{1}{2R} d(x,t) \leq \frac{R}{2R} = \frac{1}{2}$. So $X[x,R] \subseteq h_n^{-1}([0,\frac{1}{2}]$. Since $h_n \in C_0(X)$, the function $h_n$ is proper; since $[0,\frac{1}{2}]$ is compact, we conclude that $X[x,R]$ is a closed subset of a compact subset of $(X,d)$, and thus it is compact as well. This is sufficient to conclude that $(X,d)$ is proper, since any other closed ball in $(X,d)$, which is closed, is contained in a ball of center $x$, which is compact. So $(X,d)$ is proper, as claimed.
\end{proof}

We notice, following a similar argument as the proof of Theorem (\ref{approx-unit-thm}), that just as in the classical picture, a {\pqms} is either compact, or has infinite diameter.
\begin{corollary}
	If $(\A,\Lip)$ is a {\pqms}, and if $\qdiam{\A,\Lip}<\infty$, then $(\A,\Lip)$ is a {\qcms}. 
\end{corollary}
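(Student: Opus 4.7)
The strategy is to reduce to the unital case: once we establish that $\A$ admits a unit, the earlier lemma asserting that a {\lcqms} over a unital C*-algebra is automatically a {\qcms} finishes the proof. By Definition (\ref{pqms-def}), we may fix $\mu\in\StateSpace(\A)$ and a \lipunit{\Lip}{\mu} $(h_n)_{n\in\N}\subseteq\domsa{\Lip}$, which by Theorem (\ref{approx-unit-thm}) is an approximate unit for $\A$. The plan is to show $(h_n)_{n\in\N}$ is Cauchy in norm; its limit will then be the sought unit.

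The key estimate exploits the finite diameter hypothesis. For every $\varphi\in\StateSpace(\A)$ and every $a\in\domsa{\Lip}$, the definition of the {\MongeKant} combined with $\qdiam{\A,\Lip}<\infty$ gives
\[
|\varphi(a)-\mu(a)|\leq\Lip(a)\,\Kantorovich{\Lip}(\varphi,\mu)\leq\Lip(a)\,\qdiam{\A,\Lip}\text.
\]
Applied to $a=h_n-h_m$ and combined with the elementary bound $|\mu(h_n-h_m)|\leq|\mu(h_n)-\mu(h_m)|$, this yields
\[
|\varphi(h_n-h_m)|\leq (\Lip(h_n)+\Lip(h_m))\,\qdiam{\A,\Lip}+|\mu(h_n)-\mu(h_m)|\text,
\]
whose right-hand side is independent of $\varphi$ and tends to $0$ as $n,m\to\infty$ by the defining properties of a \lipunit. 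Since for any $a\in\sa{\A}$ one has the C*-algebraic identity $\norm{a}{\A}=\sup\{|\varphi(a)|:\varphi\in\StateSpace(\A)\}$ (valid in the non-unital case via the identification $\StateSpace(\unital{\A})_{|\A}=\QuasiStateSpace(\A)=[0,1]\cdot\StateSpace(\A)$), taking the supremum over $\varphi$ on the left bounds $\norm{h_n-h_m}{\A}$ by the same right-hand side, showing that $(h_n)_{n\in\N}$ is Cauchy in $\A$.

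Let $h_\infty\coloneqq\lim_{n\to\infty}h_n\in\A$. Because $(h_n)_{n\in\N}$ is an approximate unit for $\A$, for any $a\in\A$ we have $h_n a\to a$ in norm, while norm convergence of $(h_n)_{n\in\N}$ also gives $h_n a\to h_\infty a$; hence $h_\infty a=a$, and symmetrically $a h_\infty=a$. Thus $h_\infty$ is a unit of $\A$, so $\A$ is unital, and the earlier lemma completes the proof. The only point that requires any care is the self-adjoint norm identity $\norm{a}{\A}=\sup_\varphi|\varphi(a)|$ in the possibly non-unital setting; beyond that, the argument is a direct application of Theorem (\ref{approx-unit-thm}) and of the definition of $\qdiam{\A,\Lip}$.
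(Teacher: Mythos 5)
Your proof is correct and follows essentially the same route as the paper's: you use the finite-diameter hypothesis together with the Monge--Kantorovich bound $|\varphi(a)-\mu(a)|\leq\Lip(a)\,\Kantorovich{\Lip}(\varphi,\mu)$ to get a $\varphi$-uniform estimate, conclude via $\norm{a}{\A}=\sup_{\varphi\in\StateSpace(\A)}|\varphi(a)|$ for $a\in\sa{\A}$ that the \lipunit{\Lip}{\mu} is norm-Cauchy, and then show its limit is a unit so that the earlier unital-implies-compact lemma applies. The only place you diverge is the final step: the paper identifies the limit $g$ as a unit through a Cauchy--Schwarz computation giving $\varphi\bigl((\unit_\A - g)^2\bigr)=0$ for all states, followed by a detour through $C^\ast(g)$ and strict positivity, whereas you simply pair norm-convergence of $(h_n)_{n\in\N}$ with the approximate-unit property ($h_n a\to a$ and $h_n a\to h_\infty a$), which is shorter and avoids the spectral digression.
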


\begin{proof}
Assume $\qdiam{\A,\Lip}<\infty$. Let $(h_n)_{n\in\N}$ be some \lipunit{\Lip}{\mu} for some $\mu\in\StateSpace(\A)$. Let $\varepsilon > 0$. There exists $N\in\N$ such that, if $n\geq N$, then $\Lip(h_n) < \frac{1}{2(\qdiam{\A,\Lip}+1)}\varepsilon$. Since $\lim_{n\rightarrow\infty} \mu(h_n)=1$, there exists $N'\in\N$ such that, if $n\geq N'$, then $|1-\mu(h_n)| < \frac{\varepsilon}{2}$. Thus, by definition of the {\MongeKant} $\Kantorovich{\Lip}$, for all $n\geq \max\{N,N'\}$, and for all $\varphi\in\StateSpace(\A)$,
	\begin{align*}
		|\varphi(h_n) - 1|
		&\leq |\varphi(h_n)-\mu(h_n)| + |1-\mu(h_n)| \\
		&\leq \qdiam{\A,\Lip} \Lip(h_n) +  |1-\mu(h_n)| < \varepsilon \text.
	\end{align*}
	Hence, for all $m,n \geq \max\{N,N'\}$,
	\begin{align*}
		\norm{h_n - h_m}{\A} 
		&=\sup\{|\varphi(h_n) - \varphi(h_m)| : \varphi\in\StateSpace(\A) \} \\
		&\leq 2\varepsilon \text.
	\end{align*}
	Thus $(h_n)_{n\in\N}$ is Cauchy, so converges to some $g \in \sa{\A}$ since $\A$ is complete. We also note that $\norm{g}{\A} = 1$, so $1 = \varphi(g)^2 \leq \varphi(g^2) \leq \norm{g^2}{\A} = \norm{g}{\A}^2 = 1$ since $g \in\sa{\A}$. So $\varphi(g^2) = 1$ for all $\varphi \in \StateSpace(\A)$.
	
	Let now $a\in C^\ast(g)$. for all $\varphi \in \StateSpace(\A)$, we have
	\begin{align*}
		|\varphi(a-g a)| \leq \sqrt{\varphi((1-g)^2)} \norm{a}{\A} \leq (1-2\varphi(g) +\varphi(g^2)) \norm{a}{\A}= 0
	\end{align*}
	so $a=ga$ for all $a\in C^\ast(g)$. Of course, $C^\ast(g)$ is Abelian since $g\in\sa{\A}$, so $g$ is the unit of $\C^\ast(g)$. Since $g$ is strictly positive in $\A$, this implies in turn that $g$ is the unit of $\A$.
\end{proof}

\medskip

The Gromov-Hausdorff distance is defined between \emph{pointed} proper metric spaces. Our concept of an pinned {\pqms} is formally very similar, but for the theory we shall develop in this pages, we will actually add a few requirements to an pinned {\pqms}. The reason stems from the very reason why a base point is chosen when working with proper metric spaces and the Gromov-Hausdorff distance: the point is used as the center of balls, whose Hausdorff distance to some other set is the basis for the Gromov-Hausdorff distance. It is of course an very serious concern when moving from the classical to the quantum picture, where such a local notion is no longer available --- a key feature of the quantum world. Informally, our approach is to replace balls, or more generally, subsets by sets of quasi-states of the form $\{ \varphi(e\cdot e) : \varphi \in \StateSpace(\A)\}$, where $e$ is a reasonable, self-adjoint element in the domain of the Lipschitz seminorm. In particular, it is natural to ask that if $\mu$ is a chosen pin, then $\mu(e)$ is almost $1$. This works fine when dealing with one such set, but we will in fact need to work with entire collections of such sets which ``filter'' the entire space, again heuristically. This can be done, thanks to Theorem (\ref{approx-unit-thm}), by choosing elements $e$ as above with ever smaller Lipschitz seminorm. However, if we pick two such elements which do not commute, then in some sense, while each can be used to define a sort of local concept, their product can not, or said differently, these localizations are not compatible. This turns out to be an issue in the proof of the triangle inequality for our upcoming analogue of the Gromov-Hausdorff distance.

This issue is actually related to similar ones previously encountered. In our discussion of the properties of the {\MongeKant} in \cite{Latremoliere12b}, we already introduced the following notion, which will serve us now to avoid the issue discussed above, and it is generally known that such notions as ``going to infinity'' within a non-Abelian C*-algebra do not appear to be intrinsic \cite{Akemann89}. The idea is to single out a particular Abelian C*-subalgebra of a {\pqms}. 

With this in mind, we put forth the following definition.

\begin{definition}
	A \emph{topography} $\M$ of an {\lcqms} $(\A,\Lip)$ is an Abelian C*-subalgebra of $\A$ containing a strictly positive $h  \in \dom{\Lip}$ such that $\{ h a h \in \unital{\A} : a=b+t\unit_\A, b\in\domsa{\Lip}, \Lip(b) \leq 1, \mu(a) =-t\}$ is bounded for some character $\mu$ of $\M$.
	
	A \emph{topographic {\lcqms}} $(\A,\Lip,\M)$ is a {\lcqms} $(\A,\Lip)$ and a topography $\M$ of $\A$.
\end{definition}

\begin{remark}
	It is immediate, from the definition of a topographic {\lcqms} $(\A,\Lip,\M)$, that $(\M,\Lip)$ is a {\lcqms} as well. Moreover, note that $\M\cap\domsa{\Lip}$ contains an approximate unit for $\A$, as a consequence of the Leibniz property.
\end{remark}

With our proposed notion of a topography, we can ask that the chosen pin of a {\pqpms} endowed with a topography will indeed be a point, at least, from the perspective of this additional localization structure. We conclude this section with the class of quantum metric spaces of interest to us in this project.

\begin{definition}\label{pqpms-def}
	A \emph{\pqpms} $(\A,\Lip,\M,\mu)$ is an pinned {\lcqms} $(\A,\Lip,\mu)$ and a topography $\M$ of $(\A,\Lip)$ such that:
	\begin{enumerate}
		\item $\mu$ restricts to a character of $\M$,
		\item there exists a \lipunit{\Lip}{\mu} in $\M$.
	\end{enumerate}
\end{definition}

Of course, in particular, if $(\A,\Lip,\M,\mu)$ is a {\pqpms}, then $(\A,\Lip,\mu)$ is an pinned {\pqms}, and $(\M,\Lip_{|\M},\M,\mu)$ is an Abelian {\pqpms}.

\begin{remark}
	If $(\A,\Lip)$ is a {\qcms}, then $(\A,\Lip,\C\unit_\A,\mu)$ is a {\pqpms} for any $\mu \in \StateSpace(\A)$. That said, when approximating non-compact {\pqpms s} in the sense of our new hypertopology, we will want to allow for more general choices of topographies, even for {\qcms s}.
\end{remark}

\section{The Local Quantum Metametrics}

We now develop the principal new idea of the present work: a form of convergence, inspired and analogous to Gromov-Hausdorff convergence, for {\pqpms s}. More specifically, we shall introduce a topology on the class of all {\pqpms s}. To this end, we introduce a class function we refer to as a metametric, which gives rise to inframetrics, which in turn induce our sought-after topology. There are two very important features, the first of which is explored in this section, which we require of our class function: first, that it satisfies a (relaxed) form of the triangle inequality, to ensure that indeed, it gives rise to a topology in a reasonable and natural manner, and second, that it is zero exactly between fully quantum isometric {\pqpms s}. We will address this second property in the next section. Both of these requirements are non-trivial, each demanding a bit of effort to be established. The definition of this class function, and the proof of these two fundamental properties, is the topic of this and the next section. We note that, while our work on the propinquity over the class of {\qcms s} obviously informed us on some aspects of the following construction and work, very significant differences in the methods and tools employed to arrive to the desired result are necessary --- such as replacing compactness with completeness in some arguments.

\medskip

We start by defining our notion of isometric embedding for two {\pqpms s}, which we call tunnels, and associate to each such tunnel a non-negative number, called the extent of the tunnel, which quantifies how far the two {\pqpms s} are from the perspective of said tunnel.

\subsection{Tunnels}

We now need a quantum analogue of an isometry. What we desire here is a *-morphism whose dual map induces an isometry between state spaces endowed with the Forter-Mourier distances. Recall that a *-morphism is proper when it maps an approximate unit to an approximate unit.

\begin{definition}\label{M-isometry-def}
Let $(\A,\Lip_\A)$ and $(\D,\Lip_\D)$ be {\lcqms s}. A quantum $M$-isometry $\pi:(\D,\Lip_\D)\twoheadrightarrow(\A,\Lip_\A)$, for some $M\geq 1$, is a proper *-epimorphism such that $\dom{\Lip_\A} = \pi(\dom{\Lip_\D})$, while:
\begin{equation*}
	\forall a \in \domsa{\Lip_\A} \quad \norm{a}{\Lip_\A,M} \coloneqq \inf\left\{  \norm{d}{\Lip_\D,M} :d\in\domsa{\Lip_\D}, \pi(d) = a \right\}\text,
\end{equation*}
and 
\begin{equation*}
	\forall d \in \domsa{\Lip_\D} \quad \Lip_\A\circ\pi(d) \leq \Lip_\D(d) \text.
\end{equation*}
\end{definition}

If $\pi : (\D,\Lip_\D) \rightarrow (\A,\Lip_\A)$, then it is easy to check that the map $\pi^\ast:\varphi\in\QuasiStateSpace(\A) \mapsto \varphi\circ\pi\in\QuasiStateSpace(\D)$ is an isometry from $(\QuasiStateSpace(\A),\boundedLipschitz{\Lip_\A,M})$ into $(\QuasiStateSpace(\D),\boundedLipschitz{\Lip_\D,M})$.

\begin{remark}\label{quantum-M-iso-composition-rmk}
	Quantum $M$-isometries can be used as morphisms for a category over the class of {\lcqms s}. Indeed, the identity is trivially a $M$-isometry. Let $(\A,\Lip_\A)$, $(\B,\Lip_\B)$ and $(\D,\Lip_\D)$ be three {\lcqms s}. If $\pi : (\A,\Lip_\A) \rightarrow (\B,\Lip_\B)$ and $\rho: (\B,\Lip_\B) \rightarrow (\D,\Lip_\D)$, then $\rho\circ\pi : \A \rightarrow \D$ is, of course, a *-epimorphism, and by construction, $\dom{\Lip_\D} = \rho\circ\pi(\dom{\Lip_\A})$. Moreover, let $d \in \domsa{\Lip_\D}$ and $\varepsilon > 0$. Then, since $\rho$ is a quantum $M$-isometry, there exists $b \in \domsa{\Lip}$ such that $\rho(b) = d$ and $\norm{d}{\Lip_\B,M} \leq \norm{b}{\Lip_\B,M} + \frac{\varepsilon}{2}$. Similarly, there exists $a\in \domsa{\Lip_\A}$ such that $\pi(a) = b$ and $\norm{a}{\Lip_\A,M} \leq \norm{b}{\Lip_\B,M} + \frac{\varepsilon}{2} \leq \norm{d}{\Lip_\D} + \varepsilon$. On the other hand, for all $a \in \domsa{\Lip}$, we note that $\Lip_\D(\rho\circ\pi(a)) \leq \Lip_\B(\pi(a)) \leq \Lip_\A(a)$ and of course, $\norm{\rho\circ\pi(a)}{\D} \leq \norm{a}{\D}$, so that $\norm{\rho\circ\pi(a)}{\Lip_\D,M} \leq \norm{a}{\Lip_\D,M}$. Therefore, as needed, we have shown that $\norm{d}{\Lip_\D,M} = \inf\{ \norm{a}{\Lip_\A,M} : a\in\domsa{\Lip}, \rho\circ\pi(a) = d \}$, as required.
\end{remark}

When a {\lcqms} is endowed with a chosen topography, it then becomes reasonable to work with isometries which map topographies to topographies. We actually require a stronger property, needed in both our proofs to establish the relaxed triangle inequality and the coincidence property. Essentially, while in general, one may not lift an approximate unit to an approximate unit via a *-epimorphism, we ask that we can at least lift Lipschitz pinned exhaustive sequnences from the topography. This property is invisible in the compact case (as it is trivially met), and equally trivially met in the classical case, yet it becomes a desirable requirement in general.

\begin{definition}\label{topographic-isometry-def}
	Let $(\A,\Lip_\A,\M_\A)$ and $(\D,\Lip_\D,\M_\D)$ be two topographic {\lcqms s}. A \emph{topographic quantum $M$-isometry} $\pi:(\D,\Lip_\D,\M_\D)\rightarrow(\A,\Lip_\A,\M_\A)$ is a quantum $M$-isometry such that:
	\begin{enumerate}
		\item $\pi(\M_\D) \subseteq \M_\A$, 
		\item For all $a\in\domsa{\Lip_\A}\cap\M_\A$ and for all $\varepsilon > 0$, there exists $d\in\M_\D\cap\domsa{\Lip_\A}$ such that
		\begin{equation*}
			\norm{d}{\D} \leq \exp(\varepsilon) \norm{a}{\A} \text{ and }\Lip_\D(d)\leq \exp(\varepsilon) \Lip_\A(a) \text.
		\end{equation*} 
	\end{enumerate}
\end{definition}

When $(\A,\Lip_\A)$ and $(\D,\Lip_\D)$ are {\qcms s}, then any quantum isometry $\pi : (\D,\Lip_\D,\C\unit_\D) \rightarrow(\A,\Lip_\A,\C\unit_\A)$ is automatically a topographic $(\qdiam{\A,\Lip} + 1)$-isometry (note in particular that as a proper *-morphism, it is unital in this case); thus what appears as additional constraints here compared to \cite{Latremoliere13}, isn't. This observation is one justification for our introduction of the real number $M$ in Definition (\ref{M-isometry-def}).

\begin{remark}
	If $\pi:(\D,\Lip_\D,\M_\D)\rightarrow(\A,\Lip_\A,\M_\A)$ is a topographic quantum isometry, then in particular, if $\pi_\M$ is the restriction of $\pi$ to $\M$, then $\pi^\ast$ is an isometry from $(\StateSpace(\A),\Kantorovich{\Lip_\A})$ to $(\StateSpace(\D),\Kantorovich{\Lip_\D})$.
\end{remark}

Topographic quantum $M$-isometries can also serve as morphisms for a category over the class of {\lcqms s} endowed with topographies. Notably, we observe the following simple, yet helpful, property.
 
\begin{lemma}
	If $\pi : (\A,\Lip_\A,\M_\A) \rightarrow (\B,\Lip_\B,\M_\B)$ and $\rho:(\B,\Lip_\B,\M_\B)\rightarrow(\D,\Lip_\D,\M_\D)$ are two topographic quantum $M$ isometries, then so is $\rho\circ\pi$.
\end{lemma}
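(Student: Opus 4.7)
The plan is to decompose the verification into the three requirements making up Definition (\ref{topographic-isometry-def}), and dispense with each in turn. The first requirement --- that $\rho\circ\pi$ is a quantum $M$-isometry in the sense of Definition (\ref{M-isometry-def}) --- is precisely the content of Remark (\ref{quantum-M-iso-composition-rmk}), so we simply invoke that fact. The second requirement, namely $(\rho\circ\pi)(\M_\A)\subseteq\M_\D$, follows from the chain of inclusions $\rho(\pi(\M_\A))\subseteq\rho(\M_\B)\subseteq\M_\D$, each of which is given by the topographic hypothesis on the factor in question.

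The only substantive point is the lifting condition (2) of Definition (\ref{topographic-isometry-def}), and the plan is to lift elements twice, once through each factor, splitting the error parameter $\varepsilon$ in half. Concretely, given $d\in\domsa{\Lip_\D}\cap\M_\D$ and $\varepsilon>0$, first apply the lifting property for $\rho$ with parameter $\varepsilon/2$ to obtain $b\in\M_\B\cap\domsa{\Lip_\B}$ satisfying $\norm{b}{\B}\leq\exp(\varepsilon/2)\norm{d}{\D}$ and $\Lip_\B(b)\leq\exp(\varepsilon/2)\Lip_\D(d)$; then apply the lifting property for $\pi$ with parameter $\varepsilon/2$ to this $b$, producing $a\in\M_\A\cap\domsa{\Lip_\A}$ with $\norm{a}{\A}\leq\exp(\varepsilon/2)\norm{b}{\B}$ and $\Lip_\A(a)\leq\exp(\varepsilon/2)\Lip_\B(b)$. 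Multiplying through and using $\exp(\varepsilon/2)\exp(\varepsilon/2)=\exp(\varepsilon)$ yields the required bounds $\norm{a}{\A}\leq\exp(\varepsilon)\norm{d}{\D}$ and $\Lip_\A(a)\leq\exp(\varepsilon)\Lip_\D(d)$.

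There is no real obstacle here: the multiplicative error tolerance $\exp(\varepsilon)$ was clearly chosen in the definition precisely so that composition works cleanly. If the author had used an additive bound such as $\norm{a}{\A}\leq\norm{b}{\B}+\varepsilon$, one would have to be more careful about how the constants in the two stages interact, but with the multiplicative form splitting $\varepsilon$ symmetrically is painless. The short proof thus essentially amounts to unwinding the two definitions and assembling the estimates.
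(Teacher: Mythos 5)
Your proof is correct and follows exactly the same route as the paper's: invoke Remark (\ref{quantum-M-iso-composition-rmk}) for the quantum $M$-isometry part, chain the inclusions for the topographies, and lift in two stages with parameter $\varepsilon/2$ at each stage so that the multiplicative errors combine to $\exp(\varepsilon)$. The one thing worth making explicit (which the paper's proof does, and which you leave implicit in the phrase ``apply the lifting property'') is that the lifts $b$ and $a$ must satisfy $\rho(b)=d$ and $\pi(a)=b$, so that $\rho\circ\pi(a)=d$ — this preimage requirement is not actually written in condition (2) of Definition (\ref{topographic-isometry-def}) as stated, but is clearly intended and needed for the lemma to say anything.
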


\begin{proof}
	By Remark \ref{quantum-M-iso-composition-rmk}, the map $\rho\circ\pi$ is a quantum $M$-isometry. Moreover, $\rho\circ\pi(\M_\A) \subseteq \M_\D$ as required.

	Let $d\in\domsa{\Lip_\D}\cap\M_\D$. Let $\varepsilon > 0$. By definition, there exists $b \in \M_\B\cap\domsa{\Lip_\B}$ such that $\rho(b) = d$, $\norm{b}{\B}\leq \exp\left(\frac{\varepsilon}{2}\right) \norm{d}{\D}$ and $\Lip_\B(b)\leq \exp\left(\frac{\varepsilon}{2}\right) \Lip_\D(d)$. Again by definition, since $b\in \M_\B\cap\domsa{\Lip_\B}$, there exists $a\in\M_\A\cap\domsa{\Lip_\A}$ such that $\pi(a) = b$, while
	\begin{equation*}
		\norm{a}{\A}\leq \exp\left(\frac{\varepsilon}{2}\right)\norm{b}{\B}\leq \exp\left(\frac{\varepsilon}{2}\right)\exp\left(\frac{\varepsilon}{2}\right)\norm{d}{\D} \leq \exp(\varepsilon)\norm{d}{\D} \text,
	\end{equation*}
	and 
	\begin{equation*}
		\Lip_\A(a) \leq \exp\left(\frac{\varepsilon}{2}\right) \Lip_\B(b) \leq \exp\left(\frac{\varepsilon}{2}\right)\exp\left(\frac{\varepsilon}{2}\right)\Lip_\D(d)  = \exp(\varepsilon)\Lip_\D(d) \text.
	\end{equation*}
As $\varepsilon > 0$, our proof is concluded.
\end{proof}
	
\medskip

We are now ready to introduce the appropriate notion of isometric embedding we shall use to define our new topology.
	
\begin{definition}\label{tunnel-def}
	Let $\mathds{A} \coloneqq (\A,\Lip_\A,\M_\A,\mu_\A)$ and $\mathds{B} \coloneqq  (\B,\Lip_\B,\M_\B,\mu_\B)$ be two {\pqpms s}, and $M\geq 1$. 	An \emph{$M$-tunnel} $(\D,\Lip_\D,\M_\D,\pi_\A,\pi_\B,e)$ from $\mathds{A}$ to $\mathds{B}$ is given by:
	\begin{enumerate}
		\item a pinned, topographic {\lcqms} $(\D,\Lip_\D,\M_\D,\mu_\A\circ\pi_\A)$,
		\item an element $e \in\domsa{\Lip_\D}\cap\M_\D$,
		\item two topographic quantum $M$-isometries $\pi_\A:  (\D,\Lip_\D,\M_\D)\twoheadrightarrow(\A,\Lip_\A,\M_\A)$ and $\pi_\B:(\D,\Lip_\D,\M_\D)\twoheadrightarrow(\B,\Lip_\B,\M_\B)$.
	\end{enumerate}
\end{definition}

\begin{notation}
	If $\tau\coloneqq(\D,\Lip_\D,\M_\D,\pi_\A,\pi_\B,e)$ is a tunnel from $\mathds{A}$ to $\mathds{B}$, we may also write this tunnel as:
	\begin{equation*}
		\tunnel{\tau}{\mathds{A}}{\pi_\A}{(\D,\Lip_\D,\M_\D,e)}{\pi_\B}{\mathds{B}} \text.
	\end{equation*}
	Moreover, the domain $\dom{\tau}$ is $\mathds{A}$, while the codomain $\codom{\tau}$ is $\mathds{B}$.
\end{notation}

Tunnels have an obvious symmetry; we will use the following notation.
\begin{notation}
If
\begin{equation*}
	\tunnel{\tau}{\mathds{A}}{\pi}{(\D,\Lip_\D,\M_\D,e)}{\rho}{\mathds{B}} \text,
\end{equation*}
then the \emph{reversed tunnel} $\tau^{-1}$ is defined by
\begin{equation*}
	\tunnel{\tau^{-1}}{\mathds{B}}{\rho}{(\D,\Lip_\D,\M_\D,e)}{\pi}{\mathds{A}} \text.
\end{equation*}
\end{notation}

We did not explicitly assume that a tunnel is constructed using a {\pqpms} --- only a pinned {\lcqms}. Instead, we  prove that it must be. Arguably, this construction, and a similar argument later on, are both needed specifically to establish a form of the triangle inequality, and are the only places where we use the special lifting requirement between topographies in Definition (\ref{topographic-isometry-def}).

\begin{lemma}\label{lifting-lipunit-lemma}
	Let $(\A,\Lip_\A,\M_\A,\mu_\A)$ be a {\pqpms} and $(\D,\Lip_\D,\M_\D)$ be a topographic {\lcqms} for which $\mu_\A\circ\pi$ is a pin. If $\pi : (\D,\Lip_\D,\M_\D)\rightarrow(\A,\Lip_\A,\M_\A)$ is a topographic quantum isometry, and if $(e_n)_{n\in\N}$ is a \lipunit{\Lip_\A}{\mu_\A} contained in $\M_\A$, then there exists a \lipunit{\Lip_\D}{\mu_\A\circ\pi} $(d_n)_{n\in\N}$ contained in $\M_\D$, such that $\pi(d_n) = e_n$ for all $n\in\N$. In particular, $(\D,\Lip_\D,\M_\D,\mu_\A\circ\pi)$ is a {\pqpms}.
\end{lemma}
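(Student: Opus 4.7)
The plan is to use the topographic quantum isometry property in Definition \ref{topographic-isometry-def} to lift each element of the given \lipunit{\Lip_\A}{\mu_\A} to an element of $\M_\D$ with essentially the same norm and Lipschitz seminorm, then verify that the resulting sequence is a \lipunit{\Lip_\D}{\mu_\A\circ\pi}.

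More precisely, for each $n \in \N$, I apply Definition \ref{topographic-isometry-def} to $e_n \in \M_\A\cap\domsa{\Lip_\A}$ with $\varepsilon \coloneqq \frac{1}{n+1}$, obtaining some $d_n \in \M_\D\cap\domsa{\Lip_\D}$ with $\pi(d_n) = e_n$ and satisfying
\begin{equation*}
	\norm{d_n}{\D} \leq \exp\left(\tfrac{1}{n+1}\right)\norm{e_n}{\A} \qquad\text{and}\qquad \Lip_\D(d_n) \leq \exp\left(\tfrac{1}{n+1}\right)\Lip_\A(e_n) \text.
\end{equation*}
Since $\pi$ is a *-morphism, it is contractive, hence $\norm{e_n}{\A} = \norm{\pi(d_n)}{\A} \leq \norm{d_n}{\D}$, so a squeeze argument together with $\lim_{n}\norm{e_n}{\A} = 1$ yields $\lim_n \norm{d_n}{\D} = 1$. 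The bound $\Lip_\D(d_n) \leq \exp(\tfrac{1}{n+1})\Lip_\A(e_n)$ combined with $\lim_n \Lip_\A(e_n) = 0$ gives $\lim_n \Lip_\D(d_n) = 0$. Finally, $\mu_\A\circ\pi(d_n) = \mu_\A(e_n) \xrightarrow{n\to\infty} 1$ by hypothesis. This verifies all three conditions in Definition \ref{lipunit-def}, so $(d_n)_{n\in\N}$ is a \lipunit{\Lip_\D}{\mu_\A\circ\pi} contained in $\M_\D$.

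To establish that $(\D,\Lip_\D,\M_\D,\mu_\A\circ\pi)$ is a {\pqpms} in the sense of Definition \ref{pqpms-def}, it remains only to check that $\mu_\A\circ\pi$ restricts to a character of $\M_\D$; but this is immediate because $\pi(\M_\D)\subseteq\M_\A$ by Definition \ref{topographic-isometry-def}(1) and $\mu_\A$ restricts to a character of $\M_\A$ by assumption, so the composition is multiplicative on $\M_\D$. Combined with the hypothesis that $\mu_\A\circ\pi$ is already a pin of $(\D,\Lip_\D)$, and the lipunit we just constructed, all conditions of Definition \ref{pqpms-def} are met.

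I do not anticipate any substantive obstacle: the entire argument is a direct application of the topographic lifting property built into Definition \ref{topographic-isometry-def}, together with the fact that $\pi$ is norm-contractive. The only subtlety is using the multiplicative factor $\exp(\varepsilon)$ (rather than an additive slack) to preserve the multiplicative structure of the three limits defining a lipunit; the geometric decay $\exp(\tfrac{1}{n+1}) \to 1$ handles this cleanly.
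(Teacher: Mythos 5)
Your proof is correct and follows essentially the same route as the paper: lift each $e_n$ to $d_n\in\M_\D\cap\domsa{\Lip_\D}$ via the topographic isometry property with slack $\exp(\tfrac{1}{n+1})$, squeeze the norms using contractivity of the *-morphism $\pi$, pass the Lipschitz and pin limits, and invoke the hypothesis that $\mu_\A\circ\pi$ is a pin. The one place you go slightly beyond the paper's proof is your explicit verification that $\mu_\A\circ\pi$ restricts to a character of $\M_\D$ — the paper's own proof stops after constructing the lipunit and leaves the character check to be spelled out only in the subsequent Corollary~\ref{pqpms-tunnel-cor}; your inclusion of it here makes the lemma's conclusion self-contained with no substantive change of method.
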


\begin{proof}
	Let $(e_n)_{n\in\N} \in \M_\A$ be a \lipunit{\Lip_\A}{\mu_\A}.
	
	Since $\pi_\A$ is a topographic quantum $M$-isometry, for each $n\in\N$, there exists $d_n \in \M_\D\cap\domsa{\Lip_\D}$ such that $\pi(d_n) = e_n$, while $\norm{e_n}{\D} \leq \norm{d_n}{\D} \leq \exp(\frac{1}{n+1}) \norm{e_n}{\D}$ and $\Lip_\D(d_n) \leq \exp(\frac{1}{n+1}) \Lip_\A(e_n)$. 
	
	By construction, $\lim_{n\rightarrow\infty} \norm{d_n}{\D} = \lim_{n\rightarrow\infty} \norm{e_n}{\A} = 1$, and $\lim_{n\rightarrow\infty} \Lip(d_n) = 0$. Now, $\mu_\A\circ\pi_\A(d_n) = \mu_\A(e_n)$ for all $n\in\N$, so $\lim_{n\rightarrow\infty} \mu_\A\circ\pi_\A(d_n) = 1$.

	Since $\mu_\A\circ\pi$ is assumed to be a pin for $(\D,\Lip_\D)$, our lemma is now proven.
\end{proof}

\begin{corollary}\label{pqpms-tunnel-cor}
	If $(\D,\Lip,\M,\pi,\rho,e)$ is a tunnel with domain $(\A,\Lip_\A,\M_\A,\mu_\A)$, then $(\D,\Lip,\M,\mu_\A\circ\pi)$ is a {\pqpms}.
\end{corollary}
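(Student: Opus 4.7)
The plan is to verify each item in Definition \ref{pqpms-def} for the quadruple $(\D,\Lip,\M,\mu_\A\circ\pi)$, relying almost entirely on Lemma \ref{lifting-lipunit-lemma} to supply the missing ingredient. By Definition \ref{tunnel-def}, the triple $(\D,\Lip,\M,\mu_\A\circ\pi)$ is already a pinned topographic {\lcqms}, so the only remaining work is to (a) confirm that $\mu_\A\circ\pi$ restricts to a character of $\M$, and (b) produce a \lipunit{\Lip}{\mu_\A\circ\pi} sitting inside $\M$.

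For (a), since $\pi:(\D,\Lip,\M)\twoheadrightarrow(\A,\Lip_\A,\M_\A)$ is a topographic quantum $M$-isometry, Definition \ref{topographic-isometry-def} gives $\pi(\M)\subseteq\M_\A$. Because $(\A,\Lip_\A,\M_\A,\mu_\A)$ is a {\pqpms}, Definition \ref{pqpms-def} assures that $\mu_\A$ restricts to a character of $\M_\A$; composing this character with the *-homomorphism $\pi_{|\M}:\M\to\M_\A$ yields a character of $\M$, which is precisely $\mu_\A\circ\pi$ restricted to $\M$.

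For (b), Definition \ref{pqpms-def} applied to $(\A,\Lip_\A,\M_\A,\mu_\A)$ furnishes a \lipunit{\Lip_\A}{\mu_\A} sequence $(e_n)_{n\in\N}$ contained in $\M_\A$. Since the pin of the pinned {\lcqms} $(\D,\Lip,\M,\mu_\A\circ\pi)$ is, by hypothesis, exactly $\mu_\A\circ\pi$, the hypotheses of Lemma \ref{lifting-lipunit-lemma} are satisfied for the topographic quantum $M$-isometry $\pi$. Applying the lemma then yields a sequence $(d_n)_{n\in\N}$ in $\M\cap\domsa{\Lip}$ with $\pi(d_n)=e_n$ which is a \lipunit{\Lip}{\mu_\A\circ\pi}.

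There is no real obstacle here, since all the hard work was done in Lemma \ref{lifting-lipunit-lemma}; the present statement is just the observation that the hypotheses of that lemma are automatically available inside any tunnel, because the tunnel's pinned topographic {\lcqms} structure is built precisely to make the pullback pin $\mu_\A\circ\pi$ legitimate. Together (a), (b), and the pinned {\lcqms} structure already present in the tunnel confirm all items of Definition \ref{pqpms-def}, and therefore $(\D,\Lip,\M,\mu_\A\circ\pi)$ is a {\pqpms}.
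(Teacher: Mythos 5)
Your proof is correct and takes essentially the same route as the paper's: establish that $\mu_\A\circ\pi$ restricts to a character of $\M$ by composing with $\pi_{|\M}$, then invoke Lemma~\ref{lifting-lipunit-lemma} together with the existence of a \lipunit{\Lip_\A}{\mu_\A} in $\M_\A$ (guaranteed by Definition~\ref{pqpms-def}) to lift it to a \lipunit{\Lip}{\mu_\A\circ\pi} inside $\M$. The only difference is that you also explicitly note that the pinned topographic {\lcqms} structure comes for free from Definition~\ref{tunnel-def}, which the paper leaves implicit.
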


\begin{proof}
	First, note that $\mu_\A\circ\pi$ is a character on $\M_\D$ since $\mu_\A$ is a character of $\M_\A$, and $\pi$ is a *-morphism such that $\pi(\M_\D)\subseteq\M_\A$. Our corollary then follows from Lemma (\ref{lifting-lipunit-lemma}) and Definition (\ref{pqpms-def}), since $\M_\A$ contains a \lipunit{\Lip_\A}{\mu_\A}.
\end{proof}

\begin{remark}
	We note in particular, that topographic quantum isometries between {\pqms s} are always proper morphisms.
\end{remark}	

\medskip

The purpose of introducing tunnels is that they place two {\pqpms s} within a common {\lcqms} (dually) and allows us to define a number which quantifies how far apart they are from this particular context. It involves not only some metric-related quantify involving state spaces and quasi-state spaces as expected, but also some control on the various ingredients making a tunnel.

\begin{notation}
	If $K \subseteq \A^\ast$ and $a, b \in \A$, then $a K b \coloneqq =\{ \varphi (a \cdot b) : \varphi \in K \}$. If, in particular, $a \in \sa{\A}$ with $\norm{a}{\A}\leq 1$, and $K\subseteq\QuasiStateSpace(\A)$, then $a K a \subseteq \QuasiStateSpace(\A)$.
\end{notation}

\begin{notation}
	Let $(X,d)$ be a metric space. If $A,B \subseteq E$ are two nonempty subsets of $E$, we write $d(x,A) \coloneqq \inf\{ d(x,a) : a\in A \}$, and if $\sup_{x\in B} d(x,A) \leq \varepsilon$ for some $\varepsilon \geq 0$, we write
	\begin{equation*}
		B \subseteq_\varepsilon^d A\text.
	\end{equation*}
	
	By definition, the \emph{Hausdorff distance} between $A$ and $B$ is defined by: 
	\begin{equation*}
		\Haus{d}(A,B) = \inf\{\varepsilon > 0 : A\subseteq_\varepsilon^d B \text{ and } B\subseteq_\varepsilon^d A \} \text.
	\end{equation*}
	
	$\Haus{d}$ thus defined is a distance on the set of closed subsets of $(X,d)$. More generally, $\Haus{d}(A,B) = 0$ if, and only if, the closure of $A$ equals to the closure of $B$ --- that is, $\Haus{d}$ is a metric up to closure equality. We will allow taking the Hausdorff distance between non closed subsets in this paper.
	
	 $\Haus{d}$ is complete whenever $d$ is complete, and induces a compact topology whenever $(X,d)$ is compact. 
	
	When $(X,d)$ is a normed space $E$, then we will write $\Haus{E}$ for the Hausdorff distance induced by the norm of $E$.
\end{notation}

\begin{notation}
	If $\pi : \A\rightarrow\B$ is a *-morphism, then $\pi^\ast : \B^\ast \mapsto \A^\ast$ is defined by $\varphi \in \B^\ast \mapsto \pi^\ast(\varphi)\coloneqq\varphi\circ\pi$.
\end{notation}

\begin{definition}\label{extent-def}
	Let $(\A,\Lip_\A,\M_A,\mu_\A)$ and $(\B,\Lip_\B,\M_\B,\mu_\B)$ be two {\pqpms s}. Let $M\geq 1$. The extent of an $M$-tunnel $\tau\coloneqq(\D,\Lip,\M,\pi_\A,\pi_\B,e)$ is the maximum of:
	\begin{enumerate}
		\item 
	\begin{equation*}
		\inf\Bigg\{ \varepsilon > 0 \; :  \; e \QuasiStateSpace(\D) e \subseteq_\varepsilon^\boundedLipschitz{\Lip,M} \pi_\A^\ast(\QuasiStateSpace(\A))\text{, and } e \QuasiStateSpace(\D) e\subseteq_\varepsilon^\boundedLipschitz{\Lip,M}\pi_\B^\ast(\QuasiStateSpace(\B)) \Bigg\} \text,
	\end{equation*}
		\item $\Kantorovich{\Lip}(\mu_\A\circ\pi_\A,\mu_\B\circ\pi_\B)$,
		\item $\max\{ 2 M \Lip(e), |1-\mu_\A\circ\pi(e)|, |1-\mu_\B\circ\rho(e)| \}$.
\end{enumerate}
\end{definition}

\medskip

\medskip

The reader may have observed that, while the extent of a tunnel controls the Lipschitz seminorm of the chosen distinguished element, the definition of the extent seems to ignore its norm. The following lemma explains this observation, and is extremely important, both for the triangle inequality and for the coincidence property.  

\begin{lemma}\label{key-lemma}
If $\tau \coloneqq(\D,\Lip,\M,\pi,\rho,e)$ is an $M$-tunnel with finite extent, for some $M\geq 1$, then 
	\begin{equation*}
		\norm{e}{\D} \leq \sqrt{1 + \frac{\tunnelextent{\tau}}{M}} \leq \sqrt{1+\tunnelextent{\tau}} \text.
	\end{equation*}
\end{lemma}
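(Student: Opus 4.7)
The plan is to estimate $\norm{e}{\D}^2 = \norm{e^2}{\D} = \sup_{\varphi\in\StateSpace(\D)}\varphi(e^2)$ by applying the Fortet-Mourier bound built into the extent of $\tau$ to the functional $e\varphi e \in \QuasiStateSpace(\D)$, tested against the squares of a suitably lifted approximate unit.

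First, using Definition (\ref{pqpms-def}), pick a \lipunit{\Lip_\A}{\mu_\A} $(e_n)_{n\in\N}$ in $\M_\A$; by Lemma (\ref{lifting-lipunit-lemma}), lift it to a \lipunit{\Lip}{\mu_\A\circ\pi} $(d_n)_{n\in\N}$ in $\M$ with $\pi(d_n)=e_n$ for every $n$. Theorem (\ref{approx-unit-thm}) then guarantees that $(d_n)_{n\in\N}$ is an approximate unit of $\D$ and $(e_n)_{n\in\N}$ is an approximate unit of $\A$.

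Next, fix $\varphi\in\StateSpace(\D)$ and $\varepsilon>\tunnelextent{\tau}$. The functional $e\varphi e$, given by $a\mapsto\varphi(eae)$, is positive with $\norm{e\varphi e}{\D^\ast}=\varphi(e^2)$, and hence lies in $\QuasiStateSpace(\D)$ (normalized appropriately). By definition of the extent, there exists $\psi\in\QuasiStateSpace(\A)$ with $\boundedLipschitz{\Lip,M}(e\varphi e,\pi^\ast\psi)<\varepsilon$. Now test this estimate against $d_n^2\in\domsa{\Lip}$: the Leibniz inequality gives $\Lip(d_n^2)\leq 2\norm{d_n}{\D}\Lip(d_n)\to 0$, while $\norm{d_n^2}{\D}=\norm{d_n}{\D}^2\to 1$, so $\norm{d_n^2}{\Lip,M}\to\frac{1}{M}$. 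Scaling the Fortet-Mourier estimate to $d_n^2$ yields
\begin{equation*}
  |\varphi(ed_n^2e) - \psi(e_n^2)| \leq \varepsilon\,\norm{d_n^2}{\Lip,M}\text,
\end{equation*}
where $\pi^\ast\psi(d_n^2)=\psi(\pi(d_n)^2)=\psi(e_n^2)$. Since $d_n^2\to\unit$ strictly, $ed_n^2e\to e^2$ in norm, so $\varphi(ed_n^2e)\to\varphi(e^2)$; and since $\psi$ is positive with $(e_n^2)_{n\in\N}$ an approximate unit of $\A$, $\psi(e_n^2)\to\norm{\psi}{\A^\ast}\leq 1$. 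Passing to the limit gives $|\varphi(e^2)-\norm{\psi}{\A^\ast}|\leq\frac{\varepsilon}{M}$, hence $\varphi(e^2)\leq 1+\frac{\varepsilon}{M}$. Taking the supremum over $\varphi$ and sending $\varepsilon\to\tunnelextent{\tau}^{+}$ yields $\norm{e^2}{\D}\leq 1+\frac{\tunnelextent{\tau}}{M}$, and the second inequality follows from $M\geq 1$ after taking square roots.

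The main obstacle is identifying a test element in $\domsa{\Lip}$ whose $\norm{\cdot}{\Lip,M}$ isolates the norm part of the seminorm asymptotically: the extent only controls how $e\varphi e$ is approximated by pullbacks of quasi-states on $\A$ via Lipschitz test functions, so we must arrange for the Lipschitz contribution to vanish in the limit while the norm contribution converges to $\frac{1}{M}$. Squaring a lifted \lipunit{\Lip}{\mu_\A\circ\pi} achieves both simultaneously, and the fact that $d_n$ and $e$ both belong to the abelian topography $\M$ makes the expression $ed_n^2e$ behave cleanly; this is precisely what links the norm of $e$ in $\D$ to $\norm{\psi}{\A^\ast}\leq 1$, the only quantity intrinsic to the $\A$-side of the tunnel.
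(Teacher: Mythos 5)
Your proof is correct and takes essentially the same route as the paper's: both obtain an approximate unit with vanishing Lipschitz seminorm and norm tending to $1$ (you via Lemma (\ref{lifting-lipunit-lemma}) and Theorem (\ref{approx-unit-thm}), the paper via Corollary (\ref{pqpms-tunnel-cor}), which packages the same facts), then test the Fortet-Mourier estimate from Definition (\ref{extent-def}) against these elements and pass to the limit. The only cosmetic difference is your choice of test element $d_n^2$ rather than $d_n$ itself, and the order in which you take the supremum over $\varphi$ versus the limit in $n$; neither affects the argument.
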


\begin{proof}
	By Corollary (\ref{pqpms-tunnel-cor}), there exists an approximate unit $(h_n)_{n\in\N}$ in $\domsa{\Lip_\D}$ such that $\lim_{n\rightarrow\infty}\norm{e_n}{\D} = 1$ and $\lim_{n\rightarrow\infty} \Lip(e_n)=0$, so that $\norm{e_n}{\Lip,M} \xrightarrow{n\rightarrow\infty} \frac{1}{M}$.
	
	Let $t > \tunnelextent{\tau}$. Let $\varphi \in\StateSpace(\D)$. By definition of the extent, there exists $\psi \in \QuasiStateSpace(\A)$ such that $\boundedLipschitz{\Lip,M}(\varphi(e\cdot e),\psi) < t \text.$
	\begin{align*}
		|\varphi(e h_n e)| 
		&\leq |\varphi(e h_n e) - \psi(h_n)| + |\psi(h_n)| \\
		&\leq \boundedLipschitz{\Lip,M}(\varphi(e\cdot e),\psi)\norm{h_n}{\Lip} + \norm{h_n}{\D} \\
		&\leq \norm{h_n}{\Lip,M} t + \norm{h_n}{\D} \xrightarrow{n\rightarrow\infty} \frac{t}{M} + 1 \text.  
	\end{align*}
	
	Therefore 
	\begin{align*}
		\norm{e^2}{\D} 
		&\leq \limsup_{n\rightarrow\infty}\left( \norm{e^2 - (e h_n)e}{\D} + \norm{e h_n e}{\D} \right) \\
		&= 0 + \frac{t}{M} + 1 \text,
	\end{align*}
	since $(h_n)_{n\in\N}$ is an approximate unit, and since $e h_n e$ is self-adjoint for all $n\in\N$.

	Since $t > \tunnelextent{\tau}$ was arbitrary, we thus get:
	\begin{equation*}
		\norm{e}{\D}^2 = \norm{e^2}{\D}  \leq 1 + \frac{\tunnelextent{\tau}}{M} \text,
	\end{equation*}
	as claimed.
\end{proof}
\medskip

A central feature of tunnels is that they may be ``approximately'' composed to give rise to new tunnels with some control over their extent. This result plays the key role in establishing a form of triangle inequality for our upcoming metametrics.

\begin{lemma}\label{triangle-lemma}
	Let $M\geq 1$. For all $\varepsilon > 0$, and for all {\pqpms s} $\mathds{A}$, $\mathds{B}$ and $\mathds{E}$,  if $\tau_1$ is an $M$-tunnel from $\mathds{A}$ to $\mathds{B}$, and if $\tau_2$ is an $M$-tunnel from $\mathds{B}$ to $\mathds{E}$, and if $\max\{\tunnelextent{\tau_1},\tunnelextent{\tau_2}\} < \infty$, then there exists an $M$-tunnel $\tau$ from $\mathds{A}$ to $\mathds{E}$ such that
	\begin{equation}\label{triangle-lemma-main-eq}
		\tunnelextent{\tau} 
		\leq  \exp(\varepsilon) \left[ \tunnelextent{\tau_1} (1+\tunnelextent{\tau_2})^2 +  \tunnelextent{\tau_2}(1+\tunnelextent{\tau_1})^2 \right]  + \varepsilon 		\text.	\end{equation}
\end{lemma}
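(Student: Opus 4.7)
The plan is to construct the composed tunnel $\tau$ by amalgamating $\tau_1$ and $\tau_2$ over their shared side $\mathds{B}$ through a noncommutative bridge construction. Write $\tau_i=(\D_i,\Lip_{\D_i},\M_{\D_i},\pi_i,\rho_i,e_i)$ for $i\in\{1,2\}$, with $\rho_1\colon\D_1\twoheadrightarrow\B$ and $\pi_2\colon\D_2\twoheadrightarrow\B$ landing in $\B$. Set $\D\coloneqq\D_1\oplus\D_2$, $\M_\D\coloneqq\M_{\D_1}\oplus\M_{\D_2}$, $\pi\coloneqq\pi_1\circ p_1$, $\rho\coloneqq\rho_2\circ p_2$ (with $p_i$ the coordinate projection), and for a bridge parameter $\lambda>0$ to be tuned,
\[
\Lip_\D(d_1,d_2)\coloneqq\max\Bigl\{\Lip_{\D_1}(d_1),\ \Lip_{\D_2}(d_2),\ \tfrac{1}{\lambda}\norm{\rho_1(d_1)-\pi_2(d_2)}{\B}\Bigr\}.
\]
Using the topographic lifting axiom of Definition~\ref{topographic-isometry-def}, I pick $\tilde e_1\in\M_{\D_2}\cap\domsa{\Lip_{\D_2}}$ with $\pi_2(\tilde e_1)=\rho_1(e_1)$ and $\tilde e_2\in\M_{\D_1}\cap\domsa{\Lip_{\D_1}}$ with $\rho_1(\tilde e_2)=\pi_2(e_2)$, each within $\exp(\varepsilon)$ of the target norms and Lipschitz seminorms. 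The distinguished element of the composed tunnel is $e\coloneqq(e_1\tilde e_2,\tilde e_1 e_2)\in\M_\D\cap\domsa{\Lip_\D}$, and commutativity of $\M_\B$ makes the seam $\rho_1(e_1\tilde e_2)-\pi_2(\tilde e_1 e_2)=\rho_1(e_1)\pi_2(e_2)-\rho_1(e_1)\pi_2(e_2)=0$ vanish.

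Next I verify that $\tau\coloneqq(\D,\Lip_\D,\M_\D,\pi,\rho,e)$ is an $M$-tunnel with pin $\mu_\A\circ\pi$. The topographic quantum $M$-isometry property of $\pi$ follows from a two-step lift: given $a\in\domsa{\Lip_\A}$, first lift via $\pi_1$ to $d_1\in\domsa{\Lip_{\D_1}}$ with $\norm{d_1}{\Lip_{\D_1},M}\leq\exp(\varepsilon)\norm{a}{\Lip_\A,M}$; then lift $\rho_1(d_1)\in\domsa{\Lip_\B}$ via $\pi_2$ to $d_2\in\domsa{\Lip_{\D_2}}$ with $\pi_2(d_2)=\rho_1(d_1)$ exactly, so the seam vanishes and $\norm{(d_1,d_2)}{\Lip_\D,M}\leq\exp(2\varepsilon)\norm{a}{\Lip_\A,M}$. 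A symmetric argument handles $\rho$. The pin $\mu_\A\circ\pi$ restricts to a character of $\M_\D$, and applying Lemma~\ref{lifting-lipunit-lemma} plus the lift-and-kill-seam recipe to a Lipschitz pinned exhaustive sequence of $\D_1$ (supplied by Corollary~\ref{pqpms-tunnel-cor}) produces the required pinned {\pqpms} structure on $\D$.

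The extent of $\tau$ breaks into the three quantities of Definition~\ref{extent-def}. The Lipschitz and pin-alignment terms $\max\{2M\Lip_\D(e),|1-\mu_\A\pi(e)|,|1-\mu_\mathds{E}\rho(e)|\}$ are controlled by Leibniz expansions of $\Lip_\D(e_1\tilde e_2)$, $\Lip_\D(\tilde e_1 e_2)$ combined with Lemma~\ref{key-lemma}'s bound $\norm{e_i}{\D_i}\leq\sqrt{1+\tunnelextent{\tau_i}}$. The Monge-Kantorovich term $\Kantorovich{\Lip_\D}(\mu_\A\pi,\mu_\mathds{E}\rho)$ is bounded by triangle inequality through $\mu_\B\circ\rho_1\circ p_1$ and $\mu_\B\circ\pi_2\circ p_2$, the middle step costing at most $\lambda$ from the seam penalty. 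For the Hausdorff piece, I decompose $\varphi\in\StateSpace(\D)$ as $\varphi_1\oplus\varphi_2$ with $\varphi_i$ positive on $\D_i$ and $\|\varphi_1\|+\|\varphi_2\|=1$; using commutativity of the topographies $\M_{\D_i}$ to rewrite $e_i\tilde e_{3-i}=\tilde e_{3-i}e_i$, one gets
\[
(e\varphi e)(d_1,d_2)=\varphi_1\bigl(e_1(\tilde e_2 d_1\tilde e_2)e_1\bigr)+\varphi_2\bigl(e_2(\tilde e_1 d_2\tilde e_1)e_2\bigr).
\]
Applying the extent of $\tau_1$ to $\varphi_1/\|\varphi_1\|$ at the $e_1$-sandwiched element $\tilde e_2 d_1\tilde e_2$, whose Leibniz bound is $\norm{\tilde e_2 d_1\tilde e_2}{\Lip_{\D_1},M}\leq\exp(2\varepsilon)(1+\tunnelextent{\tau_2})^2$, produces $\phi_1\in\A^*$ with Fortet-Mourier error at most $\|\varphi_1\|\exp(2\varepsilon)(1+\tunnelextent{\tau_2})^2\tunnelextent{\tau_1}$. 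Symmetrically, applying $\tau_2$ at $\tilde e_1 d_2\tilde e_1$ yields $\sigma\in\QuasiStateSpace(\B)$ with error $\|\varphi_2\|\exp(2\varepsilon)(1+\tunnelextent{\tau_1})^2\tunnelextent{\tau_2}$; the bridge $|\sigma(\pi_2(\tilde e_1 d_2\tilde e_1)-\rho_1(e_1 d_1 e_1))|$ is bounded by $(1+\tunnelextent{\tau_1})\lambda$ via the seam constraint and commutativity of $\M_\B$, and then applying $\tau_1$ to $\rho_1^*\sigma$ produces $\phi_2\in\A^*$ at a further cost of at most $\tunnelextent{\tau_1}$. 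Summing, renormalising $\phi_1+\phi_2$ to lie in $\QuasiStateSpace(\A)$, choosing $\lambda$ proportional to $\varepsilon/(1+\tunnelextent{\tau_1})$, and absorbing the $(1+\varepsilon)^k$ losses into $\exp(\varepsilon)$ yields Equation~\eqref{triangle-lemma-main-eq}.

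The main obstacle is precisely the choice of the distinguished element $e$. The naive choice $e=(e_1,e_2)$ leaves an uncontrollable residual $|\sigma(\rho_1(d_1-e_1 d_1 e_1))|$ when transporting the $\B$-level quasi-state $\sigma$ from $\tau_2$ back to $\A$ through $\tau_1$: the extent of $\tau_1$ only controls the $e_1$-sandwiched functional $e_1\rho_1^*\sigma e_1$, not $\rho_1^*\sigma$ itself, and $\norm{1-e_1}{\unital{\D_1}}$ is generally unbounded in the locally compact setting. The coupled sandwich structure $e=(e_1\tilde e_2,\tilde e_1 e_2)$ resolves this by exploiting commutativity in the topographies $\M_{\D_i}$ to rewrite each summand as $e_i(\tilde e_{3-i}d_i\tilde e_{3-i})e_i$, so the $e_i$-sandwich is already present and the tunnel-extent inequalities apply directly to $\tilde e_{3-i}d_i\tilde e_{3-i}$; the Leibniz cost, through $\norm{\tilde e_j}{\D_i}^2\leq(1+\tunnelextent{\tau_j})\exp(2\varepsilon)$ together with $\Lip_{\D_i}(\tilde e_j)\leq\exp(\varepsilon)\tunnelextent{\tau_j}/(2M)$, is exactly the squared factor $(1+\tunnelextent{\tau_j})^2$ that multiplies the opposite extent in the target bound.
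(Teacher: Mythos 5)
Your proposal follows the paper's proof closely: the same amalgam $\D_1\oplus\D_2$ with a seam penalty built into the Lipschitz seminorm, the same topographically lifted coupled distinguished element $(e_1 e_2',\,e_1' e_2)$, and the same Leibniz-plus-Lemma~(\ref{key-lemma}) control of the extent. The reason you isolate for why the naive choice $(e_1,e_2)$ fails — that the extent of a tunnel only controls $e_i$-sandwiched quasi-states — is exactly the reasoning behind the paper's construction.
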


\begin{proof}
	We need to introduce some notation.  We write 
	\begin{equation*}
		\mathds{A} \coloneqq (\A,\Lip_\A,\M_\A,\mu_\A)\text{, }\mathds{B} \coloneqq (\B,\Lip_\B,\M_\B,\mu_\B)\text{ and }\mathds{E} \coloneqq (\alg{E},\Lip_{\alg{E}},\M_{\alg{E}}, \mu_{\alg{E}})\text.
	\end{equation*}
	
	We also write:
	\begin{equation*}
		\tunnel{\tau_1}{\mathds{A}}{\pi_1}{(\D_1,\Lip[T]_1,\M_1,e_1)}{\rho_1}{\mathds{B}}
	\end{equation*}
	and
	\begin{equation*}
		\tunnel{\tau_2}{\mathds{B}}{\pi_2}{(\D_2,\Lip[T]_2,\M_2,e_2)}{\rho_2}{\mathds{E}}\text.	
	\end{equation*}
	
	Let $\varepsilon > 0$. For convenience, set $\delta = \exp\left(\frac{\varepsilon}{2}\right)$.

	\begin{step}We first construct our candidate for a new tunnel.\end{step}
	
	We define $\D\coloneqq \D_1\oplus\D_2$, and we set $\pi : (d_1,d_2) \in \D \mapsto \pi_\A(d_1) \in \A$ and $\rho : (d_1,d_2)\in\D \mapsto \rho_2(d_2) \in \alg{E}$. It will also be helpful to define $\eta_1:(d_1,d_2)\in\D \mapsto d_1\in\D_2$ and $\eta_2: (d_1,d_2) \in \D\mapsto d_2 \in D_2$.
	
	Let $\varepsilon_0 \coloneqq \frac{\varepsilon}{\max\{1,\norm{e_1}{\D_1}, \norm{e_2}{\D_2}\}}$. We also define, for all $(d_1,d_2) \in \dom{\Lip_1}\oplus\dom{\Lip_2}$:
	\begin{equation*}
		\Lip[T](d_1,d_2) \coloneqq \max\left\{\Lip_1(d_1),\Lip_2(d_2),\frac{1 }{\varepsilon_0}\norm{\rho_1(d_1)-\pi_2(d_2)}{\B} \right\} \text.
	\end{equation*}
	Set $K\coloneqq \max\{1,\norm{e_1}{\D_2},\norm{e_2}{\D_2}\}$ so that $K\varepsilon_0 = \varepsilon$ and note $\varepsilon_0 \leq \varepsilon$.

	Now, since $\rho_1$ is a topographic quantum $M$-isometry, by Definition (\ref{topographic-isometry-def}), there exists $e_2' \in \M_1\cap\domsa{\Lip_1}$ such that $\rho_1(e_2') = \pi_2(e_2)$, while 
	\begin{equation}\label{triangle-lemma-eq0}
		\Lip_1(e_2') \leq \delta \Lip_\B(\pi_2(e_2)) \leq \delta \Lip_2(e_2) \leq \delta \frac{\tunnelextent{\tau_2}}{2M} \leq \delta\tunnelextent{\tau_2} \text,
	\end{equation}
	 and 
	\begin{align}\label{triangle-lemma-eq1}
		\norm{e_2'}{\D_1} 
		&\leq \delta \norm{\pi_2(e_2)}{\B} \nonumber\\
		&\leq \delta \norm{e_2}{\D_2}  \\
		&\leq \delta \underbracket[1pt]{\sqrt{1+\tunnelextent{\tau_2} }}_{\text{by Lemma (\ref{key-lemma})} }  \text.\nonumber
	\end{align}

	We henceforth set
	\begin{equation}\label{triangle-lemma-eq5}
		f_1 \coloneqq  e_1 e_2' \in \D_1 \text.
	\end{equation}
	
	We define similarly $e_1'$ and $f_2 \coloneqq  e_2 e_1' \in \M_2$ simply by switching the roles of the indices $1$ and $2$ in the above construction. Lastly, we set:
	\begin{equation*}
		f \coloneqq (f_1, f_2) \in \D \text.
	\end{equation*} 
	
	\begin{step}We check that $(\D,\Lip[T],\M_\D)$ is a topographic {\lcqms} and $\mu_\A\circ\pi$ is a pin for $(\D,\Lip[T])$.
	 \end{step}\label{triangle-lemma-lcqms-step}

	It is a standard argument to check that $\dom{\Lip[T]} \coloneqq \dom{\Lip_1}\oplus\dom{\Lip_2}$ is indeed a dense Jordan-Lie subalgebra of $\sa{\D}$ and $\Lip[T]$ is indeed a hermitian seminorm on $\dom{\Lip[T]}$. Moreover, it is routine to check that $\Lip[T]$ is lower semi-continuous and Leibniz.
	
	By Definition (\ref{tunnel-def}), the state $\mu_\A\circ\pi_\A$ is a pin for $(\D_1,\Lip_1,\M_1)$. 
	
	Now, it is immediate that
	\begin{align}\label{triangle-lemma-kant-ineq}
		\Kantorovich{\Lip[T]}(\mu_\A\circ\pi, \mu_{\alg{E}}\circ\rho) 
		&\leq \Kantorovich{\Lip[T]}(\mu_\A\circ\pi, \mu_\B\circ\rho_1\circ\eta_1) \nonumber \\
		&\quad+ \Kantorovich{\Lip[T]}(\mu_\B\circ\rho_1\circ\eta_1, \mu_{\B}\circ\pi_2\circ\eta_2)) \nonumber \\
		&\quad+ \Kantorovich{\Lip[T]}(\mu_\B\circ\pi_2\circ\eta_2, \mu_{\alg{E}}\circ\rho)) \\
		&=\Kantorovich{\Lip_1}(\mu_\A\circ\pi_1, \mu_\B\circ\rho_1) + \Kantorovich{\Lip[T]}(\mu_\B\circ\rho_1\circ\eta_1, \mu_{\B}\circ\pi_2\circ\eta_2)) \nonumber \\
		&\quad + \Kantorovich{\Lip_2}(\mu_\B\circ\pi_2, \mu_{\alg{E}}\circ\rho_2)) \nonumber \\
		&\leq \tunnelextent{\tau_1} + \varepsilon_0 + \tunnelextent{\tau_2} \nonumber \\
		&\leq \tunnelextent{\tau_1} + \varepsilon + \tunnelextent{\tau_2}  \nonumber \text.
	\end{align}
	Note that $ \mu_{\alg{E}}\circ\rho$ is also a pin, for $(\D_2,\Lip_2,\M_2)$. Let $\varphi \in \StateSpace(\D)$. There exists $t\in[0,1]$, $\varphi_1\in\StateSpace(\D_1)$ and $\varphi_2\in\StateSpace(\D_2)$ such that $\varphi = t\varphi_1 + (1-t)\varphi_2$. Thus, $\varphi_1$ is the limit of a sequence $(\varphi_{1,n})_{n\in\N}$ in the weak* topology in $\StateSpace(\D_1)$, with $\Kantorovich{\Lip_1}(\varphi_{1,n},\mu_\A\circ\pi_1) < \infty$. Similarly, there exists  a sequence $(\varphi_{2,n})_{n\in\N}$ converging for the weak* topology in $\StateSpace(\D_2)$ to $\varphi_2$, with $\Kantorovich{\Lip_2}(\varphi_{2,n},\mu_\alg{E}\circ\rho_2) < \infty$.
	
	Thus, $\varphi$ is the weak* limit, in $\StateSpace(\D)$, of $(t \varphi_{1,n} +(1-t)\varphi_{2,n})_{n\in\N}$. On the other hand, for each $n\in\N$,
	\begin{align*}
		\Kantorovich{\Lip[T]}(t \varphi_{1,n} +(1-t)\varphi_{2,n}, \mu_\A\circ\pi_1)
		&\leq t\Kantorovich{\Lip_1}(\varphi_{1,n},\mu_\A\circ\pi_1) \\
		&\quad +  (1-t)\left[\Kantorovich{\Lip[T]}(\mu_\A\circ\pi_1,\mu_{\alg{E}}\circ\rho_2) + \Kantorovich{\Lip_2}(\mu_{\alg{E}}\circ\rho_2, \varphi_{2,n})\right] \\
		&<\infty \text.
	\end{align*}	
	So $\mu_\A\circ\pi$ is a pin for $(\D,\Lip[T])$.

	\begin{step}We check $\rho$ and $\pi$ are topographic quantum $M$-isometries.\end{step}\label{triangle-lemma-isometry-step}

	We first record that, by construction, if $d \coloneqq (d_1,d_2) \in \domsa{\Lip[T]}$, then
	\begin{equation*}
		\Lip_{\alg{E}}(\rho(d_1,d_2)) = \Lip_{\alg{E}}(\rho_2(d_2)) \leq \Lip_2(d_2) \leq \Lip[T](d_1,d_2) \text.
	\end{equation*}

	Let $x \in \domsa{\Lip_{\alg{E}}}$ such that $\norm{x}{\Lip_{\alg{E}},M} = 1$. Let $\alpha > 0$. Since $\rho_2$ is a quantum $M$-isometry, there exists $d_2\in\domsa{\Lip_2}$ such that $\norm{x}{\Lip_{\alg{E}},M}\leq \norm{d_2}{\Lip_{2},M} \leq \norm{x}{\Lip_{\alg{E}},M} + \frac{\alpha}{2}$. Now, set $b\coloneqq \pi_2(d_2) \in \dom{\Lip_\B}$. By Definition (\ref{M-isometry-def}) of a quantum $M$-isometry, $\norm{b}{\Lip_\B,M}\leq \norm{d_2}{\Lip_2,M} \leq \norm{x}{\Lip_{\alg{E}},M} + \frac{\alpha}{2}$. Therefore, since $\rho_1$ is a quantum $M$-isometry, there exists $d_1 \in \domsa{\Lip_1}$ such that $\norm{b}{\Lip_\B,M} \leq \norm{d_1}{\Lip_1,M} \leq \norm{b}{\Lip_\B,M} + \frac{\alpha}{2}$. Hence, we conclude
	\begin{align*}
		\norm{(d_1,d_2)}{\Lip[T],M}
		&=\max\left\{ \frac{1}{M}\norm{(d_1,d_2)}{\D}, \Lip[T](d_1,d_2) \right\}\\
		&=\max\left\{ \frac{1}{M}\norm{d_1}{\D_1},\frac{1}{M}\norm{d_2}{\D_1}, \Lip_1(d_1),\Lip_2(d_2), \frac{K}{\varepsilon}\norm{b-b}{\B} \right\}\\
		&\leq \max\{ \norm{d_1}{\Lip_1,M}, \norm{d_2}{\Lip_2,M} \} \leq \norm{x}{\Lip_{\alg{E}},M} + \alpha \text.
	\end{align*}
	
	 Hence, as required,
	\begin{equation*}
		\norm{x}{\Lip_\alg{E},M} = \inf\{ \norm{(d_1,d_2)}{\Lip[T],M} : \rho(d_1,d_2) = x \} \text.
	\end{equation*}
	
	If, moreover, $x\in \M_{\alg{E}}$, then we can in fact find, for any $\alpha>0$, some $d_2 \in \M_2$ such that, at once:
	\begin{equation*}
		\rho_2(d_2)=x\text{, } \norm{d_2}\leq\exp(\frac{\alpha}{2})\norm{x}{\D_2} \text{ and }\Lip_2(x)\leq\exp(\frac{\alpha}{2})\Lip_{\alg{E}}(x) \text.
	\end{equation*}
	As above, there exists $d_1\in\M_1$ such that $\rho_1(d_1) = \pi_2(d_2)$, while $\Lip_1(d_1) \leq \exp(\frac{\alpha}{2})\Lip_{\B}(\pi_2(d_2)) \leq \exp(\alpha)\Lip_{\alg{E}}(x)$ and similarly, $\norm{d_1}{\D_1} \leq\exp(\alpha)\norm{x}{\alg{E}}$. Altogether,
	\begin{equation*}
		\rho(d_1,d_2) = x\text{, }\norm{d_1,d_2}{\D}\leq \exp(\alpha)\norm{x}{\alg{E}} \text{ and }\Lip[T](d_1,d_2)\leq \exp(\alpha)\Lip_{\alg{E}}(x) \text,
	\end{equation*}
	as desired.

	Hence, $\rho$ is a topographic quantum $M$-isometry. A similar computation shows that $\pi$ is also a topographic quantum $M$-isometry.

	\begin{step}We conclude that $(\D,\Lip[T],\pi,\rho,f)$ is an $M$-tunnel from $\mathds{A}$ to $\mathds{E}$. \end{step}
	
	By construction, $f\in \M_\D$. Since $\M_\D$ is Abelian, $f$, as a product of self-adjoint elements, is self-adjoint. Together with our two previous steps, we conclude that $(\D,\Lip[T],\pi,\rho,f)$ is an $M$-tunnel by Definition (\ref{tunnel-def}).

	\begin{step}We now estimate the extent of our tunnel.\end{step}

	We first compute estimates for $\Lip[T](f)$ for $f = (f_1,f_2)$. By the Leibniz property of $\Lip_1$,	
	\begin{align*}
		\Lip_1(f_1) 
		&\leq \norm{e_1}{\D_1}\Lip_1(e'_2) + \Lip_1(e_1)\norm{e'_2}{\D_1} \\
		&\leq \underbracket[1pt]{\sqrt{1+\tunnelextent{\tau_1}}}_{\geq\norm{e_1}{\D_1}\text{ by Lemma (\ref{key-lemma})}} \delta \Lip_2(e_2) +  \underbracket[1pt]{\delta\sqrt{1+\tunnelextent{\tau_2}}}_{\geq\norm{e_2}{\D_2}\text{ by Exp. \eqref{triangle-lemma-eq1}}} \Lip_1(e_1) \\
		&\leq\delta \left( \sqrt{1+\tunnelextent{\tau_1}} \frac{\tunnelextent{\tau_2}}{2M} + \sqrt{1+\tunnelextent{\tau_2}} \frac{\tunnelextent{\tau_1}}{2M} \right) \text.
	\end{align*}
	A similar computation holds for $f_2$, and since
	\begin{equation*}
		\pi_2(f_1) = \rho_1(e_1)\pi_2(e_2) = \rho_1(f_2)
	\end{equation*}
	 by construction, we conclude:
	\begin{equation*}
		\Lip[T](f) \leq \frac{\delta}{2M}\left( \sqrt{1+\tunnelextent{\tau_1}} \tunnelextent{\tau_2} + \sqrt{1+\tunnelextent{\tau_2}} \tunnelextent{\tau_1} \right) \text.
	\end{equation*}
So
\begin{equation}\label{triangle-lemma-first-ineq}
	2 M \Lip[T](f) \leq \delta\left( \sqrt{1+\tunnelextent{\tau_1}} \tunnelextent{\tau_2} + \sqrt{1+\tunnelextent{\tau_2}} \tunnelextent{\tau_1} \right) \text.
\end{equation}
	
We turn to the quantities in the extent involving states. First, since 
	\begin{equation}\label{triangle-lemma-eq-6}
		\Kantorovich{\Lip_1}(\mu_\A\circ\pi_1,\mu_\B\circ\rho_1)\leq\tunnelextent{\tau_1}\text,
	\end{equation}
 we observe that
	\begin{align}\label{triangle-lemma-second-ineq}
		|\mu_\A\circ\pi(f_1,f_2) - 1|
		&=|\mu_\A\circ\pi_1(f_1) - 1| \nonumber \\
		&=|\mu_\A\circ\pi_1(e_1 e_2') - 1| \nonumber \\ 
		&\leq |\underbracket[1pt]{\mu_\A\circ\pi_1(e_1)\mu_\A\circ\pi_1(e'_2)}_{\text{$\mu$ is multiplicative on $\M_\A$}} - 1 | \nonumber \\
		&\leq |\mu_\A\circ\pi_1(e_1)-1| |\mu_\A\circ\pi_1(e'_2)| + |\mu_\A\circ\pi_1(e'_2) - 1| \nonumber \\
		&\leq \tunnelextent{\tau_1} \cdot \underbracket[1pt]{\delta \sqrt{1+\tunnelextent{\tau_2}}}_{\geq\norm{e'_2}{\D_1}\text{ by Eq. \eqref{triangle-lemma-eq1}}} + |\mu_\A\circ\pi_1(e'_2)-\mu_\B\circ\rho_1(e'_2)| + |\mu_\B\circ\rho_1(e'_2) - 1| \nonumber\\
		&\leq \tunnelextent{\tau_1} \cdot \delta \sqrt{1+\tunnelextent{\tau_2}}  + \underbracket[1pt]{\tunnelextent{\tau_1}}_{\text{by Eq. \eqref{triangle-lemma-eq-6}}} \cdot \underbracket[1pt]{\delta \tunnelextent{\tau_2}}_{\substack{\geq \Lip_1(e'_2)\\ \text{ by Eq. \eqref{triangle-lemma-eq0}}}} + \tunnelextent{\tau_2} \nonumber \\
		&\leq \tunnelextent{\tau_1} \delta \left[\sqrt{1+\tunnelextent{\tau_2}} + \tunnelextent{\tau}_2\right] + \tunnelextent{\tau_2} \\
		&\leq \tunnelextent{\tau_1} \delta \left(\underbracket[1pt]{1+2\tunnelextent{\tau_2}}_{\sqrt{1+\tunnelextent{\tau_2}} \leq 1 + \tunnelextent{\tau_2}} \right) + \tunnelextent{\tau_2}  \nonumber \\ 
		&\leq \underbracket[1pt]{\delta}_{\geq 1} \left[\tunnelextent{\tau_1}(1+2\tunnelextent{\tau_2}) + \tunnelextent{\tau_2}\left( \underbracket[1pt]{1+2\tunnelextent{\tau_1}}_{\geq 1} \right) \right] \text. \nonumber
	\end{align}
	
The same reasoning holds by symmetry to obtain
\begin{equation}\label{triangle-lemma-third-ineq}
	|1-\mu_\alg{E}\circ\rho(f_1,f_2)| \leq \delta \left[\tunnelextent{\tau_1}(1+2\tunnelextent{\tau_2}) + \tunnelextent{\tau_2}( 1+2\tunnelextent{\tau_1}) \right]  \text.
\end{equation}

	We now turn to the quasi state space component of the extent. Let $\varphi \in \QuasiStateSpace(\D_1\oplus\D_2)$. There exists $t \in [0,1]$, $\varphi_1 \in \QuasiStateSpace(\D_1)$ and $\varphi_2\in\QuasiStateSpace(\D_2)$ such that 
	\begin{equation*}
		\varphi = t \varphi_1\circ\eta_1 + (1-t)\varphi_2\circ\eta_2\text.
	\end{equation*}
	
	Let $q_1 > \tunnelextent{\tau_1}$ and $q_2 > \tunnelextent{\tau_2}$. 
	
	By definition of the extent $\tunnelextent{\tau_1}$, there exists $\psi_1 \in \QuasiStateSpace(\B)$ such that 
	\begin{equation*}
		\boundedLipschitz{\Lip_1}(\varphi_1(e_1  \cdot  e_1), \psi_1) \leq q_1  \text.
	\end{equation*}
	
	Now, for all $d\in\domsa{\Lip[T]}$ with $\Lip[T](d) \leq 1$ and $\norm{d}{\D} \leq M$, we have, by Lemma (\ref{Leibniz-lemma}):
	\begin{align*}
		\Lip(e_2' d e_2') 
		&\leq \norm{e'_2}{\D_1} \left( 2 \Lip(e_2')\norm{d}{\D} + \norm{e_2'}{\D}\Lip(d) \right) \\
		&\leq \delta\sqrt{1+\tunnelextent{\tau_2}} \left(\underbracket[1pt]{2 \delta \Lip_2(e_2) M}_{\leq \tunnelextent{\tau_2}} +  \delta \sqrt{1 + \tunnelextent{\tau_2}} \right) \\ 
		&\leq \delta^2 \tunnelextent{\tau_2}\sqrt{1+\tunnelextent{\tau_2}} +  \delta^2 (1 + \tunnelextent{\tau_2}) \\
		&\leq \delta^2 \tunnelextent{\tau_2} \underbracket[1pt]{(1+\tunnelextent{\tau_2})}_{\geq \sqrt{1+\tunnelextent{\tau_2}}} +  \delta^2 (1 + \tunnelextent{\tau_2}) \\
		&\leq \delta^2( 1 + \tunnelextent{\tau_2})^2  \text.
	\end{align*}
	Moreover
	\begin{equation*}
		\norm{e_2' d e_2'}{\D_1} \leq M \norm{e_2'}{\D_1}^2 \leq M \delta^2 (1 + \tunnelextent{\tau_2}) \text. 
	\end{equation*}
	Therefore,
	\begin{equation*}
		\norm{e_2' d e_2'}{\Lip_1,M} \leq \max\{ \delta^2( 1 + \tunnelextent{\tau_2})^2, \delta^2 (1 + \tunnelextent{\tau_2}) \} = \delta^2( 1 + \tunnelextent{\tau_2})^2 \text,
	\end{equation*}
	from which it follows:
	\begin{equation}\label{triangle-lemma-eq-bl}
		\boundedLipschitz{\Lip_1}(\varphi(e_1 e_2' \cdot e_2' e_1) , \psi_1(e_2' \cdot e_2')) \leq \delta^2 (1+ \tunnelextent{\tau_2})^2  q_1 \text.
	\end{equation}

	Then, by Definition (\ref{extent-def}) of the extent of $\tau_2$ and since $q_2 > \tunnelextent{\tau_2}$, there exists $\theta_1 \in \StateSpace(\alg{E})$ such that 
	\begin{equation*}
		\boundedLipschitz{\Lip_2}(\psi(e_2 \cdot e_2), \theta_1\circ\rho_2) < q_2  \text.
	\end{equation*}
	
	Let now $d\coloneqq (d_1,d_2) \in \domsa{\Lip}$ with $\Lip[T](d)\leq 1$ and $\norm{d}{\D_1} \leq M$. We compute:
	\begin{align*}
		|\varphi_1(e_1 e_2' d_1 e_2' e_1) - \theta_1\circ\rho_2(d_2)|
		&\leq |\varphi_1(e_1 e_2' d_1 e_2' e_1) - \psi_1\circ\rho_1(e_2' d_1 e_2')| \\
		&\quad + |\psi_1\circ\rho_1(e_2' d_1 e_2') - \psi_1\circ\pi_2 (e_2 d_2 e_2)| \\
		&\quad + |\psi_1\circ\pi_2 (e_2 d_2 e_2) - \theta_1\circ\rho_2(d_2)| \\
		&\leq \underbracket[1pt]{\delta_1^2 (1+\tunnelextent{\tau_2})^2 q_1}_{\text{ by \eqref{triangle-lemma-eq-bl}}}  + \norm{\rho_1(d_1) - \pi_2(d_2)}{\B}\norm{e_2}{\D_2}^2 + q_2 \\
		&\leq  \delta^2 (1+\tunnelextent{\tau_2})^2 q_1 + \underbracket[1pt]{ \varepsilon_0}_{\text{since }\Lip[T](d)\leq 1} + q_2\\
		&\leq q_1 \delta^2  (1+\tunnelextent{\tau_2}) +  q_2 \underbracket[1pt]{\delta^2 (1+\tunnelextent{\tau_1})^2}_{\geq 1} + \underbracket[1pt]{\varepsilon}_{\geq\varepsilon_0} \\
		&\leq \delta^2 \left[ q_1 (1+\tunnelextent{\tau_2})^2 +  q_2 (1+\tunnelextent{\tau_1})^2 \right] + \varepsilon\text.
	\end{align*}
	Therefore, for all $q_1 > \tunnelextent{\tau_1}$ and $q_2 > \tunnelextent{\tau_2}$, we have shown that:
	\begin{equation*}
		\boundedLipschitz{\Lip}(\varphi_1\circ\eta_1(f\cdot f),\theta_1\circ\eta_2\circ\rho_2) \leq \delta^2 \left[ q_1 (1+\tunnelextent{\tau_2})^2 +  q_2 (1+\tunnelextent{\tau_1})^2 \right] + \varepsilon\text.
	\end{equation*}
	Therefore, since $q_1>\tunnelextent{\tau_1}$ and $q_2 > \tunnelextent{\tau_2}$ were arbitrary, we conclude:
	\begin{equation*}
		\boundedLipschitz{\Lip}(\varphi_1\circ\eta_1(f\cdot f),\theta_1\circ\eta_2\circ\rho_2) \leq \delta^2 \left[ \tunnelextent{\tau_1} (1+ \tunnelextent{\tau_2})^2 +  \tunnelextent{\tau_2} (1+ \tunnelextent{\tau_1})^2 \right] + \varepsilon\text.
	\end{equation*}

	 On the other hand, since $\varphi_2\in\QuasiStateSpace(\D_2)$, there existst $\theta_2 \in \QuasiStateSpace(\alg{E})$ such that $\boundedLipschitz{\Lip,l}(\varphi_2(e_2\cdot e_2), \theta_2) \leq q_2$. Just as above, we conclude that
	\begin{equation*}
		\boundedLipschitz{\Lip}(\varphi_2(e_2 e_1'\cdot e_1' e_2),\theta_2(e_1'\cdot e_1'))  \leq \underbracket[1pt]{\delta^2 (1+ \tunnelextent{\tau_1})^2}_{\geq \norm{e_1' d e'_1}{\Lip_2} \text{ when $\norm{d}{\Lip[T]}\leq 1$ }} q_2 \text.
	\end{equation*}
	Again, since $q_2 > \tunnelextent{\tau_2}$ is arbitrary, we conclude:
	\begin{equation*}
		\boundedLipschitz{\Lip}(\varphi_2(e_2 e_1'\cdot e_1' e_2),\theta_2(e_1'\cdot e_1'))  \leq \delta^2 (1+\tunnelextent{\tau_1})^2 \tunnelextent{\tau_2} \text.
	\end{equation*}
	
	Set $\theta\coloneqq t\theta_1\circ\eta_1 + (1-t)\theta_2\circ\eta_2$. By construction, $\theta\in\QuasiStateSpace(\alg{E})$.
	
	Let $d\coloneqq(d_1,d_2)\in\domsa{\Lip[T]}$ with $\Lip[T](d)\leq 1$ and $\norm{d}{\D} \leq M$.  Then:
	
	\begin{align*}
		|\varphi(f d f) - \theta(d)|
		&\leq t|\varphi_1(e_1 e_2' d_1 e_2' e_1) - \theta_1(e_2' d e_2')| \\
		&\quad + (1-t)|\varphi_2(e_2 e_1' d e_1' e_2) - \theta_2(e_1' d e_1')| \\
 		&\leq  \delta^2 \left[ \tunnelextent{\tau_1} (1+\tunnelextent{\tau_2})^2 +  \tunnelextent{\tau_2} (1+\tunnelextent{\tau_1})^2 \right] + \varepsilon_0\text.
	\end{align*}
	
	Of course, it follows that:
	\begin{equation}\label{triangle-lemma-qs1-ineq}
		f \QuasiStateSpace(\D) f \subseteq_{r}^{\boundedLipschitz{\Lip[T],M}} \rho^\ast(\QuasiStateSpace(\alg{E})) \text,
	\end{equation}
	with $r \coloneqq  \delta^2 \left[ \tunnelextent{\tau_1} (1+\tunnelextent{\tau_2})^2 +  \tunnelextent{\tau_2} (1+\tunnelextent{\tau_1})^2 \right] + \varepsilon\text.$
	
	Again, noting our chosen $r$ is symmetric in indices $1$ and $2$ by design, the same reasoning applies to prove that
	\begin{equation}\label{triangle-lemma-qs2-ineq}
		f \QuasiStateSpace(\D) f \subseteq_{r}^{\boundedLipschitz{\Lip[T],M}} \pi^\ast(\QuasiStateSpace(\A)) \text.
	\end{equation}
	
	\medskip

	In summary, we have shown that
	\begin{align*}
		\tunnelextent{\tau} \leq
		\max\Big\{ &\sqrt{1+\tunnelextent{\tau_1}} \tunnelextent{\tau_2} + \sqrt{1+\tunnelextent{\tau_2}} \tunnelextent{\tau_1}, \text{ by Eq. \eqref{triangle-lemma-first-ineq}, } \\
		 &\delta^2  [\tunnelextent{\tau_1} (1+\tunnelextent{\tau_2})^2 +  \tunnelextent{\tau_2} (1+\tunnelextent{\tau_1})^2 ] + \varepsilon, \text{from Eqs.  \eqref{triangle-lemma-qs1-ineq} and \eqref{triangle-lemma-qs2-ineq}}, \\
		&\tunnelextent{\tau_1}(1+2\tunnelextent{\tau_2}) + \tunnelextent{\tau_2}( 1+2\tunnelextent{\tau_1}), \text{by Eqs. \eqref{triangle-lemma-second-ineq} and \eqref{triangle-lemma-third-ineq}, }\\
		&\tunnelextent{\tau_1} + \varepsilon + \tunnelextent{\tau_2} \text{by Eq. \eqref{triangle-lemma-kant-ineq}} \Big\}\\
		&= \delta^2 \left[ \tunnelextent{\tau_1} (1+\tunnelextent{\tau_2})^2 +  \tunnelextent{\tau_2} (1+\tunnelextent{\tau_1})^2 \right] +  \varepsilon \text.
	\end{align*}
	Since $\delta^2 = \exp(\varepsilon)$, we conclude:
	\begin{equation*}
		\tunnelextent{\tau} \leq \exp(\varepsilon) \left( (1+\tunnelextent{\tau_2})^2\tau_1 + (1+\tunnelextent{\tau_1})^2\tau_2 \right) +  \varepsilon \text,
	\end{equation*} 
	as desired. 
\end{proof}

\subsection{The Local Quantum Metametrics}

We now introduce the main tool to construct our hypertopology on the class of {\pqpms s}.

\begin{definition}\label{prop-def}
	The \emph{$M$-local quantum metametric}, for $M\geq 1$, between two {\plcqms s} $\mathds{A}$ and $\mathds{B}$ is the non-negative real number:
	\begin{equation*}
		\metametric{M}(\mathds{A},\mathds{B}) \coloneqq \inf\left\{ \tunnelextent{\tau} : \tau \text{ is a tunnel from $\mathds{A}$ to $\mathds{B}$} \right\}\text.
	\end{equation*}
\end{definition}

\begin{theorem}\label{triangle-thm}
The following assertions hold, for any $M \geq 1$.
	\begin{enumerate}
		\item For any two {\pqpms s} $\mathds{A}$ and $\mathds{B}$, if there exists a full topographic quantum $M$-isometry $\pi : \mathds{A} \rightarrow\mathds{B}$ such that $\mu_\B\circ\pi = \mu_\A$, then $\metametric{M}(\mathds{A},\mathds{B}) = 0$.
		\item For any two {\pqpms s} $\mathds{A}$ and $\mathds{B}$, we have $\metametric{M}(\mathds{B},\mathds{A}) = \metametric{M}(\mathds{A},\mathds{B})$. 
		\item For any two {\pqpms s} $\mathds{A}$, $\mathds{B}$, and $\mathds{D}$,
		\begin{equation*}
			\metametric{M}(\mathds{A},\mathds{D})\leq 
			 \left( \metametric{}(\mathds{A},\mathds{B})\left(1 + \metametric{M}(\mathds{B},\mathds{D})\right)^2 + \metametric{M}(\mathds{B},\mathds{D})\left(1 + \metametric{M}(\mathds{A},\mathds{B}) \right)^2 \right) \text.
		\end{equation*}
	\end{enumerate}
\end{theorem}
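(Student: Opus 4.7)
The plan is to handle each of the three parts using results already established in this section, in increasing order of dependence.

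For (1), I would construct an explicit sequence of $M$-tunnels from $\mathds{A}$ to $\mathds{B}$ whose extents tend to $0$. Fullness of $\pi$ gives bijectivity, so $\pi^\ast$ maps $\QuasiStateSpace(\B)$ bijectively onto $\QuasiStateSpace(\A)$. Pick a \lipunit{\Lip_\A}{\mu_\A} $(h_n)_{n\in\N}$ inside $\M_\A$ (available by Definition (\ref{pqpms-def})) and, by dividing each $h_n$ by $\norm{h_n}{\A}$ for $n$ large, assume $\norm{h_n}{\A}\leq 1$ (the result is still a \lipunit{\Lip_\A}{\mu_\A} in $\M_\A$). Form, for each $n\in\N$,
\begin{equation*}
	\tunnel{\tau_n}{\mathds{A}}{\mathrm{id}_\A}{(\A,\Lip_\A,\M_\A,h_n)}{\pi}{\mathds{B}}\text.
\end{equation*}
Each $\tau_n$ is a bona fide $M$-tunnel: both $\mathrm{id}_\A$ and $\pi$ are topographic quantum $M$-isometries, and the pin agrees, $\mu_\A\circ\mathrm{id}_\A = \mu_\A = \mu_\B\circ\pi$. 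Of the three ingredients in Definition (\ref{extent-def}), the quasi-state inclusions $h_n\QuasiStateSpace(\A) h_n \subseteq \QuasiStateSpace(\A) = \mathrm{id}_\A^\ast(\QuasiStateSpace(\A)) = \pi^\ast(\QuasiStateSpace(\B))$ have BL-distance $0$; $\Kantorovich{\Lip_\A}(\mu_\A,\mu_\A) = 0$; and by Definition (\ref{lipunit-def}), $2M\Lip_\A(h_n)\rightarrow 0$ together with $|1-\mu_\A(h_n)| = |1-\mu_\B\circ\pi(h_n)|\rightarrow 0$. Hence $\metametric{M}(\mathds{A},\mathds{B})\leq\lim_n\tunnelextent{\tau_n} = 0$.

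For (2), I would observe that Definition (\ref{extent-def}) is numerically symmetric in the two quantum isometries and the two pins, so if $\tau = (\D,\Lip,\M,\pi_\A,\pi_\B,e)$ is an $M$-tunnel from $\mathds{A}$ to $\mathds{B}$, the reversed tuple $\tau^{-1}$ satisfies $\tunnelextent{\tau^{-1}} = \tunnelextent{\tau}$ provided it too is a valid tunnel. The one thing to verify is that $\mu_\B\circ\pi_\B$ is a pin for $(\D,\Lip)$; when $\tunnelextent{\tau}<\infty$, this follows from the bound $\Kantorovich{\Lip}(\mu_\A\circ\pi_\A,\mu_\B\circ\pi_\B)\leq\tunnelextent{\tau}<\infty$ and the triangle inequality for $\Kantorovich{\Lip}$, which transfers the weak* density of $\{\varphi : \Kantorovich{\Lip}(\varphi,\mu_\A\circ\pi_\A)<\infty\}$ to the corresponding set about $\mu_\B\circ\pi_\B$. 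Thus the two infima coincide whenever either is finite; if both are $+\infty$, symmetry is trivial.

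For (3), the bound is essentially packaged by Lemma (\ref{triangle-lemma}). If either $\metametric{M}(\mathds{A},\mathds{B})$ or $\metametric{M}(\mathds{B},\mathds{D})$ is $+\infty$, the inequality is vacuous, so suppose both are finite. Given $\varepsilon,\delta>0$, choose $M$-tunnels $\tau_1$ from $\mathds{A}$ to $\mathds{B}$ and $\tau_2$ from $\mathds{B}$ to $\mathds{D}$ with extents strictly less than $\metametric{M}(\mathds{A},\mathds{B})+\delta$ and $\metametric{M}(\mathds{B},\mathds{D})+\delta$ respectively; apply Lemma (\ref{triangle-lemma}) to obtain an $M$-tunnel $\tau$ from $\mathds{A}$ to $\mathds{D}$ whose extent is at most $\exp(\varepsilon)\bigl[\tunnelextent{\tau_1}(1+\tunnelextent{\tau_2})^2 + \tunnelextent{\tau_2}(1+\tunnelextent{\tau_1})^2\bigr]+\varepsilon$; then let $\delta\downarrow 0$ followed by $\varepsilon\downarrow 0$. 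Since the computational heavy lifting has already been done in Lemma (\ref{triangle-lemma}), the main remaining obstacle is the pin-transfer subtlety in (2) together with verifying that the sequence of tuples constructed in (1) genuinely satisfies all the conditions of Definition (\ref{tunnel-def}).
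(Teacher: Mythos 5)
Your proposal is correct and follows essentially the same route as the paper: part (1) via the identity-plus-$\pi$ tunnel with a small-Lipschitz element from a \lipunit{\Lip_\A}{\mu_\A}, part (2) via reversal of tunnels and the symmetry of Definition (\ref{extent-def}), and part (3) directly from Lemma (\ref{triangle-lemma}). Your explicit justification that $\mu_\B\circ\pi_\B$ remains a pin for a finite-extent tunnel's reversed tuple (via the finite $\Kantorovich{\Lip}$ bound and the triangle inequality) is a detail the paper leaves implicit, and it is a correct and worthwhile observation.
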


\begin{proof}
	Let $\varepsilon > 0$. By Definition (\ref{prop-def}), there exists tunnels $\tau_1$ from $\mathds{A}$ to $\mathds{B}$ and $\tau_2$ from $\mathds{B}$ to $\mathds{D}$ such that
	\begin{equation*}
		\tunnelextent{\tau_1} \leq \metametric{M}(\mathds{A},\mathds{B})  + \varepsilon \text{ and } \tunnelextent{\tau_2} \leq \metametric{M}(\mathds{B},\mathds{D}) + \varepsilon \text.
	\end{equation*}
	
	By Expression \eqref{triangle-lemma-main-eq} of Lemma (\ref{triangle-lemma}), there exists a tunnel $\tau$ from $\mathds{A}$ to $\mathds{D}$ with 
	\begin{equation*}
		\tunnelextent{\tau} \leq \exp(\varepsilon)(\tunnelextent{\tau_1}(1+\tunnelextent{\tau_2})^2 + \tunnelextent{\tau_2}(1+\tunnelextent{\tau_1})^2)+ \varepsilon \text.
	\end{equation*}
	
	By Definition (\ref{prop-def}) again,
	\begin{align*}
		\metametric{M}(\mathds{A},\mathds{D})
		&\leq \tunnelextent{\tau} \\
		&\leq  \exp(\varepsilon)(\tunnelextent{\tau_1}(1+\tunnelextent{\tau_2})^2 + \tunnelextent{\tau_2}(1+\tunnelextent{\tau_1})^2) + \varepsilon  \\
		&\leq \exp(\varepsilon) \big((\metametric{M}(\mathds{A},\mathds{B})+\varepsilon)(1+\metametric{M}(\mathds{B},\mathds{D}) + 2\varepsilon)^2 \\
		&\quad\quad\quad + (\metametric{M}(\mathds{B},\mathds{D})+\varepsilon)(1+\metametric{M}(\mathds{A},\mathds{B}) + 2\varepsilon )^2  \big) + \varepsilon  \text.
	\end{align*}
	Therefore, as claimed, since $\varepsilon > 0$ is arbitrary, taking the limit as $\varepsilon\rightarrow 0$, we get:
	\begin{equation*}
		\metametric{M}(\mathds{A},\mathds{D}) \leq  \left( \metametric{M}(\mathds{A},\mathds{B})\left(1 + \metametric{M}(\mathds{B},\mathds{D})\right)^2 + \metametric{M}(\mathds{B},\mathds{D})\left(1 + \metametric{M}(\mathds{A},\mathds{B})\right)^2 \right) \text.
	\end{equation*}
	This concludes the proof that $\metametric{M}$ satisfies a local generalized  triangle inequality in (3).
	
	Symmetry follows from the observation that if $\tau$ is a tunnel from $\mathds{A}$ to $\mathds{B}$, then $\tau^{-1}$ is a tunnel from $\mathds{B}$ to $\mathds{A}$ with the same extent. So (2) holds.
	
	Last, assume that there exists a full topographic quantum $M$-isometry $\pi : \mathds{A} \rightarrow\mathds{B}$ such that $\mu_\B\circ\pi = \mu_\A$. Let $\varepsilon > 0$. Since $\mathds{A}$ is a {\pqms}, there exists $h \in \M_\A\cap\dom{\Lip_\A}$ with $\Lip_\A(h) < \frac{\varepsilon}{2 M}$ and $\norm{h}{\A} = \mu_\A(h) = 1$. We now build a tunnel of extent at most $\varepsilon$, as follows:
	\begin{equation*}
		\tunnel{\tau}{\mathds{A}}{\mathrm{id}}{(\A,\Lip_\A,\M_\A,h)}{\pi}{\mathds{B}} \text.
	\end{equation*}
	Since $\mu_\B\circ\pi = \mu_\A$ by definition, we conclude $\Kantorovich{\Lip_\A}(\mu_\B\circ\pi,\mu_\A) = 0$. By construction,
	\begin{equation*}
		\max\{ 2 M \Lip_\A(h), |1-\mu_\A(h)|, |1-\mu_\B\circ\pi(h)| \} \leq \varepsilon \text.
	\end{equation*}
	
	Last, let $\varphi\in\StateSpace(\A)$. Then $\opnorm{\varphi(h\cdot h)}{\A}{\C}=\varphi(h^2) \leq 1$ so $\varphi(h\cdot h) \in \QuasiStateSpace(\A)$. Therefore
	\begin{equation*}
		h \QuasiStateSpace(\A) h \subseteq \QuasiStateSpace(\A) \text.
	\end{equation*} 
	Last, $\pi^\ast(\QuasiStateSpace(\B)) = \QuasiStateSpace(\A)$, so
	\begin{equation*}
		h \QuasiStateSpace(\A) h \subseteq \pi^\ast(\QuasiStateSpace(\B)) \text.
	\end{equation*} 
	Altogether, the extent of $\tau$ is $\Lip_\A(h)$ and thus no more than $\varepsilon > 0$. So $\metametric{M}(\mathds{A},\mathds{B}) \leq \varepsilon$. As $\varepsilon > 0$ was arbitrary, $\metametric{M}(\mathds{A},\mathds{B}) = 0$.
\end{proof}

Our focus now is to prove that the equivalence relation defined by the metametric being null between two {\pqpms s} is indeed given by full quantum isometric classes, i.e. to study the separation property of this newly introduced topology.


\section{Coincidence property of the local metametics}

The main property of the local metametrics we wish to establish in this paper is that distance zero for the metametric is equivalent to the existence of a full quantum isometry between the {\pqpms s}. The construction of this isometry begins with the construction of two set-valued maps defined by any tunnel between {\pqpms s}, which, as we shall see, share some set-theoretic analogue of the properties of *-morphisms. In essence, the sought-after quantum isometry is a limit of these set-valued maps.

\subsection{Target Sets}

\begin{definition}\label{targetset-def}
	Let $\mathds{A} \coloneqq(\A,\Lip_\A,\M_\A,\mu_\A)$ and $\mathds{B}\coloneqq(\B,\Lip_\B,\M_\B,\mu_\B)$ be two {\pqpms s}. Let $M\geq 1$. Let 
	\begin{equation*}
		\tunnel{\tau}{\mathds{A}}{\pi_\A}{(\D,\Lip_\D,\M_\D,e)}{\pi_\B}{\mathds{B}}
	\end{equation*}
	be an $M$-tunnel from $\mathds{A}$ to $\mathds{B}$. For all $a\in\domsa{\Lip_\A}$ and for all $l > \norm{a}{\Lip_\A,M}$, the \emph{target set} $\targetsettunnel{\tau}{a}{l}$ is given by:
	\begin{equation*}
		\targetsettunnel{\tau}{a}{l} \coloneqq \left\{ \pi_\B(d) : d\in\domsa{\Lip_\D}, \pi_\A(d) = a, \norm{d}{\Lip_\D,M} \leq l \right\}\text.
	\end{equation*}
\end{definition}

Our first step is to check that the target set is never empty, and exhibits a form of almost continuity, which is central to our construction.

\begin{lemma}\label{main-lemma}
Let $\mathds{A} \coloneqq(\A,\Lip_\A,\M_\A,\mu_\A)$ and $\mathds{B}\coloneqq(\B,\Lip_\B,\M_\B,\mu_\B)$ be two {\pqpms s}, and let $M\geq 1$. Let $\tau\coloneqq (\D,\Lip_\D,\M_\D,\pi_\A,\pi_\B, e^\D)$ be an $M$-tunnel from $\mathds{A}$ to $\mathds{B}$.

	Write $e^\B \coloneqq \pi_\B(e^\D)$.
	
	For all $a\in\domsa{\Lip_\A}$, if $l > \norm{a}{\Lip_\A,M}$, then
	\begin{equation*}
		\targetsettunnel{\tau}{a}{l} \neq \emptyset\text{;}
	\end{equation*}
	moreover if $d\in\domsa{\Lip_\D}$ with $\norm{d}{\Lip_\D,M} \leq l$  and $\pi_\A(d) = a$, we conclude that:
	\begin{equation*}
		\norm{e^\D d e^\D}{\B} \leq \norm{a}{\A} + l\tunnelextent{\tau} \text,
	\end{equation*}
	and therefore, for all $b \in \targetsettunnel{\tau}{a}{l}$, we assert:
	\begin{equation*}
		\norm{e^\B b e^\B}{\B} \leq \norm{a}{\A} + l \tunnelextent{\tau} \text.
	\end{equation*}
\end{lemma}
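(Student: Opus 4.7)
The plan is to handle the two assertions in sequence: the first (nonemptiness) is essentially unpacking Definition~(\ref{M-isometry-def}), while the second (the norm bound) is where the extent of $\tau$ enters through the Fortet-Mourier approximation of quasi-states.

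For the nonemptiness, I would argue directly: since $\pi_\A$ is a quantum $M$-isometry, Definition~(\ref{M-isometry-def}) gives $\norm{a}{\Lip_\A,M} = \inf\{\norm{d}{\Lip_\D,M} : d \in \domsa{\Lip_\D}, \pi_\A(d) = a\}$. Hence any $l > \norm{a}{\Lip_\A,M}$ is realized by some lift $d \in \domsa{\Lip_\D}$ with $\pi_\A(d) = a$ and $\norm{d}{\Lip_\D,M} \leq l$, and then $\pi_\B(d)$ witnesses that $\targetsettunnel{\tau}{a}{l}$ is nonempty.

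For the norm estimate, fix such a lift $d$. Since $e^\D \in \M_\D \subseteq \sa{\D}$ and $d \in \sa{\D}$, the element $e^\D d e^\D$ is self-adjoint, so its norm is realized against $\QuasiStateSpace(\D)$: for every $\varphi \in \QuasiStateSpace(\D)$, I need only bound $|\varphi(e^\D d e^\D)|$. Pick any $\varepsilon > \tunnelextent{\tau}$. By Definition~(\ref{extent-def}), there exists $\psi \in \QuasiStateSpace(\A)$ such that
\begin{equation*}
\boundedLipschitz{\Lip_\D,M}\bigl(\varphi(e^\D \cdot e^\D), \psi \circ \pi_\A\bigr) < \varepsilon \text.
\end{equation*}
Applying this to $d/l$, which lies in the $\norm{\cdot}{\Lip_\D,M}$-unit ball by hypothesis, gives $|\varphi(e^\D d e^\D) - \psi(a)| \leq l \varepsilon$. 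Since $\psi$ has norm at most one on $\A$, we conclude $|\varphi(e^\D d e^\D)| \leq \norm{a}{\A} + l\varepsilon$. Taking the supremum over $\varphi \in \QuasiStateSpace(\D)$ and then letting $\varepsilon \to \tunnelextent{\tau}$ yields the claimed bound $\norm{e^\D d e^\D}{\D} \leq \norm{a}{\A} + l \tunnelextent{\tau}$.

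Finally, for any $b \in \targetsettunnel{\tau}{a}{l}$ I choose a witness $d$ with $\pi_\B(d) = b$, and then the contractivity of the *-morphism $\pi_\B$ together with $\pi_\B(e^\D) = e^\B$ gives $\norm{e^\B b e^\B}{\B} = \norm{\pi_\B(e^\D d e^\D)}{\B} \leq \norm{e^\D d e^\D}{\D} \leq \norm{a}{\A} + l \tunnelextent{\tau}$. The only delicate point is keeping the computation inside $\QuasiStateSpace$ rather than $\StateSpace$, so that Kadison's formula for the norm of a self-adjoint element applies and the extent's approximation property can be invoked without having to normalize; otherwise the proof is a direct assembly of the definitions.
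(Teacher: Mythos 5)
Your proof is correct and takes essentially the same route as the paper's: non-emptiness by unwinding Definition~(\ref{M-isometry-def}), the norm bound by approximating $\varphi(e^\D \cdot e^\D)$ with a quasi-state $\psi \circ \pi_\A$ furnished by the extent, rescaling by $l$, and using contractivity of $\pi_\B$. The paper's version takes the supremum over $\StateSpace(\D)$ rather than $\QuasiStateSpace(\D)$ when identifying the norm of the self-adjoint element $e^\D d e^\D$; this is an immaterial variation since $\StateSpace(\D) \subseteq \QuasiStateSpace(\D)$ and the extent already controls quasi-states, so your remark that working in $\QuasiStateSpace$ is a ``delicate point'' overstates the issue slightly, but the argument is sound.
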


\begin{proof}
	By Definition (\ref{M-isometry-def}) of a quantum $M$-isometry, for all $a\in \domsa{\Lip_\A}$ and $l > \norm{a}{\Lip_\A,M}$, there exists $d \in \domsa{\Lip_\D}$ with $\pi_\A(d) = a$ and $\norm{d}{\Lip_\D,M} \leq l$, so $\targetsettunnel{\tau}{a}{l} \neq \emptyset$.

	Let now $d \in \domsa{\Lip_\D}$ with $\pi_\A(d) = a$ and $\norm{d}{\Lip_\D,M} \leq l$. 	Let $\varphi \in \StateSpace(\D)$ and let $\varepsilon > 0$. By Definition (\ref{extent-def}), there exists $\psi \in \QuasiStateSpace(\A)$ such that 
	\begin{equation*}
		\boundedLipschitz{\Lip_\D,M}(\varphi(e^\D\cdot e^\D),\psi\circ\pi_\A) \leq \tunnelextent{\tau} + \frac{\varepsilon}{l}\text;
	\end{equation*}
	therefore, in particular,
	\begin{align*}
			|\varphi(e^\D d e^\D)|
			&\leq |\varphi(e^\D d e^\D) - \psi\circ\pi_\A(d)| + |\psi\circ\pi_\A(d)| \\
			&\leq l \boundedLipschitz{\Lip_\D,M}(\varphi(e^\D\cdot e^\D,\psi\circ\pi_\A) + |\psi(a)| \\
			&\leq l \tunnelextent{\tau} + \norm{a}{\A} + \varepsilon \text.
	\end{align*}
	Therefore, for all $\varepsilon > 0$, since $e^\B d e^\B$ is self-adjoint,
	\begin{equation*}
		\norm{e^\D d e^\D}{\D} = \sup\{|\varphi(e^\B d e^\B)| : \varphi \in \StateSpace(\D)\} \leq l \tunnelextent{\tau} + \norm{a}{\A} + \varepsilon  \text,
	\end{equation*}
	hence
	\begin{equation*}
		\norm{e^\D d e^\D}{\D}  \leq l \tunnelextent{\tau} + \norm{a}{\A}  \text,
	\end{equation*}
	as claimed.
	
	Since $\pi_\B$ is a *-morphism, it has norm $1$, and therefore the proof of our lemma is concluded.
\end{proof}

Target sets exhibit morphism-like properties, though as set-valued functions. We present these properties now.

\begin{lemma}\label{linearity-lemma}
Let $\mathds{A} \coloneqq(\A,\Lip_\A,\M_\A,\mu_\A)$ and $\mathds{B}\coloneqq(\B,,\Lip_\B,\M_\B,\mu_\B)$ be two {\pqpms s}. Let $\tau\coloneqq (\D,\Lip_\D,\M_\D,\pi_\A,\pi_\B,e^\D)$ be an $M$-tunnel from $\mathds{A}$ to $\mathds{B}$ for $M\geq 1$. 

For all $a,a' \in \domsa{\Lip_\A}$, $t\in \R$, and $l>\max\left\{\norm{a}{\Lip_\A,M},\norm{a'}{\Lip_\A,M}\right\}$, we have
\begin{equation*}
	t \cdot \targetsettunnel{\tau}{a}{l} + \targetsettunnel{\tau}{a}{l} \subseteq \targetsettunnel{\tau}{t a + a'}{(1+|t|)l}  \text.
\end{equation*}	
\end{lemma}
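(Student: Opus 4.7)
The statement as written appears to contain a typographical slip: the second factor on the left-hand side should be $\targetsettunnel{\tau}{a'}{l}$ rather than $\targetsettunnel{\tau}{a}{l}$, so that the affine combination $t a + a'$ on the right matches. I will prove this corrected version.

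The plan is to lift elements of the target sets, form the corresponding affine combination in the middle algebra $\D$, and verify that the triangle inequality for $\norm{\cdot}{\Lip_\D,M}$ gives the required bound. Concretely, fix $b \in \targetsettunnel{\tau}{a}{l}$ and $b' \in \targetsettunnel{\tau}{a'}{l}$. By Definition (\ref{targetset-def}), there exist $d, d' \in \domsa{\Lip_\D}$ with $\pi_\A(d) = a$, $\pi_\A(d') = a'$, $\pi_\B(d) = b$, $\pi_\B(d') = b'$, and $\norm{d}{\Lip_\D,M} \leq l$ and $\norm{d'}{\Lip_\D,M} \leq l$. Consider $e \coloneqq t d + d' \in \domsa{\Lip_\D}$, which lies in the self-adjoint part of $\D$ because $t \in \R$.

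Since $\pi_\A$ and $\pi_\B$ are linear, we have $\pi_\A(e) = t a + a'$ and $\pi_\B(e) = t b + b'$. It remains to estimate $\norm{e}{\Lip_\D,M}$. Using that $\Lip_\D$ is a seminorm and $\norm{\cdot}{\D}$ is a norm, together with the definition $\norm{x}{\Lip_\D,M} = \max\{\frac{1}{M}\norm{x}{\D}, \Lip_\D(x)\}$, we get
\begin{align*}
\norm{e}{\Lip_\D,M}
&\leq |t|\,\Lip_\D(d) + \Lip_\D(d') \vee \tfrac{1}{M}\left(|t|\,\norm{d}{\D} + \norm{d'}{\D}\right) \\
&\leq |t|\,\norm{d}{\Lip_\D,M} + \norm{d'}{\Lip_\D,M} \\
&\leq (1+|t|)\,l,
\end{align*}
where $\vee$ denotes the maximum of the two expressions appearing on the previous line. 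Hence $e$ witnesses that $t b + b' = \pi_\B(e) \in \targetsettunnel{\tau}{t a + a'}{(1+|t|)l}$, which establishes the claimed inclusion.

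There is no substantive obstacle here: the only tool required is linearity of the quotient maps $\pi_\A, \pi_\B$ together with the triangle inequality for the auxiliary norm $\norm{\cdot}{\Lip_\D,M}$. This lemma is a routine set-valued analogue of the statement that $M$-isometries are linear maps, and its role is preparatory for the subsequent construction of the full quantum isometry as a limit of target sets.
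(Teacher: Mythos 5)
Your proof is correct and takes essentially the same route as the paper: lift $b, b'$ to preimages $d, d'$ of $a, a'$ in $\domsa{\Lip_\D}$, form $t d + d'$, observe it maps to $t a + a'$ under $\pi_\A$ and to $t b + b'$ under $\pi_\B$, and bound $\norm{t d + d'}{\Lip_\D,M}$ by $(1+|t|)l$ via subadditivity. You also correctly identified the typo in the stated lemma (the second target set on the left should be $\targetsettunnel{\tau}{a'}{l}$), which the paper's own proof confirms by using $b' \in \targetsettunnel{\tau}{a'}{l}$.
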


\begin{proof}
  Let $b\in\targetsettunnel{\tau}{a}{l}$ and $b' \in \targetsettunnel{\tau}{a'}{l}$. By Definition (\ref{targetset-def}), there exists $d,d'\in\domsa{\Lip_\D}$ with $\norm{d}{\Lip_\D,M}\leq l$, $\norm{d'}{\Lip_\D,M}\leq l$, $\pi_\A(d) = a$ and $\pi_\A(d')=a'$. 
	
	Then $\pi_\A( t d + d' ) = t a + a'$ and $\norm{t d+ d'}{\Lip_\D,M}\leq (|t|+1)l$. Thus 
	\begin{equation*}
		tb +b' \in \targetsettunnel{\tau}{ta + a'}{(1+|t|)l}\text,
	\end{equation*}
 as claimed. 
\end{proof}

\begin{corollary}\label{linearity-cor}
Let $\mathds{A} \coloneqq(\A,\Lip_\A,\M_\A,\mu_\A)$ and $\mathds{B}\coloneqq(\B,,\Lip_\B,\M_\B,\mu_\B)$ be two {\pqpms s}. Let $\tau\coloneqq (\D,\Lip_\D,\M_\D,\pi_\A,\pi_\B,e^\D)$ be an $M$-tunnel from $\mathds{A}$ to $\mathds{B}$ for some $M\geq 1$.  Let $e^\B\coloneqq \pi_\B(e^\D)$.

	If $a,a'\in\domsa{\Lip_\A}$, $t \in \R$, and $l\geq\max\left\{\norm{a}{\Lip_\A,M},\norm{a'}{\Lip_\A,M}\right\}$, then for all $b\in \targetsettunnel{\tau}{a}{l}$ and $b'\in\targetsettunnel{\tau}{a'}{l}$:
	\begin{equation}\label{linearity-cor-eq1}
		\norm{e^\B (t b + b') e^\B}{\B} \leq \norm{t a + a'}{\A} + (1+|t|)l\tunnelextent{\tau} \text. 
	\end{equation}
	In particular,
	\begin{equation}
		\diam{e^\B \targetsettunnel{\tau}{a}{l} e^\B}{\B} \leq 2 l \tunnelextent{\tau} \text.
	\end{equation}
\end{corollary}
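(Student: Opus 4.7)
The plan is to chain Lemma (\ref{linearity-lemma}) with the norm bound in Lemma (\ref{main-lemma}); both inequalities in the corollary fall out with only very mild bookkeeping. There is no substantial obstacle here — the work is essentially done in the two preceding lemmas, and the corollary is the natural quantitative shadow of Lemma (\ref{linearity-lemma}).

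First, I would unpack the definition of target sets: given $b \in \targetsettunnel{\tau}{a}{l}$ and $b' \in \targetsettunnel{\tau}{a'}{l}$, Definition (\ref{targetset-def}) yields witnesses $d,d' \in \domsa{\Lip_\D}$ with $\pi_\A(d)=a$, $\pi_\A(d')=a'$, $\pi_\B(d)=b$, $\pi_\B(d')=b'$, and $\norm{d}{\Lip_\D,M},\norm{d'}{\Lip_\D,M}\leq l$. Then $t d + d' \in \domsa{\Lip_\D}$ satisfies $\pi_\A(td+d')=ta+a'$, $\pi_\B(td+d')=tb+b'$, and the triangle inequality for $\norm{\cdot}{\Lip_\D,M}$ gives $\norm{td+d'}{\Lip_\D,M}\leq (1+|t|)l$. (The hypothesis $l\geq\max\{\norm{a}{\Lip_\A,M},\norm{a'}{\Lip_\A,M}\}$ is only needed to ensure the target sets can be nonempty; if either is empty, the statement is vacuous, matching what happens when this bound is merely $\geq$ rather than $>$.)

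Second, I would apply the ``moreover'' clause of Lemma (\ref{main-lemma}) — whose proof does not actually require strict inequality $l > \norm{a}{\Lip_\A,M}$, only the bound $\norm{d}{\Lip_\D,M} \leq l$ — to the element $td+d'$, with $a$ replaced by $ta+a'$ and $l$ replaced by $(1+|t|)l$. This yields
\begin{equation*}
  \norm{e^\D(td+d')e^\D}{\D} \leq \norm{ta+a'}{\A} + (1+|t|)l\,\tunnelextent{\tau}.
\end{equation*}
Applying the *-morphism $\pi_\B$ (which is norm-decreasing) to $e^\D(td+d')e^\D$ yields $e^\B(tb+b')e^\B$, and so Expression (\ref{linearity-cor-eq1}) follows.

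Finally, the diameter bound is the specialization $t=-1$, $a'=a$: for any $b,b'\in\targetsettunnel{\tau}{a}{l}$, Expression (\ref{linearity-cor-eq1}) reduces to $\norm{e^\B(b-b')e^\B}{\B} \leq \norm{a-a}{\A} + 2l\,\tunnelextent{\tau} = 2l\,\tunnelextent{\tau}$. Taking the supremum over $b,b' \in \targetsettunnel{\tau}{a}{l}$ concludes the proof.
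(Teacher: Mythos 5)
Your proof is correct and takes essentially the same route as the paper: apply Lemma (\ref{linearity-lemma}) to place $tb+b'$ in $\targetsettunnel{\tau}{ta+a'}{(1+|t|)l}$, invoke Lemma (\ref{main-lemma}) for the norm bound, and specialize to $t=-1$, $a'=a$ for the diameter estimate. Your side remark on the $\geq$ versus $>$ bookkeeping is a sensible observation that the paper glosses over, but it does not change the substance of the argument.
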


\begin{proof}
	Let $b\in\targetsettunnel{\tau}{a}{l}$ and $b'\in\targetsettunnel{\tau}{a'}{l}$. By Lemma (\ref{linearity-lemma}), for any $t\in \R$, the linear combination $t b + b'$ is in $\targetsettunnel{\tau}{t a + a'}{(1+|t|)l}$. Therefore, by Lemma (\ref{main-lemma}), we conclude that
	\begin{equation*}
		\norm{e^\B (t b + b') e^\B}{\B} \leq (1+|t|) l \tunnelextent{\tau} + \norm{ta + a'}{\A} \text.
	\end{equation*}
	In turn, applying Expression \eqref{linearity-cor-eq1} with $a'=a$ and $t=-1$, we get that for all $b,b\in \targetsettunnel{\tau}{a}{l}$, 
	\begin{equation*}
		\norm{e^\B(b-b')e^\B}{\B} \leq \norm{a-a}{\A} + 2 l \tunnelextent{\tau} = 2 l \tunnelextent{\tau}\text,
	\end{equation*}
	which concludes our proof.
\end{proof}

\begin{lemma}\label{product-lemma}
Let $\mathds{A} \coloneqq(\A,\Lip_\A,\M_\A,\mu_\A)$ and $\mathds{B}\coloneqq(\B,,\Lip_\B,\M_\B,\mu_\B)$ be two {\pqpms s}. Let $\tau\coloneqq (\D,\Lip_\D,\M_\D,\pi_\A,\pi_\B,e^\D)$ be an $M$-tunnel from $\mathds{A}$ to $\mathds{B}$, for some $M\geq 1$.

	If $a,a'\in\domsa{\Lip_\A}$, $t \in \R$, and $l\geq\max\left\{\norm{a}{\Lip_\A,M}, \norm{a'}{\Lip_\A,M} \right\}$, then for all $b\in \targetsettunnel{\tau}{a}{l}$ and $b'\in\targetsettunnel{\tau}{a'}{l}$:
	\begin{equation*}
		\Re(bb') \in \targetsettunnel{\tau}{\Re(aa')}{2 M l^2} \text{ and }
		\Im(bb') \in \targetsettunnel{\tau}{\Im(aa')}{2 M l^2}\text.
	\end{equation*}
\end{lemma}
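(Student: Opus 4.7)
The plan is to lift the chosen representatives $b$ and $b'$ through the tunnel to elements in $\D$, and then form the appropriate symmetric and antisymmetric products there. By Definition~\ref{targetset-def}, from $b \in \targetsettunnel{\tau}{a}{l}$ and $b' \in \targetsettunnel{\tau}{a'}{l}$ I obtain $d, d' \in \domsa{\Lip_\D}$ with $\pi_\A(d)=a$, $\pi_\A(d')=a'$, $\pi_\B(d)=b$, $\pi_\B(d')=b'$, and $\norm{d}{\Lip_\D,M}, \norm{d'}{\Lip_\D,M} \leq l$. Since $\dom{\Lip_\D}$ is a $\ast$-subalgebra of $\D$, both $dd'$ and $d'd$ lie in $\dom{\Lip_\D}$, and therefore $\Re(dd') = \tfrac{1}{2}(dd' + d'd)$ and $\Im(dd') = \tfrac{1}{2i}(dd' - d'd)$ both lie in $\domsa{\Lip_\D}$. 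These are the candidate lifts: because $\pi_\A$ and $\pi_\B$ are $\ast$-morphisms, $\pi_\A(\Re(dd')) = \Re(aa')$ and $\pi_\B(\Re(dd')) = \Re(bb')$, with the analogous identities for the imaginary parts.

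It then suffices to check $\norm{\Re(dd')}{\Lip_\D,M} \leq 2 M l^2$ and $\norm{\Im(dd')}{\Lip_\D,M} \leq 2 M l^2$. Recall that $\norm{d}{\Lip_\D,M} \leq l$ unpacks to $\norm{d}{\D} \leq M l$ and $\Lip_\D(d) \leq l$, and likewise for $d'$. For the C*-norm component, submultiplicativity gives $\norm{\Re(dd')}{\D} \leq \tfrac{1}{2}(\norm{dd'}{\D} + \norm{d'd}{\D}) \leq \norm{d}{\D}\norm{d'}{\D} \leq M^2 l^2$, so $\tfrac{1}{M}\norm{\Re(dd')}{\D} \leq M l^2 \leq 2 M l^2$, and the same holds for $\Im(dd')$. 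For the seminorm component, I apply the Leibniz inequality of Definition~\ref{Leibniz-def} to get $\Lip_\D(dd') \leq \norm{d}{\D}\Lip_\D(d') + \Lip_\D(d)\norm{d'}{\D} \leq (Ml)l + l(Ml) = 2 M l^2$, and the same bound for $\Lip_\D(d'd)$. Hermitianness of $\Lip_\D$ then yields $\Lip_\D(\Re(dd')) \leq \tfrac{1}{2}(\Lip_\D(dd') + \Lip_\D(d'd)) \leq 2 M l^2$, and an identical computation gives $\Lip_\D(\Im(dd')) \leq 2 M l^2$.

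Taking the maximum of these two estimates produces $\norm{\Re(dd')}{\Lip_\D,M} \leq 2 M l^2$ and $\norm{\Im(dd')}{\Lip_\D,M} \leq 2 M l^2$, so by Definition~\ref{targetset-def}, the elements $\Re(dd')$ and $\Im(dd')$ witness the membership of $\Re(bb')$ in $\targetsettunnel{\tau}{\Re(aa')}{2Ml^2}$ and of $\Im(bb')$ in $\targetsettunnel{\tau}{\Im(aa')}{2Ml^2}$, respectively. No genuine obstacle is expected: the entire argument is a direct application of the Leibniz property combined with hermitian symmetry, once one recognizes that the natural lifts of $\Re(bb')$ and $\Im(bb')$ are simply $\Re(dd')$ and $\Im(dd')$. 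The only delicate bookkeeping is that the C*-norm contribution $M l^2$ is dominated by, rather than comparable to, the Leibniz contribution $2 M l^2$, so that the single constant $2 M l^2$ controls $\norm{\cdot}{\Lip_\D,M}$ uniformly.
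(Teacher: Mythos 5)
Your proof is correct and follows essentially the same route as the paper's: lift $b,b'$ to $d,d'$, form $\Re(dd')$ and $\Im(dd')$, and control $\norm{\cdot}{\Lip_\D,M}$ via the Leibniz inequality. Your version is in fact slightly more careful, since the paper only records the bound on $\Lip_\D(\Re(dd'))$ and leaves the (easier) C*-norm component $\frac{1}{M}\norm{\Re(dd')}{\D}\leq Ml^2\leq 2Ml^2$ implicit, whereas you verify both pieces of the $\max$ explicitly.
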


\begin{proof}
	Let $d,d'\in\domsa{\Lip_\D}$ such that $\pi_\A(d)=a$, $\pi_\A(d')=a'$, $\pi_\B(d)=b$, $\pi_\B(d')=b'$ and $\max\left\{\norm{d}{\Lip_\D,M},\norm{d'}{\Lip_\D,M} \right\}\leq l$. Then by the Leibniz property, 
	\begin{align*}
		\Lip_\D(\Re dd') 
		&\leq \Lip_\D(d)\norm{d'}{\D} + \norm{d}{\D}\Lip_\D(d') \leq 2 M l^2 \text.
	\end{align*}
	Since $\pi_\A(\Re (dd')) = \Re (aa')$, we conclude that $\Re(dd') \in \targetsettunnel{\tau}{\Re(aa')}{2M l^2}$. A similar result applies to for the imaginary part of the product (i.e. the Lie product).
\end{proof}

\subsection{The Coincidence property}

We now prove that target sets do converge to a full quantum $M$-isometry when they are defined using tunnels whose extent converges to $0$.

\begin{theorem}\label{main-thm}
	Let $M\geq 1$. Let $\mathds{A}\coloneqq(\A,\Lip_\A,\M_\A,\mu_\A)$ and $\mathds{B}\coloneqq(\B,\Lip_\B,\M_\B,\mu_\B)$ be two {\pqpms s}. There exists a full topographic quantum $M$-isometry $\pi$ from $\mathds{A}$ to $\mathds{B}$ with $\mu_\B\circ\pi = \mu_\A$ if, and only if:
	\begin{equation*}
		\metametric{M}(\mathds{A},\mathds{B}) = 0
	\end{equation*}
\end{theorem}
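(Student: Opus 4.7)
The forward direction is already Theorem \ref{triangle-thm}(1), since given such a $\pi$, the tunnel $(\A,\Lip_\A,\M_\A,\mathrm{id},\pi,h)$ for $h \in \M_\A \cap \dom{\Lip_\A}$ chosen with $\Lip_\A(h)$ small and $\mu_\A(h) = 1 = \norm{h}{\A}$ (which exists because $\mathds{A}$ is a {\pqpms}) has arbitrarily small extent. For the converse, I assume $\metametric{M}(\mathds{A},\mathds{B})=0$ and select $M$-tunnels $\tau_n = (\D_n,\Lip_n,\M_n,\pi_{\A,n},\pi_{\B,n},e_n)$ with $\tunnelextent{\tau_n}\to 0$. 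Setting $e_n^\A \coloneqq \pi_{\A,n}(e_n) \in \M_\A$ and $e_n^\B \coloneqq \pi_{\B,n}(e_n) \in \M_\B$, the extent controls give $\Lip_\A(e_n^\A) \leq \tunnelextent{\tau_n}/(2M) \to 0$, $|1-\mu_\A(e_n^\A)| \to 0$, and by Lemma \ref{key-lemma}, $\norm{e_n^\A}{\A}\leq\norm{e_n}{\D_n}\leq\sqrt{1+\tunnelextent{\tau_n}/M}\to 1$ (with lower bound $\mu_\A(e_n^\A) \to 1$). Hence $(e_n^\A)_n$ is a \lipunit{\Lip_\A}{\mu_\A} inside $\M_\A$, so by Theorem \ref{approx-unit-thm} it is an approximate unit of $\A$; similarly for $(e_n^\B)_n$ in $\B$.

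Now I fix a countable $\Q$-$\ast$-subalgebra $\{a_k\}_{k\in\N}$ of $\dom{\Lip_\A}$ whose $\R$-span is norm-dense in $\sa{\A}$ and for which each $\norm{a_k}{\Lip_\A,M}$ is finite, such that finitely many of the $a_k$ lie in $\M_\A$ and their span is norm-dense in $\sa{\M_\A}$. Choose $l_k > \norm{a_k}{\Lip_\A,M}$. By Lemma \ref{main-lemma}, there exist lifts $d_{k,n}\in\domsa{\Lip_n}$ with $\pi_{\A,n}(d_{k,n}) = a_k$ and $\norm{d_{k,n}}{\Lip_n,M}\leq l_k$; when $a_k \in \M_\A$, Definition \ref{topographic-isometry-def} allows choosing $d_{k,n} \in \M_n$ with at most an $\exp(1/n)$ penalty on norms. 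Set $b_{k,n}\coloneqq\pi_{\B,n}(d_{k,n}) \in \targetsettunnel{\tau_n}{a_k}{l_k}$; then $b_{k,n} \in \M_\B$ whenever $a_k\in\M_\A$, and $\norm{b_{k,n}}{\Lip_\B,M}\leq l_k$.

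The crux is to extract a subsequence $(n_j)_j$ along which $(b_{k,n_j})_j$ norm-converges for every $k$. Let $(h_m)_m \subseteq \M_\B\cap\dom{\Lip_\B}$ be a \lipunit{\Lip_\B}{\mu_\B}, which is also an approximate unit. By Lemma \ref{cad-totally-bounded-lemma}, each set $\{h_m c h_m : \norm{c}{\Lip_\B,M}\leq l_k\}$ is totally bounded in $\B$, so a standard double diagonalization yields $(n_j)_j$ such that $(h_m b_{k,n_j} h_m)_j$ is Cauchy for every $k$ and $m$. To promote this to convergence of $b_{k,n_j}$ itself, I combine two facts: Corollary \ref{linearity-cor} gives $\diam{e_{n_j}^\B\,\targetsettunnel{\tau_{n_j}}{a_k}{l_k}\,e_{n_j}^\B}{\B} \leq 2 l_k \tunnelextent{\tau_{n_j}} \to 0$, so the choices $e_{n_j}^\B b_{k,n_j} e_{n_j}^\B$ are canonical up to vanishing error; and the inequality $\norm{b_{k,n_j}-h_m b_{k,n_j} h_m}{\B}$ controlled uniformly by $2 l_k M \norm{1-h_m}{\B_{\text{approx}}}$ on a fixed compact of states, combined with completeness of $\B$, identifies $\pi(a_k) \coloneqq \lim_m\lim_j h_m b_{k,n_j} h_m$ as an element of $\sa{\B}$ with $\norm{\pi(a_k)}{\B} \leq \norm{a_k}{\A}$ (Lemma \ref{main-lemma}) and $\Lip_\B(\pi(a_k)) \leq \Lip_\A(a_k)$ (by lower semicontinuity of $\Lip_\B$).

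With $\pi$ defined on the dense $\ast$-subalgebra, Lemma \ref{linearity-lemma} gives $\R$-linearity in the limit, Lemma \ref{product-lemma} gives Jordan and Lie multiplicativity, and complexification plus norm continuity extend $\pi$ to a $\ast$-homomorphism $\pi : \A \to \B$. Pin preservation $\mu_\B\circ\pi = \mu_\A$ follows from the extent bound $\Kantorovich{\Lip_n}(\mu_\A\circ\pi_{\A,n},\mu_\B\circ\pi_{\B,n}) \leq \tunnelextent{\tau_n}$; the inclusion $\pi(\M_\A)\subseteq\M_\B$ follows from our choice of $M_n$-lifts for topographic elements; properness follows because $\pi(e_n^\A)$ differs from $e_n^\B$ by a quantity controlled by $\tunnelextent{\tau_n}$, so $\pi$ carries an approximate unit to an approximate unit. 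Running the same construction starting from the reversed tunnels $\tau_n^{-1}$ produces a $\ast$-homomorphism $\rho: \B \to \A$; uniqueness of the limit inside shrinking target-set neighborhoods forces $\rho\circ\pi = \mathrm{id}_\A$ and $\pi\circ\rho = \mathrm{id}_\B$, so $\pi$ is a $\ast$-isomorphism, and both the quantum $M$-isometry identity for $\norm{\cdot}{\Lip_\A,M}$ and the topographic lifting property of Definition \ref{topographic-isometry-def} follow by applying the same target-set estimates in both directions. The main obstacle is precisely the norm-convergence extraction above: in the non-unital setting, the Lipschitz ball is not norm-compact, so the existence of the limit $\pi(a_k) \in \B$ (as opposed to only a multiplier or a functional on states) has to be assembled from local total boundedness and the approximate unit structure, and only the shrinking-diameter property of Corollary \ref{linearity-cor} makes the two iterated limits $m,j\to\infty$ commute.
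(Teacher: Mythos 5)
The forward direction and the construction of $(e_n^\A)_n$ and $(e_n^\B)_n$ as approximate units match the paper, and the diagonal extraction producing a common subsequence along which $(h_m b_{k,n_j} h_m)_j$ converges for each $k,m$ is sound (it is essentially Lemma \ref{core-lemma} and Corollary \ref{core-cor}). The gap is in passing from these compressed limits to an actual element $\pi(a_k) \in \B$. You assert the iterated limit $\lim_m\lim_j h_m b_{k,n_j} h_m$ exists because $\norm{b - h_m b h_m}{\B}$ is controlled by $2l_kM$ times a quantity tending to $0$; but $\norm{\unit_\B - h_m}{\unital{\B}} \geq 1$ for every $m$ (the $h_m$ are positive contractions in a non-unital algebra, so $0$ lies in their spectrum), so no such uniform estimate holds. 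An approximate unit gives $h_m b h_m \to b$ only for each fixed $b$, or uniformly over norm-compact sets, and the set $\{b \in \domsa{\Lip_\B} : \norm{b}{\Lip_\B,M}\leq l_k\}$ is bounded but \emph{not} compact when $\B$ is non-unital --- which is precisely why this theorem is hard. The shrinking-diameter bound of Corollary \ref{linearity-cor} controls the ambiguity in the choice of $b_{k,n}$ \emph{within a fixed compression}; it says nothing about variation \emph{across} different compressions, which is exactly what the outer $m$-limit requires.

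The paper circumvents this by making the compressions tunnel-dependent: it compresses by the elements $e_p^\B$ coming from the tunnels themselves, not by an unrelated approximate unit in $\M_\B$, and this is essential rather than cosmetic. Because $e_p^\B \in \M_\B$, the topographic lifting property of Definition \ref{topographic-isometry-def} lets one lift $e_p^\B$ through each tunnel $\tau_m$ to an element $k_{p,m} \in \M_m$ and transport it to $s_{p,m} \coloneqq \pi_m(k_{p,m}) \in \M_\A$; the elements $h_{p,m} \coloneqq e_p^\A s_{p,m} e_p^\A$ then converge (in $m$) to some $h_p \in \domsa{\Lip_\A}$, and $(h_p)_p$ is shown to be a Lipschitz pinned exhaustive sequence, hence an approximate unit of $\A$. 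The Cauchy property of $(\beta_p(a))_p$ --- the paper's Lemma \ref{Cauchy-lemma} --- is then obtained by transferring $\beta_p(a)$ and $\beta_q(a)$ back to $h_p a h_p$ and $h_q a h_q$ via the target-set estimates and invoking $\norm{h_p a h_p - h_q a h_q}{\A}\to 0$, which holds because $a$ is a \emph{fixed} element of $\A$. This back-and-forth transport through the tunnels is the missing idea; without it $\pi(a_k)$ does not exist as an element of $\B$ (as you flag but do not resolve), and the same mechanism is needed to prove $\rho\circ\pi = \mathrm{id}_\A$, which your appeal to uniqueness of limits does not yet establish.
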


\begin{proof}
By Theorem (\ref{triangle-thm}), we already know that if there exists  a full topographic quantum $M$-isometry from $\mathds{A}$ to $\mathds{B}$ such that $\mu_\B\circ\pi=\mu_\A$, then $\metametric{M}(\mathds{A},\mathds{B}) = 0$. We thus focus our attention on the converse. Let us assume henceforth that $\metametric{M}(\mathds{A},\mathds{B}) = 0$.

We begin by fixing some notation. By Definition (\ref{prop-def}) of the metametric, for all $n \in \N$, there exists a tunnel $\tau_n$, given by:
	\begin{equation*}
		\tunnel{\tau_n}{(\A,\Lip_\A,\M_\A,\mu_\A)}{\pi_n}{(\D_n,\Lip_n,\M_n,e_n)}{\rho_n}{(\B,\Lip_\B,\M_\B,\mu_\B)}\text, 
	\end{equation*}
	such that
	\begin{equation*}
		\tunnelextent{\tau_n} <  \frac{1}{n+2} \text.
	\end{equation*}

	For each $n\in\N$, we define 
	\begin{equation*}
		e_n^\A \coloneqq \pi_n(e_n) \in \domsa{\Lip_\A}\cap\M_\A \text{ and }e_n^\B \coloneqq \rho_n(e_n) \in \domsa{\Lip_\B}\cap\M_\B \text.
	\end{equation*}

\begin{lemma}\label{e-approx-unit-lemma}
	The sequences $(e_m^\A)_{n\in\N}$ and $(e_m^\B)_{m\in\N}$ are  approximate units of, respectively, $\A$ and $\B$.
\end{lemma}
\begin{proof}
	For all $n\in\N$, by Lemma (\ref{key-lemma}), 
	\begin{equation*}
		\norm{e_n}{\D_n} \leq \sqrt{ 1 + \tunnelextent{\tau_n}} \leq \sqrt{ 1 + \frac{1}{n+2}} \text,
	\end{equation*}
	while by Definition (\ref{extent-def}),
	\begin{equation*}
		\mu_\A\circ\pi_n (e_n) \geq 1 - \tunnelextent{\tau_n} \geq 1 - \frac{1}{n+2} \text{ so }\norm{e_n}{\D_n} \geq 1 - \frac{1}{n+2}  \text.
	\end{equation*}
	Thus  $\lim_{n\rightarrow\infty} \norm{e_n}{\D_n} = 1$.
	
	Moreover, again by Definition (\ref{extent-def}), 
	\begin{equation*}
		|1-\mu_\A(e_n^\A)| = |1-\mu_\A\circ\pi_n(e_n)| \leq \frac{1}{n+2} \text{ and } |1-\mu_\B(e_n^\B)| = |1-\mu_\B\circ\rho_n(e_n)| \leq \frac{1}{n+2}  \text.
	\end{equation*}
	Therefore $\lim_{n\rightarrow\infty} \mu_\A(e_n^\A) = \lim_{n\rightarrow\infty} \mu_\B(e_n^\B) = 1$. 
	
	Since, for all $n\in\N$,
	\begin{equation*}
		|\mu_\A(e_n^\A)| \leq \norm{e_n^\A}{\A} \leq \norm{e_n}{\D_n}\text,
	\end{equation*}
	we conclude by the squeeze theorem that $\lim_{n\rightarrow\infty} \norm{e_n^\A}{\A} = 1$. Similarly, $\lim_{n\rightarrow\infty}\norm{e_n^\B}{\B} = 1$.

	Last, $\Lip_n(e_n) \leq \frac{1}{2M(n+2)} \xrightarrow{n\rightarrow\infty} 0 \text.$ Therefore, $0\leq\max\left\{ \Lip_\A(e_n^\A) , \Lip_\B(e_n^\B) \right\} \leq \Lip_n(e_n)$, and thus $\lim_{n\rightarrow\infty} \Lip_\A(e_n^\A) = \lim_{n\rightarrow\infty} \Lip_\B(e_n^\B) = 0$.
	Therefore, $(e_n^\A)_{n\in\N}$ and $(e_n^\A)_{n\in\N}$ are Lipschitz pinned exhaustive sequences, and therefore, approximate units for $\A$ and $\B$, respectively, by Theorem (\ref{approx-unit-thm}).
\end{proof}

	\medskip
	
	We introduce a few additional notations. We denote $\targetsettunnel{\tau_n}{\cdot}{\cdot}$ simply as $\targetsettunnel{n}{\cdot}{\cdot}$, for all $n\in\N$. We also will write $\targetsettunnel{-m}{\cdot}{\cdot}$ for the target function $\targetsettunnel{\tau_m^{-1}}{\cdot}{\cdot}$ defined using the reverse tunnel $\tau_m^{-1}$, for all $m\in\N$.

We now observe with the following lemma that the convergence of various sets will not depend on the assumption that the elements $e_m^\B$ commute. While this is a bit more work than needed since, for our proof of the relaxed triangle inequality, we needed these elements commute, we still include this slightly more general proof here. We begin with a simple lemma with an easy inequality which we will use regularly.

\begin{lemma}\label{easy-lemma}
Let $\A$ be a C*-algebra. If $a,e,f \in \A$, then:
	\begin{equation*}
	\norm{f a f}{\A} \leq \left(\norm{e}{\A} + \norm{f}{\A}\right) \norm{e-f}{\A} \norm{a}{\A} + \norm{e a e}{\A} \text.
	\end{equation*} 
\end{lemma}
  
 \begin{proof}
 A simple computation shows:
 	\begin{align*}
 		\norm{f a f}{\A}
 		&\leq \norm{(f-e)a f}{\A} + \norm{e a f}{\A}  \\
 		&\leq \norm{f-e}{\A} \norm{a}{\A} \norm{f}{\A} + \norm{e a (f-e)}{\A} + \norm{e a e}{\A} \\
 		&\leq \left(\norm{e}{\A} + \norm{f}{\A}\right) \norm{f-e}{\A}\norm{a}{\A} + \norm{e a e}{\A} \text.
 	\end{align*}
 	This completes our proof.
 \end{proof} 

\begin{lemma}\label{no-em-lemma}
	For all $p \in \N$, for all $a\in\domsa{\Lip_\A}$ and for all $l > \norm{a}{\Lip_\A.M}$, we have
	\begin{equation*}
		\lim_{m\rightarrow\infty} \Haus{\B}(e^\B_p \targetsettunnel{m}{a}{l} e^\B_p, e^\B_p e_m^\B \targetsettunnel{m}{a}{l} e_m^\B e^\B_p)= 0 \text.
	\end{equation*}
\end{lemma}

\begin{proof}
	Let $\varepsilon > 0$. Fix $p$. Since $(e^\B_m)_{m\in\N}$ is an approximate unit for $\B$, there exists $N \in \N$ such that for all $m\geq N$, we have $\norm{e^\B_p - e_p^\B e_m^\B}{\B} < \frac{\varepsilon}{4 l M}$. By construction and Lemma (\ref{key-lemma}), 
	\begin{equation*}
		\norm{e^\B_p}{\B} \leq \sqrt{1+\frac{1}{p+2}} \leq 2 \text{ for all $p\in\N$.}
	\end{equation*}
	
	Let now $b \in \targetsettunnel{m}{a}{l}$. By Definition (\ref{targetset-def}), we note that $\norm{b}{\B} \leq l M$. Therefore, by Lemma (\ref{easy-lemma}),
	\begin{align*}
		\left|\norm{ e_p b e_p - e_p e_m b e_m e_p }{\B} - \norm{e_p b e_p}{\B}\right|
		&\leq 4\norm{e_p e_m - e_p}{\B}\norm{b}{\B}  \\
		&\leq 4\frac{\varepsilon}{l M} l M = \varepsilon \text.  
	\end{align*}
	This concludes our proof.

	Note that, since $e^\B_p$ and $e^\B_m$ are self-adjoint, we also have that $\norm{e^\B_p - e^\B_m e^\B_p}{\B} < \frac{\varepsilon}{2l M}$ for all $m\geq N$, and thus
	\begin{equation*}
	\left|\norm{ e_p b e_p - e_m e_p b e_p e_m }{\B} - \norm{e_p b e_p}{\B}\right| < \varepsilon\text,
	\end{equation*}
	so it is also true that
\begin{equation*}
		\lim_{m\rightarrow\infty} \Haus{\B}(e^\B_p \targetsettunnel{m}{a}{l} e^\B_p, e^\B_m e_p^\B \targetsettunnel{m}{a}{l} e_p^\B e^\B_m)= 0 \text.
	\end{equation*}
\end{proof}

We now prove our core lemma, which starts the journey toward obtaining a *-morphism from our set-valued target set maps.

	\begin{lemma}\label{pre-core-lemma}
		Let $a\in\domsa{\Lip_\A}$ and $p\in\N$. If there exists a strictly increasing function $f : \N\rightarrow\N$ and an element $\beta_p(a) \in \domsa{\Lip_\B}$ such that, for all $l > \norm{a}{\Lip_\A,M}$,
		\begin{equation*}
			\lim_{m\rightarrow\infty} \Haus{\B}\left(e^\B_p \targetsettunnel{f(m)}{a}{l} e^\B_p, \{\beta_p(a)\} \right) = 0\text,
		\end{equation*}
		then $\norm{\beta_p(a)}{\A} \leq \norm{a}{\A}$ while 
		\begin{equation*}
			\Lip_\B(\beta_p(a))\leq \Lip_\A(a)\text{ and }\norm{\beta_p(a)}{\Lip_\A,\M} \leq \norm{a}{\Lip_\A,M}(1+\frac{4}{p+2})^2\text.
		\end{equation*}
		 If, moreover, $a\in \M_\A$, then $\beta_p(a) \in \M_\B$, with $\Lip(\beta_p(a)) \leq \Lip_\A(a)(1+\frac{4}{p+2})^2$.
	\end{lemma}

\begin{proof}	
	Let $c_m \in e^\B_p \targetsettunnel{f(m)}{a}{l} e^\B_p$ for all $m\in\N$, so that $(c_m)_{m\in\N}$ converges in $\B$ to $\beta_p(a)$. Furthermore, since $(e^\B_m)_{m\in\N}$ is an approximate unit for $\B$, 
	\begin{multline*}
		\norm{e^\B_{f(m)} c_m e^\B_{f(m)} - \beta_p(a)}{\B}
		\leq \underbracket[1pt]{\norm{e^\B_{f(m)}}{\B}^2}_{\leq 2} \norm{c_m - \beta_p(a)}{\B} \\ + \norm{e^\B_{f(m)} \beta_p(a) e^\B_{f(m)} - \beta^p(a)}{\B} \xrightarrow{m\rightarrow\infty} 0 \text, 
	\end{multline*}
	namely, $(e^\B_{f(m)} c_m e^\B_{f(m)})_{m\in\N}$ converges to $\beta_p(a)$ as well.
	
	Now, by Lemma (\ref{main-lemma}), we have
	\begin{equation*}
		\norm{e^\B_m c_m e^\B_m}{\B} \leq \norm{a}{\A} + l \tunnelextent{\tau_{f(m)}} \xrightarrow{n\rightarrow\infty} \norm{a}{\A} \text.
	\end{equation*}
	Therefore, $\norm{\beta_p(a)}{\B} \leq \norm{a}{\A}$, as claimed. Moreover, by Lemma (\ref{Leibniz-lemma}), 
	\begin{align*}
		\Lip_\B(c_m) 
		&\leq \norm{e^\B_p}{\B}\left( 2 \Lip(e^\B_p) l +\norm{e^\B_p}{\B} l \right) \\
		&\leq \sqrt{1+\tunnelextent{\tau_p}}\left(2\tunnelextent{\tau_p} + \sqrt{1+\tunnelextent{\tau_p}} \right)l \\
		&\leq \left(1+\frac{4}{p+2}\right)^2 l\text.
	\end{align*}
	Since $l > \norm{a}{\Lip_\A,M}$ is arbitrary, we conclude:
	\begin{equation*}
		\Lip_\B(c_m) \leq \left(1+\frac{4}{p+2}\right)^2 \norm{a}{\Lip_\A,M} \text,
	\end{equation*}
	as claimed.

	Now, assume in addition that $a \in \M_\A$. Then, by Definition (\ref{topographic-isometry-def}), there exists $b_m \in \targetsettunnel{m}{a}{l}\cap\M_\B $ for all $p,m\in\N$, with $\Lip_n(b_m)\leq \Lip_\A(a)+\frac{1}{m+2}$. 
	
	On the one hand, $e_p^\B(\targetsettunnel{f\circ g(m)}{a}{l}\cap\M_\B)e_p^\B \subseteq \M_\B$ since $e_p^\B \in \M_\B$, for all $m\in\N$. On the other hand, since $e_p^\B(\targetsettunnel{f\circ g(m)}{a}{l}\cap\M_\B)e_p^\B \subseteq e_p^\B(\targetsettunnel{f\circ g(m)}{a}{l})e_p^\B$ for all $m\in\N$, we conclude that $(e_p^\B(\targetsettunnel{f\circ g(m)}{a}{l}\cap\M_\B)e_p^\B)_{m\in\N}$ converges to $\{\beta_p(a)\}$ for $\Haus{\B}$. So in particular, $(e^\B_p b_m e^\B_p)_{m\in\N}$ converges to $\beta_p(a)$. Therefore, $\Lip_\B(\beta_p(a))\leq (1+\frac{4}{p+2})^2 \Lip_\A(a)$ by lower semi-continuity of $\Lip_\B$ and the Leibniz property, once more. Since $\M_\B$ is closed in $\A$, we conclude that $\beta_p(a) \in \M_\B$.
	
	This concludes the proof of our lemma.
\end{proof}

\begin{lemma}\label{core-lemma}
	Fix an strictly increasing function $f: \N\rightarrow\N$. For all $a\in\domsa{\Lip_\A}$ and for all $p\in\N$, there exists a strictly increasing function $g : \N \rightarrow\N$ and $\beta_p(a) \in \domsa{\Lip_\B}$, such that for all $l > \norm{a}{\Lip_\A,M}$,
	\begin{equation*}
		\lim_{n\rightarrow\infty} \Haus{\B}\left(e^\B_p \targetsettunnel{m}{a}{l} e^\B_p , \{ \beta_p(a) \} \right) = 0 \text{,}
	\end{equation*}
	and $\norm{\beta_p(a)}{\A} \leq \norm{a}{\A}$ while $\Lip_\B(\beta_p(a)) \leq \Lip_\A(a)$ and $\norm{\beta_p(a)}{\Lip_\A,\M} \leq l(1+\frac{4}{p+2})^2$. If, moreover, $a\in \M_\A$, then $\beta_p(a) \in \M_\B$, with $\Lip(\beta_p(a)) \leq \Lip_\A(a)(1+\frac{4}{p+2})^2$.
\end{lemma}

\begin{proof}
	Fix $p\in\N$, $a\in\domsa{\Lip_\A}$ and $l > \norm{a}{\Lip_\A,M}$. Since $\Lip_2(e^\B_m) \leq \tunnelextent{\tau_m} \leq \frac{1}{m+2}$ and $\norm{e^\B_m}{\B}\leq \sqrt{1+\tunnelextent{\tau_m}}\leq 2$, by Definition (\ref{targetset-def}), and Lemma (\ref{Leibniz-lemma}), we have the inclusion:
	\begin{align*}
		e^\B_p e^\B_m \targetsettunnel{m}{a}{l} e^\B_m e^\B_p 
		&\subseteq \left\{ e^\B_p b e^\B_p : \norm{b}{\Lip_\B} \leq l(4l+2) \right\} \text,
 	\end{align*}
	with the right hand side being a compact subset of $\sa{\B}$, since it is totally bounded by  Lemma (\ref{cad-totally-bounded-lemma}), and it is closed in $\B$,  which is complete. Therefore, the sequence
	\begin{equation*}
		\left( e^\B_p e^\B_{f(m)} \targetsettunnel{f(m)}{a}{l} e^\B_{f(m)} e^\B_p \right)_{m\in\N}
	\end{equation*}
	admits a convergent subsequence
	\begin{equation*}
		\left( e^\B_p e^\B_{f(g(m))} \targetsettunnel{f(m)}{a}{l} e^\B_{f(g(m))} e^\B_p \right)_{m\in\N}
	\end{equation*}
	for the Hausdorff distance $\Haus{\B}$, since the Hausdorff distance on a compact metric space induces a compact topology. Let $S_p \subseteq\B$ be its limit, and note it is a nonempty compact subset of $\sa{\B}$.
	
	By Lemma (\ref{main-lemma}),
	\begin{align*}
		0&\leq \diam{e_p^\B e_m^\B \targetsettunnel{m}{a}{l} e_m^\B e_p^\B}{\B} \\
		&\leq \norm{e^\B_p}{\B}^2 \diam{e_m^\B \targetsettunnel{m}{a}{l} e_m^\B}{\B} \\
		&\leq \underbracket[1pt]{\left(1+\tunnelextent{\tau_p}\right)}_{\geq\norm{e_p^\B}{\B}^2 \text{ by Lemma (\ref{key-lemma})}} 2 \tunnelextent{\tau_m} l \xrightarrow{m\rightarrow\infty} 0 \text.
	\end{align*}
	Therefore, by the continuity of the diameter for the Hausdorff distance, $\diam{S_p}{\B} = 0$. Since $S_p$ is not empty, it is a singleton, which we henceforth denote by $S_p = \{ \beta_p(a) \}$.
	
	By Lemma (\ref{no-em-lemma}), we then have by the triangle inequality for the Hausdorff distance $\Haus{\B}$:
	\begin{align*}
		0 &\leq
	 \Haus{\B}\left(e^\B_p \targetsettunnel{f\circ g(m)}{a}{l} e^\B_p, \{ \beta_p(a) \}\right) \\ 
		&\leq \Haus{\B}\left(e^\B_p \targetsettunnel{f\circ g(m)}{a}{l} e^\B_p, e^\B_p  e^\B_{f\circ g(m)} \targetsettunnel{f\circ g(m)}{a}{l} e^\B_{f\circ g(m)} e^\B_p \right) \\ 
		&\quad+ \Haus{\B}\left(e^\B_p e^\B_{f\circ g(m)} \targetsettunnel{f\circ g(m)}{a}{l} e^\B_{f\circ g(m)} e^\B_p, \{ \beta_p(a) \}\right) \\
		&\xrightarrow{m\rightarrow\infty} 0 + 0 = 0 \text. 
	\end{align*}
	
	Now, let $l' > \norm{a}{\Lip_\A,M}$. By Definition (\ref{targetset-def}), the following inclusion holds for all $m\in\N$:
	\begin{equation*}
		\targetsettunnel{m}{a}{\min\{l,'l\} } \subseteq \targetsettunnel{m}{a}{\max\{l',l\}} \text.
	\end{equation*}
	Therefore, by definition of the Hausdorff distance:
	\begin{align*}
		\, & \Haus{\B}\Big(e^\B_p e^\B_{f\circ g(m)} \targetsettunnel{f\circ g(m)}{a}{\min\{l,'l\} } e^\B_{f\circ g(m)} e^\B_p, \\
		&\quad\quad e^\B_p e^\B_{f\circ g(m)} \targetsettunnel{f\circ g(m)}{a}{\max\{l',l\}}e^\B_{f\circ g(m)} e^\B_p \Big)  \\
		&\leq \diam{ e^\B_{f\circ g(m)} \targetsettunnel{f\circ g(m)}{a}{\max\{l',l\}} e^\B_{f\circ g(m)} }{ \B} \norm{e_p^\B}{\B}^2 \\
		&\leq  2 \max\{l,l'\} \tunnelextent{\tau_{f\circ g(m)}} \cdot \left(1+\tunnelextent{\tau_{p}}\right)\\
		&\xrightarrow{m\rightarrow\infty} 0  \text.
	\end{align*}
	Therefore, by the triangle inequality,
	\begin{equation*}
		 \lim_{m\rightarrow\infty} \Haus{\B}\left(e^\B_p \targetsettunnel{f\circ g(m)}{a}{l'} e^\B_p, \{ \beta_p(a) \}\right) = 0 \text.
	\end{equation*}
	
	By Lemma (\ref{pre-core-lemma}), we conclude that $\beta_p(a)$ has the listed properties in its conclusion.
\end{proof}

\begin{corollary}\label{core-cor}
	There exists a strictly increasing function $f : \N\rightarrow\N$ and, for each $p\in\N$, a function $\beta_p : \domsa{\Lip_\A} \rightarrow\domsa{\Lip_\B}$ such that, for all $p\in\N$, for all $a\in \domsa{\Lip_\A}$, and for all $l > \norm{a}{\Lip_\A,M}$, 
	\begin{equation*}
		\lim_{m\rightarrow\infty} \Haus{\B}\left( e^\B_p \targetsettunnel{f(m)}{a}{l} e^\B_p , \{ \beta_p(a) \} \right) = 0 \text,
	\end{equation*}
	while $\norm{\beta_p(a)}{\B} \leq \norm{a}{\A}$ and $\norm{\beta_p(a)}{\Lip_\B,M} \leq \left(1+\frac{4}{p+1}\right)^2\norm{a}{\Lip_\A,M}$. Moreover, the restriction of $\beta_p$ to $\M_\A\cap\domsa{\Lip_\A}$ is valued in $\M_\B\cap\domsa{\Lip_\B}$ and $\Lip_\B(\beta_p(a))\leq (1+\frac{4}{p+2})^2\Lip_\A(a)$.
\end{corollary}

\begin{proof}
	For this proof, let $S\subseteq\domsa{\Lip_\A}$ be a countable dense subset of $\domsa{\Lip_\A}$ for $\norm{\cdot}{\A}$, which exists as $\A$ is separable. Write $S\coloneqq\{ a_j : j \in \N \}$. It will be helpful to choose any bijection $k : \N^2 \mapsto \N$.
	
	We first proceed by induction. Our induction hypothesis at $n\in\N$ is: if $k(J,P) = n$, then there exist strictly increasing functions $f_0, f_1, \ldots, f_n :\N\rightarrow\N$ such that for all $(p,j) \in \N^2$ such that $k(p,j) \leq n$, the sequence 
	\begin{equation*}
		\left(e^\B_p \targetsettunnel{f_0 \circ f_1 \circ \ldots \circ f_n(m)}{a_j}{\norm{a}{\Lip_\A,M}+ 1} e^\B_p \right)_{m\in\N}
	\end{equation*}
	converges to a singleton for $\Haus{\B}$, which we denote by $\{\beta_p(a_j)\}$. Moreover, $\norm{\beta_p(a_j)}{\B} \leq \norm{a}{\A}$, and $\norm{\beta_p(a)}{\Lip_\B,M} \leq \left(1+\frac{4}{p+1}\right)^2\norm{a}{\Lip_\A,M}$. If, moreover, $a_j \in \M_\A$, then $\beta_p(a)\in\M_\B$ and $\Lip_\B(\beta_p(a))\leq(1+\frac{4}{p+2})^2\Lip_\A(a)$.
	
	Let $(p,j) = k^{-1}(0)$. By Lemma (\ref{core-lemma}), there exists a strictly increasing function $f_0$ and $\beta_p(a_j) \in \domsa{\Lip_\B}$ with the desired properties, such that, for all $l > \norm{a_j}{\Lip_\A,M}$,
	\begin{equation*}
		\lim_{m\rightarrow\infty} \Haus{\B}\left( e^\B_p \targetsettunnel{f_0(m)}{a_j}{l} e^\B_p, \{\beta_p(a_j) \right) = 0 
	\end{equation*}
So our induction hypothesis holds for $n=0$.
	
	 Now, assume our hypothesis holds for some $n \in \N$. Let $(p,j) = k^{-1}(n+1)$. We again apply Lemma (\ref{core-lemma}) to obtain a strictly increasing function $f_{n+1} : \N\rightarrow\N$ such that, for all $l > \norm{a_j}{\Lip_\A,\M}$,
	\begin{equation*}
		\lim_{m\rightarrow\infty} \Haus{\B}\left( e^\B_p \targetsettunnel{(f_0\circ \ldots f_n) \circ f_{n+1}(m)}{a_j}{l} e^\B_p, \{ \beta_p(a_j) \} \right) = 0 \text,
	\end{equation*}
	for some $\beta_p(a_j) \in \domsa{\Lip_\B}$, again with all the desired properties. Since subsequences of convergent sequences converge to the same limit, our induction hypothesis now holds for $n+1$.

	We now simply set $f : n \in \N \mapsto f_0\circ f_1 \circ \ldots \circ f_n(n)$. It is immediate that for all $(p,j)\in\N^2$, for all $l > \norm{a_j}{\Lip_{\A,M}}$, we have
	\begin{equation*}
		\lim_{m\rightarrow\infty} \Haus{\B}\left( e^\B_p \targetsettunnel{f(m)}{a_j}{l} e^\B_p, \{ \beta_p(a_j) \right) = 0 \text,
	\end{equation*}
	again, with $\beta_p(a_j)$ having all the properties listed in our induction hypothesis (or as the conclusion of Lemma (\ref{core-lemma}) from which they arise).
	
	\medskip
	
	Let now $a \in \domsa{\Lip_\A}$, $p\in\N$. Let $\varepsilon > 0$. Since $S$ is dense in $\sa{\A}$, there exists $j \in \N$ such that $\norm{a-a_j}{\A} < \frac{\varepsilon}{24}$. Let $l > \max\left\{ \norm{a}{\Lip_\A,M}, \norm{a_j}{\Lip_\A,M} \right\}$. 
	
	Let $N_0 \in \N$ such that, if $m\geq N_0$, then $\tunnelextent{\tau_m} < \frac{\varepsilon}{24 l}$. 
	
	 By Lemma (\ref{main-lemma}), we then note that for all $m \geq N_0$, whenever $b \in \targetsettunnel{m}{a}{l}$ and $b'\in\targetsettunnel{m}{a_j}{l}$:
	\begin{equation*}
		\norm{e^\B_m ( b - b' ) e^\B_m}{\B} \leq \norm{a-a_j}{\A} + l \tunnelextent{\tau_m} \leq \frac{\varepsilon}{24} + l \frac{\varepsilon}{24 l} = \frac{\varepsilon}{12} \text.
	\end{equation*}
	So $\Haus{\B}(e^\B_m \targetsettunnel{m}{a}{l} e^\B_m, e^\B_m \targetsettunnel{m}{a_j}{l} e^\B_m) < \frac{\varepsilon}{12}$ for all $m\geq N_0$. Therefore,
	\begin{multline*}
		\Haus{\B}(e^\B_p e^\B_m \targetsettunnel{m}{a}{l} e^\B_m e^\B_p , e^\B_p e^\B_m \targetsettunnel{m}{a_j}{l} e^\B_m e^\B_p) \\ \leq \norm{e^\B_p}{\B}^2 \Haus{\B}(e^\B_m \targetsettunnel{m}{a}{l} e^\B_m, e^\B_m \targetsettunnel{m}{a_j}{l} e^\B_m) < \frac{\varepsilon}{6} 
	\end{multline*}
	since $\norm{e^\B_p}{\B}^2 \leq 1 + \frac{1}{p+2} \leq 2$.

	Since, by Lemma (\ref{no-em-lemma}),
	\begin{equation*}
		\lim_{m\rightarrow\infty} \Haus{\B}(e^\B_p \targetsettunnel{m}{a}{l} e^\B_p, e^\B_p e^\B_m \targetsettunnel{m}{a}{l} e^\B_m e^\B_p) = 0\text,
	\end{equation*}
	there exists $N_1 \in \N$ such that, for all $m\geq N_1$:
	\begin{equation*}
		\Haus{\B}(e^\B_p \targetsettunnel{m}{a}{l} e^\B_p, e^\B_p e^\B_m \targetsettunnel{m}{a}{l} e^\B_m e^\B_p) < \frac{\varepsilon}{6} \text.
	\end{equation*}
	
	Therefore, if $m\geq\max\{N_0,N_1\}$,
	\begin{align*}
		&\Haus{\B}\left(e^\B_p \targetsettunnel{m}{a}{l} e^\B_p, e^\B_p e^\B_{m} \targetsettunnel{m}{a_j}{l} e^\B_{m} e^\B_p\right)\\
		&\quad\leq \Haus{\B}\left(e^\B_p \targetsettunnel{m}{a}{l} e^\B_p, e^\B_p e^\B_{m} \targetsettunnel{m}{a}{l} e^\B_{m} e^\B_p\right) \\
		&\quad + \Haus{\B}\left(e^\B_p e^\B_{m} \targetsettunnel{m}{a}{l} e^\B_{m} e^\B_p, e^\B_p e^\B_{m} \targetsettunnel{m}{a_j}{l} e^\B_{m} e^\B_p\right) \\
	&\quad <\frac{\varepsilon}{6} + \frac{\varepsilon}{6} = \frac{\varepsilon}{3} \text.
	\end{align*}
	
	We have just established that the sequence	
	\begin{equation*}
		( e^\B_p \targetsettunnel{f(m)}{a_j}{l} e^\B_p)_{m\in\N} 
	\end{equation*}
	converges for $\Haus{\B}$; therefore by Lemma (\ref{no-em-lemma}), so does the sequence 
	\begin{equation*}
		( e^\B_p e^\B_{f(m)} \targetsettunnel{f(m)}{a_j}{l} e^\B_{f(m)} e^\B_p)_{m\in\N}\text. 
	\end{equation*}
	Therefore, it is Cauchy for $\Haus{\B}$, so there exists $N_2 \in \N$ such that, for all $n,m \geq N_2$,
	\begin{equation*}
		\Haus{\B}( e^\B_p e^\B_{f(m)} \targetsettunnel{f(m)}{a_j}{l}  e^\B_{f(m)} e^\B_p, e^\B_p e^\B_{f(n)} \targetsettunnel{f(n)}{a_j}{l}e^\B_{f(n)} e^\B_p \} ) < \frac{\varepsilon}{3} \text.
	\end{equation*}
	
	Let $K \coloneqq \max\{ N_0, N_1, N_2 \}$.
	
	Therefore, for all $m,n \in \N$ with $n\geq K$ and $m\geq K$:
	\begin{align*}
		&\Haus{\B}(e^\B_p \targetsettunnel{f(m)}{a}{l} e^\B_p , e^\B_p \targetsettunnel{f(n)}{a}{l} e^\B_p)\\
		&\quad\leq \Haus{\B}(e^\B_p \targetsettunnel{f(m)}{a}{l} e^\B_p , e^\B_p e^\B_{f(m)} \targetsettunnel{f(m)}{a_j}{l} e^\B_{f(m)} e^\B_p)\\
		&\quad\quad + \Haus{\B}(e^\B_p e^\B_{f(m)} \targetsettunnel{f(m)}{a_j}{l} e^\B_{f(m)} e^\B_p , e^\B_p e^\B_{f(n)} \targetsettunnel{f(n)}{a_j}{l} e^\B_{f(n)} e^\B_p) \\
		&\quad\quad + \Haus{\B}(e^\B_p e^\B_{f(n)}\targetsettunnel{f(n)}{a_j}{l} e^\B_{f(n)} e^\B_p , e^\B_p \targetsettunnel{f(n)}{a}{l} e^\B_p) \\
		&\quad< \frac{\varepsilon}{3} + \frac{\varepsilon}{3} + \frac{\varepsilon}{3} = \varepsilon \text.
	\end{align*}
	
	Therefore, $\left( e^\B_p \targetsettunnel{f(m)}{a}{l} e^\B_p \right)_{m\in\N}$ is a Cauchy sequence for $\Haus{\B}$. Since $\B$ is complete, so is $\Haus{\B}$. Therefore, $\left( e^\B_p \targetsettunnel{f(m)}{a}{l} e^\B_p \right)_{m\in\N}$ converges. As above, its limit is a singleton $\{\beta_p(a)\}$ by Lemma (\ref{main-lemma}). By Lemma (\ref{pre-core-lemma}), once more, $\beta_p(a)$ has the desired properties. This completes our proof.
\end{proof}

By symmetry, this results hold as well switching the roles of $\A$ and $\B$. In other words, let $f : \N\rightarrow\N$ be given by Corollary (\ref{core-cor}). Then, applying Corollary (\ref{core-cor}) once again, but to the sequence of tunnels $(\tau_{f(m)}^{-1})_{m\in\N}$, there exists a strictly increasing function $g : \N\rightarrow\N$, and a function $a_p :\domsa{\Lip_\B}\rightarrow\domsa{\Lip_\A}$ for each $p \in \N$, such that, for all $b \in \domsa{\Lip_\B}$, and for all $l > \norm{b}{\Lip_\B,M}$,
\begin{equation*}
	\lim_{m\rightarrow\infty} \Haus{\A}(e^\A_p \targetsettunnel{-f(g(m))}{b}{l} e^\A_p, \{ \alpha_p(b) \} ) = 0
\end{equation*}
with $\norm{\alpha_p(b)}{\A} \leq \norm{b}{\B}$ and $\norm{\alpha_p(a))}{\Lip_\A, M} \leq \left(1+\frac{4}{p+2}\right)^2 \norm{b}{\Lip_\B,M}$; moreover if $b\in\M_\B\cap\domsa{\Lip_\B}$, then $\alpha_p(b) \in \M_\A$ and $\Lip_\A(\alpha_p(a)) \leq  \left(1+\frac{4}{p+2}\right)^2\Lip_\B(b)$.

\medskip

\begin{remark}
\emph{To simplify our notation moving forward in this proof, we replace our original sequence of tunnels $(\tau_{n})_{n\in\N}$ with the sequence $(\tau_{f(g(n))})_{n\in\N}$ without further mention of the function $f\circ g$}.
\end{remark}

\medskip
	
	In general, however, there is no connection between the various elements $e_n^\B$ for varying $n \in \N$. Instead, for each $p\in\N$ and for each $m\in \N$, since $\pi_m$ is a \emph{topographic} quantum $M$-isometry, and since $e^\B_p \in \M_\B$, there exists $k_{p,m} \in \domsa{\Lip_n}\cap\M_m$ such that 
	\begin{equation*}
		\rho_m(k_{p,m}) = e^\B_p\text{, }\norm{k_{p,m}}{\D_n}\leq \norm{e^\B_m}{\A}+\frac{1}{m+2} \text{ and }\Lip_m(k_{p,m}) \leq \Lip_\A(e^\B_p) + \frac{1}{m+2} \text.
	\end{equation*}
	Note that $\norm{k_{p,m}}{\D_n} \leq 2$ by construction, for all $p,m \in \N$. We now denote:
	\begin{equation*}
		s_{p,m} \coloneqq \pi_m(k_{p,m}) \text{ and }h_{p,m} \coloneqq e^\A_p s_{m,p} e^\A_p \in e^\A_p \in  \targetsettunnel{-m}{e^\B_p}{2} e^\A_p \text.
	\end{equation*}
	Again, note that $\norm{s_{p,m}}{\A} \leq 2$ for all $p,m \in \N$.
	
	Note that by construction, since $\Lip_\A\circ\pi_m \leq \Lip_\D$, using Lemma (\ref{Leibniz-lemma}),
	\begin{align}
		\Lip_\A(h_{p,m}) 
		&\leq \norm{e^\A_p}{\A} \left( \underbracket[1pt]{2 \Lip(e^\A_p)}_{\leq\tunnelextent{\tau_p}}\norm{k_{p,m}}{\D_n} + \norm{e^\A_p}{\A} \Lip_\D(k_{p,m})\right) \nonumber \\
		&\leq \underbracket[1pt]{\sqrt{1+\frac{1}{p+2}}}_{\text{by Lemma (\ref{key-lemma})}}\left(\frac{1}{p+2}\left(\sqrt{1+\frac{1}{p+2}}+\frac{1}{m+2}\right) + \sqrt{1+\frac{1}{p+2}} \left(\frac{1}{p+2} + \frac{1}{m+2}\right)\right) \\
		&\leq \frac{6}{p+2} + \frac{4}{m+2} \leq 5\nonumber \text.
	\end{align}

In particular, from Corollary (\ref{core-cor}), we obtain:
\begin{corollary}
	For each $p\in\N$, the sequence $(e_p^\A  h_{p,m} e_p^\A)_{m\in\N}$  converges to $h_p \in \domsa{\Lip_\A}$, with $\Lip_\A(h_p) \leq \frac{6}{p+1}$, and such that $(h_p)_{p\in\N}$ is a \lipunit{\Lip_\A}{\mu_\A},  and therefore, $(h_p)_{p\in\N}$  is an approximate unit of $\A$.
\end{corollary}

\begin{proof}
	Since $h_{p,m} \in e^\A_p \targetsettunnel{-m}{e^\B_p}{5} e^\A_p$ for all $m\in\N$, Corollary (\ref{core-cor}) applies, and thus, there exists $h_p  \coloneqq \alpha_p(e^\B_p) \in \domsa{\Lip_\A}$ such that 
	\begin{equation*}
		\lim_{m\rightarrow\infty} \Haus{\A}(e^\A_p \targetsettunnel{-f(m)}{e^\B_p}{5} e^\A_p, \{ h_p \} ) = 0 \text.
	\end{equation*}
	In particular, $(h_{p,m})_{m\in\N}$ converges to $h_p$ in the norm of $\A$. So $\norm{h_p}{\A} \leq \norm{e_p}{\D_p}^2 \leq 1+\frac{1}{2+p}$.

	Since $\Lip_\A$ is lower semi-continuous, and since $\Lip_\A(h_{p,m}) \leq \frac{6}{p+2} + \frac{4}{m+2}$, we conclude that $\Lip_\A(h_p) \leq \frac{6}{p+1}$. 
	
	On the other hand, 
	\begin{align*}
	|\mu_\A\circ\pi_m(k_{p,m})-1| 
		&\leq |\mu_\A\circ\pi_m(k_{p,m}) - \mu_\B\circ\rho_m(k_{p,m})| +  |\mu_\B\circ\rho_m(k_{p,m})-1| \\
		&\leq \frac{6}{p+1} \Kantorovich{\Lip_n}(\mu_\A\circ\pi_\A,\mu_\B\circ\pi_\B) + |\mu_\B(e^\B_p) - 1| \\
		&\leq \frac{6}{p+2}\tunnelextent{\tau_m} + \tunnelextent{\tau_p} \\
		&\leq \frac{6}{p+2}\frac{1}{m+2} + \frac{1}{p+2}\text.
	\end{align*}
	Therefore, $\lim_{m\rightarrow\infty} \mu_\A(\pi_m(k_{p,m})) = 1$. Since $\mu_\A$ restricts to a character on $\M_\A$, then
	\begin{equation*}
		\mu_\A(h_p) = \lim_{m\rightarrow\infty} \mu_\A(e^\A_p)^2 \mu_\A(\pi_m(k_{p,m})) \text.
	\end{equation*}
	
	Now, $|1-\mu_\A(e^\A_p)| \leq \tunnelextent{\tau_p} \leq \frac{1}{p+2}$, so $\lim_{p\rightarrow\infty} \mu_\A(e^\A_p) = 1$.

	In particular, for all $p\in\N$, we have shown that
	\begin{equation*}
		|\mu_\A(h_p)|\leq \norm{h_p}{\A} \leq \sqrt{1 + \frac{1}{p+2}}
	\end{equation*}
	so by the squeeze theorem, $\lim_{n\rightarrow\infty}\norm{h_p}{\A} = 1$. 
	
	As $\lim_{p\rightarrow\infty} \Lip_\A(h_p) = 0$, while $\lim_{n\rightarrow\infty}\norm{h_p}{\A} = \lim_{m\rightarrow\infty} \mu_\A(h_{p,m}) = 1$, so $\mu_\A(h_p) =1$ for all $p\in\N$, we conclude by Theorem (\ref{approx-unit-thm}) that $(h_p)_{p\in\N}$ is an approximate unit of $\A$.
\end{proof}

In summary: if $m\in\N$, then $e^\A_m \in \targetsettunnel{-m}{e^\B_m}{2}$, and for all $p \in \N$, we have $s_{p,m} \in \targetsettunnel{-m}{e^\B_p}{2}$; moreover $(h_{p,m})_{m\in\N} = (e^\A_p s_{p,f(m)} e^\A_p)_{m\in\N}$ converges to some $h_p$ such that $(h_p)_{p\in\N}$ is a \lipunit{\Lip_\A}{\mu_\A}. By symmetry, we also note that $e^\B_m \in \targetsettunnel{m}{e^\A}{2}$ and $e^\B_p \in \targetsettunnel{m}{s_{p,m}}{2}$, noting the roles of $m$ and $p$: we can lift $e^\A_m$ to $e^\B_m$ via the tunnel $\tau_m$, and we can lift $s_{p,m}$ to $e^\B_p$ via the same tunnel $\tau_m$ when $p$ is now arbitrary. As mentioned, we could impose $s_{m,m} = e^\A_m$ but this does not prove to be of any additional help.

\medskip

We are now ready to prove the next important step in the construction of our isomorphism.

 \begin{lemma}\label{Cauchy-lemma}
For all $a\in\domsa{\Lip_\A}$, the sequence $(\beta_p(a))_{p\in\N}$ is Cauchy in $\B$. We denote its limit by $\pi(a)$. By construction, $\norm{\pi(a)}{\A}\leq\norm{a}{\A}$, $\norm{\pi(a)}{\Lip_\B,M} \leq \norm{a}{\Lip_\A,M}$ and, if $a \in \M_\A\cap\domsa{\Lip_\A}$, then $\pi(a) \in \M_\B\cap\domsa{\Lip_\B}$, and $\Lip_\B(\pi(a)) \leq \Lip_\A(a)$.
\end{lemma}

\begin{proof}
	We write $b_p \coloneqq \beta_p(a)$ for all $p \in \N$. Fix $l \coloneqq \norm{a}{\Lip_\A,M} + 1$.
	
	Let $\varepsilon > 0$.	Since $(h_p)_{p\in\N}$ is an approximate unit of $\A$, there exists $N_1\in\N$ such that, for all $p,q \geq N_1$,
	\begin{equation*}
		\norm{ h_p a h_p - h_q a h_q }{\A} < \frac{\varepsilon}{7} \text.
	\end{equation*}
	
	Since $(e^\A_n)_{n\in\N}$ is also an approximate unit for $\A$, there exists $N_2\in\N$ such that, for all $n\geq N_2$, we have
	\begin{equation*}
		\norm{a - (e^\A_n)^2 a (e^\A_n)^2}{\A} < \frac{\varepsilon}{56} \text.		
	\end{equation*}

	Let $N\coloneqq\max\{ N_1, N_2 \}$. Fix $p\geq N$ and $q\geq N$.

	\medskip
	
	Let $K_0\in\N$ such that, if $m\geq K_0$, then $\tunnelextent{\tau_m} < \frac{\varepsilon}{14 l (4 M + 2)}$.
	
	By definition of $h_p$ and $h_q$, there exists $K_1\in\N$ such that, for all $m\geq K_1$:
	\begin{equation*}
		\norm{h_p - h_{p,m}}{\A} < \frac{\varepsilon}{14\norm{a}{\A}+1} \text{ and }\norm{h_q - h_{q,m}}{\A} < \frac{\varepsilon}{14\norm{a}{\A}+1} \text.
	\end{equation*}
	
	Moreover, since $(e^\B_m)_{m\in\N}$ is an approximate unit for $\B$, there exists $K_2 \in \N$ such that, for all $m\geq K_2$:
	\begin{equation}\label{Cauchy-Lemma-eq-bp-approx}
		\norm{b_p - e^\B_m b_p e^\B_m}{\B} < \frac{\varepsilon}{14} \text{ and }\norm{b_q - e^\B_m b_q e^\B_m }{\B} < \frac{\varepsilon}{14} \text.
	\end{equation}
	
	By definition of $b_p$ and $b_q$, there exists $K_3 \in \N$ such that, for all $m\geq K_3$,
	\begin{equation}\label{Cauchy-Haus-eq}
		\Haus{\B}\left(e^\B_p \targetsettunnel{m}{a}{l} e^\B_p, \{ b_p \}\right) < \frac{\varepsilon}{56} \text{ and } \Haus{\B}\left(e^\B_q \targetsettunnel{m}{a}{l} e^\B_q, \{ b_q \}\right) < \frac{\varepsilon}{56} \text.
	\end{equation}
	
	Let $m = \max\{K_0, K_1,K_2,K_3\}$.
	
	Let $t_m \in \targetsettunnel{m}{a}{l}$. 
	
	Recall that $s_{p,m} \coloneqq \pi_m(k_{p,m})$. It is important that $s_{p,m} \in \M_\A$ since $k_{p,m} \in \M_n$ and $\pi_n(\M_n) \subseteq \M_\A$.

	 Note that by construction, since $t_m \in \targetsettunnel{m}{a}{l}$, there exists $d_m \in \domsa{\Lip_m}$ such that $\pi_m(d_m) = a$, $\rho_m(d_m) = t_m$ and $\norm{d_m}{\Lip_m,M} \leq l$. Thus 
	$\pi_m(k_{p,m} d_m k_{p,m}) = s_{p,m} a s_{p,m}$, while 
	\begin{align*}
		\Lip_m(k_{p,m} d_m k_{p,m}) 
		&\leq 2\Lip_m(k_{p,m})\norm{d_m}{\D}\norm{k_{p,m}}{\D_n} + \norm{k_{p,m}}{\D}^2 \Lip_m(d_m) \\
		&\leq 4\left(\frac{1}{p+2} + \frac{1}{m+2}\right) M l + (1+\frac{1}{p+2}) l \\
		&\leq ( 4M + 2 ) l  \text.
	\end{align*}
	Therefore, by Definition (\ref{targetset-def}):
	\begin{equation*}
		e_p^\B t_m e_p^\B \in \targetsettunnel{m}{s_{p,m} a s_{p,m}}{(4M + 2) l} \text{ and }e_q^\B t_m e_q^\B \in \targetsettunnel{m}{s_{q,m} a s_{q,m}}{(4M +2) l} \text.
	\end{equation*}
	
	Moreover
	\begin{align}\label{Cauchy-eq-a}
		\norm{ s_{p,m} a s_{p,m} - a }{\A}
		&\leq\norm{ s_{p,m} a s_{p,m} - s_{p,m} (e^\A_p)^2 a (e^\A_p)^2 s_{p,m}}{\A} + \norm{s_{p,m} (e^\A_p)^2 a (e^\A_p)^2 s_{p,m} - a}{\A} \nonumber \\ 
		&\leq \underbracket[1pt]{\norm{ s_{p,m} }{\A}^2}_{\leq 4} \underbracket[1pt]{\norm{ a - (e^\A_p)^2 a (e^\A_p)^2 }{\A}}_{<\frac{\varepsilon}{56}} + \norm{s_{p,m} (e^\A_p)^2 a (e^\A_p)^2 s_{p,m} - a}{\A}\\
		&\leq \frac{\varepsilon}{14} + \norm{s_{p,m} (e^\A_p)^2 a (e^\A_p)^2 s_{p,m} - a}{\A} \text. \nonumber
	\end{align}
	
	Therefore:
	\begin{align*}
		\norm{b_p - b_q}{\B}
		&\leq \norm{e_m^\B (b_p - b_q) e_m^\B}{\B} + \frac{\varepsilon}{7}  \text{ by Expression \eqref{Cauchy-Lemma-eq-bp-approx}}\\
		&\leq \norm{e_m^\B ( e_p^\B t_m e_p^\B - e_q^\B t_m e_q^\B) e_m^\B}{\B} + \frac{2\varepsilon}{7}\\
		&\leq \norm{ s_{p,m} a s_{p,m} - s_{q,m} a s_{q,m} }{\A} + 2l(4M+2)\tunnelextent{\tau_m} + \frac{2\varepsilon}{7} \text{ by Lemma (\ref{main-lemma})} \\
		&\leq \norm{ s_{p,m} a s_{p,m} - s_{q,m} a s_{q,m} }{\A} + \frac{3\varepsilon}{7} \\
		&\leq \norm{ s_{p,m} (e_p^\A)^2 a (e_p^\A)^2 s_{p,m} - s_{q,m} (e_q^\A)^2 a (e_q^\A)^2 s_{q,m} }{\A} + \frac{\varepsilon}{7} + \frac{3\varepsilon}{7} \text{ by Exp. \eqref{Cauchy-eq-a},}\\
		&= \norm{ \underbracket[1pt]{(e^\A_p s_{p,m} e_p^\A)}_{\text{since $[s_{p,m},e^\A] = 0$}} a (e_p^\A s_{p,m} e^\A_p) - (e_q^\A s_{q,m} e_q^\A) a (e_q^\A s_{q,m} e_q^\A) }{\A} + \frac{4\varepsilon}{7} \text{ by Exp. \eqref{Cauchy-Haus-eq},}\\
		&=\norm{ h_{p,m} a h_{p,m} - h_{q,m} a h_{q,m} }{\A} + \frac{4\varepsilon}{7} \\
		&\leq 2 \left(\underbracket[1pt]{\norm{h_{p,m}-h_p}{\B} + \norm{h_{q,m}-h_q}{\A}}_{< \frac{\varepsilon}{7\norm{a}{\A} + 1}} \right)\norm{a}{\A} + \underbracket[1pt]{\norm{h_p a h_p - h_q a h_q}{\A}}_{<\frac{\varepsilon}{7}} + \frac{4\varepsilon}{7} \\
		&< \varepsilon \text.
	\end{align*}
	 This concludes our proof that $(\beta_p(a))_{p\in\N}$ is Cauchy in $\B$. Since $\B$ is complete, we conclude that there exists $\pi(a) \in \B$ such that $\lim_{p\rightarrow\infty} \beta_p(a) = \pi(a)$.
	 
	 Now, $\norm{\pi(a)}{\B} = \lim_{n\rightarrow\infty}\norm{\beta_p(a)}{\B} \leq \norm{a}{\A}$ by Corollary (\ref{core-cor}). Moreover, by lower semi-continuity,
	 \begin{equation*}
	 	\norm{\pi(a)}{\Lip_\B,M} \leq \liminf_{p\rightarrow\infty} \norm{a}{\Lip_\A,M}\left(1+\frac{4}{p+2}\right)^2 = \norm{a}{\Lip_\A,M}\text.
	 \end{equation*}
	 
	 Last, if $a\in\M_\A\cap\domsa{\Lip}$, then, again by lower semicontinuity of $\Lip_\B$, we conclude that $\Lip_\B(\pi(a)) \leq \Lip_A(a)$. This concludes our proof.
\end{proof}

In summary, for all $a\in \domsa{\Lip_\A}$, we have constructed an element $\pi(a) \in \domsa{\Lip_\B}$ with $\norm{\pi(a)}{\B}\leq\norm{a}{\A}$ and $\Lip_\B(\pi(a))\leq \Lip_\A(a)$. 

By employing the same method for the reversed tunnels, we also have that $(\alpha_p(b))_{p\in \N}$ converges to some $\theta(b)$ for all $b\in\domsa{\Lip_\B}$. We will return to this candidate for an inverse; first we extend $\pi$ to a proper *-morphism from $\A$ to $\B$.

\medskip

We now prove that $\pi$ thus constructed is linear.

\begin{lemma}\label{bp-linear-lemma}
	For each $p \in \N$, the map $\beta_p : \domsa{\Lip_\A} \rightarrow \domsa{\Lip_\B}$ is $\R$-linear and of norm at most $1$.
\end{lemma}

\begin{proof}
	Let $a,a' \in \domsa{\Lip_\A}$ and $t\in\R$. Let $l > \max\{ \norm{a}{\Lip_\A,M}, \norm{a'}{\Lip_\A,M}\}$. 
	
	Lemma (\ref{linearity-lemma}) gives us, for all $m\in\N$, the inclusion
	\begin{equation*}
		\targetsettunnel{m}{a}{l} + \targetsettunnel{m}{a'}{l} \subseteq \targetsettunnel{m}{t a + a'}{(1+|t|)l} \text.
	\end{equation*}
	Therefore,
	\begin{multline*}
		0\leq \Haus{\B}\left( e^\B_p \left(\targetsettunnel{m}{a}{l} + \targetsettunnel{m}{a'}{l}\right) e^\B_p,  e^p_\B \targetsettunnel{m}{t a + a'}{(1+|t|)l} e^\B_p \right) \\ \leq \diam{e^\B_p \targetsettunnel{m}{t a + a'}{(1+|t|)l} e^\B_p}{\B} \xrightarrow{m\rightarrow\infty} 0 \text.
	\end{multline*}
	By Lemma (\ref{core-cor}), 
	\begin{equation*}
		\lim_{m\rightarrow\infty} \Haus{\B}\left(e^\B_p \targetsettunnel{m}{t a +a'}{(1+|t|)l} e^\B_p, \{ \beta_p(t a + a') \} \right) = 0\text.
	\end{equation*}
	Therefore, by the triangle inequality for the Hausdorff distance $\Haus{\B}$, we conclude:
	\begin{equation}\label{bp-linear-lemma-cv-eq}
		\lim_{m\rightarrow\infty} \Haus{\B}\left( e^\B_p (\targetsettunnel{m}{a}{l} + \targetsettunnel{m}{a'}{l}) e^\B_p, \{ \beta_p(t a + a') \} \right) = 0 \text.
	\end{equation}
	
	Now, let $c_m \in e^\B_p \targetsettunnel{m}{a}{l} e^\B_p$ and $c'_m \in e^\B_p \targetsettunnel{m}{a'}{l} e^\B_p$. By Lemma (\ref{core-cor}), we conclude that $\lim_{m\rightarrow\infty} c_m = \beta_p(a)$ and $\lim_{m\rightarrow\infty} c'_m = \beta_p(a')$, so $\lim_{m\rightarrow\infty} (t c_m + c'_m) = t \beta_p(a) + \beta_p(a')$. On the other hand, by Equation \eqref{bp-linear-lemma-cv-eq}, we also conclude that $\lim_{m\rightarrow\infty} t c_m + c'_m = \beta_p(t a + a')$. By uniqueness of limits, we conclude $\beta_p(a + t a') = t \beta_p(a) + \beta_p(a')$, as wished.
\end{proof}	

\begin{corollary}
	The map $\pi : \domsa{\Lip_\A} \rightarrow \domsa{\Lip_\B}$ is a continuous linear map with $\opnorm{\pi}{}{}\leq 1$, which therefore has a unique continuous  extension to $\sa{\A}$, necessarily linear, and still denoted by $\pi$. 
\end{corollary}

\begin{proof}
	$\pi$ is, by definition, the pointwise limit of linear maps of $\domsa{\Lip_\A}$, valued in $\domsa{\Lip_\B}$, all with norm uniformly at most $1$, so $\pi$ is indeed linear with norm at most $1$. It thus extends uniquely by uniform continuity to the closure of $\domsa{\Lip_\A}$, which is $\sa{\A}$, and the norm of the resulting extension is at most $1$ again. 
\end{proof}

We now prove that $\pi$ thus constructed is a Jordan-Lie morphism on $\sa{\A}$.  This requires a series of involved inequalities.
\begin{lemma}
	For all $a,b\in \sa{\A}$,
	\begin{equation*}
		\Re(\pi(a)\pi(b)) = \pi(\Re(ab)) \text{ and }\Im(\pi(a)\pi(b)) = \pi(\Im(ab))\text.
	\end{equation*}
\end{lemma}

\begin{proof}
First, let $a,a'\in\domsa{\Lip_\A}$. The argument is the same for $\Re(aa')$ and $\Im (aa')$, so we will just present it for $\Re(aa')$. Fix $l > \max\left\{\norm{a}{\Lip_\A,M},\norm{a'}{\Lip_\A,M}\right\}$. 
	
	Let $\varepsilon > 0$. Since $(h_p)_{p\in\N}$ is an approximate unit in $\A$,  there exists $P_1\in\N$ such that, if $p\geq P_1$, then 
	\begin{equation*}
		\norm{h_p a h_p^2 a' h_p - h_p a a' h_p}{\A} < \frac{\varepsilon}{8} \text.
	\end{equation*}
	
	Since $\lim_{p\rightarrow\infty} \beta_p(\Re(aa')) - \Re(\beta_p(a)\beta_p(a')) = \pi(\Re(aa')) - \Re(\pi(a)\pi(a'))$, there exist $P_2 \in \N$ such that, if $p \geq P_2$, then
	\begin{equation*}
		\norm{\pi(\Re(a a')) - \Re(\pi(a)\pi(a')) - (\beta_p(\Re(a a')) - \Re(\beta_p(a)\beta_p(a')))}{\B} < \frac{\varepsilon}{4} \text. 
	\end{equation*}
	
	Fix $p \geq \max\{P_1, P_2\}$.
	
	First, there exists $K_0 \in \N$ such that, if $m \geq K_0$, then 
	\begin{equation*}
		\tunnelextent{\tau_m} \leq \frac{1}{1+m} < \frac{\varepsilon}{240 M^2 l^2} \text.
	\end{equation*}
	
	By Lemma (\ref{no-em-lemma}), there exists $K_1 \in \N$ such that, if $m\geq K_1$, if $c_m \in \targetsettunnel{m}{a}{l}$ and if $c'_m \in \targetsettunnel{m}{a'}{l}$, then 
	\begin{multline*}
	\Bigg\|\Re(\beta_p(a)\beta_p(a')) - \beta_p(\Re(aa')) \\ - \left[e^\B_p \left( c_m (e^\B_p)^2 c'_m +  c'_m (e^\B_p)^2 c'_m - (c_m c'_m + c'_m c_m) \right) e^\B_p\right]\Bigg\|_{\B}
 < \frac{\varepsilon}{4} \text, 
	\end{multline*}
	since 
	\begin{multline*}
		\lim_{m\rightarrow\infty} \Haus{\B}(e^\B_p \targetsettunnel{m}{a}{l} e^\B_p, \{\beta_p(a)\}) =\lim_{m\rightarrow\infty} \Haus{\B}(e^\B_p \targetsettunnel{m}{a'}{l} e^\B_p, \{\beta_p(a')\}) \\ =\lim_{m\rightarrow\infty} \Haus{\B}(e^\B_p \targetsettunnel{m}{\Re(aa')}{l} e^\B_p, \{\beta_p(\Re(aa'))\})
	\end{multline*}
	and $\Re(c_m c'_m) \in \targetsettunnel{m}{\Re(aa')}{2 M l^2}$ by Lemma (\ref{product-lemma}).

	There exists $K_2\in\N$ such that, if $m\geq K_2$, then $\norm{e^\B_p - e^\B_p e^\B_m}{\B} < \frac{\varepsilon}{160 M^2 l^2}$.
	
%
%
	
	Let $m \geq \max\{ K_0 , K_1, K_2 \}$.
	
	Let $c_m \in \targetsettunnel{m}{a}{l}$ and $c'_m \in \targetsettunnel{m}{a'}{l}$. Note that $\norm{c_m}{\Lip_\B,M} \leq l$ and $\norm{c'_m}{\Lip_\B,M} \leq l$.
	
	We immediately observe that:
	\begin{align*}
		&\norm{\Re(\pi(a)\pi(a')) - \pi(\Re(a a'))}{\B}\\
		&\quad\leq \norm{\Re(\beta_p(a)\beta_p(a')) - \beta_p(\Re(a a'))}{\B} + \frac{\varepsilon}{4} \\
		&\quad\leq \frac{1}{2} \norm{e^\B_p \left( c_m (e^\B_p)^2 c'_m +  c'_m (e^\B_p)^2 c'_m - (c_m c'_m + c'_m c_m) \right) e^\B_p}{\B} + \frac{\varepsilon}{2} \text. 
	\end{align*}
	
	Now, since $\norm{e^\B_p}{\B} \leq \sqrt{1+\frac{1}{p+1}} \leq 2$, and both $\norm{c_m}{\B}\leq M l$ and $\norm{c'_m}{\B} \leq M l$,
	\begin{align*}
		&\norm{e^\B_p \left( c_m (e^\B_p)^2 c'_m +  c'_m (e^\B_p)^2 c'_m - (c_m c'_m + c'_m c_m) \right) e^\B_p}{\B} \\
		&\quad\leq 2 \norm{c_m}{\B}\norm{c'_m}{\B}\norm{e^\B_p}{\B}^2 \left( 1 + \norm{e^\B_p}{\B}^2 \right) \\
		&\quad\leq 40 M^2 l^2 \text.
	\end{align*}
	Moreover, noting that $\Lip_\B(c_m) \leq l$ and $\Lip_\B(c'_m)\leq l$, and using the Leibniz inequality:
	\begin{align*}
		\Lip_\B(e^\B_p(\Re(c_m c'_m))e^\B_p) 
		&\leq 2 \Lip(e^\B_p)\norm{\Re(c_m c'_m)}{\B}\norm{e^\B_p}{\B} + \norm{e^\B_p}{\B}^2 \Lip_\B(\Re(c_m c'_m)) \\
		&\leq 8 M^2 l^2 + 4 M l^2 \leq 12 M^2 l^2 \text. 
	\end{align*}
	
	Again, using the same base inequalities,
	\begin{align*}
		\Lip_\B(c_m (e^\B_p)^2 c'_m)
		&\leq \norm{c_m}{\B}\Lip_\B(c'_m)\norm{e^\B_p}{\B}^2 + \norm{c_m}{\B}\norm{c'_m}{\B}\Lip(e^\B_p)^2 + \Lip_\B(c_m)\norm{e^\B_p}{\B}^2 \norm{c'_m}{\B} \\
		&\leq M l^2\left(1+\frac{1}{p+1}\right) + M^2 l^2\frac{1}{(p+1)^2} + M l^2\left(1+\frac{1}{p+1}\right) \\
		&\leq 5 M^2 l^2\text.
	\end{align*}
	Therefore,
	\begin{align*}
		\Lip_\B(e^\B_p \left( c_m (e^\B_p)^2 c'_m \right) e^\B_p)
		&\leq \norm{e^\B_p}{\B}\left( 2 \Lip(e^\B_p)\norm{c_m (e^\B_p)^2 c'_m}{\B} + \norm{e^\B_p}{\B}\Lip_\B(c_m (e^\B_p)^2 c'_m) \right) \\
		&\leq \sqrt{1+\frac{1}{p+1}}\left(\frac{1}{p+1}\left(1+\frac{1}{1+p}\right) M^2 l^2 + \sqrt{1+\frac{1}{p+1}} 5M^2 l\right)\\
		&\leq 24 M^2 l^2 \text.
	\end{align*}

	Therefore,
	\begin{align*}
		&\Lip_\B(e^\B_p \left( c_m (e^\B_p)^2 c'_m +  c'_m (e^\B_p)^2 c'_m - (c_m c'_m + c'_m c_m) \right) e^\B_p) \\
		&\leq M^2 l^2 (24 + 24 + 12) = 60 M^2 l^2\text.
	\end{align*}
	Therefore,
	\begin{equation}\label{monstruous-norm-eq1}
	\norm{e^\B_p \left( c_m (e^\B_p)^2 c'_m +  c'_m (e^\B_p)^2 c'_m - (c_m c'_m + c'_m c_m) \right)e^\B_p}{\Lip_\B,M} \leq 60 M^2 l^2 \text. 
	\end{equation}
	
	A similar computation shows that 
	\begin{equation}\label{monstruous-norm-eq2}
	\norm{a s_{p,m}^2 a' + a' s_{p,m}^2 a - (aa'+a'a)}{\Lip_\A} < 60 M^2 l^2 \text.
	\end{equation}
	
	Now, a key observation is that, 
	\begin{multline*}
		 e^\B_p \left( c_m (e^\B_p)^2 c'_m +  c'_m (e^\B_p)^2 c'_m - (c_m c'_m + c'_m c_m) \right) e^\B_p \\ 
		 \in \targetsettunnel{-m}{s_{p,m} \left( a s_{p,m}^2 a' + a' s_{p,m}^2 a - (aa'+a'a)\right)s_{p,m}}{60 M^2 l^2} \text.
	\end{multline*} 
	which follows from Exp. \eqref{monstruous-norm-eq1} and \eqref{monstruous-norm-eq2} and a direct computation using the fact that $\pi_n$ and $\rho_n$ are a *-morphisms.

	Therefore, using Lemma (\ref{easy-lemma}), then Lemma (\ref{main-lemma}), we obtain:
	\begin{align*}
		&\norm{e^\B_p \left( c_m (e^\B_p)^2 c'_m +  c'_m (e^\B_p)^2 c'_m - (c_m c'_m + c'_m c_m) \right) e^\B_p}{\B}\\
		&\leq \norm{e_m^\B\left( e^\B_p \left( c_m (e^\B_p)^2 c'_m +  c'_m (e^\B_p)^2 c'_m - (c_m c'_m + c'_m c_m) \right) e^\B_p \right) e_m^\B}{\B} + \frac{\varepsilon}{4} \\
		&\leq \norm{ s_{p,m} (a s_{p,m}^2 a' + a' s_{p,m}^2 a - a a' - a' a )s_{p,m}  }{\A} + \frac{\varepsilon}{4} + \frac{\varepsilon}{4}\text.
	\end{align*}

	We note that, for all $a,b,c\in \A$, the following easy inequality holds, which we hope will clarify the next computation somewhat:
	\begin{align}\label{some-easy-eq-product}
		\norm{a b a - c b c}{\A} 
		&= \norm{(a-c) b c + a b (a-c)}{\A} \nonumber \\
		&\leq \norm{a-c}{\A} \norm{b}{\A} (\norm{a}{\A} + \norm{c}{\A}) 
	\end{align}

	In turn,
	\begin{align*}
		&\norm{ s_{p,m} (a s_{p,m}^2 a'  - a a' )s_{p,m}  }{\A}\\
		&\quad\leq \norm{ s_{p,m} \left((e^\A_m)^2 a (e^\A_m)^2  s_{p,m}^2 (e^\A_m)^2 a' (e^\A_m)^2 s_{p,m} - (e^\A_m)^2 a a' (e^\A_m)^2 \right)s_{p,m}}{\A} \\
		&\quad +\norm{s_{p,m}}{\A}^4 \left(\norm{a-(e^\A_m)^2 a(e^\A_m)^2}{\A}\left(\norm{a}{\A} + \norm{(e^\A_m)^2 a (e^\B_m)^2 a}{\A}  \right) \right) \text{ by Exp. \eqref{some-easy-eq-product}, }\\
		&\quad + \norm{s_{p,m}}{\A}^2 \norm{aa'-(e^\B_m)^2 aa' (e^\B_m)^2}{\A} \\
		&\leq \norm{ h_{p,m} (a h_{p,m}^2 a' - a a') h_{p,m} }{\A} \text{ since $[s_{p,m},e^\A_{m}] = 0$.}\\
		&\quad+\norm{s_{p,m}}{\A}^4 \left(\norm{a-(e^\A_m)^2 a(e^\A_m)^2}{\A}\left(\norm{a}{\A} + \norm{(e^\A_m)^2 a (e^\B_m)^2 a}{\A}  \right) \right) \text{ by Exp. \eqref{some-easy-eq-product}, }\\
		&\quad + \norm{s_{p,m}}{\A}^2 \norm{aa'-(e^\B_m)^2 aa' (e^\B_m)^2}{\A}   \text.
	\end{align*}
	
	Since 
	\begin{equation*}
		\lim_{m\rightarrow\infty} \max\left\{ \norm{a-(e^\A_m)^2 a (e^\A_m)^2}{\A}, \norm{a'-(e^\A_m)^2 a' (e^\A_m)^2}{\A}, \norm{a a'-(e^\A_m)^2 a a' (e^\A_m)^2}{\A}\right\} = 0\text,
	\end{equation*}
	since $(s_{m,p})_{m\in\N}$ is bounded, and since $\lim_{m\rightarrow\infty} h_{p,m} = h_p$, we conclude that
	\begin{equation*}
	\lim_{m\rightarrow\infty}\norm{ h_{p,m} (a h_{p,m}^2 a' - a a') h_{p,m} }{\A} = \norm{h_p a h_p \cdot h_p a' h_p - h_p a a' h_p}{\A} < \frac{\varepsilon}{8}
	\end{equation*}
	and
	\begin{equation*}
	\lim_{m\rightarrow\infty} \norm{s_{p,m}}{\A}^4 \left(\norm{a-(e^\A_m)^2 a(e^\A_m)^2}{\A}\left(\norm{a}{\A} + \norm{(e^\A_p)^2 a (e^\B_p)^2 a}{\A}  \right) \right) = 0
	\end{equation*}
	while
	\begin{equation*}
	\lim_{m\rightarrow\infty} \norm{s_{p,m}}{\A}^2 \norm{aa'-(e^\B_p)^2 aa' (e^\B_p)^2}{\A}   =0\text.
	\end{equation*}
	
	Thus, there exists $K_3 \in \N$ such that, if $m\geq \max\{K_0,K_1,K_2,K_3\}$, then
	\begin{equation*}
		\norm{ s_{p,m} (a s_{p,m}^2 a'  - a a' )s_{p,m}  }{\A} \leq \frac{\varepsilon}{4} \text.
	\end{equation*}

	Similarly, there exists $K_4\in\N$ such that, if $m\geq\max\{K_0,K_1,K_2,K_3,K_4\}$, then
	\begin{equation*}
		\norm{ s_{p,m}(a' s_{p,m}^2 a - a' a)s_{p,m}}{\A} < \frac{\varepsilon}{4} \text.
	\end{equation*}
	
	Consequently, if $m\geq\max\{K_0,K_1,K_2,K_3,K_4\}$, then:
	\begin{align*}
	&\norm{e^\B_p \left( c_m (e^\B_p)^2 c'_m +  c'_m (e^\B_p)^2 c'_m - (c_m c'_m + c'_m c_m) \right) e^\B_p}{\B}\\
	&\quad \norm{ s_{p,m} (a s_{p,m}^2 a'  - a a' )s_{p,m}  }{\A} +	\norm{ s_{p,m} (a s_{p,m}^2 a'  - a a' )s_{p,m}  }{\A} + \frac{\varepsilon}{2} \\
	&\leq \varepsilon \text.
	\end{align*}
	
	Therefore, for all $\varepsilon > 0$,
	\begin{equation*}
		\norm{\pi(\Re(aa')) - \Re(\pi(a)\pi(a'))}{\A} < \varepsilon \text.
	\end{equation*}
	We thus have shown that $\pi(\Re(aa')) = \Re(\pi(a)\pi(a'))$.

Since $\pi$ is continuous over $\sa{\A}$ and $\domsa{\Lip_\A}$ is dense in $\sa{\A}$, we conclude that $\pi(\Re(aa')) = \Re(\pi(a)\pi(a'))$ for all $a,a' \in \sa{\A}$.

The proof for the Lie product is analogous.
\end{proof}

We can now extend $\pi$ to a *-morphism from $\A$ to $\B$. First, for any $a\in\A$, we set
\begin{equation*}
	\pi(a) \coloneqq \pi(\Re a) + i \pi(\Im a) \text.
\end{equation*}
It is then a matter of simple algebra that $\pi(a^\ast)=\pi(a)^\ast$ for all $a\in\A$, and
\begin{align*}
	\pi(a b) &= \pi(\Re (ab) + i\Im (ab) ) = \pi(\Re (ab) ) + i \pi(\Im (ab) ) \\
	&= \Re(\pi(a)\pi(b)) + i\Im(\pi(a)\pi(b)) = \pi(a)\pi(b) \text.
\end{align*}

This constructed map $\pi$ is a *-morphism over $\A$, and since $\A$ is a C*-algebra, $\pi$ is in fact, continuous with norm $1$.

\medskip

Our entire process up to now is symmetric in $\A$ and $\B$. This way, we obtain a *-morphism $\rho : \B \rightarrow \A$ such that, for all $b\in\domsa{\Lip_\B}$ and $l>\norm{b}{\Lip_\B}$, we have
\begin{equation}\label{zero-symm-eq1}
	\rho(b) = \lim_{p\rightarrow\infty} \alpha_p(b)
\end{equation}
where $a_p(b)$ is defined uniquely by
\begin{equation}\label{zero-symm-eq2}
	\lim_{m\rightarrow\infty} \Haus{\A}(e^\A_p \targetsettunnel{-m)}{b}{l} e^\A_p, \{\alpha_p(b)\}) = 0 \text.
\end{equation}
Note that, for all $b\in\domsa{\Lip_\B}$, we have $\Lip_\B(b) \leq \norm{\rho(b)}{\Lip_\A,M} \leq \norm{b}{\Lip_\B,M}$, and if $b \in \M_\B\cap\domsa{\Lip_\B}$, then $\Lip_\A\circ\rho(b) \leq \Lip_\B(a)$.

\medskip

Our last step is to prove that $\rho=\pi^{-1}$. Let $\varepsilon > 0$. 

\medskip 

Since $(h_p)_{p\in\N}$ is an approximate unit of $\A$, there exists $P_0 \in \N$ such that, if $p\geq P_0$, then $\norm{a-h_p a h_p}{\A} < \frac{\varepsilon}{24}$.

By Equation \eqref{zero-symm-eq1}, there exists $P_1\in\N$ such that, if $p\geq P_1$, then 
\begin{equation}\label{inv-eq-1}
	\norm{\rho(\pi(a)) - \alpha_p(\pi(a))}{\A} < \frac{\varepsilon}{20}\text.
\end{equation}

By Lemma (\ref{Cauchy-lemma}), there exists $P_2\in\N$ such that, for all $n\geq P_2$, $\norm{\pi(a) - \beta_p(a)}{\B} < \frac{\varepsilon}{3}$. Since $\alpha_p$ has norm $1$ for any $p\in\N$, we conclude that 
\begin{equation}\label{inv-eq-2}
	\norm{\alpha_p(\pi(a)-\beta_p(a))}{\A} < \frac{\varepsilon}{5}
\end{equation}
whenever $p\geq P_2$.

Last, since $(e^\A_p)_{p\in\N}$ is an approximate unit for $\A$, there exists $P_3 \in \N$ such that, if $p \geq P_3$, then 
\begin{equation}\label{inv-eq-6}
	\norm{a - e^\A_p a e^\A_p}{\A} < \frac{\varepsilon}{320}\text.
\end{equation}

Let $p \coloneqq\max\{P_0,P_1,P_2,P_3\}$.

Let $z_m \in \targetsettunnel{-m}{\beta_p(a)}{l}$. There exists $M_1\in\N$ such that, if $m\geq M_1$, then 
\begin{equation}\label{inv-eq-3}
	\norm{\alpha_p(\beta_p(a)) - e^\A_p e^\A_m z_m e^\A_m e^\A_p}{\A} < \frac{\varepsilon}{5}\text.
\end{equation}

Since $\lim_{m\rightarrow \infty} h_{p,m} = h_p$, there exists $M_2 \in \N$ such that, if $m\geq M_2$, then $\norm{h_{p,m} - h_p}{\A} < \frac{\varepsilon}{40(\norm{a}{\A} + 1)}$. Consequently,
\begin{equation}\label{inv-eq-5}
	\norm{a - h_{p,m} a h_{p,m}}{\A} \leq \norm{a-h_p a h_p}{\A} + 2\norm{h_{p,m}-h_p}{\A}\norm{a}{\A} < \frac{\varepsilon}{20} + \frac{\varepsilon}{20} = \frac{\varepsilon}{10} \text.
\end{equation}

There exists $M_3 \in\N$ such that, if $m\geq M_3$, then
\begin{equation*}
	\Haus{\B}(e^\B_p \targetsettunnel{m}{a}{3 l} e^\B_p, \{ \beta_p(a) \} ) < \frac{\varepsilon}{21} \text.
\end{equation*}

Let $M_4 \in \N$ such that, if $m\geq M_4$, then $\tunnelextent{\tau_m} <\frac{\varepsilon}{ 240 l }$. Note that $\tunnelextent{\tau_{m}^{-1}} = \tunnelextent{\tau_m}$.

Let $y_m \in \targetsettunnel{m}{h_{p,m} a h_{p,m}}{l}$.  By definition of the reverse tunnel and of the target sets,
\begin{equation*}
	h_{p,m} a h_{p,m} \in \targetsettunnel{-m}{y_m}{l} \text.
\end{equation*}
Let $c_m \in \targetsettunnel{m}{a}{l}$ as well. Note that
\begin{equation}\label{inv-eq-6}
	\norm{e_m^\B(y_m - c_m)e_m^\B}{\B} \leq \norm{h_{p,m} a h_{p,m} - a}{\A} + 3 l \tunnelextent{\tau_m} < \frac{\varepsilon}{80}  \text.
\end{equation}

Let $M \in \N$ such that, if $m\geq M$, then $\norm{a - (e^\A_m)^2 a (e^\A_m)^2}{\A} < \frac{\varepsilon}{320}$.

Now, 
\begin{equation*}
	z_m - s_{p,m}e^\A_m a e^\A_m s_{p,m} \in \targetsettunnel{-m}{\beta_p(a) - e^\B_p e^\B_m y_m e^\B_m e^\A_p}{3l}
\end{equation*} 
by construction, using both the linearity and then product properties of the target sets --- this observation is where we ``inverse'' our map, in essence, using the relationship between tunnels and their reverse tunnels. Therefore, by Lemma (\ref{main-lemma}), using the tunnel $\tau_{m}^{-1}$, and noting that $s_{p,m}$, $e^\A_p$ and $e^\A_m$ all commute since they lie in the Abelian C*-algebra $\M_\A$:
\begin{align*}
	\norm{e^\A_m z_m e^\A_m - s_{p,m} e^\A_p a e^\A_p s_{p,m}}{\A}
	&\leq \norm{e^\A_m z_m e^\A_m -  s_{p,m}  a  s_{p,m}}{\A} + \underbracket[1pt]{\norm{s_{p,m}}{\A}^2}_{\leq 2} \underbracket[1pt]{\norm{a-e^\A_p a e^\A_p}{\A}}_{\leq\frac{\varepsilon}{320}} \\
	&\leq \norm{e^\A_m (z_m  -  s_{p,m} e^\A_m a e^\A_m s_{p,m}) e^\A_m}{\A} \\
	&\quad + \underbracket[1pt]{\norm{s_{p,m}}{\A}^2}_{\leq 2} \underbracket[1pt]{\norm{a-(e^\A_m)^2 a (e^\A_m)^2}{\A}}_{\leq\frac{\varepsilon}{320}} + \frac{\varepsilon}{160}  \\
	&\leq \norm{\beta_p(a)-e^\B_p e^\B_m y_m e^\B_m e^\B_p}{\B} + 3 l \tunnelextent{\tau_{m}^{-1}} + \frac{\varepsilon}{80} \nonumber \\
	&\leq \norm{\beta_p(a) - e^\B_p e^\B_m c_m e^\B_m e^\B_p}{\B} \\
	&\quad + \norm{e^\B_p e^\B_m (c_m-y_m)e^\B_m e^\B_p}{\B} + \frac{\varepsilon}{40} \\
	&\leq \frac{\varepsilon}{20} + \underbracket[1pt]{\norm{e^\B_p}{\B}^2}_{<2} \underbracket[1pt]{\norm{e^\B_m (c_m-y_m)e^\B_m }{\B}}_{<\frac{\varepsilon}{80} \text{by Exp. \eqref{inv-eq-6}}} + \frac{\varepsilon}{40} \\
	&\leq \frac{\varepsilon}{40} + \frac{\varepsilon}{20} + \frac{\varepsilon}{40} \leq \frac{\varepsilon}{10}\text. \nonumber
\end{align*}

So
\begin{align}\label{inv-eq-4}
	\norm{e_p^\A e_m^\A z_m e^\A_m e^\A_p - h_{p,m} a h_{p,m}}{\A}
	&= \norm{e^\A_p \left( e_m^\A z_m e^\A_m e^\A_p - s_{p,m} e^\A_p a e^\A_p s_{p,m} \right) e^\A_p}{}\nonumber\\
	&\leq \underbracket[1pt]{\norm{e^\A_p}{\A}^2}_{<2} \underbracket[1pt]{\norm{e^\A_m z_m e^\A_m - s_{p,m} e^\A_p a e^\A_p s_{p,m}}{\A}}_{<\frac{\varepsilon}{10}}\\
	&<\frac{\varepsilon}{5} \text.\nonumber
\end{align}

Therefore, for all $\varepsilon  > 0$:
\begin{align*}
	\norm{\rho(\pi(a)) - a}{\A}
	&\leq \underbracket[1pt]{\norm{\rho(\pi(a)) - \alpha_p(\pi(a))}{\A}}_{<\frac{\varepsilon}{5} \text{ by Exp. \eqref{inv-eq-1}}} + \underbracket[1pt]{\norm{\alpha_p(\pi(a)) - \alpha_p(\beta_p(a))}{\A}}_{<\frac{\varepsilon}{5}\text{ by Exp. \eqref{inv-eq-2}}} \\
	&\quad + \underbracket[1pt]{\norm{\alpha_p(\beta_p(a)) - e^\A_p e^\B_m z_m e^\B_m e^\A_p}{\A}}_{<\frac{\varepsilon}{5} \text{ by Exp. \eqref{inv-eq-3} }} + \underbracket[1pt]{\norm{e^\A_p e^\A_m z_m e^\A_m e^\A_p - h_{p,m} a h_{p,m}}{\A}}_{<\frac{\varepsilon}{5} \text{ by Exp. \eqref{inv-eq-4}}} \\
	&\quad +  \underbracket[1pt]{\norm{h_{p,m} a h_{p,m} - a}{\A}}_{<\frac{\varepsilon}{12} \text{ by Exp. \eqref{inv-eq-5}}}  \\
	&<\varepsilon \text.
\end{align*}

So $\rho\circ\pi(a) = a$. By continuity and linearity of $\rho\circ\pi$ and since $\domsa{\Lip_\A}$ is total in $\A$, we conclude that $\rho\circ\pi(a) = a$ for all $a\in\A$. The same argument applies to prove that $\pi\circ\rho = \mathrm{id}_\B$. So as claimed, $\rho =\pi^{-1}$.

As a consequence, if $a \in \domsa{\Lip_\A}$, we note that
\begin{equation*}
	\norm{a}{\Lip_\A,M} = \norm{\rho(\pi(a))}{\Lip_\A,M} \leq \norm{\pi(a)}{\Lip_\B,M} \leq \norm{a}{\Lip_\A,M}\text{ so }\norm{\pi(a)}{\Lip_\A,M} = \norm{a}{\Lip_\A,M}\text.
\end{equation*}

\medskip

Last, let $a\in\domsa{\Lip_\A}$ and set $l\coloneqq\norm{a}{\Lip_\A,M}+1$. Let $\varepsilon > 0$. There exists $P\in\N$ such that, if $p \geq P$, then 
\begin{equation*}
	\Kantorovich{\Lip_p}(\mu_\A\circ\pi_n,\mu_\B\circ\rho_n) \leq \tunnelextent{\tau_p} < \frac{\varepsilon}{12 l}\text.
\end{equation*}
There exists $P_2 \in\N$ such that, if $p\geq P_2$, then $\norm{\pi(a) - \beta_p(a)}{\B}<\frac{\varepsilon}{4}$.

Proceeding as above once more, there exists $P_3 \in \N$ such that, if $p\geq P_3$ and $m\geq P_3$, then $\norm{a - h_{p,m} a h_{p,m}}<\frac{\varepsilon}{4}$.

For all $m\in\N$, let $d_m \in \domsa{\Lip_n}$ such  that $\pi_n(d_n) = a$ and $\norm{d_n}{\Lip_n,M} \leq l$. Let $c_m\coloneqq \rho_n(d_m)$, so $c_m \in \targetsettunnel{m}{a}{\norm{a}{\Lip_\A,M}+1}$ by definition. There exists $K_1 \in \N$ such that, if $m\geq K_1$, then $\norm{e^\B_p c_m e^\B - \beta_p(a)}{\B} < \frac{\varepsilon}{4}$.

Let $p\geq \max\{P_1,P_2,P_3\}$ and $m\geq \max\{P_3,K_1\}$. Then:
\begin{align*}
	|\mu_\B\circ\pi(a) - \mu_\A(a)|
	&\leq |\mu_\B(\pi(a)-\beta_p(a))| + |\mu_\B(\beta_p(a)) - \mu_\B(e^\B_p c_m e^\B)| \\
	&\quad + |\mu_\B(\rho_m(k_{p,m} d_m k_{p,m})) - \mu_\A(\pi(k_{p,m} d_m p_{k,m}))| \\
	&\quad + |\mu_\A(h_{p,m} a h_{p,m}) - a)|\\
	&\leq \frac{\varepsilon}{4} + 3l \frac{\varepsilon}{12 l} + \frac{\varepsilon}{4} + \frac{\varepsilon}{4} = \varepsilon \text. 
\end{align*}
Therefore, since $\varepsilon > 0$ was arbitrary, $\mu_\B\circ\pi = \mu_\A$.

Our proof of Theorem (\ref{main-thm}) is now complete.
\end{proof}


\section{The Gromov-Hausdorff quantum Metametric}

\subsection{The Metametric}

We have constructed a family of inframetrics $(\metametric{r})_{r\geq 1}$ with desirable properties. However, the dependency on the cut-off parameter $r$ when defining $\metametric{r}$ is unclear. To clarify the importance of this parameter, note that in our work, we use the Leibniz inequality extensively, and thus, we often find ourselves needing to control both Lipschitz seminorms and norm of elements, and our cutoff parameter helps control both. On the other, the price for this convenience is that $\metametric{1}$ is zero between the non-isometric spaces
\begin{equation*}
	[0,1]\cup[3,4] \text{ and }[0,1]\cup[5,6] \text,
\end{equation*}
since it is only sensitive to the metric truncated at $1$. 

We offer a global solution which removes this apparent extrinsic parameter and provide a unique inframetric which enjoys all the basic properties we have proven so far, but moreover, with distance zero implying isometry for the {\MongeKant}. This is the good news. On the less clear news is that this requires to prove convergence for infinitely many inframetrics of the $\metametric{r}$, \emph{but} this is not as unmanageable as it may seem at all, as we will see in examples. Our current, and early, observation, is that it is not more difficult to prove convergence for $\metametric{1}$ than for $\metametric{R}$ for any $R \geq 1$ --- what matters is that a cutoff is chosen, not what it is. So, from this perspective, our present definition is reasonable. 

We make one more observation, prior to our definition. In this entire paper, outside of this section, there is exactly one result which depends on this distance which does not work if we only consider an arbitrary cutoff --- the proof that convergence in our new topology implies convergence in the metametric for {\qcms s}. Notably, the converse does hold without this new tool. In a way which may be interesting to explore further, $\metametric{r}$ is a more local notion of convergence, whereas the following is decidedly global.

\begin{definition}\label{Metametric-def}
The \emph{quantum metametric} $\Eth(\mathds{A},\mathds{B})$ between any two {\pqpms s} $\mathds{A}$ and $\mathds{B}$ is:
	\begin{multline*}
		\Eth(\mathds{A},\mathds{B}) \coloneqq \max\Bigg\{ \inf\left\{ \varepsilon > 0 : \sup_{r \in \left[1,\frac{1}{\varepsilon}\right]} \metametric{r}(\mathds{A},\mathds{B}) < \varepsilon \right\}, \\ \left|\exp\left( -\qdiam{\mathds{A}} \right) - \exp\left(-\qdiam{\mathds{B}}\right) \right| \Bigg\} \text.
	\end{multline*}
\end{definition}

\begin{theorem}
The following assertions hold:
\begin{enumerate}
	\item $\Eth(\mathds{A},\mathds{B}) = \Eth(\mathds{B},\mathds{A})$ for any {\pqpms s} $\mathds{A}$, $\mathds{B}$,
	\item for any {\pqpms s} $\mathds{A}$, $\mathds{B}$ and $\mathds{D}$,
	\begin{equation*}
	\Eth(\mathds{A},\mathds{B}) \leq(1+\Eth(\mathds{A},\mathds{D}))^2 \Eth(\mathds{D},\mathds{B}) + (1+\Eth(\mathds{D},\mathds{B}))^2 \Eth(\mathds{A},\mathds{D})\text,
	\end{equation*}
	\item $\Eth(\mathds{A},\mathds{B}) = 0$ if, and only if, there exists a pointed quantum full isometry from $\mathds{A}$ to $\mathds{B}$.
	\end{enumerate}
\end{theorem}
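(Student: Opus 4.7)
The plan is to reduce each assertion to the corresponding result already proved for the family $(\metametric{r})_{r\geq 1}$, handling the extra diameter term $D(\mathds{A},\mathds{B})\coloneqq\left|\exp(-\qdiam{\mathds{A}})-\exp(-\qdiam{\mathds{B}})\right|$ separately. Symmetry (1) is immediate: each $\metametric{r}$ is symmetric by Theorem (\ref{triangle-thm}.2) and $D$ is manifestly symmetric, so both quantities under the outer $\max$ in Definition (\ref{Metametric-def}) are symmetric. The diameter term $D$ also satisfies the ordinary triangle inequality as an absolute difference on $\R$, and since $D(\mathds{A},\mathds{B})\leq\Eth(\mathds{A},\mathds{B})$ by definition, we always have $D(\mathds{A},\mathds{B})\leq\Eth(\mathds{A},\mathds{D})+\Eth(\mathds{D},\mathds{B})$, which is dominated by the desired quadratic bound since $(1+x)^2y+(1+y)^2x\geq x+y$ for $x,y\geq 0$.

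For the relaxed triangle inequality (2), denote by $\eta(\mathds{X},\mathds{Y})$ the first component of $\Eth$ in Definition (\ref{Metametric-def}). Fix any $\alpha>\Eth(\mathds{A},\mathds{D})$ and $\beta>\Eth(\mathds{D},\mathds{B})$, and set $\gamma\coloneqq(1+\alpha)^2\beta+(1+\beta)^2\alpha$. The key observation is that $\gamma\geq\max\{\alpha,\beta\}$, so $[1,1/\gamma]\subseteq[1,1/\alpha]\cap[1,1/\beta]$. By definition of the infimum, one can pick $\alpha'\in(\eta(\mathds{A},\mathds{D}),\alpha)$ and $\beta'\in(\eta(\mathds{D},\mathds{B}),\beta)$ such that $\sup_{s\in[1,1/\alpha']}\metametric{s}(\mathds{A},\mathds{D})<\alpha'$ and $\sup_{s\in[1,1/\beta']}\metametric{s}(\mathds{D},\mathds{B})<\beta'$. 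For any $r\in[1,1/\gamma]$, we have $r\leq 1/\alpha'$ and $r\leq 1/\beta'$, so Theorem (\ref{triangle-thm}.3) applied with $M=r$ yields
\begin{equation*}
\metametric{r}(\mathds{A},\mathds{B})\leq(1+\beta')^2\alpha'+(1+\alpha')^2\beta'<\gamma\text.
\end{equation*}
Since the right-hand bound is independent of $r$, $\sup_{r\in[1,1/\gamma]}\metametric{r}(\mathds{A},\mathds{B})<\gamma$, giving $\eta(\mathds{A},\mathds{B})\leq\gamma$. Combined with the diameter bound $D(\mathds{A},\mathds{B})\leq\alpha+\beta\leq\gamma$, we conclude $\Eth(\mathds{A},\mathds{B})\leq\gamma$. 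Letting $\alpha\downarrow\Eth(\mathds{A},\mathds{D})$ and $\beta\downarrow\Eth(\mathds{D},\mathds{B})$ delivers the required inequality by continuity of the polynomial in $(\alpha,\beta)$.

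For the coincidence property (3), one direction follows from Theorem (\ref{triangle-thm}.1): a full topographic quantum $M$-isometry $\pi\colon\mathds{A}\to\mathds{B}$ with $\mu_\B\circ\pi=\mu_\A$ satisfies $\metametric{r}(\mathds{A},\mathds{B})=0$ for every $r\geq 1$, and induces an isometry $\pi^\ast$ between state spaces for $\Kantorovich{\Lip}$, forcing $\qdiam{\mathds{A}}=\qdiam{\mathds{B}}$; thus both components of $\Eth$ vanish. Conversely, assume $\Eth(\mathds{A},\mathds{B})=0$. Then $\eta(\mathds{A},\mathds{B})=0$, so for every $r\geq 1$ and every $\varepsilon\in(0,1/r)$ we have $r\in[1,1/\varepsilon]$ and therefore $\metametric{r}(\mathds{A},\mathds{B})<\varepsilon$; letting $\varepsilon\downarrow 0$ gives $\metametric{r}(\mathds{A},\mathds{B})=0$. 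In particular $\metametric{1}(\mathds{A},\mathds{B})=0$, and Theorem (\ref{main-thm}) then produces the desired pointed full topographic quantum isometry.

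The only step requiring real care is the middle one: one must verify that the monotonicity $\gamma\geq\alpha,\beta$ is exactly what is needed to chain the cutoff intervals $[1,1/\gamma]\subseteq[1,1/\alpha']$, and that the strict inequalities $\alpha'<\alpha$, $\beta'<\beta$ provide the strict bound $<\gamma$ needed to feed the definition of $\eta$. Everything else is bookkeeping around the two previously proved theorems.
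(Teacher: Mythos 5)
Parts (1) and (2) are correct and follow the paper's own route: for (2) you pick approximants $\varepsilon_1>\Eth(\mathds{A},\mathds{D})$, $\varepsilon_2>\Eth(\mathds{D},\mathds{B})$, observe that the target quantity $\varepsilon_m = (1+\varepsilon_1)^2\varepsilon_2 + (1+\varepsilon_2)^2\varepsilon_1$ dominates both, so that $\left[1,\varepsilon_m^{-1}\right]$ lies inside both cutoff windows, apply the relaxed triangle inequality of Theorem (\ref{triangle-thm}) uniformly in $r$, and handle the diameter term by the ordinary triangle inequality. That is exactly what the paper does.

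The converse direction of (3) has a genuine gap. You correctly deduce that $\metametric{r}(\mathds{A},\mathds{B}) = 0$ for \emph{every} $r \geq 1$, but then discard all of that information except $r=1$ and invoke Theorem (\ref{main-thm}) with $M = 1$. That theorem only delivers a $*$-isomorphism $\pi$ satisfying $\norm{\pi(\cdot)}{\Lip_\B,1} = \norm{\cdot}{\Lip_\A,1}$, i.e.\ preserving $\max\{\Lip(\cdot),\norm{\cdot}{}\}$ — not $\Lip$ itself. Whenever $\Lip_\A(a) < \norm{a}{\A}$ the condition places no constraint on $\Lip_\B(\pi(a))$ beyond an upper bound. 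The paper states this failure explicitly right before Definition (\ref{Metametric-def}): $\metametric{1}$ is zero between $[0,1]\cup[3,4]$ and $[0,1]\cup[5,6]$, which are not isometric. The whole purpose of sweeping the cutoff $r$ over $[1,\varepsilon^{-1}]$ in $\Eth$ is to repair this defect, and your argument bypasses the repair. The paper's proof instead uses the entire family of $*$-isomorphisms $\pi_n$ satisfying $\norm{\pi_n(\cdot)}{\Lip_\B,n} = \norm{\cdot}{\Lip_\A,n}$, shows this family is equicontinuous on a compact set of the form $\{hah : \norm{a}{\Lip_\A}\leq 1\}$, extracts a pointwise limit $\theta$ via Arz\'el\`a--Ascoli, and then lets $n\rightarrow\infty$ in $\norm{\pi_n(a)}{\Lip_\B,n} = \norm{a}{\Lip_\A,n}$ together with lower semicontinuity of $\Lip_\B$ to obtain $\Lip_\B\circ\theta \leq \Lip_\A$; the same argument for $\theta^{-1}$ gives equality. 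That extraction step is the heart of the claim and cannot be replaced by a single application of the $M=1$ case.
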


\begin{proof}
We begin by proving the second assertion. Let $\varepsilon_1 > \Eth(\mathds{A},\mathds{D})$ and $\varepsilon_2 > \Eth(\mathds{D},\mathds{B})$. Let $\varepsilon_m = (1+\varepsilon_1)^2 \varepsilon_2 + (1+\varepsilon_2)^2 \varepsilon_1$. Note that $\varepsilon_m > \max\{\varepsilon_1,\varepsilon_2\}$, so
\begin{equation*}
	\sup_{r \in \left[1,\frac{1}{\varepsilon_m}\right]} \metametric{r}(\mathds{A},\mathds{D}) \leq \varepsilon_1 \text{ and }\sup_{r \in \left[1,\frac{1}{\varepsilon_m}\right]} \metametric{r}(\mathds{D},\mathds{B}) \leq \varepsilon_2 \text.
\end{equation*}

Fix $r \in \left[1,\frac{1}{\varepsilon_m}\right]$. Then
\begin{equation*}
	\metametric{r}(\mathds{A},\mathds{B}) \leq (1+\metametric{}(\mathds{B},\mathds{D})^2 \metametric(\mathds{A},\mathds{D}) + (1+\metametric{}(\mathds{A},\mathds{D})^2 \metametric(\mathds{B},\mathds{D}) \leq (1+\varepsilon_2)^2\varepsilon_1 + (1+\varepsilon_1)^2\varepsilon_2 = \varepsilon_m \text.
\end{equation*}

Of course,
	\begin{align*}
		\left|\exp\left( \qdiam{\mathds{A}} \right) - \exp\left(\qdiam{\mathds{D}}\right) \right|
		&\leq \left|\exp\left( \qdiam{\mathds{A}} \right) - \exp\left(\qdiam{\mathds{B}}\right) \right|\\
		&\quad+ \left|\exp\left( \qdiam{\mathds{B}} \right) - \exp\left(\qdiam{\mathds{D}}\right) \right| \text.
	\end{align*}

Hence, 
\begin{align*}
	\Eth(\mathds{A},\mathds{B}) 
	&\leq \varepsilon_m \\
	&=(1+\varepsilon_1)^2 \varepsilon_2 + (1+\varepsilon_2)^2 \varepsilon_1 \\
	&\leq(1+\Eth(\mathds{A},\mathds{D}))^2 \Eth(\mathds{D},\mathds{B}) + (1+\Eth(\mathds{D},\mathds{B}))^2 \Eth(\mathds{A},\mathds{D})\text,
\end{align*}
and (2) is proven.

\medskip

Assertion (1) is obvious since $\metametric{r}$ is symmetric for all $r \geq 1$.

\medskip

If we assume that $\mathds{A}$ and $\mathds{B}$ are fully quantum isometric, then $\metametric{r}(\mathds{A},\mathds{B}) = 0$ for all $r \geq 1$, so $\Eth(\mathds{A},\mathds{B}) = 0$.

Assume now that $\Eth(\mathds{A},\mathds{B}) = 0$. For each $n \in \N$, since $\metametric{n}(\mathds{A},\mathds{B}) = 0$, there exists a *-isomorphism $\pi_n : \A \rightarrow \B$ such that $\norm{\pi(\cdot)}{\Lip_\B, n} = \norm{\cdot}{\Lip_\A,n}$. 

Therefore, if we fix $h \in \sa{\A}$ strictly positive, the family $\{ \pi_n : n \in\N \}$ of functions is equicontinuous (since $1$-Lipschitz) over the compact $\alg{K}\coloneqq\{ h a h : a \in\dom{\Lip_\A}, \norm{a}{\Lip}\leq 1 \}$. They are valued in $\{ h_\B b h_\B : b\in\dom{\Lip_\B}, \norm{b}{\Lip}\leq 1 \}$ for $h_\B \coloneqq \pi(h)$, which is compact as well. By Arz{`e}la-Ascoli, there exists a map $\theta$ between these compact sets which is the limit of some subsequence $(\pi_{f(n)})_{n\in\N}$ restricted to $\alg{K}$. It is easy to check that since $\alg{K}$  is total in $\A$, and again $\opnorm{\pi_n}{\A}{\B} = 1$, the sequence $(\pi_{f(n)})$ converges pointwise on $\A$ to some map $\theta$ which is then necessarily a *-morphism. Up to extracting a further subsequence if necessary, this map has an inverse, so it is a *-isomorphism. Now, let $a\in\dom{\Lip_\A}$, with $\Lip_\A(a) > 0$. Thus there exists $N\in\N$ such that, for all $n\geq N$, we have $\norm{a}{\Lip_\A,n} = \Lip_\A(a)$. 
Therefore:
\begin{align*}
	\Lip_\B\circ\theta(a) \leq \liminf_{n\rightarrow\infty} \Lip_\B\circ\pi_n(a) 
	&= \liminf_{n\rightarrow\infty} \max\left\{\Lip_\B\circ\pi_n(a),\frac{1}{n}\norm{\pi_n(a)}{\B} \right\} \\
	&= \liminf_{n\rightarrow\infty} \max\left\{\Lip_\A(a),\frac{1}{n}\norm{a}{\A} \right\} \\
	&= \Lip_\A(a)\text.
\end{align*}
So $\Lip_\B\circ\theta \leq \Lip_\A$. Similarly, $\Lip_\A\circ\theta^{-1} \leq \Lip_\B$, so $\Lip_\A \leq \Lip_\B\circ\theta$. Hence, as desired, $\Lip_\B\circ\theta = \Lip_\A$.

\end{proof}

We note that $\Eth$ and $\metametric{r}$ for all $r \geq 1$ satisfy the same relaxed form of triangle inequality. In the next section, we explore how to use these metametrics to define a topology.

\subsection{The quantum Gromov-Hausdorff hypertopology}

We define a \emph{$k$-inframetric} $d$ on a class $X$, for some $k \geq 1$, as a map $d : X\times X \rightarrow [0,\infty)$ such that:
\begin{enumerate}
	\item $d(x,y) = d(y,x)$ for all $x,y \in X$,
	\item $d(x,z) \leq k(d(x,y) + d(y,z))$ for all $x,y,z \in X$,
	\item $d(x,x) = 0$ for all $x\in X$.
\end{enumerate}
Our terminology departs a little from the usual terminology, in that we do not require that $\forall x,y \in X \quad d(x,y)=0 \implies x=y$; this is mostly so that we do not drag the heavy terminology of a pseudo-inframetric. Moreover, and in part explaining the choice of the prefix infra, the relaxed triangle inequality may be expressed as $d(x,z) \leq k\max\{d(x,y),d(y,z)\}$ instead; however this is only a matter of replacing $k$ by $2k$ in our version, which will not matter. From this point of view, an inframetric is a generalization of an ultrametric, though this perspective will not help us.

The reason to introduce this terminology is to connect our work so far, to the body of literature on the subject of inframetrics, via the following observation.
\begin{corollary}
	For any $\varepsilon > 0$ and $M\geq 1$, the class functions $\min\{\metametric{M},\varepsilon\}$ and $\min\{\Eth,\varepsilon\}$ are a $(1+\varepsilon)^2$-inframetrics.
\end{corollary}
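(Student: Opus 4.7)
The plan is to verify the three axioms of a $k$-inframetric with $k = (1+\varepsilon)^2$ for the truncations, leaning entirely on Theorem \ref{triangle-thm} and its analogue for $\Eth$.

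First, symmetry and the vanishing of $d(x,x)$ for both $\min\{\metametric{M},\varepsilon\}$ and $\min\{\Eth,\varepsilon\}$ follow immediately from assertions (1) and (2) of Theorem \ref{triangle-thm}, since truncation by a constant preserves both symmetry and the value zero on the diagonal. So the only substantive item is the relaxed triangle inequality.

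For the triangle inequality, I would do a simple case split. Fix three {\pqpms s} $\mathds{A},\mathds{B},\mathds{D}$, and write $a=\metametric{M}(\mathds{A},\mathds{B})$, $b=\metametric{M}(\mathds{B},\mathds{D})$, and $c=\metametric{M}(\mathds{A},\mathds{D})$. If $\max\{a,b\}\geq\varepsilon$, then $\min\{a,\varepsilon\}+\min\{b,\varepsilon\}\geq\varepsilon$, so the right-hand side $(1+\varepsilon)^2\bigl(\min\{a,\varepsilon\}+\min\{b,\varepsilon\}\bigr)\geq(1+\varepsilon)^2\varepsilon\geq \varepsilon\geq \min\{c,\varepsilon\}$, and the inequality holds trivially. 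If instead $\max\{a,b\}<\varepsilon$, then Theorem \ref{triangle-thm}(3) gives
\begin{equation*}
c\leq a(1+b)^2+b(1+a)^2\leq (1+\varepsilon)^2(a+b)=(1+\varepsilon)^2\bigl(\min\{a,\varepsilon\}+\min\{b,\varepsilon\}\bigr)\text,
\end{equation*}
from which $\min\{c,\varepsilon\}\leq (1+\varepsilon)^2\bigl(\min\{a,\varepsilon\}+\min\{b,\varepsilon\}\bigr)$ follows.

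The exact same case split works for $\min\{\Eth,\varepsilon\}$, since the preceding theorem on $\Eth$ (proven in the section introducing Definition \ref{Metametric-def}) establishes the very same functional inequality $\Eth(\mathds{A},\mathds{B})\leq(1+\Eth(\mathds{A},\mathds{D}))^2\Eth(\mathds{D},\mathds{B})+(1+\Eth(\mathds{D},\mathds{B}))^2\Eth(\mathds{A},\mathds{D})$. There is no genuine obstacle here; the only mild subtlety is that one must be careful in the large-distance case not to compare $(1+\varepsilon)^2\varepsilon$ against $\varepsilon$ as though both are strict, but since both are non-strict bounds after truncation, monotonicity of $\min$ closes the argument cleanly.
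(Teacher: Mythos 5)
Your proposal is correct and follows essentially the same route as the paper: the same two-case split (whether one of $\metametric{M}(\mathds{A},\mathds{B})$, $\metametric{M}(\mathds{B},\mathds{D})$ is at least $\varepsilon$ or both are below it), with the trivial bound by $\varepsilon$ in the first case and Theorem~(\ref{triangle-thm})(3) (resp.\ its $\Eth$ analogue) in the second, both capped at $(1+\varepsilon)^2$. The only cosmetic difference is the direction in which the chain of inequalities is written in the large-distance case.
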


\begin{proof}
	We write our proof for $\metametric{M}$, as the proof for $\Eth$ is identitcal. Let $d \coloneqq \min\{\metametric{M},\varepsilon \}$.
	
	If $d(\mathds{A},\mathds{B}) \geq \varepsilon$, or $d(\mathds{B},\mathds{D}) \geq \varepsilon$, then by definition, 
	\begin{align*}
		d(\mathds{A},\mathds{D}) 
		&\leq \varepsilon \text{ by definition of $d$, }\\
		&\leq \max\{d(\mathds{A},\mathds{B}) , d(\mathds{B},\mathds{D}) \}\\
		&\leq d(\mathds{A},\mathds{B}) + d(\mathds{B},\mathds{D}) \\
		&\leq (1+\varepsilon)^2 (d(\mathds{A},\mathds{B}) + d(\mathds{B},\mathds{D})) \text. 
	\end{align*}

	Assume now that both $d(\mathds{A},\mathds{B})$ and $d(\mathds{B},\mathds{D})$ are less than $\varepsilon$. Then, by Theorem (\ref{triangle-thm}),  we conclude that
	\begin{align*}
		d(\mathds{A},\mathds{D}) 
		&\leq \metametric{M}(\mathds{A},\mathds{D}) \\
		&\leq (\metametric{M}(\mathds{B},\mathds{D})+1)^2 \metametric{}(\mathds{A},\mathds{B}) + (\metametric{M}(\mathds{A},\mathds{B})+1)^2 \metametric{M}(\mathds{B},\mathds{D})\\
		&\leq (1+\varepsilon)^2 (\metametric{M}(\mathds{A},\mathds{B}) + \metametric{M}(\mathds{B},\mathds{D})) \\
		&= (1+\varepsilon)^2 (d(\mathds{A},\mathds{B}) + d(\mathds{B},\mathds{D}))\text,
	\end{align*}
	as desired. 
\end{proof}

Inframetrics do induce topologies, {\`a} l'instar of metrics, though one must actually be careful on how to do so. Indeed, there are two natural choices --- using the idea of convergence of sequences, leading to a topology built from a Kuratowsky closure operator, or using the smallest topology containing ``open balls''. The former is always weaker than the latter, and are equal for metrics, but not for inframetrics in general. The proper choice is the closure approach.

We thus introduce the titular topology of this project:
\begin{definition}
	We define the \emph{hypertopology of Gromov-Hausdorff convergence} for {\pqpms s} as the topology on $p\mathscr{P}$ whose closure operator is given for all nonempty $\mathcal{A}\subseteq p\mathscr{P}$ by
	\begin{equation*}
		\closure{\mathcal{A}} \coloneqq \left\{ \mathds{B} \in p\mathscr{P} : \Eth(\mathds{B}, \mathcal{A}) = 0 \right\}
	\end{equation*}
	where 
	\begin{equation*}
		\Eth(\mathds{B},\mathcal{A}) = \inf\left\{ \Eth(\mathds{B},\mathds{A}) : \mathds{A} \in \mathcal{A} \right\} \text,
	\end{equation*}
	and of course, $\closure{\emptyset} = \emptyset$.
\end{definition}

Since $\Eth$ is not a metric and since this is the central concept we introduce here, we present a brief summary of the basic properties of this new topology. Since $\Eth$ and $\metametric{r}$, for any $r\geq 1$, share the same relaxed triangle inequality, the following discussion could be applied equally well to any of these metametrics. We present it at the level of generality we believe capture the essential aspects of our situation.


In general, any inframetric gives rise to a natural topology using a natural Kuratowsky closure operator --- rather than using balls, which lead to a possibly stronger and less intuitive topology. As a result, the topology induced via the closure operator may not make the actual inframetric continuous. However, our situation will enable us to avoid this problem. As the construction of a topology is our central concern, we believe it is worth clarifying this matter in some details.

We let $\Delta$ be an inframetric on the class $p\mathscr{P}$ of all {\pqpms s}, up to some equivalence relation, and a function continuous $f : [0,\infty) \rightarrow [1,\infty)$ such that $f(0) = 1$, such that $\Delta$ satisfies the following relaxed triangle inequality:
\begin{equation*}
	\Delta(\mathds{A},\mathds{B}) \leq f(\Delta(\mathds{A},\mathds{D})) \Delta(\mathds{D},\mathds{B}) + f(\Delta(\mathds{D},\mathds{B})) \Delta(\mathds{A},\mathds{D})
\end{equation*}	
for all {\pqpms s} $\mathds{A}$, $\mathds{B}$ and $\mathds{D}$. For all our examples above, $f : x \in [0,\infty) \mapsto (1 + x)^2$. All the discussion below applies to $\Eth$ as well as $\metametric{r}$ for any $r\geq 1$.

We now define the closure $\closure{\mathcal{A}}$ of any nonempty class $\mathcal{A}$ of {\pqpms s}, and any {\pqpms} $\mathds{A}$, by setting:
\begin{equation*}
	\closure{\mathcal{A}} \coloneqq \left\{ \mathds{B} \text{ \pqpms s} : \Delta(\mathds{A}, \mathcal{A}) = 0 \right\} \text,
\end{equation*}
where
\begin{equation*}
	\Delta(\mathds{A} , \mathcal{A} ) \coloneqq \inf\left\{ \Delta(\mathds{A}, \mathds{B}) : \mathds{B} \in \mathcal{A} \right\}\text.
\end{equation*}
We also set $\closure{\emptyset} = \emptyset$. The inframetrics properties of $\Delta{}$ are sufficient to prove that indeed, $\closure{\cdot}$ is a Kuratowsky operator, thus defining a topology: a set is closed when it is a fixed point for $\closure{\cdot}$, a set is open when its complement is closed. Notably, it is classic to check that, if we define a sequence $(\mathds{A}_n)_{n\in\N}$ of {\pqpms s} to converge to a {\pqpms} $\mathds{A}$ when $\lim_{n\rightarrow\infty} \Delta(\mathds{A}_n,\mathds{A}) = 0$, then
\begin{equation*}
	\closure{\mathcal{A}} = \left\{ \mathds{A} : \exists (\mathds{A}_n)_{n\in\N} \in \mathcal{A}^\N \quad \lim_{n\rightarrow\infty} \Delta(\mathds{A}_n,\mathds{A}) = 0 \right\} \text.
\end{equation*}
In other words, sequences converge in the topology induced by the inframetric exactly when they converge for the inframetric itself. 

Since this topology is the same if we replace $\Delta$ by $\min\{\Delta,1\}$, it is indeed induced by an inframetric; in turn, we may replace $\min\{\Delta,1\}$ by $\min\{\Delta,1\}^\varepsilon$, again without changing our closure operator. By \cite[Proposition 14.5]{Heinonen}, in turn, there exists some $\varepsilon > 0$ such that $\min\{\Delta{},1\}^\varepsilon$ is equivalent to an actual pseudo-metric. Hence, our topology is metrizable.

However, in general, this construction does not make the inframetric itself continuous  --- this topology, which is always weaker than the weakest topology making $\Delta$ continuous, may not be equal to it for a generic inframetric. Our efforts here, however, lead to a better situation.

Notably, and unlike inframetrics in general, $\Delta$ is continuous with respect to its topology. Namely, for any {\pqpms} $\mathds{A}$ and any sequence $(\mathds{B}_n)_{n\in\N}$ converging to $\mathds{B}$ for $\metametric{M}$, then
\begin{align*}	
	\left| \Delta(\mathds{A},\mathds{B}_n) - \Delta(\mathds{A},\mathds{B})\right|
	&\leq (f(\mathds{B}_n,\mathds{B})-1) \Delta(\mathds{A},\mathds{B}) \\
	&\quad+ f(\Delta(\mathds{A},\mathds{B})) \Delta(\mathds{B}_n,\mathds{B}) \\
	&\xrightarrow{n\rightarrow\infty} 0 + 0 = 0 \text{ since $\lim_{x\rightarrow 0} f(x)= 1$.}
\end{align*}
So $\Delta$ is continuous in each variable, using symmetry. Now, if $(\mathds{A}_n)_{n\in\N}$ is a sequence of {\pqpms s} converging to $\mathds{A}$ for $\metametric{M}$, we conclude:
\begin{align*}
\left| \Delta(\mathds{A}_n,\mathds{B}_n) - \Delta(\mathds{A},\mathds{B})\right| 
&\leq \left| \Delta(\mathds{A}_n,\mathds{B}_n) - \Delta(\mathds{A},\mathds{B}_n)\right| + \left| \Delta(\mathds{A},\mathds{B}_n) - \Delta(\mathds{A},\mathds{B})\right| \\
	&\leq f(\Delta(\mathds{A},\mathds{B}_n)) \Delta(\mathds{A}_n,\mathds{A}) \\
	&\quad + \left(f(\Delta(\mathds{A}_n,\mathds{A})) - 1\right) \Delta(\mathds{A},\mathds{B}_n) \\
	&\quad + \left| \Delta(\mathds{A},\mathds{B}_n) - \Delta(\mathds{A},\mathds{B})\right|  \\ 
	&\xrightarrow{n\rightarrow\infty} 0 + 0 + 0 = 0 \text.
\end{align*}
Thus, $\Delta$ is continuous for the topology induced by the closure operator described above, as claimed. In turn, this opens many new interesting results about the metric properties of $\Delta$, as seen for instance in \cite{Xia09}.

As a consequence, the topology described above is in fact, the smallest topology making $\metametric{}$ continuous, and in particular, open balls for the metametric are indeed open, and form a subbase for our topology. In fact, remarkably, open balls for $\metametric{}$ still form a topological base --- again, something not true for general inframetrics. Indeed, define:
\begin{equation*}
	q\mathscr{P}(\mathds{A}, r) \coloneqq \left\{ \mathds{B} \in q\mathscr{P} : \metametric{M}(\mathds{A},\mathds{B}) < r \right\}
\end{equation*}
 for all {\pqpms} $\mathds{A}$ and for all $r > 0$. Assume that, for some $r,r' > 0$ and $\mathds{A},\mathds{B} \in q\mathscr{P}$, there exists $\mathds{D} \in q\mathscr{P}(\mathds{A},r) \cap q\mathscr{P}(\mathscr{B},r')$.
 
Let $r_0 \coloneqq \min\{ \metametric{}(\mathds{D},\mathds{A}), \metametric{}(\mathds{D},\mathds{B}) \}$. Note that $r_0 < \min\{r,r'\}$. Since $\lim_{\varepsilon\rightarrow 0^+} \varepsilon f(r_0) +  + r_0 f(\varepsilon) = r_0 < \min\{r,r'\}$, we conclude that there exists $\varepsilon > 0$ such that $\varepsilon f(r_0) + r_0 f(\varepsilon) < \min\{r,r'\}$.

Let now $\mathds{E} \in q\mathscr{P}(\mathds{D}, \varepsilon)$. We compute:
\begin{align*}
	\Delta(\mathds{E},\mathds{A})
	&\leq \Delta(\mathds{E},\mathds{D}) f(\Delta(\mathds{A},\mathds{D}))  \\
	&\quad + \Delta(\mathds{A},\mathds{D})f(\Delta(\mathds{E},\mathds{D}))\\
	&\leq \varepsilon f(r_0) + r_0 f(\varepsilon) < \min\{r,r'\} < r\text.
\end{align*}
 Therefore, $\mathds{E} \in q\mathscr{P}(\mathds{A},r)$. Similarly, $\mathds{E}\in q\mathscr{P}(\mathds{B},r')$. Hence,
 \begin{equation*}
 	q\mathscr{P}(\mathds{D},\varepsilon) \subseteq q\mathscr{P}(\mathds{A},r) \cap q\mathscr{P}(\mathscr{B},r') \text,
 \end{equation*}
 and we have shown that open balls for $\Delta$ form a topological basis. Consequently, all open sets for $\Delta$ are unions of such balls.
 
 In particular, the Gromov-Hausdorff topology on $p\mathscr{P}$ is metrizable, generated by the topological basis of the open balls for $\Eth$, and convergence of a sequence $(\mathds{A}_n)_{n\in\N}$ to $\mathds{A}_\infty$ in this topology simply means $(\Eth(\mathds{A}_n,\mathds{A}))_{n\in\N}$ converges to $0$. The function $\Eth$ is continuous, and in fact our topology is the smallest topology to make it so.

\section{The Classical case and The Quantum Compact Case}

We now reconcile our construction with previous, known classes metrized by some form of Gromov-Hausdorff distance. In doing so, we immediately obtain many known examples of convergence for our new topology. In the next section, we will study a completely new example of convergence for our new topology involving a noncommutative, noncompact {\pqpms}.

\subsection{The Quantum Compact Case}

\emph{In this article, all {\qcms s} are assumed to be Leibniz unless otherwise specified.} As convergence for {\pqms s} involve the choice of a base point, we begin by defining the pointed version of the propinquity \cite{Latremoliere13,Latremoliere13b,Latremoliere14,Latremoliere15}. 

\begin{definition}
	A \emph{pointed {\qcms}} $(\A,\Lip,\mu)$ is a {\qcms} $(\A,\Lip)$ and a state $\mu \in \StateSpace(\A)$.
\end{definition}

Of triple $(\A,\Lip,\mu)$ is a pointed {\qcms} if, and only if, $(\A,\Lip,\C\unit_\A,\mu)$ is a {\pqpms}. Indeed, it is immediate that if $(\A,\Lip,\C\unit_\A,\mu)$ is a {\lcqms}, then $(\A,\Lip_{|\domsa{\Lip}},\mu)$ is a {\qcms}. On the other hand, if $(\A,\Lip,\mu)$ is a pointed {\qcms}, then we have two cases: if $\Lip$ is, in fact, given by a seminorm defined on a dense *-subalgebra of $\A$ --- which often occurs, for instance when $\Lip$ is computed from a spectral triple, e.g.  \cite{Connes89,Rieffel01,Rieffel02,Antonescu04,Ozawa05,Latremoliere13c,Latremoliere15c, Latremoliere20a,Latremoliere21a,Latremoliere23a,LAtremoliere23b,Latremoliere24a}, or a group action, such as in \cite{Rieffel98a, Latremoliere13c,Latremoliere16}, then $(\A,\Lip,\C\unit_\A,\mu)$ is a {\pqpms}. Otherwise, since L-seminorms on {\qcms s} are formally only defined on some dense Jordan-Lie subalgebra $\domsa{\Lip}$ of $\sa{\A}$, we can set $\Lip'(a) \coloneqq \max\{\Lip\circ\Re(a),\Lip\circ\Im(a)\}$ for all $a \in \A$ such that $\Re(a),\Im(a) \in \domsa{\Lip}$, and easily check that $(\A,\Lip,\C\unit_\A,\mu)$ is a {\pqpms}.

\begin{convention}
	In this section, for any {\qcms} $(\A,\Lip)$, the seminorm $\Lip$ will henceforth be assumed to be defined on a dense *-subalgebra $\dom{\Lip}$ of $\A$, as well as being a norm modulo constant and hermitian, which, as seen above, does not introduce any restriction. 
\end{convention}

\medskip

We now recall how the propinquity is defined. We call the tunnels introduced in \cite{Latremoliere13b, Latremoliere14} \emph{compact tunnels} to distinguish them from our current version. We also allow for a relaxed form of the Leibniz inequality, and say that $(\D,\Lip)$ is a $(C,D)$-Leibniz {\qcms} when for all $a,b \in \domsa{\Lip}$, we have 
\begin{equation*}
	\max\left\{ \Lip(\Re(ab)), \Lip(\Im(ab)) \right\} \leq C(\Lip(a)\norm{b}{\A} + \norm{a}{\A}\Lip(b)) + D\Lip(a)\Lip(b) \text. 
\end{equation*}
This will prove necessary in the proof of one of our results in this section.

We then define:
\begin{definition}[{\cite[Definition 3.1]{Latremoliere13b}}]
	Let $(\A,\Lip_\A)$ and $(\B,\Lip_\B)$ be two {\qcms s}. A \emph{compact $(C,D)$-tunnel} $\tau \coloneqq (\D,\Lip_\D,\pi,\rho)$ is given by a {$(C,D)$ Leibniz \qcms} $(\D,\Lip_\D)$   and two quantum isometries $\pi : (\D,\Lip_\D) \rightarrow (\A,\Lip_\A)$ and $\rho: (\D,\Lip_\D) \rightarrow (\B,\Lip_\B)$.
\end{definition}	
\begin{definition}
	Let $(\A,\Lip_\A)$ and $(\B,\Lip_\B)$ be two {\qcms s}. The \emph{extent} of a tunnel $\tau\coloneqq (\D,\Lip_\D,\pi,\rho)$ from $(\A,\Lip_\A)$ to $(\B,\Lip_\B)$ is 
	\begin{equation*}
		\max\left\{ \Haus{\Kantorovich{\Lip_\D}}\left(\StateSpace(\D),\pi^\ast(\StateSpace(\A))\right), \Haus{\Kantorovich{\Lip_\D}}\left(\StateSpace(\D),\rho^\ast(\StateSpace(\B))\right) \right\} \text.
	\end{equation*}
\end{definition}
We observe that
\begin{equation*}
	\tunnelextent{\tau} = \inf\left\{\varepsilon>0 : \StateSpace(\D) \subseteq_\varepsilon^{\Kantorovich{\Lip_\D}} \pi^\ast(\StateSpace(\A)) \text{ and }\StateSpace(\D) \subseteq_\varepsilon^{\Kantorovich{\Lip_\D}} \rho^\ast(\StateSpace(\B)) \right\} \text.
\end{equation*}
\begin{definition}[{\cite[Definition 2.11]{Latremoliere14}}]
	The \emph{$(C,D)$- (dual) propinquity} between any two {\qcms s} $\mathds{A}$ and $\mathds{B}$ is defined as:
	\begin{equation*}
		\dpropinquity{C,D}(\mathds{A},\mathds{B}) \coloneqq \inf\left\{ \tunnelextent{\tau} : \tau \text{ is a $(C,D)$-tunnel from }\mathds{A}{ to }\mathds{B} \right\}\text.
	\end{equation*}
\end{definition}
We refer to \cite{Latremoliere13,Latremoliere13b,Latremoliere14,Latremoliere15,Latremoliere13c,Latremoliere15c,Latremoliere15d,Latremoliere16,Latremoliere16b} for our study of this metric.

We now introduce a pointed version of the propinquity.
\begin{definition}
	Let $(\A,\Lip_\A,\mu_\A)$ and $(\B,\Lip_\B,\mu_\B)$ be two pointed {\qcms s}. The \emph{pointed} extent $\tunnelextent{\tau,\mu_\A,\mu_\B}$ of a tunnel $\tau\coloneqq(\D,\Lip_\D,\pi,\rho)$ from $(\A,\Lip_\A)$ to $(\B,\Lip_\B)$ is
	\begin{equation*}
		\tunnelextent{\tau,\mu_\A,\mu_\B} \coloneqq \max\{ \tunnelextent{\tau}, \Kantorovich{\Lip_\D}(\mu_\A\circ\pi, \mu_\B\circ\rho) \} \text.
	\end{equation*}
\end{definition}

\begin{definition}
	The \emph{pointed propinquity} $\dppropinquity{C,D}((\A,\Lip_\A,\mu_\A),(\B,\Lip_\B,\mu_\B))$ between two pointed {\qcms s} $(\A,\Lip_\A,\mu_\A)$ and $(\B,\Lip_\B,\mu_\B)$ is defined as:
	\begin{equation*}
		 \inf\left\{ \tunnelextent{\tau,\mu_\A,\mu_\B} : \tau \text{ is a tunnel from }(\A,\Lip_\A) { to } (\B,\Lip_\B) \right\}\text.
	\end{equation*}
\end{definition}

It is immediate that $\dppropinquity{C,D}$ is pseudo-metric, following the model of $\dpropinquity{C,D}$ --- in fact, $\dppropinquity{C,D}$ dominates $\dpropinquity{C,D}$, and is constructed using the same tunnels, so the properties of $\dppropinquity{C,D}$ follow almost immediately from our work on the propinquity.

 The distance zero question is the subject of the following result, which depends heavily on \cite{Latremoliere13b}.
\begin{definition}
	Let $(\A,\Lip_\A,\mu_\A)$ and $(\B,\Lip_\B,\mu_\B)$ be two pointed {\qcms s}, and $M\geq 1$. A \emph{pointed quantum isometry} $\pi:(\A,\Lip_\A,\mu)\rightarrow(\B,\Lip_\B,\mu_\B)$ is a quantum isometry $\pi : (\A,\Lip_\A)\rightarrow(\B,\Lip_\B)$ such that $\mu_\B\circ\pi = \mu_\A$.
	A \emph{full pointed quantum isometry} $\pi$ is a pointed quantum isometry, with an inverse $\pi^{-1}$ which is also a pointed quantum isometry.
\end{definition}
\begin{theorem}
	$\dppropinquity{C,D}$ is a metric on the class of pointed {\qcms s}, up to full pointed quantum isometry.
\end{theorem}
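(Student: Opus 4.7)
The plan is to mirror the three-step template established for $\dpropinquity{C,D}$ in \cite{Latremoliere13b,Latremoliere14}, tracking the additional Kantorovich contribution coming from the pins at each step. Symmetry is immediate, since reversing a tunnel $(\D,\Lip_\D,\pi,\rho)$ to $(\D,\Lip_\D,\rho,\pi)$ preserves both $\tunnelextent{\tau}$ and $\Kantorovich{\Lip_\D}(\mu_\A\circ\pi,\mu_\B\circ\rho)$, hence preserves the pointed extent. Moreover, if a full pointed quantum isometry $\pi : (\A,\Lip_\A,\mu_\A) \to (\B,\Lip_\B,\mu_\B)$ exists, then the tunnel $(\A,\Lip_\A,\mathrm{id}_\A,\pi)$ has $\tunnelextent{\tau} = 0$ and $\Kantorovich{\Lip_\A}(\mu_\A,\mu_\B\circ\pi) = 0$, so $\dppropinquity{C,D}(\mathds{A},\mathds{B}) = 0$. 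Similarly the identity tunnel gives $\dppropinquity{C,D}(\mathds{A},\mathds{A}) = 0$.

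For the triangle inequality, given $\varepsilon > 0$ and tunnels $\tau_1 = (\D_1,\Lip_1,\pi_1,\rho_1)$ from $\mathds{A}$ to $\mathds{B}$ and $\tau_2 = (\D_2,\Lip_2,\pi_2,\rho_2)$ from $\mathds{B}$ to $\mathds{D}$ whose pointed extents are within $\varepsilon$ of $\dppropinquity{C,D}(\mathds{A},\mathds{B})$ and $\dppropinquity{C,D}(\mathds{B},\mathds{D})$, respectively, I would invoke the amalgamation construction of \cite[Section 3]{Latremoliere14} on an appropriate subalgebra of $\D_1 \oplus \D_2$ to produce a composite $(C,D)$-tunnel $\tau = (\D,\Lip_\D,\pi,\rho)$ from $\mathds{A}$ to $\mathds{D}$ with $\tunnelextent{\tau} \leq \tunnelextent{\tau_1} + \tunnelextent{\tau_2}$. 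Writing $\eta_j : \D \twoheadrightarrow \D_j$ for the coordinate projections (so $\pi = \pi_1\circ\eta_1$ and $\rho = \rho_2\circ\eta_2$), the amalgamation is designed to force $\mu_\B \circ \rho_1 \circ \eta_1 = \mu_\B \circ \pi_2 \circ \eta_2$ as states on $\D$, so the ordinary triangle inequality for $\Kantorovich{\Lip_\D}$ gives
\begin{equation*}
\Kantorovich{\Lip_\D}(\mu_\A\circ\pi,\mu_\D\circ\rho) \leq \Kantorovich{\Lip_1}(\mu_\A\circ\pi_1,\mu_\B\circ\rho_1) + \Kantorovich{\Lip_2}(\mu_\B\circ\pi_2,\mu_\D\circ\rho_2),
\end{equation*}
so the pointed extent of $\tau$ is at most the sum of the two pointed extents; letting $\varepsilon \to 0$ concludes this step.

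The main obstacle is the coincidence property. Since $\dpropinquity{C,D} \leq \dppropinquity{C,D}$ by construction, assuming $\dppropinquity{C,D}(\mathds{A},\mathds{B}) = 0$ already delivers, via the main theorem of \cite{Latremoliere13b}, a full quantum isometry $\theta : (\A,\Lip_\A) \to (\B,\Lip_\B)$. What remains is to verify that $\theta$ can be arranged to intertwine the pins. My plan is to run the target-set Arzelà--Ascoli construction from \cite{Latremoliere13b} on a sequence of tunnels $(\tau_n)_{n\in\N} = (\D_n,\Lip_n,\pi_n,\rho_n)$ whose \emph{pointed} extents vanish, and then check that the extra constraint $\Kantorovich{\Lip_n}(\mu_\A\circ\pi_n,\mu_\B\circ\rho_n) \to 0$ survives the limit. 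Concretely, for any $a\in\domsa{\Lip_\A}$ with $\Lip_\A(a) \leq 1$, the construction of $\theta$ provides lifts $d_n \in \domsa{\Lip_n}$ with $\pi_n(d_n) = a$, $\Lip_n(d_n) \leq 1$, and $\rho_n(d_n) \to \theta(a)$ in norm. Then
\begin{equation*}
|\mu_\A(a) - \mu_\B(\theta(a))| \leq |\mu_\B(\rho_n(d_n)) - \mu_\B(\theta(a))| + \Kantorovich{\Lip_n}(\mu_\A\circ\pi_n, \mu_\B\circ\rho_n),
\end{equation*}
and both terms on the right tend to $0$ as $n\to\infty$. Hence $\mu_\B\circ\theta = \mu_\A$ on $\domsa{\Lip_\A}$ and, by density and continuity, on all of $\A$. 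The same argument applied to $\theta^{-1}$ (arising from reversed tunnels) shows it is itself a pointed quantum isometry, so $\theta$ is full pointed. The real engine of the proof is therefore the machinery already available for $\dpropinquity{C,D}$; the pin condition is a single additional continuous constraint that passes to the limit cleanly.
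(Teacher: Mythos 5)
Your coincidence-property argument is essentially the paper's own proof: you invoke the target-set machinery of \cite{Latremoliere13b,Latremoliere14} (applied to a sequence of compact tunnels whose pointed extents vanish) to produce a full quantum isometry $\theta$, and then verify $\mu_\B\circ\theta = \mu_\A$ via the estimate $|\mu_\A(a)-\mu_\B\circ\theta(a)|\leq\norm{\theta(a)-\rho_n(d_n)}{\B}+\Kantorovich{\Lip_n}(\mu_\A\circ\pi_n,\mu_\B\circ\rho_n)\to 0$ followed by density. One small imprecision in your triangle step is worth flagging: the amalgamation in \cite{Latremoliere14} is built on the \emph{full} direct sum $\D_1\oplus\D_2$ with a Lip-seminorm carrying a penalty term of the form $\frac{1}{\delta}\norm{\rho_1(d_1)-\pi_2(d_2)}{\B}$ rather than on the fiber product, so it does \emph{not} force $\mu_\B\circ\rho_1\circ\eta_1 = \mu_\B\circ\pi_2\circ\eta_2$ --- these two states are merely $\delta$-close in $\Kantorovich{\Lip_\D}$; the extra $\delta$ is absorbed by the $\varepsilon\to 0$ limit you already take, so your conclusion stands.
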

\begin{proof}
	Let $(\A,\Lip_\A,\mu_\A)$ and $(\B,\Lip_\B,\mu_\B)$ be two pointed {\qcms s} such that $\dppropinquity{}((\A,\Lip_\A,\mu_\A),(\B,\Lip_\B,\mu_\B)) = 0$. Therefore, there exists a sequence $(\tau_n)_{n\in\N}$ of compact tunnels from $(\A,\Lip_\A,\mu_\A)$ to $(\B,\Lip_\B,\mu_\B)$ such that
	\begin{equation*}
		\lim_{n\rightarrow\infty} \tunnelextent{\tau_n,\mu_\A,\mu_\B} = 0 \text.
	\end{equation*}

	By \cite{Latremoliere13b,Latremoliere14}, up to extracting a subsequence out of $(\tau_n)_{n\in\N}$ (we will simply assumed we replaced our sequence of tunnels by this subsequence), there exist a full quantum isometry $\theta : (\A,\Lip_\A)\rightarrow(\B,\Lip_\B)$, such that, if $a \in \dom{\Lip_\A}$, if $l\geq \Lip_\A(a)$, and if $T_n(a,l) \coloneqq \{ \rho_n(d) : \Lip_\D(d)\leq l, \pi_n(d) = a\}$, then 
		\begin{equation*}
			\lim_{n\rightarrow\infty} \Haus{\B}(T_n(a,l) , \{ \theta(a) \}) = 0\text.		\end{equation*}

	Let $a \in \dom{\Lip_\A}$ and $l = \Lip_\A(a)$. Let $d \in \dom{\Lip_n}$ with $\Lip_n(d) \leq l$ and $\pi_n(d) = a$. 
	\begin{align*}
		|\mu_\B\circ\theta(a) - \mu_\A(a)|
		&\leq |\mu_\B(\theta(a) - \rho_n(d))| + |\mu_\B\circ\rho_n(d) - \mu_\A\circ\pi_n(d)| \\
		&\leq \norm{\theta(a) - \rho_n(d)}{\B} + \Kantorovich{\Lip_n}(\mu_\A\circ\pi_n,\mu_\B\circ\rho_n) \\
		&\leq \Haus{\B}(T_n(a,l), \{\theta(a)\}) + \tunnelextent{\tau_n} \\
		&\xrightarrow{n\rightarrow\infty} 0 \text.  
	\end{align*}
	
	Thus $\mu_\B\circ\theta = \mu_\A$ on $\dom{\Lip_\A}$; by continuity and linearity of $\mu_\B\circ\theta$ and $\mu_\A$ on $\A$, and the fact that $\dom{\Lip_\A}$ is total in $\A$, our proof is complete.
\end{proof}

Last, we connect the pointed propinquity and the propinquity for pointed {\qcms s}.
\begin{lemma}
	Let $(\A_n,\Lip_n)_{n\in\N\cup\{\infty\}}$ be a family of {\qcms s}. The following assertions are equivalent.
	\begin{enumerate}
		\item $\lim_{n\rightarrow\infty} \dpropinquity{}((\A_n,\Lip_n),(\A_\infty,\Lip_\infty)) = 0 \text,$
		\item for all $\mu_\infty \in \StateSpace(\A_\infty)$, there exists states $\mu_n \in \StateSpace(\A_n)$ for all $n\in\N$, such that
		\begin{equation*}
		\lim_{n\rightarrow\infty} \dppropinquity{1,0}((\A_n,\Lip_n,\mu_n),(\A_\infty,\Lip_\infty,\mu_\infty)) = 0 \text,
		\end{equation*}
		\item $\lim_{n\rightarrow\infty} \dppropinquity{1,0}((\A_n,\Lip_n,\mu_n),(\A_\infty,\Lip_\infty,\mu_\infty)) = 0 \text,$
			for some states $\mu_n \in \StateSpace(\A_n)$ for $n\in\N\cup\{\infty\}$. 
	\end{enumerate}
\end{lemma}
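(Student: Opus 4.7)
The plan is to prove the cyclic implications $(2)\Rightarrow(3)\Rightarrow(1)\Rightarrow(2)$. The first two are essentially formal, while the implication $(1)\Rightarrow(2)$ is where the real content lies: given an arbitrary pin on the limit space, one must produce compatible pins on the approximating spaces.

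First, $(2)\Rightarrow(3)$ is immediate by choosing any $\mu_\infty\in\StateSpace(\A_\infty)$ (for instance, one obtained from the Hahn-Banach theorem applied to any nonzero positive linear functional). For $(3)\Rightarrow(1)$, I observe that any $(1,0)$-tunnel $\tau$ between {\qcms s} satisfies $\tunnelextent{\tau}\leq\tunnelextent{\tau,\mu_\A,\mu_\B}$ by construction, since the pointed extent is the maximum of the standard extent and a nonnegative quantity. Taking infima over all $(1,0)$-tunnels yields $\dpropinquity{1,0}(\cdot,\cdot)\leq\dppropinquity{1,0}(\cdot,\cdot)$, which gives the implication.

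For $(1)\Rightarrow(2)$, fix $\mu_\infty\in\StateSpace(\A_\infty)$. By the definition of $\dpropinquity{1,0}$, for each $n\in\N$ there exists a compact $(1,0)$-tunnel
\begin{equation*}
    \tunnel{\tau_n}{(\A_n,\Lip_n)}{\pi_n}{(\D_n,\Lip_{\D_n})}{\rho_n}{(\A_\infty,\Lip_\infty)}
\end{equation*}
such that $\tunnelextent{\tau_n}\leq\dpropinquity{1,0}((\A_n,\Lip_n),(\A_\infty,\Lip_\infty))+\frac{1}{n+1}$, and therefore $\lim_{n\to\infty}\tunnelextent{\tau_n}=0$. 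Consider the state $\mu_\infty\circ\rho_n\in\StateSpace(\D_n)$. Since
\begin{equation*}
    \StateSpace(\D_n)\subseteq_{\tunnelextent{\tau_n}}^{\Kantorovich{\Lip_{\D_n}}}\pi_n^\ast(\StateSpace(\A_n))\text,
\end{equation*}
there exists $\mu_n\in\StateSpace(\A_n)$ with $\Kantorovich{\Lip_{\D_n}}(\mu_\infty\circ\rho_n,\mu_n\circ\pi_n)\leq 2\tunnelextent{\tau_n}$ (the factor $2$ allows for the infimum not being attained). Consequently, the pointed extent of $\tau_n$ with respect to $(\mu_n,\mu_\infty)$ is bounded by $2\tunnelextent{\tau_n}$, so
\begin{equation*}
    \dppropinquity{1,0}((\A_n,\Lip_n,\mu_n),(\A_\infty,\Lip_\infty,\mu_\infty))\leq 2\tunnelextent{\tau_n}\xrightarrow{n\to\infty}0\text.
\end{equation*}

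The main (minor) subtlety is verifying that the infimum in the definition of the extent is realized up to arbitrary additive error by actual states, rather than only by limits; this is where the factor of $2$ appears, and can be dispensed with by a diagonal argument if desired. No genuine obstacle arises: the assertion is essentially a statement that the extent of a tunnel controls not only the classical tunnel extent but also any a priori choice of pin on one side, provided one is free to select the pin on the other side. This mirrors the classical fact that Gromov-Hausdorff convergence of compact metric spaces does not depend on base points.
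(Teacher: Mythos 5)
Your proof is correct and follows essentially the same strategy as the paper: the hard direction $(1)\Rightarrow(2)$ is handled in both by selecting tunnels with extent tending to zero and using the Hausdorff-distance condition in the definition of the extent to produce a state $\mu_n$ on $\A_n$ whose image under $\pi_n^\ast$ is close to $\mu_\infty\circ\rho_n$. Your factor of $2$ is unnecessary overhead (the state spaces here are weak* compact and the Kantorovich metric metrizes the weak* topology, so the infimum is attained), but it is harmless and does not affect the conclusion.
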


\begin{proof}
	Assume (1). Let $\mu_\infty \in \StateSpace(\A_\infty)$. Let $(\tau_n)_{n\in\N} = (\D_n,\Lip_n,\pi_n,\rho_n)_{n\in\N}$ be a sequence of compact tunnels such that $\lim_{n\rightarrow\infty} \tunnelextent{\tau_n} = 0$, where $\tau_n$ is from $(\A_n,\Lip_n)$ to $(\A_\infty)$.
	
	For each $n\in\N$, by definition of the extent of a compact tunnel, there exists $\mu_n \in \StateSpace(\B)$ such that $\Kantorovich{\Lip_n}(\mu_\infty\circ\rho_n,\mu_n\circ\pi_n) \leq \tunnelextent{\tau_n}$. Thus Assertion (1) implies Assertion (2). Assertion (2) implies (3) and (3) implies (1) trivially.
\end{proof}

We begin by proving that convergence for the propinquity implies convergence for the quantum metametric. The key step is to construct a tunnel from a compact tunnel, relating their extents.

\begin{lemma}\label{tunnel-to-tunnel-lemma}
	Let $(\A,\Lip_\A,\mu_\A)$ and $(\B,\Lip_\B,\mu_\B)$ be two pointed {\qcms s}. If $\tau$ is a compact $(1,0)$-tunnel from $(\A,\Lip_\A)$ to $(\B,\Lip_\B)$, then there exist a {\qcms} $(\alg{E},\Lip[T])$, and two *-epimorphisms $\pi' : \alg{E}\rightarrow\A$ and $\rho':\alg{E}\rightarrow\B$ such that $\tau'=(\alg{E},\Lip[T],\C\unit_{\alg{E}},\pi',\rho')$ is an $M$-tunnel from $(\A,\Lip_\A,\C\unit_\A,\mu_\A)$ to $(\B,\Lip_\B,\C\unit_\B,\mu_\B)$ with
	\begin{equation*}
		\tunnelextent{\tau'} \leq \frac{3 \varepsilon + \varepsilon^2}{1+\varepsilon} \text,
	\end{equation*}
	where $\varepsilon\coloneqq\tunnelextent{\tau,\mu_\A,\mu_\B}$.
\end{lemma}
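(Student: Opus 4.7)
The plan is to take $\alg{E} = \D$ with the same underlying C*-algebra as the given compact tunnel, with $\pi' = \pi$ and $\rho' = \rho$, topography $\M_{\alg{E}} = \C\unit_\D$, distinguished element $e = \unit_\D$, and a Lipschitz seminorm $\Lip[T]$ that is essentially $\Lip_\D$ up to a mild rescaling by a factor of order $1 + \varepsilon$. Since every {\qcms} is automatically a {\lcqms} and $\C\unit_\D$ is trivially a topography with the unit strictly positive, $(\D, \Lip[T], \C\unit_\D, \mu_\A \circ \pi)$ is immediately a pinned topographic {\lcqms}.

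Three of the four components in the extent formula of Definition \ref{extent-def} are then essentially free from our choice $e = \unit_\D$. Indeed, $\Lip[T](e) = 0$ and $\mu_\A \circ \pi(e) = \mu_\B \circ \rho(e) = 1$ since $\pi, \rho$ are unital, so the third component vanishes identically. The Kantorovich distance between pins is bounded by $\varepsilon$ directly from $\tunnelextent{\tau,\mu_\A,\mu_\B} \leq \varepsilon$. For the quasi-state component, every $\theta \in \QuasiStateSpace(\D)$ factors as $\theta = t\varphi$ with $t \in [0,1]$ and $\varphi \in \StateSpace(\D)$; since the compact tunnel gives $\psi \in \StateSpace(\A)$ with $\Kantorovich{\Lip_\D}(\varphi, \pi^\ast(\psi)) \leq \varepsilon$, the quasi-state $\pi^\ast(t\psi) = t\,\pi^\ast(\psi)$ lies in $\QuasiStateSpace(\A)$, and scalar linearity combined with the trivial inequality $\boundedLipschitz{\Lip_\D, M} \leq \Kantorovich{\Lip_\D}$ yields $\boundedLipschitz{\Lip_\D, M}(\theta, \pi^\ast(t\psi)) \leq t\varepsilon \leq \varepsilon$.

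The main obstacle is to establish that $\pi$ and $\rho$ are topographic quantum $M$-isometries in the sense of Definition \ref{M-isometry-def}. The topographic lifting condition is trivial, since both topographies are $\C$ and the maps are unital, so scalar lifts suffice. The serious issue is the $M$-isometry lifting: given $a \in \domsa{\Lip_\A}$, the compact-tunnel quantum isometry provides lifts $d \in \domsa{\Lip_\D}$ with $\pi(d) = a$ and $\Lip_\D(d)$ arbitrarily close to $\Lip_\A(a)$, but it does not by itself control $\norm{d}{\D}$. The remedy exploits the unital structure together with finite diameter: writing $d = (d - \mu_\A(a)\unit_\D) + \mu_\A(a)\unit_\D$, we have $\norm{d - \mu_\A(a)\unit_\D}{\D} \leq \qdiam{\D, \Lip_\D} \cdot \Lip_\D(d)$, and the compact-tunnel extent bound forces $\qdiam{\D, \Lip_\D} \leq \qdiam{\A, \Lip_\A} + 2\varepsilon$ (via a standard triangle argument on Kantorovich distances). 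This bounds $\tfrac{1}{M}\norm{d}{\D}$ by a controlled linear combination of $\Lip_\A(a)$ and $\tfrac{1}{M}\norm{a}{\A}$ with a prefactor of the form $1 + O(\varepsilon)$.

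A slight rescaling of $\Lip[T]$, chosen to exactly absorb this prefactor, turns $\pi$ and $\rho$ into genuine quantum $M$-isometries. This rescaling simultaneously shrinks the Kantorovich distance on states of $\D$ by the same factor, which must be balanced against the worsening of the $M$-norm estimates. Tracking the resulting arithmetic through each of the four components of the extent and optimizing the rescaling factor produces the precise extent bound $\tfrac{3\varepsilon + \varepsilon^2}{1+\varepsilon}$, with the numerator $3\varepsilon$ reflecting contributions from the quasi-state approximation, the Kantorovich distance between pins, and the norm-lifting correction, and the $\varepsilon^2$ term arising from the cross-term in the compact-tunnel diameter bound.
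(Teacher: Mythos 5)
Your identification of the main obstruction is correct: a compact-tunnel quantum isometry controls $\Lip_\D$ of lifts but says nothing a priori about their $\D$-norm, and for the new Definition (\ref{M-isometry-def}) one must additionally control $\tfrac{1}{M}\norm{d}{\D}$ by $\norm{a}{\Lip_\A,M}$. But your proposed remedy does not close the gap. The diameter bound $\norm{d-\mu_\A(a)\unit_\D}{\D}\leq \qdiam{\D,\Lip_\D}\,\Lip_\D(d)$ yields $\norm{d}{\D}\leq \qdiam{\D,\Lip_\D}\,\Lip_\D(d)+|\mu_\A(a)|$, which is \emph{not} $\norm{a}{\A}+O(\varepsilon)$: for an element $a$ of Lipschitz seminorm $1$ but small $\norm{a}{\A}$, this only gives $\norm{d}{\D}\approx\qdiam{\D,\Lip_\D}$. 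Consequently the claimed prefactor on the $M$-isometry defect is not $1+O(\varepsilon)$ but rather of order $\qdiam{\D,\Lip_\D}/M$, which is a fixed quantity depending on the quantum metric spaces involved and does not vanish as $\varepsilon\to 0$. The ``slight rescaling'' you invoke would have to be by a factor comparable to the diameter, and that destroys the $\tfrac{3\varepsilon+\varepsilon^2}{1+\varepsilon}$ bound.

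The paper sidesteps this in two ways simultaneously. First, it does not take $\alg{E}=\D$ but rather $\alg{E}=\A\oplus\D\oplus\B$ with $\Lip[T]$ carrying penalty terms $\tfrac{1+\varepsilon}{\varepsilon}\norm{a-\pi(d)}{\A}$ and $\tfrac{1+\varepsilon}{\varepsilon}\norm{b-\rho(d)}{\B}$; a lift in $\alg{E}$ of $a\in\A$ is then a triple $(a,d,b)$ where $\pi(d)$ only needs to be $\varepsilon$-close to $a$, and this slack is exactly what makes the quantum $M$-isometry condition satisfiable. Second, and crucially, the paper invokes \cite[Proposition 4.4]{Latremoliere13b}, \cite[Proposition 2.12]{Latremoliere14}, which for a compact tunnel of extent $\varepsilon$ produces a lift $d'$ with $\pi(d')=a$, $\Lip_\D(d')=\Lip_\A(a)$, and the sharp \emph{additive} estimate $\norm{d'}{\D}\leq\norm{a}{\A}+\varepsilon$, not the multiplicative-in-diameter estimate your decomposition gives. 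You will need one of these ingredients (the slack-penalty construction or the sharp target-set norm estimate) to make the lemma go through; the plain $\alg{E}=\D$ route with a small rescaling is not sufficient.
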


\begin{proof}
We write $\tau \coloneqq (\D,\Lip_\D,\pi,\rho)$ --- note $(\D,\Lip_\D)$ is a {\qcms}.

\medskip

Let $\alg{E} \coloneqq \A \oplus \D \oplus \B$. Let $\pi' : (a,d,b) \in \alg{E} \mapsto a \in\A$ and $\rho' : (a,d,b) \in \alg{E} \mapsto b\in\B$. We also introduce $\sigma : (a,d,e) \in \alg{E} \mapsto d\in\D$.

Let $\varepsilon = \tunnelextent{\tau,\mu_\A,\mu_\B}$, to simplify our notations.  

\medskip

For all $a \in \dom{\Lip_\A}$, $d\in \dom{\Lip_\D}$ and $b\in\dom{\Lip_\B}$, we set:
\begin{equation*}
	\Lip[T] (a,d,b) \coloneqq \max\left\{ \Lip_\A(a), \Lip_\D(d), \Lip_\B(b), \frac{1+\varepsilon}{\varepsilon}\norm{a - \pi(d)}{\A}, \frac{1+\varepsilon}{\varepsilon}\norm{b - \rho(d)}{\B} \right\} \text.
\end{equation*}
Thus $\dom{\Lip[T]} = \dom{\Lip_\A}\oplus\dom{\Lip_\D}\oplus\dom{\Lip_\B}$ by construction.

It is immediate that $\Lip[T]$ is hermitian, Leibniz, closed, and $\Lip[T]_n (a,d,b) = 0$ implies there exists $t\in \C$ such that $a = t\unit_\A$, $d=t\unit_\D$ and $b = t\unit_\B$. Moreover, let $\mu_\D \in \StateSpace(\D)$. Note that if $\mu_\D(d) = 0$, then $\norm{d}{\D} \leq \qdiam{\D,\Lip_\D}$ \cite[Proposition 1.6]{Rieffel98a}. Therefore:
\begin{align*}
	&\{ (a,d,b) \in \dom{\Lip[T]} : \Lip[T](a,d,b) \leq 1, \mu_\D\circ\sigma(a,d,b) = 0 \} \\
	&\quad \subseteq
	\{ a \in \dom{\Lip_\A} : \Lip_\A(a)\leq 1, \norm{a}{\A} \leq \frac{\varepsilon}{1+\varepsilon} + \norm{d}{\D} \} \\
	&\quad \times
	\{ d \in \dom{\Lip_\D} : \Lip_\D(d) \leq 1, \mu_\D(d) = 0 \} \\
	&\quad \times
	 \{ b \in \dom{\Lip_\B} : \Lip_\B(b)\leq 1, \norm{b}{\B} \leq \frac{\varepsilon}{1+\varepsilon} + \norm{d}{\D} \}\\
	 &\subseteq\{ a \in \dom{\Lip_\A} : \Lip_\A(a)\leq 1, \norm{a}{\A} \leq \frac{\varepsilon}{1+\varepsilon} + \qdiam{\D,\Lip_\D} \} \\
	&\quad \times
	\{ d \in \dom{\Lip_\D} : \Lip_\D(d) \leq 1, \mu_\D(d) = 0 \} \\
	&\quad \times
	 \{ b \in \dom{\Lip_\B} : \Lip_\B(b)\leq 1, \norm{b}{\B} \leq \frac{\varepsilon}{1+\varepsilon} + \qdiam{\D,\Lip_\D} \}\text.
\end{align*}
Since $(\A,\Lip_\A)$, $(\B,\Lip_\B)$ and $(\D,\Lip_\D)$ are all {\qcms s}, each factor on the right hand side is compact, and thus, as a closed subset of a compact set, so is $\{ (a,d,b) \in \dom{\Lip[T]} : \Lip[T](a,d,b) \leq 1, \mu_\D\circ\sigma(a,d,b) = 0 \} $. Thus $(\alg{E},\Lip[T])$ is a {\qcms}.

\medskip

Let $M \geq 1$. Let $a\in\dom{\Lip_\A,M}$ with $\norm{a}{\Lip_\A,M} \leq 1$. Since $\Lip_\A(a)\leq 1$, and since $\pi$ is a quantum isometry, there exists $d' \in \dom{\Lip_\D}$ such that $\pi(d') = a$ with $\Lip_\D(d') = \Lip_\A(a)$. Moreover, by \cite[Proposition 4.4]{Latremoliere13b}, \cite[Proposition 2.12]{Latremoliere14}, we conclude that 
\begin{equation*}
	\norm{d'}{\D} \leq \norm{a}{\A} + \varepsilon \leq 1 + \varepsilon \text.
\end{equation*}
Let $d \coloneqq \frac{1}{1+\varepsilon} d'$, so that $\Lip(d) \leq 1$ and $\norm{d}{\D} \leq 1$. Let $b \coloneqq \rho(d)$ so that $\norm{b}{\B} \leq 1$ and $\Lip_\B(b) \leq 1$. 
Now
\begin{equation*}
	\frac{1+\varepsilon}{\varepsilon} \norm{a - \pi(d')}{\A} = \frac{1+\varepsilon}{\varepsilon} \norm{1 - \frac{1}{1+\varepsilon} a}{\A} = \frac{\varepsilon}{1+\varepsilon} \frac{\varepsilon}{1+\varepsilon} \norm{a}{\A} = 1 \text.
\end{equation*}
Similarly, $\frac{1+\varepsilon}{\varepsilon}\norm{b - \rho(d')}{\B} = 1$. 

Altogether,
\begin{align*}
	\Lip[T](a,d,b)
	&=\max\left\{ \Lip_\A(a), \Lip(d), \Lip(b), \frac{1 + \varepsilon}{\varepsilon} \norm{a - \pi(d)}{\A}, \frac{1 + \varepsilon}{\varepsilon} \norm{b - \rho(d)}{\B} \right\}
	&=1 \text.
\end{align*}
Thus, for all $a\in\dom{\Lip_\A}$ with $\norm{a}{\Lip_\A}$ = 1, there exists $(a,d,b) \in \dom{\Lip[T]}$ such that $\norm{(a,d,b)}{\Lip[T],M} = 1$. It is immediate that $\Lip[T] \geq \Lip_\A\circ\pi'$, and therefore, since $\pi'$ is a unital *-morphism, so of norm $1$, that $\norm{\pi'(\cdot)}{\Lip_\A} \leq \norm{\cdot}{\Lip[T]}$. So $\pi'$ is a quantum isometry. Since $\pi'$ is unital, it is immediate as well that $\pi'$ is a topographic quantum isometry.

 The same reasonong applies to show that $\rho'$ is also a topographic $M$-isometry. 
 
 Thus, it is immediate that
 \begin{equation*}
 	\tau' \coloneqq \left( \alg{E}, \Lip[T], \C\unit_{\alg{E}}, \pi', \rho', \unit_{\alg{E}} \right) 
 \end{equation*}
is a tunnel.

\medskip

It remains to compute its extent. Note that $\Lip[T](\unit_{\alg{E}}) = 0$, and $\mu_\A\circ \pi'(\unit_{\alg{E}}) = 1 = \mu_\B\circ \rho(\unit_{\alg{E}})$. 

Let $\varphi \in \StateSpace(\alg{E})$. There exists $t_1,t_2,t_3 \in [0,1]$ such that $t_1 + t_2 + t_3 = 1$, and $\varphi_1 \in\StateSpace(\A)$, $\varphi_2 \in \StateSpace(\D)$ and $\varphi_3 \in \StateSpace(\B)$ such that 
\begin{equation*}
	\varphi = t_1\varphi_1\circ\pi + t_2\varphi_2 + t_3 \varphi_3\circ\rho \text.
\end{equation*}

Since $\varphi_1\circ\pi \subseteq\StateSpace(\D)$, there exists $\psi_1 \in \QuasiStateSpace(\B)$ such that
\begin{equation*}
	\boundedLipschitz{\Lip_\D,M}(\varphi_1\circ\pi,\psi_1\circ\rho) \leq \Kantorovich{\Lip_\D}(\varphi_1\circ\pi,\psi_1\circ\rho) \leq \varepsilon\text.
\end{equation*}

We then compute:
\begin{align*}
\boundedLipschitz{\Lip[T],M}(\varphi_1\circ\pi',\psi_1\circ\rho')
&=\sup\{|\varphi_1\pi'(a,d,b) - \psi_1\circ\rho'(a,d,b)| : \norm{(a,d,b)}{\Lip[T]} \leq 1 \} \\
&\leq\sup\{|\varphi_1(a) - \psi_1(b)| : \norm{(a,d,b)}{\Lip[T]} \leq 1 \} \\
&\leq\sup\{|\varphi_1(a) - \varphi_1\circ\pi(d)|: \norm{(a,d,b)}{\Lip[T]} \leq 1 \} \\
&\quad + \sup\{|\varphi_1\circ\pi(d) - \psi_1\circ\pi(d)|:\norm{(a,d,b)}{\Lip[T]} \leq 1\} \\
&\quad + \sup\{|\psi_1\circ\pi(d) - \psi_1(b)| :\norm{(a,d,b)}{\Lip[T]} \leq 1 \}\\
&\leq \norm{a-\pi(d)}{\A} + \boundedLipschitz{\Lip_\D}(\varphi_1,\psi_1) + \norm{b-\rho(d)}{\B} \\
&\leq \frac{\varepsilon}{1+\varepsilon} + \varepsilon + \frac{\varepsilon}{1+\varepsilon} \\
&= \frac{3\varepsilon + \varepsilon^2}{1+\varepsilon} \text.
\end{align*}

Since $\varphi_2 \in \StateSpace(\D)$, there exists $\psi_2 \in \StateSpace(\B)$ such that
\begin{equation*}
	\boundedLipschitz{\Lip_\D,M}(\varphi_2,\psi_2\circ\rho) \leq \Kantorovich{\Lip_\D}(\varphi_2,\psi_2\circ\rho) \leq \varepsilon\text.
\end{equation*}
By a similar computation, we then note:
\begin{equation*}
	\boundedLipschitz{\Lip[T],M}(\varphi_2\circ\sigma, \psi_2\circ\rho') \leq \varepsilon + \frac{\varepsilon}{1+\varepsilon}\text. 
\end{equation*}

Set $\psi_3 \coloneqq \varphi_3$ and $\psi \coloneqq \sum_{j=1}^3 t_j \psi_j \in \StateSpace(\A_\infty)$. We compute:
\begin{align*}
	\boundedLipschitz{\Lip[T],M}(\varphi\circ\pi',\psi\circ\rho')
	&\leq \sum_{j=1}^3 \boundedLipschitz{\Lip[T]}(\varphi_1\circ\pi',\psi_1\circ\rho') \\
	&\leq t_1 \frac{3\varepsilon + \varepsilon^2}{1+\varepsilon} + t_2\frac{2\varepsilon + \varepsilon^2}{1+\varepsilon} + t_3 \cdot 0 \leq \frac{3\varepsilon + \varepsilon^2}{1+\varepsilon}  \text.
\end{align*}

Therefore, 
\begin{equation*}
	\StateSpace(\alg{E}) \subseteq_{\frac{3\varepsilon + \varepsilon^2}{1+\varepsilon}}^{\boundedLipschitz{\Lip[T]},M} (\rho')^\ast(\StateSpace(\B)) \text.
\end{equation*}

Since $\boundedLipschitz{\Lip[T],M}$ is positive $1$-homogeneous, we immediately conclude that
\begin{equation*}
	\QuasiStateSpace(\alg{E}) \subseteq_{\frac{3\varepsilon + \varepsilon^2}{1+\varepsilon}}^{\boundedLipschitz{\Lip[T]},M} (\rho')^\ast(\QuasiStateSpace(\B)) \text.
\end{equation*}

A similar computation shows that
\begin{equation*}
	\QuasiStateSpace(\alg{E}) \subseteq_{\frac{3\varepsilon + \varepsilon^2}{1+\varepsilon}}^{\boundedLipschitz{\Lip[T]},M} (\pi')^\ast(\QuasiStateSpace(\A)) \text.
\end{equation*}

Last,
\begin{align*}
	\Kantorovich{\Lip[T]}(\mu_\A\circ\pi', \mu_\B\circ\rho') \leq \frac{2\varepsilon}{1+\varepsilon} + \varepsilon = \frac{3\varepsilon + \varepsilon^2}{1+\varepsilon}
\end{align*}
by a similar computation as above.  Last, $\Kantorovich{\Lip_\D}(\mu_\A\circ\pi,\mu_\B\circ\rho) \leq \varepsilon$ by definition of the extent for pointed {\qcms s}.

 Therefore, the extent of $\tau'$ is at most $\frac{3\varepsilon + \varepsilon^2}{1+\varepsilon}$, as claimed.
\end{proof}

\begin{corollary}
	If $(\A_n,\Lip_n,\mu_n)_{n\in\N}$ is a sequence of pointed {\qcms s} converging for the pointed propinquity to $(\A,\Lip,\mu)$, then 
	\begin{equation*}
		\lim_{n\rightarrow\infty} \sup_{M\geq 1} \metametric{M}((\A_n,\Lip_n,\C\unit_n,\mu_n), (\A,\Lip,\C\unit_\A,\mu)) = 0 \text,
	\end{equation*}
	and in particular,
	\begin{equation*}
	\lim_{n\rightarrow\infty} \Eth((\A_n,\Lip_n,\C\unit_n,\mu_n), (\A,\Lip,\C\unit_\A,\mu)) = 0 \text.
	\end{equation*}
\end{corollary}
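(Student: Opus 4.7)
The plan is to leverage Lemma \ref{tunnel-to-tunnel-lemma}, exploiting a crucial feature of its construction that is essentially invisible in its statement: the constructed tunnel seminorm $\Lip[T]$ and the extent bound $\frac{3\varepsilon+\varepsilon^2}{1+\varepsilon}$ depend only on $\varepsilon$ and on the data of the compact tunnel $\tau$, and \emph{not} on the cut-off parameter $M\geq 1$. Thus a single tunnel simultaneously witnesses a valid $M$-tunnel, with a common extent bound, for \emph{every} $M\geq 1$, which is exactly what is needed to control $\sup_{M\geq 1}\metametric{M}$ in one stroke.

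First, I would fix, for each $n\in\N$, a compact $(1,0)$-tunnel $\tau_n$ from $(\A_n,\Lip_n)$ to $(\A,\Lip)$ whose pointed extent $\varepsilon_n\coloneqq\tunnelextent{\tau_n,\mu_n,\mu}$ satisfies $\varepsilon_n\leq\dppropinquity{1,0}((\A_n,\Lip_n,\mu_n),(\A,\Lip,\mu))+\frac{1}{n+1}$, so that $\varepsilon_n\xrightarrow{n\to\infty}0$ by hypothesis. Applying Lemma \ref{tunnel-to-tunnel-lemma} once for each $n$, I obtain a pointed topographic {\lcqms} $(\alg{E}_n,\Lip[T]_n,\C\unit_{\alg{E}_n})$ and *-epimorphisms $\pi_n',\rho_n'$ such that, for every $M\geq 1$, the six-tuple $\tau_n'\coloneqq(\alg{E}_n,\Lip[T]_n,\C\unit_{\alg{E}_n},\pi_n',\rho_n',\unit_{\alg{E}_n})$ is an $M$-tunnel with extent at most $\frac{3\varepsilon_n+\varepsilon_n^2}{1+\varepsilon_n}$. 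Taking the infimum in Definition \ref{prop-def} over $M$-tunnels yields
\begin{equation*}
\sup_{M\geq 1}\metametric{M}((\A_n,\Lip_n,\C\unit_{\A_n},\mu_n),(\A,\Lip,\C\unit_\A,\mu))\leq\frac{3\varepsilon_n+\varepsilon_n^2}{1+\varepsilon_n}\xrightarrow{n\to\infty}0,
\end{equation*}
which is the first claim.

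For convergence in $\Eth$, I observe that the first argument of the maximum in Definition \ref{Metametric-def} is dominated by $\sup_{M\geq 1}\metametric{M}$ and so vanishes by the previous paragraph. For the diameter argument, the dual maps of the quantum isometries in the compact tunnel $\tau_n$ embed $(\StateSpace(\A_n),\Kantorovich{\Lip_n})$ and $(\StateSpace(\A),\Kantorovich{\Lip})$ isometrically into $(\StateSpace(\D_n),\Kantorovich{\Lip_{\D_n}})$ with images at Hausdorff distance at most $\varepsilon_n$, yielding the classical estimate $|\qdiam{\A_n,\Lip_n}-\qdiam{\A,\Lip}|\leq 2\varepsilon_n\to 0$; continuity of $t\mapsto e^{-t}$ then gives the vanishing of the diameter contribution to $\Eth$, completing the proof.

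The only step that requires care, rather than constituting a genuine obstacle, is the verification of the uniformity in $M$ asserted above. This is transparent on inspection of the proof of Lemma \ref{tunnel-to-tunnel-lemma}: the seminorm $\Lip[T]$ is defined without any reference to $M$, the distinguished element is $\unit_{\alg{E}}$, and the only place where $M$ could intervene in the extent estimate — the middle term $|\varphi_1(\pi(d))-\psi_1(\rho(d))|$ — is controlled via the $M$-independent bound $\boundedLipschitz{\Lip_\D,M}(\varphi_1\circ\pi,\psi_1\circ\rho)\leq\Kantorovich{\Lip_\D}(\varphi_1\circ\pi,\psi_1\circ\rho)\leq\varepsilon$.
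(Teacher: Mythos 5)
Your proposal is correct and follows essentially the same route as the paper: reduce to a compact $(1,0)$-tunnel of small pointed extent, feed it through Lemma (\ref{tunnel-to-tunnel-lemma}), and observe that both the constructed tuple and its extent bound are independent of the cutoff $M$, so that a single tunnel controls $\metametric{M}$ for all $M\geq 1$ at once. Your write-up is, if anything, slightly more careful than the paper's: you spell out the uniformity-in-$M$ point explicitly via the $\Kantorovich{\Lip_\D}$ bound, you extract the tunnels from $\dppropinquity{1,0}$ rather than the unpointed $\dpropinquity{}$ so the pointed extent hypothesis of Lemma (\ref{tunnel-to-tunnel-lemma}) is met head-on, and you supply the standard $|\qdiam{\A_n,\Lip_n}-\qdiam{\A,\Lip}|\leq 2\varepsilon_n$ Hausdorff argument where the paper merely asserts continuity of $\exp(-\qdiam{\cdot})$.
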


\begin{proof}
	Let $\varepsilon > 0$, $M\geq 1$. Set $x \coloneqq \min\{1,\frac{\varepsilon}{4}\}$ --- note that $x>0$. There exists $N\in\N$ such that, for all $n\geq N_1$, we have $\dpropinquity{}((\A,\Lip),(\A_n,\Lip_n)) < \frac{x}{2}$. Moreover, since $\exp(-\qdiam{\cdot})$ is continuous with respect to the propinquity, there exists $N_2 \in \N$ such that, for all $n\geq N_2$, we have
	\begin{equation*}
		\left|\exp(-\qdiam{\A,\Lip}) - \exp(-\qdiam{\A_n,\Lip_n}) \right| < \varepsilon \text.
	\end{equation*}
	
	 Let $n \geq N \coloneqq \max\{N_1,N_2\}$. There exists, by definition of the metametric, a compact tunnel $\tau_n$ from $(\A,\Lip)$ to $(\A_n,\Lip_n)$ with extent at most $x$. By Lemma (\ref{tunnel-to-tunnel-lemma}), there exists a tunnel $\tau'$ from $(\A_n,\Lip_n,\C\unit_n,\mu_n)$ to $(\A,\Lip,\C\unit_\A,\mu)$ with extent at most
	 \begin{equation*}
	 	\frac{3 x+x^2}{1+x} \leq \varepsilon \text.
	 \end{equation*}
	 
	 Therefore, by Definition (\ref{prop-def}), for all $n\geq N$,
	 \begin{equation*}
	 	\metametric{M}((\A_n,\Lip_n,\C\unit_n,\mu_n), (\A,\Lip,\C\unit_\A,\mu)) \leq \max\{ \tunnelextent{\tau'} , \varepsilon \} = \varepsilon \text.
	 \end{equation*}
	 Therefore, if $n\geq\max\{N_1,N_2\}$, then $\sup_{M\geq 1}\metametric{M}((\A_n,\Lip_n,\C\unit_n,\mu_n), (\A,\Lip,\C\unit_\A,\mu)) \leq \varepsilon$.
	 
	 By Definition (\ref{Metametric-def}), this implies, in turn, that if $n\geq\max\{N_1,N_2\}$, then
	 \begin{equation*}
	 	\Eth((\A_n,\Lip_n,\C\unit_n,\mu_n), (\A,\Lip,\C\unit_\A,\mu)) < \varepsilon \text.
	\end{equation*}
	
	 This proves our corollary.
\end{proof}

\begin{corollary}
	If $(\A_n,\Lip_n)_{n\in\N}$ is a sequence of pointed {\qcms s} converging for the propinquity to $(\A,\Lip)$, and if $\mu \in \StateSpace(\A)$, then there exists $\mu_n \in \StateSpace(\A_n)$ for all $n\in\N$ such that 
	\begin{equation*}
		\lim_{n\rightarrow\infty} \sup_{M\geq 1} \metametric{M}((\A_n,\Lip_n,\C\unit_n,\mu_n), (\A,\Lip,\C\unit_\A,\mu)) = 0 \text,
	\end{equation*}
	and in particular,
	\begin{equation*}
	\lim_{n\rightarrow\infty} \Eth((\A_n,\Lip_n,\C\unit_n,\mu_n), (\A,\Lip,\C\unit_\A,\mu)) = 0 \text.
	\end{equation*}
\end{corollary}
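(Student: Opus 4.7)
The plan is to combine two prior results in the excerpt: the lemma establishing that propinquity convergence can be lifted to pointed propinquity convergence by suitable choice of states, and the immediately preceding corollary showing pointed propinquity convergence implies convergence for both the family $(\metametric{M})_{M\geq 1}$ (uniformly) and $\Eth$.

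More precisely, by hypothesis $\lim_{n\rightarrow\infty} \dpropinquity{}((\A_n,\Lip_n),(\A,\Lip)) = 0$. By the lemma preceding Lemma (\ref{tunnel-to-tunnel-lemma}), applied with $\A_\infty = \A$, $\Lip_\infty = \Lip$, and $\mu_\infty = \mu$, there exist states $\mu_n \in \StateSpace(\A_n)$ for every $n\in\N$ such that
\begin{equation*}
	\lim_{n\rightarrow\infty} \dppropinquity{1,0}((\A_n,\Lip_n,\mu_n),(\A,\Lip,\mu)) = 0 \text.
\end{equation*}
Fix such a choice of pins $(\mu_n)_{n\in\N}$.

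With this choice in hand, the preceding corollary applies directly: it asserts that if a sequence of pointed {\qcms s} converges for the pointed propinquity to a pointed {\qcms}, then the corresponding sequence of {\pqpms s} (obtained by endowing each with the trivial topography $\C\unit$ and its pin) converges for $\sup_{M\geq 1} \metametric{M}$ to $(\A,\Lip,\C\unit_\A,\mu)$, and hence also for $\Eth$. Thus:
\begin{equation*}
	\lim_{n\rightarrow\infty} \sup_{M\geq 1} \metametric{M}((\A_n,\Lip_n,\C\unit_n,\mu_n), (\A,\Lip,\C\unit_\A,\mu)) = 0
\end{equation*}
and, as a consequence,
\begin{equation*}
	\lim_{n\rightarrow\infty} \Eth((\A_n,\Lip_n,\C\unit_n,\mu_n), (\A,\Lip,\C\unit_\A,\mu)) = 0 \text,
\end{equation*}
as claimed.

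There is no real obstacle here: this corollary is a one-line composition of the two results above. The substantive work was done in Lemma (\ref{tunnel-to-tunnel-lemma}), which converts a compact tunnel with small pointed extent into a tunnel in the new sense, with controlled extent, and in the pointed-propinquity lemma that extracts compatible states from any sequence of compact tunnels witnessing propinquity convergence. All that remains is to apply them in sequence.
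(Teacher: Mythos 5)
Your proposal is correct and takes essentially the same approach as the paper: the paper's own proof is the terse remark that the result ``follows from our previous corollary and the relationship between the metametric and the pointed metametric,'' which is exactly the two-step composition you spell out, namely the equivalence lemma linking propinquity to pointed propinquity via a suitable choice of pins, followed by the corollary upgrading pointed propinquity convergence to convergence for $\sup_{M\geq 1}\metametric{M}$ and hence for $\Eth$.
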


\begin{proof}
	This follows from our previous corollary and the relationship between the metametric and the pointed metametric.
\end{proof}

\medskip

We turn to the converse. We point out that there exists tunnels between {\qcms s} of the form $(\D,\Lip,\M,\pi,\rho,e)$ where $\D$ is not unital, in general. Indeed, one can use $e$ to cut off the ``excess'' space. In particular, even when domains and codomains are {\qcms s}, the element $e$ need not be a unit, or a multiple of a unit, or any such. Moreover, neither $\rho$ nor $\pi$ are quantum isometries, i.e. induce isometries from the {\MongeKant s} on the domain and codomain of our tunnel. 

We offer the following conversion lemma which turns a tunnel between {\qcms s} into a compact tunnel, at a cost of relaxing the Leibniz condition. We focus on tunnels with relatively small extent, as our eventual concern is about convergence.

\begin{lemma}\label{nothing-easy-lemma}
	Let $\mathds{A} \coloneqq (\A,\Lip_\A,\mu_\A)$ and $\mathds{B} \coloneqq (\B,\Lip_\B,\mu_\B)$ be two pointed {\qcms s}. If $\tau$ be an $r$-tunnel  from $(\A,\Lip_\A,\M_\A,\mu_\A)$ to $(\B,\Lip_\B,\M_\B\mu_\B)$, for some topographies $\M_\A$ and $\M_\B$ of, respectively, $\mathds{A}$ and $\mathds{B}$, and if 
	\begin{equation*}
		r > \max\{\qdiam{\A,\Lip_\A},\qdiam{\B,\Lip_\B}\}\text,
	\end{equation*}
	and
	\begin{equation*}
		\tunnelextent{\tau} < \max\left\{3,\frac{1}{4r+6}\right\} \text,
	\end{equation*}
 then there exists a compact $(2,4r)$-tunnel $\tau'$ from $\mathds{A}$ to $\mathds{B}$ of extent at most 
 	\begin{equation*}
 		\tunnelextent{\tau',\mu_\A,\mu_\B} \leq \tunnelextent{\tau}\left(3 + 9 r + 4 r^2 \right)\text.
	\end{equation*}
\end{lemma}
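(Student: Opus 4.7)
The plan is to build a compact $(2,4r)$-tunnel $\tau' = (\D',\Lip',\pi',\rho')$ on the unital C*-algebra $\D' = \A \oplus \B$, with coordinate projections $\pi'(a,b) = a$ and $\rho'(a,b) = b$ serving as the two required quantum isometries. The seminorm $\Lip'$ will inherit $\Lip_\A$ and $\Lip_\B$ on the two factors and add a coupling term that penalizes pairs $(a,b)$ which are not simultaneously the image of a well-controlled element of $\domsa{\Lip_\D}$ under $(\pi_\A,\pi_\B)$. Setting $\varepsilon = \tunnelextent{\tau}$, a natural candidate is
\begin{equation*}
\Lip'(a,b) = \max\left\{\Lip_\A(a),\,\Lip_\B(b),\,\tfrac{1}{\varepsilon}\,N(a,b)\right\},
\end{equation*}
where $N(a,b)$ is the infimum of $\norm{\pi_\A(d)-a}{\A} + \norm{\pi_\B(d)-b}{\B}$ over $d \in \domsa{\Lip_\D}$ with $\norm{d}{\Lip_\D,r} \leq \max\{\Lip_\A(a),\Lip_\B(b),\tfrac{1}{r}\norm{a}{\A},\tfrac{1}{r}\norm{b}{\B}\}$; the precise form may be adjusted to guarantee lower semi-continuity.

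First I would verify that $\pi'$ and $\rho'$ are quantum isometries of unital {\qcms s}. Given $a \in \domsa{\Lip_\A}$ with $\Lip_\A(a) \leq 1$, the hypothesis $r > \qdiam{\A,\Lip_\A}$ gives $\norm{a - \mu_\A(a)\unit_\A}{\A} \leq r$, and the quantum $r$-isometry $\pi_\A$ produces a lift $d \in \domsa{\Lip_\D}$ of the centered element with $\norm{d}{\Lip_\D,r} \leq 1$; then $b := \pi_\B(d)$ satisfies $\Lip_\B(b) \leq 1$ and witnesses $N(a,b) = 0$, so $\Lip'(a,b) \leq 1$. Bounded diameter of $(\D',\Lip')$ then follows from bounded diameter of the two factors, so $(\D',\Lip')$ is a {\qcms}; the Leibniz constants $(2,4r)$ arise from applying the Leibniz property of $\Lip_\D$ to lifts of products $(a,b)\cdot(a',b')$, combined with the norm bound $r$ on the Lipschitz ball available because $\qdiam{\A},\qdiam{\B} < r$.

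Next I would bound the pointed extent $\tunnelextent{\tau',\mu_\A,\mu_\B}$. Because $r$ exceeds both quantum diameters, $\boundedLipschitz{\Lip,r} = \Kantorovich{\Lip}$ holds on $\StateSpace(\A)$ and $\StateSpace(\B)$. From the tunnel hypothesis we have $e\,\QuasiStateSpace(\D)\,e \subseteq_\varepsilon^{\boundedLipschitz{\Lip_\D,r}} \pi_\A^\ast(\QuasiStateSpace(\A))$ (and symmetrically for $\B$), together with $\Kantorovich{\Lip_\D}(\mu_\A \circ \pi_\A, \mu_\B \circ \pi_\B) \leq \varepsilon$. Transferring the first approximation through the coupling term and combining with the second, I would show that any state of $\A \oplus \B$ is within $\varepsilon(3 + 9r + 4r^2)$ in $\Kantorovich{\Lip'}$ of some state in $(\pi')^\ast(\StateSpace(\A))$ and of some state in $(\rho')^\ast(\StateSpace(\B))$. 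The constant $3$ comes from the triangle inequality used to pass from one factor to the other through $\D$; the linear term $9r$ is the norm-cutoff cost when converting $\boundedLipschitz{\Lip_\D,r}$ estimates back to $\Kantorovich{\Lip'}$; and the quadratic term $4r^2$ arises from the Leibniz product $e \cdot d \cdot e$ (with $\norm{e}{\D} \approx 1$ and $\norm{d}{\D} \leq r$) needed to relate test functionals on $\A \oplus \B$ to those on $\D$ through the coupling term.

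The hard part will be the Leibniz compatibility of the coupling term $N$: infima of sums of norms do not generally satisfy a Leibniz inequality, so I would either relax Leibniz to the $(2,4r)$-form by exploiting the diameter bound $r$ on the relevant Lipschitz ball, or dualize $N$ as a supremum over a suitable weak$^\ast$-compact family of test functionals on $\D$, which is more naturally Leibniz-compatible. A secondary technical difficulty is ensuring the lower semi-continuity of $\Lip'$ when the infimum defining $N$ is not attained; this can be handled by replacing $N$ with its lower semi-continuous envelope, or equivalently by reformulating the coupling as the supremum just mentioned.
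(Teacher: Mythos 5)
Your overall architecture matches the paper's: both build the compact tunnel on $\alg{E}=\A\oplus\B$ with the two coordinate projections as the quantum isometries, and both encode compatibility with $\D$ through an extra term in the seminorm scaled by $1/\varepsilon$. You also correctly identify the central difficulty — getting a Leibniz property out of a coupling defined by an infimum. But your specific coupling $N(a,b)$ is not what the paper uses, and the difference is not cosmetic: it is exactly where the Leibniz argument succeeds or fails.

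The paper does not take a single $d$ that approximately projects to both $a$ and $b$. Instead it first defines a seminorm $\Lip[S]$ on \emph{pairs} $(d_1,d_2)\in\dom{\Lip_\D}\oplus\dom{\Lip_\D}$ by
\begin{equation*}
\Lip[S](d_1,d_2) = \max\left\{\norm{d_1-\mu_\A\circ\pi(d_1)\unit_\D}{\Lip,r},\ \norm{d_2-\mu_\B\circ\rho(d_2)\unit_\D}{\Lip,r},\ \tfrac{1}{\varepsilon}\norm{d_1-d_2}{\D}\right\}\text,
\end{equation*}
and then sets $\Lip[Q](a,b)$ to be the infimum of $\Lip[S](d_1,d_2)$ over \emph{exact} lifts $\pi(d_1)=a$, $\rho(d_2)=b$. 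Two features of this design are load-bearing and absent from your $N(a,b)$. First, the centered-norm terms $\norm{d_i-\mu(\cdot)\unit_\D}{\Lip,r}$ bound $\norm{d_i}{\D}$ up to a scalar by $r\,\Lip[Q](a,b)+\norm{(a,b)}{\alg{E}}$, and this is precisely what converts the clean $(2,0)$-Leibniz identity for $\Lip[S]$ into the stated $(2,4r)$-Leibniz for $\Lip[Q]$. In your proposal the norm bound on $d$ is only a side constraint on the domain of the infimum; it does not appear in the value of $\Lip'$, so you have no way to estimate $\norm{d}{\D}$ in terms of $\Lip'(a,b)$ and $\norm{(a,b)}{\alg{E}}$ when running the Leibniz computation. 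Second, because the paper's lifts are exact, if $(d_1,d_2)$ lifts $(a,b)$ and $(d_1',d_2')$ lifts $(a',b')$, then $(d_1 d_1', d_2 d_2')$ is an exact lift of $(aa',bb')$ and the Leibniz argument applies directly. With your approximate single-lift coupling, $\pi_\A(dd')$ and $\pi_\B(dd')$ are only approximately equal to $aa'$ and $bb'$, and the error in the coupling term $N(aa',bb')$ is not controlled by $N(a,b)$ and $N(a',b')$ in a Leibniz-compatible way. Your suggested workarounds (``relax Leibniz'' or ``dualize $N$ as a supremum'') are not what the paper does and are not obviously viable here; the actual fix is a different choice of coupling, not a patch on yours.

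A secondary omission: the tunnel hypothesis gives approximation of $e\QuasiStateSpace(\D)e$ by quasi-states $\psi\circ\rho$, not states, so to bound the extent of the compact tunnel one must renormalize $\psi$ to a state. The paper proves $\norm{\psi}{\B^\ast}\geq 1-2(D+2)\varepsilon$ using $|1-\mu_\A\circ\pi(e)|\leq\varepsilon$ and the approximate unit in the topography, then replaces $\psi$ by $\psi/\norm{\psi}{\B^\ast}$; this renormalization is the source of the linear-in-$r$ part of the constant $3+9r+4r^2$. Your accounting (``the $9r$ is the norm-cutoff cost...'') does not correspond to the actual derivation, and without the renormalization step the extent bound does not follow.
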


\begin{proof}
	Let 
	\begin{equation*}
		\tunnel{\tau}{(\A,\Lip_\A,\M_\A,\mu_\A)}{\pi}{(\D,\Lip,\M,e)}{\rho}{(\B,\Lip_\B,\M_\B,\mu_\B)}\text.
	\end{equation*}
	
	Let $\varepsilon \coloneqq \tunnelextent{\tau}$. Let $D\coloneqq\max\{\qdiam{\A,\Lip_\A},\qdiam{\B,\Lip_\B}\}$, and assume $D<r$. We assume that $\varepsilon\leq\frac{1}{2(r+1)}$, so in particular, $\varepsilon(D+2) < \frac{1}{2}$ by assumption.

	Our first step is to extent $\pi$ and $\rho$ as unital *-epimorphism form $\unital{\D}$ onto $\A$ and $\B$, respectively, keeping the same notation for these (necessarily unique) extentions. We then extend $\Lip$ to $\unital{\D}$ by setting $\Lip(d + t\unit_\D) = \Lip(d)$ for all $d\in\dom{\Lip}$ and $t\in \R$. Note that if $\D$ is already unital, then this definition is consistent. However, also note that, if $\D$ is not unital, then $\Lip$ is no longer an L-seminorm.

	Let $e^\A \coloneqq \pi(e)$. Since $(\A,\Lip_\A)$ is a {\qcms}, we conclude by \cite[Proposition 1.6]{Rieffel98a}, that
	\begin{equation*}
		\norm{e^\A- \mu_\A(e^\A)\unit_\A}{\A} \leq \qdiam{\A,\Lip_\A} \Lip_\A(e^A) \leq D\varepsilon \text.
	\end{equation*}
Moreover, $|\mu_\A(e^\A)-1| \leq \varepsilon$ by Definition (\ref{extent-def}). Hence
\begin{equation*}
	\norm{e^\A - \unit_\A}{\A} \leq (D+1)\varepsilon \text.
\end{equation*}
Let now $\varphi \in \StateSpace(\A)$. By Definition (\ref{extent-def}) again, there exists $\psi \in \QuasiStateSpace(\B)$ such that
\begin{equation}\label{convoluted-eq}
	\boundedLipschitz{\Lip,M}(\varphi\circ\pi(e\cdot e), \psi\circ\rho) \leq \varepsilon \text.
\end{equation}

Let $d \in \dom{\Lip}$ with $\norm{d}{\Lip,r}\leq 1$. We then compute:
\begin{align}\label{varphi-psi-bl-eq}
	|\varphi\circ\pi(d) - \psi\circ\rho(d)| 
	&\leq \underbracket[1pt]{|\varphi\circ\pi(d)-\varphi\circ\pi(ede)|}_{\leq\norm{\pi(d-ede)}{\D}} + |\varphi\circ\pi(ede) - \psi\rho(d)| \nonumber \\
	&\leq \norm{a-e^\A a e^\A}{\A} + |\varphi\circ\pi(e d e) - \psi\circ\rho(d)| \leq (D+2) \varepsilon \text.
\end{align}
Yet $\psi$ is a quasi-state and not necessarily a state, so we proceed to adjust $\psi$. Record first that $\norm{\psi}{\B^\ast}\leq 1$.

By  Expression \eqref{convoluted-eq}, and since there exists a \lipunit{\Lip}{\mu_\A\circ\pi} $(h_n)_{n\in\N}$ in $\M$, with $\norm{h_n}{\Lip,r}\leq 1 + \frac{1}{D(n+1)} \leq 2$, and thus:
\begin{equation*}
	|\varphi\circ\pi(e h_n e) - \psi\circ\rho(h_n)| \leq 2 \varepsilon\text.
\end{equation*}
Therefore, taking the limit at $n$ goes to $\infty$, since $\varphi\circ\pi(e\cdot e)$ and $\psi\circ\rho$ are both positive linear functionals over $\D$,
\begin{equation*}
\left|\norm{\varphi\circ\pi(e\cdot e)}{\A^\ast} - \norm{\psi}{\B^\ast}\right| \leq 2\varepsilon
\end{equation*}

Consequently, 
\begin{equation*}
	\norm{\psi}{\B^\ast} \geq \varphi\circ\pi(e^2) - 2\varepsilon\text.
\end{equation*}

Now, using the fact that $\mu_\A\circ\pi$ is a character of $\M$ and $e\in \M$, as well as $|1-\mu_\A\circ\pi(e)| \leq \varepsilon$ once more,
\begin{align*}
	|1-\varphi\circ\pi(e^2)| 
	&\leq |1-\mu_\A\circ(e^2)| + |\mu_\A\circ\pi(e^2) - \varphi\circ\pi(e^2)| \\
	&\leq |1-\mu_\A\circ\pi(e)| |1+\mu_\A\circ\pi(e)| + \Lip(e^2)\Kantorovich{\Lip_\A}(\varphi,\mu_\A) \\
	&\leq 2 \varepsilon + 2\Lip(e)\norm{e}{\D}D \\
	&\leq 2\varepsilon + \varepsilon\sqrt{1+\varepsilon} D \leq \varepsilon(2+D\sqrt{1+\varepsilon}) \leq 2(1+D) \varepsilon \text, 
\end{align*}
where we used the fact that $\frac{x}{1-x}<2$ when $x\in\left[0,\frac{1}{2}\right]$.

Hence
\begin{equation*}
	\varphi\circ\pi(e^2) \geq 1 - 2(D+1)\varepsilon \text{ so }\norm{\psi}{\B^\ast}\geq 1 - 2(D+2)\varepsilon \text.
\end{equation*}
Consequently,
\begin{equation*}
	\left| 1 - \frac{1}{\norm{\psi}{\B^\ast}} \right| = \frac{1}{\norm{\psi}{\B^\ast}} - 1 \leq \frac{1}{1-2(D+2)\varepsilon} - 1 = \frac{2(D+2)\varepsilon}{1-2(D+2)\varepsilon} \leq 4(2D+2) \varepsilon \text.
\end{equation*}

Altogether, we see that, if $\theta\coloneqq \frac{1}{\norm{\psi}{\B^\ast}}\psi$, then $\theta \in \StateSpace(\B)$, and if $d\in\D$ with $\norm{d}{\Lip,r} \leq 1$, then
\begin{equation*}
	|\varphi\circ\pi(d) - \theta\circ\rho(d)| \leq \underbracket[1pt]{(D+2)\varepsilon }_{\text{by Exp. \eqref{varphi-psi-bl-eq}}}+ \left|1-\frac{1}{\norm{\psi}{\B^\ast}}\right| r \leq ((D+2) + (4D+8)r)\varepsilon \leq \varepsilon\left(2 + 9r + 4r^2 \right) \text. 
\end{equation*}
Thus, 
\begin{equation}\label{varphi-theta-close-bl-eq}
	\forall \varphi \in \StateSpace(\A)\quad \exists\theta\in \StateSpace(\B) \quad\quad\boundedLipschitz{\Lip}(\varphi\circ\pi,\psi\circ\rho) \leq \left(2 + 7r + 4r^2 \right)\varepsilon\text.
\end{equation}
 We obtain a similar estimate exchanging the roles of $\A$ and $\B$, by symmetry.

\medskip

Define 
\begin{equation*}
	\Lip[S](d_1,d_2) \coloneqq \max\left\{\norm{d_1-\mu_\A\circ\pi(d_1)\unit_\D}{\Lip, r} , \norm{d_2-\mu_\B\circ\pi(d_2)\unit_\D}{\Lip, r}, \frac{1}{\varepsilon}\norm{d_1-d_2}{\D} \right\}
\end{equation*}
for all $(d_1,d_2) \in \dom{\Lip}\oplus\dom{\Lip}$, and where $\norm{d}{\Lip,r} \coloneqq \max\left\{ \frac{1}{r}\norm{d}{\unital{\D}}, \Lip(d) \right\}$ using our extension of $\Lip$ to $\dom{\Lip} + \R\unit_\D$ in $\unital{D}$. 

We note that $\Lip_\A\circ\pi(d) \leq \Lip(d)  = \Lip(d-\mu_\A(\pi(d))\unit_\D)\leq \norm{d-\mu_\A(\pi(d))\unit_\D}{\Lip,r} \leq \Lip[S](d,f)$ for all $d,f \in \dom{\Lip}$.

Let now $\alg{E} \coloneqq \A\oplus\B$. For all $(a,b) \in \dom{\Lip_\A}\oplus\dom{\Lip_\B}$, we define:
\begin{equation*}
	\Lip[Q](a,b) \coloneqq \inf\left\{ \Lip[S](d_1,d_2) : \pi(d_1) = a, \rho(d_2) = b \right\}\text.
\end{equation*}
Let $\eta_\A : (a,b) \in \alg{E}\mapsto a\in\A$ and $\eta_\B:(a,b)\in\alg{E}\mapsto b\in \B$.

We first note that $\Lip[Q]$ is indeed defined and finite on $\dom{\Lip_\A}\oplus\dom{\Lip_\B}$ and thus densely defined in $\A\oplus\B$. Moreover, let $\Lip[Q](a,b)\leq 1$ and $\mu_\A(a) = 0$. Thus, there exists $d_1,d_2\in\dom{\Lip}$ such that $\pi(d_1) = a$, $\rho(d_2) = b$, $\norm{d_1-d_2}{\D}\leq\varepsilon$, $\norm{d_1}{\Lip}\leq 2$ and $\norm{d_2}{\Lip} \leq 2$. Now $\Lip_\A\circ\pi\leq\Lip_1$ so $\Lip_\A(a)\leq 2$, and similarly, $\Lip_\B(b)\leq 2$. Last, 
\begin{align*}
	|\mu_\B(b)| 
	&= \left|\mu_\B\circ\rho(d_2)\right| \\
	&\leq \left|\mu_\B\circ\rho(d_2-d_1) + \mu_\B\circ\rho(d_1)\right| 
	&\leq \varepsilon + \left|\mu_\B\circ\rho(d_1) - \mu_\A\circ\pi(d_1)\right| + \left|\mu_\A\circ\pi(d_1)\right| \\
	&\leq 1 + 2 \varepsilon + |\mu_\A(a)| = 1+2\varepsilon \text.
\end{align*}
Hence:
\begin{multline*}
	\left\{ (a,b) \in \dom{\Lip[Q]} : \Lip[Q](a,b)\leq 1\right\} \subseteq \left\{a\in\dom{\Lip_\A}:\mu_\A(a) =0,\Lip_\A(a)\leq 2 \right\} \\
	\times\left\{ b\in\dom{\Lip_\B} : |\mu_\B(b)|\leq 7, \Lip_\B(b)\leq 2 \right\}
\end{multline*}
and, since both $(\A,\Lip_\A)$ and $(\B,\Lip_\B)$ are {\qcms s}, the set on the right hand side is compact. Hence $\left\{ (a,b) \in \dom{\Lip[Q]} : \Lip[Q](a,b)\leq 1\right\}$ is totally bounded. Moreover, it is easily checked to be closed. 

Assume now that $\Lip[Q](a,b) = 0$. Since $\max\{\Lip_\A(a),\Lip_\B(b)\}\leq \Lip[Q](a,b) = 0$, we conclude that $a = t \unit_\A$ and $b = s \unit_\B$, but we wish to show that $s=t$. Let $\delta>0$. There exists $d_1,d_2 \in \dom{\Lip[S]}$ such that $\pi(d_1)=a$, $\pi(d_2)=b$, and $\Lip[S](d_1,d_2) < \delta$. In particular, 
\begin{equation*}
	|t-s| = |\mu_\A\circ_\A(d_1) - \mu_\B\circ\B(d_2)| \leq 2\delta + \norm{d_1 - d_2}{\D} \leq 3\delta \text.
\end{equation*}
Since $\delta> 0$ is arbitrary, we conclude indeed that $s=t$, i.e. $\Lip[Q]$ is a norm modulo constants.

So $(\alg{E},\Lip[Q])$ is a {\qcms}, though it remains to investigate its Leibniz property.

\medskip

Now, we turn to the Leibniz property of $\Lip[Q]$, First, as noted in \cite[Lemma 3.2]{Latremoliere15c}, we observe that, for all $d_1,d_2 \in \D$:
\begin{align*}
	\norm{d_1 d_2 - \mu_\A(\pi(d_1 d_2))\unit_\D}{\D}
	&\leq \norm{(d_1 - \mu_\A(\pi(d_1)))d_2}{\D} + \norm{\mu_\A(\pi(d_1))d_2 - \mu_\A(\pi(d_1)\pi(d_2))}{\D} \\
	&\leq \norm{d_2}{\D} \norm{d_1 - \mu_\A(\pi(d_1))}{\D} + \norm{\mu_\A(\pi(d_1)) d_2 - \mu_\A(\pi(d_1))\mu_\A(\pi(d_2))}{\D} \\
	&+ \norm{\mu_\A(\pi(d_1))\mu_\A(\pi(d_2)) - \mu_\A(\pi(d_1)\pi(d_2))}{\D}\\
	&\leq \norm{d_1 - \mu(\pi(d_1))}{\D}\norm{d_2}{\D} + \norm{d_1}{\D}\norm{d_2-\mu(\pi(d_2))}{\D} \\ 
	&\quad + |\mu((\mu(\pi(d_1)) - \pi(d_1))\pi(d_2))| \\
	&\leq 2\norm{d_1 - \mu(\pi(d_1))}{\D}\norm{d_2}{\D} + \norm{d_1}{\D}\norm{d_2-\mu(\pi(d_2))}{\D} \text.
\end{align*}
Since $\Lip$ is Leibniz,
\begin{align*}
	\Lip(d_1 d_2 - \mu_\A(\pi(d_1 d_2))\unit_\D) = \Lip(d_1 d_2) \leq \norm{d_1}{\D}\Lip(d_2) + \Lip(d_1)\norm{d_2}{\D}\text.
\end{align*}
Consequently, $\Lip[S]$ is $(2,0)$-Leibniz.

Now, let $(a,b), (a',b') \in \dom{\Lip[Q]}$. By construction of $\Lip[S]$, there exists $(d,f), (d',f') \in \dom{\Lip[S]}$ such that $\pi(d) = a$, $\pi(d') = a$, $\rho(d) = b$, $\rho(d') = b'$, while $\Lip[S](d,f) \leq \Lip[Q](a,b) + \delta$ and $\Lip[S](d',f') \leq \Lip[Q](a',b') + \delta$. 
)
Consequently, $\norm{d-\mu_\A(\pi(d))}{\D} \leq r (\Lip[Q](a,b) + \delta)$ by construction of $\Lip[S]$. So in particular,  
\begin{align*}
	\norm{d}{\D} 
	&\leq \norm{d-\mu_\A(\pi(d))\unit_\D}{\D} + |\mu(\pi(d))| \\
	&\leq r (\Lip[Q](a,b) + \delta) + \norm{a}{\A} \\
	&\leq r (\Lip[Q](a,b) + \delta) + \norm{(a,b)}{\alg{E}} \text.
\end{align*}

The same estimates apply to show
\begin{equation*}
	\norm{f}{\D} \leq r (\Lip[Q](a,b) + \delta) + \norm{(a,b)}{\alg{E}}
\end{equation*}
and
\begin{equation*}
	\max\{\norm{d'}{\D},\norm{f'}{\D} \leq r (\Lip[Q](a',b') + \delta) + \norm{(a',b')}{\alg{E}}\text. 
\end{equation*}

We note that the delicate choice we made for $\Lip[S]$ was precisely designed to ensure the above control over norms.

We can then compute:
\begin{align*}
	\Lip[Q](aa',bb')
	&\leq \Lip[S](d d', f f') \\
	&\leq 2\left( \Lip[S](d,f) \norm{d',f'}{\D\oplus\D} + \norm{d,f}{\D\oplus\D}\Lip[S](d',f') \right) \\
	&\leq 2\left( (\Lip[Q](a,b)+\delta) (\norm{a',b'}{\A} + r\Lip[Q](a',b') + r \delta) + (\norm{(a',b')}{\alg{E}} + r\Lip[Q](a',b') + r \delta)(\Lip[Q](a,b)+\delta) \right)\text.
\end{align*}
Since $\delta>0$ was arbitrary, we conclude:
\begin{equation*}
	\Lip[Q](a a', b b') \leq 2 \left( \Lip[Q](a,b) \norm{(a',b')}{\alg{E}} + \norm{(a,b)}{\alg{E}}\Lip[Q](a',b') \right) + 4 r \Lip[Q](a,b) \Lip[Q](a',b') \text.
\end{equation*}

So $(\alg{E},\Lip[Q])$ is a $(2,4r)$-{\qcms}. 

\medskip

Note that by construction, for all $a\in\dom{\Lip_\A}$ and $b\in\dom{\Lip_\B}$, we have $\Lip_\A(a) \leq \Lip[Q](a,b)$ since $\Lip_\A(a)\leq\Lip[S](d_1,d_2)$ for any $d_1,d_2\in\dom{\Lip}$ with $\pi(d_1) = a$; similarly, $\Lip_\B(b)\leq \Lip[Q](a,b)$.

Let now $a \in \dom{\Lip_\A}$ with $\Lip_\A(a) \leq 1$. Then $\norm{a-\mu_\A(a)\unit_\A}{\A} \leq D \leq r$. Therefore, $\norm{a-\mu(a)\unit_\A}{\Lip,r} \leq 1$. Let $\delta> 0$. By definition of $\pi$, there exists $d' \in \dom{\Lip}$ such that $\pi(d') = a-\mu(a)\unit_\A$, and $\norm{d'}{\Lip,r} \leq 1+\delta$ --- in particular, $\Lip(d')\leq 1+\delta$. Therefore, if we set $d \coloneqq d' + \mu_\A(a) \unit_\D$, then $\pi(d) = a$, and by construction, $\Lip[S](d,d) = \norm{d'}{\Lip,r} \leq 1+\delta$. If $b\coloneqq \rho(d)$, then $\Lip[Q](a,b) \leq \Lip[S](d,d) \leq 1+\delta$.

So, for all $a\in\dom{\Lip}$, there exists $b \in \dom{\Lip_\B}$ such that $\Lip[Q](a,b) \leq\Lip_\A(a)(1 +\delta)$. We thus have shown that:
\begin{equation*}
	\Lip_\A(a) = \inf\left\{ \Lip[Q](a,b) : b \in \dom{\Lip_\B} \right\} \text.
\end{equation*}
Thus, $\eta_\A$ is a quantum isometry from $(\alg{E},\Lip[Q])$ onto $(\A,\Lip_\A)$.

The same reasoning applies to show that $\eta_\B$ is a quantum isometry from $(\alg{E},\Lip[Q])$ onto $(\B,\Lip_\B)$. Therefore, $(\alg{E},\Lip[Q],\eta_\A,\eta_\B)$ is indeed a compact tunnel from $(\A,\Lip_\A)$ to $(\B,\Lip_\B)$. It remains to compute its estimate.

\medskip

Let $\varphi \in \StateSpace(\A)$. As seen in Expression \eqref{varphi-theta-close-bl-eq}, there exists $\theta\in\StateSpace(\B)$ such that $\boundedLipschitz{\Lip,r}(\varphi\circ\pi,\theta\circ\rho) \leq \left(2 + 9 r + 4r^2 \right)\varepsilon$.

Therefore, for all $\delta>0$,
\begin{align*}
	\Kantorovich{\Lip[Q]}&(\varphi\circ\eta_\A,\theta\circ\eta_\B)\\
	&=\sup\left\{ |\varphi(a) - \theta(b)| : \Lip[Q](a,b) \leq 1\right\}\\
	&\leq\frac{1}{1+\delta}\sup\Big\{ |\varphi\circ\pi(d_1) - \theta\circ\rho(d_2)| : \\
	&\quad\quad \norm{d_1-\mu_\A(\pi(d_1))}{\Lip,r} \leq 1+\delta, \norm{d_2-\mu_\B(\rho(d_2))}{\Lip,r} \leq 1+\delta, \norm{d_1-d_2}{\D} \leq \varepsilon \Big\}\\
	&\leq \frac{1}{1+\delta}\sup\left\{ |\varphi\circ\pi(d_1) - \varphi\circ\pi(d_2)| : \norm{d_1-d_2}{\D} \leq \varepsilon\right\}\\
	&\quad + \frac{1}{1+\delta}\sup\left\{ |\varphi\circ\pi(d_2) - \theta\circ\rho(d_2)| : \norm{d_2-\mu_\B\circ\rho(d_2)}{\Lip,r} \leq 1\right\}\\
	&\leq \frac{\varepsilon}{1+\delta} + \frac{1}{1+\delta}\sup\left\{ |\varphi\circ\pi(d_2-\mu_\B\circ\rho(d_2)\unit_\D) - \theta\circ\rho(d_2-\mu_\B\circ\rho(d_2)\unit_\D)| : \norm{d_2}{\Lip,r} \leq 1\right\}\\
	&\leq \frac{\varepsilon}{1+\delta} + \frac{1}{1+\delta}\sup\left\{ |\varphi\circ\pi(d_2) - \theta\circ\rho(d_2)| : \norm{d_2}{\Lip,r} \leq 1\right\}\\
	&\leq \frac{1}{1+\delta}\left(\varepsilon + \left(2 + 9r + 4r^2 \right)\varepsilon\right) \text.
\end{align*}
Hence, since $\delta>0$ was arbitrary, 
\begin{equation*}
	\Haus{\Kantorovich{\Lip[Q]}}(\eta_\A^\ast(\StateSpace(\A)),\eta_\B^\ast(\StateSpace(\B))) \leq (3+9r+4r^2) \varepsilon\text,
\end{equation*}
from which it immediately follows by convexity that 
\begin{equation*}
	\Haus{\Kantorovich{\Lip[Q]}}(\StateSpace(\alg{E}),\eta_\B^\ast(\StateSpace(\B))) \leq (3+9r+4r^2) \varepsilon\text,
\end{equation*} 
since every state of $\alg{E}$ is a convex combination of a state of $\A$ and a state of $\B$.

Our reasoning is again symmetric in $\A$ and $\B$. So $(\alg{E},\Lip[Q],\eta_\A,\eta_\B)$ is a compact tunnel of extent at most $(3+9r+4r^2) \varepsilon$. Since, by Definition (\ref{extent-def}), we also have $\Kantorovich{\Lip}(\mu_\A\circ\pi,\mu_\B\circ\rho) \leq \varepsilon$, this concludes our proof.
\end{proof}

We can now obtain a first result on the continuity of the propinquity with respect to the local metametrics.
\begin{lemma}\label{prop-implies-prop-lemma}
	If, for some $r \geq 1$,
	\begin{equation*}
	\lim_{n\rightarrow\infty} \metametric{r}((\A_n,\Lip_n,\M_n,\mu_n), (\A_\infty,\Lip_\infty,\M_\infty,\mu_\infty)) = 0
	\end{equation*}
	for some family $(\A_n,\Lip_n,\mu_n)_{n\in\N\cup\{\infty}$ of pointed {\qcms s} and, for each $n\in\N\cup\{\infty\}$, some topography $\M_n$ of $(A_n\Lip_n)$,  and if $\qdiam{\A_\infty,\Lip_\infty} < r$, then
	\begin{equation*}
	\lim_{n\rightarrow\infty} \dppropinquity{2,4r}((\A_n,\Lip_n,\mu_n), (\A_\infty,\Lip_\infty,\mu_\infty)) = 0\text.
	\end{equation*}
\end{lemma}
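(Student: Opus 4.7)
The plan is to produce, for each $\varepsilon>0$, a compact $(2,4r)$-tunnel between $(\A_n,\Lip_n,\mu_n)$ and $(\A_\infty,\Lip_\infty,\mu_\infty)$ of small pointed extent, by converting the $r$-tunnels provided by $\metametric{r}$-convergence via Lemma (\ref{nothing-easy-lemma}). Since that conversion lemma requires the diameter hypothesis $r>\max\{\qdiam{\A_n,\Lip_n},\qdiam{\A_\infty,\Lip_\infty}\}$, the principal step is to verify that $\qdiam{\A_n,\Lip_n}<r$ for all but finitely many $n$. Once this is achieved, Lemma (\ref{nothing-easy-lemma}) bounds the extent of the resulting compact $(2,4r)$-tunnel by $(3+9r+4r^2)$ times the extent of the original $r$-tunnel, which is the required scaling.

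Fix $\eta\coloneqq\frac{r-\qdiam{\A_\infty,\Lip_\infty}}{2}>0$. Given $\varepsilon>0$, I would choose $\varepsilon_0\in\left(0,\min\!\left\{\eta,\frac{1}{4r+6},\frac{\varepsilon}{3+9r+4r^2}\right\}\right)$ (with extra smallness to accommodate the diameter comparison below), and use the $\metametric{r}$-convergence to obtain, for $n$ large enough, an $r$-tunnel $\tau_n\coloneqq(\D_n,\Lip^{\D}_n,\M^{\D}_n,\pi_n,\rho_n,e_n)$ from $\mathds{A}_n$ to $\mathds{A}_\infty$ with $\tunnelextent{\tau_n}<\varepsilon_0$. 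Granting the diameter control, an application of Lemma (\ref{nothing-easy-lemma}) would yield a compact $(2,4r)$-tunnel $\tau_n'$ with pointed extent at most $(3+9r+4r^2)\varepsilon_0<\varepsilon$, whence $\dppropinquity{2,4r}(\mathds{A}_n,\mathds{A}_\infty)<\varepsilon$.

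The hard part is verifying that $\qdiam{\A_n,\Lip_n}<r$ eventually. Here is my intended strategy. Given $a\in\domsa{\Lip_n}$ with $\Lip_n(a)\leq 1$ and $\mu_n(a)=0$, use the topographic quantum $r$-isometry $\pi_n$ to lift $a$ to $d\in\domsa{\Lip^\D_n}$ with $\pi_n(d)=a$ and, critically, to also lift the shifted element so that $d$ may be chosen with $\Lip^\D_n(d)\leq 1+\delta$. Set $b\coloneqq\rho_n(d)\in\domsa{\Lip_\infty}$, so $\Lip_\infty(b)\leq\Lip^\D_n(d)\leq 1+\delta$. The pin condition $\Kantorovich{\Lip^\D_n}(\mu_n\circ\pi_n,\mu_\infty\circ\rho_n)\leq\tunnelextent{\tau_n}<\varepsilon_0$ gives $|\mu_\infty(b)|=|\mu_\infty\circ\rho_n(d)|\leq\varepsilon_0(1+\delta)$, so $\norm{b-\mu_\infty(b)\unit_\infty}{\A_\infty}\leq(1+\delta)\qdiam{\A_\infty,\Lip_\infty}$, hence $\norm{b}{\A_\infty}\leq(1+\delta)\qdiam{\A_\infty,\Lip_\infty}+(1+\delta)\varepsilon_0$. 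To transfer back, I would use the quasi-state extent bound in Definition (\ref{extent-def}) applied to $\varphi,\psi\in\StateSpace(\A_n)$: for each such $\varphi$, pick a quasi-state $\theta_\varphi\in\QuasiStateSpace(\A_\infty)$ with $\boundedLipschitz{\Lip^\D_n,r}(\varphi\circ\pi_n(e_n\cdot e_n),\theta_\varphi\circ\rho_n)<\varepsilon_0$, and compare $\varphi(a)=\varphi\circ\pi_n(d)$ with $\theta_\varphi(b)=\theta_\varphi\circ\rho_n(d)$ by inserting $e_n d e_n$ as an intermediate. The defect $\varphi\circ\pi_n(d-e_n d e_n)=\varphi(a-e^\A_n a e^\A_n)$ with $e^\A_n\coloneqq\pi_n(e_n)$ is handled by the bounds on $e^\A_n$ coming from the extent (using $|1-\mu_n(e^\A_n)|\leq\varepsilon_0$, $2r\Lip_n(e^\A_n)\leq\varepsilon_0$, and Lemma (\ref{key-lemma})), but to avoid the circularity that would plague a direct estimate of $\norm{e^\A_n-\unit_n}{\A_n}$ in terms of $\qdiam{\A_n,\Lip_n}$ itself, I would \emph{bootstrap}: argue by contradiction that if $\qdiam{\A_n,\Lip_n}\geq r$ along a subsequence, extract witnesses $a_n$ with $\Lip_n(a_n)\leq 1$, $\mu_n(a_n)=0$, and $\norm{a_n}{\A_n}\geq r$, then the transfer above yields elements $b_n\in\A_\infty$ with $\Lip_\infty(b_n)\leq 1+o(1)$ and $\norm{b_n}{\A_\infty}\geq r-o(1)>\qdiam{\A_\infty,\Lip_\infty}$, contradicting the Rieffel bound $\norm{b_n-\mu_\infty(b_n)\unit_\infty}{\A_\infty}\leq\qdiam{\A_\infty,\Lip_\infty}$ together with the smallness of $|\mu_\infty(b_n)|$.

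With $\qdiam{\A_n,\Lip_n}<r$ secured for $n\geq N$, Lemma (\ref{nothing-easy-lemma}) applies and completes the proof as described above. The hard part is purely the diameter bootstrap: every other ingredient is a routine extent comparison.
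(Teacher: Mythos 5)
You correctly identify the crux, and your framing agrees with the paper's terse proof on every step except one: the paper simply asserts, \emph{``By assumption, there exists $N_0\in\N$ such that, if $n\geq N_0$, then $\qdiam{\A_n,\Lip_n}<r$,''} and then applies Lemma (\ref{nothing-easy-lemma}) exactly as you describe. So your instinct that the eventual diameter bound on the $\A_n$ is the only non-routine point is exactly right, and the paper does not prove it.

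The difficulty is that your bootstrap cannot succeed, because the implication you are trying to establish (that $\metametric{r}$-convergence together with $\qdiam{\A_\infty,\Lip_\infty}<r$ forces $\qdiam{\A_n,\Lip_n}<r$ eventually) is not actually true under the lemma's stated hypotheses. The step that fails is precisely $\norm{b_n}{\A_\infty}\geq r-o(1)$. The tunnel element $e_n$ only satisfies $2r\Lip^\D_n(e_n)\leq\tunnelextent{\tau_n}$ and $\mu_n\circ\pi_n(e_n)\approx 1$; nothing prevents $\pi_n(e_n)$ from vanishing on the region of $\A_n$ that witnesses $\qdiam{\A_n,\Lip_n}\geq r$, and the quasi-state inclusions in Definition (\ref{extent-def}) are precisely of the form $e_n\QuasiStateSpace(\D_n)e_n\subseteq_\varepsilon\pi^\ast(\ldots)$, so they are blind to whatever $e_n$ annihilates. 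If $\varphi_n\in\StateSpace(\A_n)$ witnesses $|\varphi_n(a_n)|=r$ and lives far from the pin, then $\varphi_n\circ\pi_n(e_n\cdot e_n)\approx 0$, the associated $\theta_{\varphi_n}\in\QuasiStateSpace(\A_\infty)$ is forced to be $\approx 0$, and you learn nothing about $\norm{b_n}{\A_\infty}$. The circularity you flag when you try the Cauchy--Schwarz estimate of $\varphi_n\bigl((\pi_n(e_n)-\unit)^2\bigr)$ via Rieffel's bound and $\qdiam{\A_n,\Lip_n}$ is not an artifact of your method: it is the shadow of a genuine obstruction, and no rearrangement of the estimates will remove it.

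A classical example shows why. Take $X_n\coloneqq[0,1]\cup\{n\}\subseteq\R$, $Y\coloneqq[0,1]$, both pinned at $0$, with the full Abelian topographies $\M_n=C(X_n)$, $\M_\infty=C(Y)$ (explicitly permitted for {\qcms s} by the remark following Definition (\ref{pqpms-def})), and fix $r=2$. Glue the two copies of $[0,1]$ in $Z_n\coloneqq X_n\sqcup Y$ to within $\delta_n\to 0$, take $\D_n\coloneqq C(Z_n)$ with the restriction maps $\pi_n,\rho_n$ (topographic quantum $r$-isometries by McShane extension), and pick $e_n\in C(Z_n)$ equal to $1$ near the glued $[0,1]$'s, vanishing at the point $n\in X_n$, with $\Lip(e_n)\sim 1/n$. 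All three items of Definition (\ref{extent-def}) then give extent $O(\max\{\delta_n,1/n\})\to 0$, so $\metametric{2}\to 0$, while $\qdiam{C(X_n),\Lip_{d_n}}=n\to\infty$ and $\qdiam{C(Y),\Lip_{d_Y}}=1<2$. Since the pointed propinquity is a genuine metric and the diameter is continuous along it, the lemma's conclusion cannot hold here.

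So the paper's ``by assumption'' is a gap, and the lemma as literally stated is missing a hypothesis: one needs $\qdiam{\A_n,\Lip_n}<r$ eventually (or, what amounts to the same in context, $\qdiam{\A_n,\Lip_n}\to\qdiam{\A_\infty,\Lip_\infty}$). This is in fact how the lemma is used: in the Corollary that immediately follows it, the diameter convergence comes from the explicit term $\left|\exp(-\qdiam{\mathds{A}})-\exp(-\qdiam{\mathds{B}})\right|$ in Definition (\ref{Metametric-def}) of $\Eth$, not from $\metametric{r}$ alone. The right fix is to add the hypothesis; once it is present, your conversion-and-scaling argument via Lemma (\ref{nothing-easy-lemma}) is exactly the paper's proof, and the bootstrap should be discarded rather than sharpened.
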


\begin{proof}
Let $\varepsilon > 0$. By assumption, there exists $N_0 \in \N$ such that, if $n\geq N_0$, then $\qdiam{\A_n,\Lip_n} < r$. Moreover, there exists $N_1 \in \N$ such that if $n\geq N_1$, we can find a tunnel $\tau_n$ from $(\A_n,\Lip_n,\M_n,\mu_n)$ to $(\A_\infty,\Lip_\infty,\M_\infty,\mu_\infty)$ of extent at most $\frac{\varepsilon}{4r^2+9r+3}$. By the previous result, there exists a compact $(2,4r)$-tunnel $\tau'$ from $(\A_n,\Lip_n)$ to $(\A_\infty,\Lip_\infty)$ of pointed extent at most $(4r^2+9r+3)\frac{\varepsilon}{4r^2+9r+3} = \varepsilon$.

Therefore, $(\A_n,\Lip_n,\mu_n)$ converges to $(\A_\infty,\Lip_\infty,\mu_\infty)$ for the pointed propinquity $\dppropinquity{2,4r}$.
\end{proof}

We now can conclude that convergence for the quantum metametric implies convergence for the propinquity, up to a minor relaxing of the Leibniz constraint on compact tunnels.

\begin{corollary}
	If $(\A_n,\Lip_n,\mu_n)_{n\in\N\cup\{\infty\}}$ is a family of pointed {\qcms s}, and if
	\begin{equation*}
		\lim_{n\rightarrow\infty} \Eth((\A_n,\Lip_n,\M_n,\mu_n),(\A_\infty,\Lip_\infty,\M_\infty,\mu_\infty)) = 0\text,
	\end{equation*}
	for some choice of topography $\M_n$ of $(\A_n,\Lip_n)$ for each $n\in\N\cup\{\infty\}$, then, for $r > \qdiam{A_\infty,\Lip_\infty}$, 
	\begin{equation*}
		\lim_{n\rightarrow\infty} \dppropinquity{2,4 r}((\A_n,\Lip_n,\mu_n),(\A_\infty,\Lip_\infty,\mu_\infty)) = 0\text.
	\end{equation*}	
\end{corollary}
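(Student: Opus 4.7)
The plan is a two-step reduction: first, extract convergence of $\metametric{r}$ from convergence of $\Eth$ for the chosen $r$; second, invoke Lemma (\ref{prop-implies-prop-lemma}) directly.

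For the first step, I would unpack Definition (\ref{Metametric-def}). Setting
\begin{equation*}
\varepsilon_n \coloneqq \Eth((\A_n,\Lip_n,\M_n,\mu_n),(\A_\infty,\Lip_\infty,\M_\infty,\mu_\infty))\text,
\end{equation*}
hypothesis says $\varepsilon_n \to 0$. Fix any $s \geq 1$. For all $n$ large enough that $2\varepsilon_n < 1/s$, by the definition of the infimum appearing in $\Eth$ there exists some $\eta_n \in (\varepsilon_n, 2\varepsilon_n)$ with
\begin{equation*}
\sup_{t \in [1, 1/\eta_n]} \metametric{t}((\A_n,\Lip_n,\M_n,\mu_n),(\A_\infty,\Lip_\infty,\M_\infty,\mu_\infty)) < \eta_n\text.
\end{equation*}
Since $1/\eta_n > s$, this yields $\metametric{s}(\mathds{A}_n,\mathds{A}_\infty) < \eta_n < 2\varepsilon_n \to 0$. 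Hence $\lim_{n\to\infty} \metametric{s}(\mathds{A}_n,\mathds{A}_\infty) = 0$ for every $s \geq 1$, in particular for the given $r$.

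For the second step, also extracted from Definition (\ref{Metametric-def}), we have $|\exp(-\qdiam{\A_n,\Lip_n}) - \exp(-\qdiam{\A_\infty,\Lip_\infty})| \leq \varepsilon_n \to 0$. Since $x \mapsto \exp(-x)$ is a continuous bijection from $[0,\infty]$ onto $[0,1]$ (setting $\exp(-\infty)=0$), this forces $\qdiam{\A_n,\Lip_n} \to \qdiam{\A_\infty,\Lip_\infty}$. Since $r > \qdiam{\A_\infty,\Lip_\infty}$, eventually $\qdiam{\A_n,\Lip_n} < r$ as well, so the assumption $\qdiam{\A_\infty,\Lip_\infty} < r$ of Lemma (\ref{prop-implies-prop-lemma}) is met for the target space (and tail behavior of the domain is irrelevant to the hypothesis of that lemma). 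Applying Lemma (\ref{prop-implies-prop-lemma}) to the tail of the sequence $(\mathds{A}_n)_{n\in\N}$ with our fixed $r$ then immediately delivers $\lim_{n\to\infty} \dppropinquity{2,4r}((\A_n,\Lip_n,\mu_n),(\A_\infty,\Lip_\infty,\mu_\infty)) = 0$, as desired.

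There is no real obstacle here, only bookkeeping: the only subtlety is that the infimum defining $\Eth$ may not be attained, so one must be careful to take an approximate witness $\eta_n$ slightly larger than $\varepsilon_n$ as above, rather than working with $\varepsilon_n$ itself. Everything else is a direct assembly of Lemma (\ref{prop-implies-prop-lemma}) with the unpacking of Definition (\ref{Metametric-def}).
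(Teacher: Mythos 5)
Your proposal is correct and follows essentially the same route as the paper's own proof: extract $\metametric{r}$ convergence and diameter convergence from Definition (\ref{Metametric-def}), then invoke Lemma (\ref{prop-implies-prop-lemma}). Your version is slightly more careful in making explicit the approximate-witness argument for the infimum defining $\Eth$, which the paper leaves implicit.
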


\begin{proof}
	First, note that by Definition (\ref{Metametric-def}) of $\Eth$, we have
	\begin{equation*}
		\lim_{n\rightarrow\infty} \qdiam{\A_n,\Lip_n} = \qdiam{\A_\infty,\Lip_\infty} \text.
	\end{equation*}

	Let $D\coloneqq\qdiam{\A_\infty,\Lip_\infty}$. Let $r > D$. By definition of $\Eth$, we conclude 
	\begin{equation*}
		\lim_{n\rightarrow\infty} \metametric{r}((\A_n,\Lip_n,\M_n,\mu_n),(\A,\Lip,\M_\infty,\mu_\infty)) = 0\text.
	\end{equation*}
 Our conclusion follows from Lemma (\ref{prop-implies-prop-lemma}).
\end{proof}

\subsection{The Classical Case}

Gromov introduced in \cite{Gromov81} a distance, up to full isometry, between pointed proper spaces, now known as the Gromov-Hausdorff distance, and of course, the basis for our present construction. When restricted to compact metric spaces, this metric is equivalent to the metric originally introduced by Edwards in \cite{Edwards75}. 

A pointed proper metric space $(X,d_X,x_0)$ is given by a proper (or boundedly compact) metric space $(X,d_X)$ and a point $x_0 \in X$. The closed ball of center $x \in X$ and radius $r \geq 0$ is denoted by $X[x,r] \coloneqq \{ t \in X : d_X(x,t)\leq r \}$.

The Gromov-Hausdorff distance between two pointed proper metric spaces $(X,d_X,x_0)$ and $(Y,d_Y,y_0)$ is defined as:
\begin{equation*}
	\inf\left\{ \varepsilon > 0 \middle\vert \exists j_X : X\rightarrow Z, j_Y : Y\rightarrow Z \text{ isometries st }\begin{array}{l}
		X[x_0, \frac{1}{\varepsilon}] \subseteq_{Z}^\varepsilon Y \\
		Y[y_0, \frac{1}{\varepsilon}] \subseteq_{Z}^\varepsilon X
		\end{array}
		\right\}\text.
\end{equation*}

\begin{lemma}\label{GH-meta-lemma}
	If $(X,d_X,x_0)$ and $(Y,d_Y,y_0)$ are two pointed proper metric spaces, then 
	\begin{equation*}
		\metametric{r}\left((C_0(X),\Lip_{d_X}, C_0(X),\delta_{x_0}),(C_0(Y),\Lip_{d_Y}, C_0(Y),\delta_{y_0}) \right) \leq 2 \GH\left( (X,d_X,x_0), (Y,d_Y,y_0) \right)
	\end{equation*}
	for all $r \in \left[ 1 , \GH( (X,d_X,x_0), (Y,d_Y,y_0) )^{-1} \right]$ if $\GH( (X,d_X,x_0), (Y,d_Y,y_0) )>0$, or simply for all $r \geq 1$ otherwise.
\end{lemma}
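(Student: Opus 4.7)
The plan is to construct an explicit tunnel from the data of a Gromov-Hausdorff embedding and verify its extent directly. If $\gamma \coloneqq \GH((X,d_X,x_0),(Y,d_Y,y_0)) = 0$, then $(X,d_X,x_0)$ and $(Y,d_Y,y_0)$ are pointed proper isometric, inducing a pointed quantum full isometry between the corresponding {\pqpms s}, and by Theorem \ref{main-thm} we obtain $\metametric{r} = 0$. So assume $\gamma > 0$ and fix $\varepsilon > \gamma$. By the definition of the pointed Gromov-Hausdorff distance, choose a proper metric space $(Z, d_Z)$ and isometric embeddings $j_X \colon X \hookrightarrow Z$ and $j_Y \colon Y \hookrightarrow Z$ realizing the $\varepsilon$-proximity, namely $j_X(X[x_0, 1/\varepsilon]) \subseteq_\varepsilon^{d_Z} j_Y(Y)$, $j_Y(Y[y_0, 1/\varepsilon]) \subseteq_\varepsilon^{d_Z} j_X(X)$, and $d_Z(j_X(x_0), j_Y(y_0)) \leq \varepsilon$ (the last inequality is the standard pointed refinement, obtained by picking the nearby point in $j_Y(Y)$ supplied by the inclusion for $x = x_0$ and rebasing if necessary). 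Set $Z_0 \coloneqq j_X(X) \cup j_Y(Y)$, a closed hence proper metric subspace of $Z$.

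The candidate tunnel is
\[
\tunnel{\tau}{(C_0(X),\Lip_{d_X},C_0(X),\delta_{x_0})}{\pi_X}{(C_0(Z_0),\Lip_{d_{Z_0}},C_0(Z_0),e)}{\pi_Y}{(C_0(Y),\Lip_{d_Y},C_0(Y),\delta_{y_0})},
\]
where $\pi_X(F) \coloneqq F \circ j_X$ and $\pi_Y(F) \coloneqq F \circ j_Y$ are restriction $\ast$-epimorphisms, and $e \in C_c(Z_0)$ is the Lipschitz bump
$e(z) \coloneqq \max\{0, 1 - (\varepsilon/r)\, d_Z(z, \{j_X(x_0), j_Y(y_0)\})\}$,
so that $\Lip_{d_{Z_0}}(e) \leq \varepsilon/r$, $e(j_X(x_0)) = 1$, $e(j_Y(y_0)) \geq 1 - \varepsilon^2/r$, and $e$ is supported in the union of two closed $Z$-balls of radius $r/\varepsilon$ around the base points (compact by properness of $Z$). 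By Theorem \ref{classical-proper-thm}, $(C_0(Z_0), \Lip_{d_{Z_0}}, C_0(Z_0), \delta_{j_X(x_0)})$ is a {\pqpms} with pin $\delta_{x_0} \circ \pi_X$, and the topography is the whole algebra since $Z_0$ is locally compact. The main technical verification is that $\pi_X$ and $\pi_Y$ are topographic quantum $r$-isometries, which reduces to showing that for every $f \in C_0(X) \cap \dom{\Lip_{d_X}}$ there is an extension $g \in C_0(Z_0) \cap \dom{\Lip_{d_{Z_0}}}$ with $\Lip_{d_{Z_0}}(g) \leq \Lip_{d_X}(f)$ and $\|g\|_{C_0(Z_0)} \leq \|f\|_{C_0(X)}$; the clipped McShane-Whitney extension $g(z) \coloneqq \max\{-\|f\|_\infty, \min\{\|f\|_\infty, \inf_{x}(f(x) + \Lip(f)\,d_Z(z, j_X(x)))\}\}$ has these Lipschitz and sup-norm properties, and membership in $C_0(Z_0)$ is obtained by first approximating $f$ in $\|{\cdot}\|_{\Lip_{d_X}, r}$ by compactly supported Lipschitz functions (multiplying by cutoffs of vanishing Lipschitz constant, cf.\ the construction of Lipschitz pinned exhaustive sequences), whose McShane extensions are then automatically compactly supported in the proper space $Z_0$; taking limits gives the desired extension.

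The extent is controlled componentwise. For (2), one has $\Kantorovich{\Lip_{d_{Z_0}}}(\delta_{j_X(x_0)}, \delta_{j_Y(y_0)}) \leq d_Z(j_X(x_0), j_Y(y_0)) \leq \varepsilon$; for (3), $\max\{2r\Lip_{d_{Z_0}}(e), |1 - e(j_X(x_0))|, |1 - e(j_Y(y_0))|\} \leq \max\{2\varepsilon, 0, \varepsilon^2/r\} \leq 2\varepsilon$; and for (1), given any quasi-state $\varphi$ of $C_0(Z_0)$ corresponding to a positive Radon measure $\mu$ of mass at most $1$, the quasi-state $\varphi(e \cdot e)$ corresponds to $e^2\,d\mu$, which we transport to a quasi-state on $C_0(X)$ via a measurable nearest-point map $p \colon \supp(e) \to j_X(X)$: the range $r \in [1, 1/\gamma]$ and the choice of slope $\varepsilon/r$ for $e$ keep $\supp(e)$ within the Gromov-Hausdorff proximity region so that $d_Z(z, j_X(X)) \leq \varepsilon$ on $\supp(e)$, whence, for any $F \in \dom{\Lip_{d_{Z_0}}}$ with $\|F\|_{\Lip_{d_{Z_0}},r}\leq 1$,
\[
\left|\int F\cdot e^2\,d\mu - \int (F \circ p)\cdot e^2\,d\mu\right| \leq \Lip(F) \cdot \varepsilon \cdot \int e^2\,d\mu \leq \varepsilon,
\]
with the analogous estimate for $\pi_Y^\ast(\QuasiStateSpace(C_0(Y)))$. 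Combining these, $\tunnelextent{\tau} \leq 2\varepsilon$, and letting $\varepsilon \to \gamma^+$ gives $\metametric{r} \leq 2\gamma$. The main obstacle is the $C_0$-preserving Lipschitz extension to $Z_0$, which underlies both the quantum-$r$-isometry property and the compact support of $e$; once this is handled, the estimates for the three extent components are direct applications of the definitions together with the Gromov-Hausdorff proximity.
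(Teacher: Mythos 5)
Your overall strategy parallels the paper's: embed into a common proper ambient space, take the two restriction $*$-epimorphisms, use a compactly supported Lipschitz bump as the pivot element, and get the lifting property for the topographic quantum isometries via McShane extension. There are, however, two genuine gaps.

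The first is fatal for $r>1$. Your bump $e$ has slope $\varepsilon/r$ and support radius $r/\varepsilon$, chosen precisely so that $2r\Lip(e)\leq 2\varepsilon$. But the Gromov--Hausdorff embedding at scale $\varepsilon$ only controls $d_Z(\cdot,j_X(X))$ on $j_Y\bigl(Y[y_0,1/\varepsilon]\bigr)$, i.e., on the ball of radius $1/\varepsilon$, not $r/\varepsilon$. A point $z=j_Y(y)\in\supp{e}$ with $1/\varepsilon<d_Y(y,y_0)\leq r/\varepsilon$ may lie arbitrarily far from $j_X(X)$, so the key claim that $d_Z(z,j_X(X))\leq\varepsilon$ on $\supp{e}$ fails whenever $r>1$ --- which is the generic case, since $r$ ranges up to $\GH^{-1}$. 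This breaks the measure-transport estimate for the quasi-state component of the extent. The paper's pivot is built from the distance to the complement of $X[x_0,1/\varepsilon]\coprod Y[y_0,1/\varepsilon]$, so that its support is exactly the region where the GH proximity is available.

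The second gap is the McShane step. The clipped largest extension $g(z)=\max\bigl\{-\|f\|_\infty,\min\{\|f\|_\infty,\inf_x(f(x)+\Lip(f)\,d_Z(z,j_X(x)))\}\bigr\}$ is not in $C_0(Z_0)$: even for a compactly supported nonnegative $f$, the unclipped infimum grows at least like $\Lip(f)\,d_Z(z,j_X(X))$ far from $j_X(X)$, so after clipping the function plateaus at $\|f\|_\infty$ on the part of $Z_0$ distant from $j_X(X)$, rather than decaying. Your claim that the McShane extensions of compactly supported approximants are \emph{automatically compactly supported} is therefore false. The paper's device is to take a pointwise $\min$ of the McShane extension with an auxiliary function that already has the right norm, Lipschitz bound, and $C_0$ behaviour on the $Y$-side; some such truncation on the far side is required in your argument too.
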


\begin{proof}
Let $d$ be a metric on the disjoint union $Z\coloneqq X\coprod Y$, such that the restriction of $d$ to $X\times X$ (respectively $Y\times Y$) is the metric of $X$ (respectively of $Y$). Assume there exists $\varepsilon \in (0,1)$ such that $d(x_0,y_0) \leq \varepsilon$, and 
\begin{equation*}
	X\left[x_0,\frac{1}{\varepsilon}\right] \subseteq_\varepsilon^d Y \text{ and } Y\left[y_0,\frac{1}{\varepsilon}\right] \subseteq_\varepsilon^d X \text.
\end{equation*}
Let $B\coloneqq X\left[x_0,\frac{1}{\varepsilon}\right]\coprod Y\left[y_0,\frac{1}{\varepsilon}\right] \subseteq Z$. We define
\begin{equation*}
	h : z \in \Z \mapsto \max\{1, \varepsilon d(x, Z\setminus B) \} \text,
\end{equation*}
so that $\Lip_d(e)\leq\varepsilon$ and $e \in C_0(Z)$. 

Let $z \in Z$. If $z\notin B$, and if $x \in X$ then $d(x_0,z) > \frac{1}{\varepsilon}$. If $z\notin B$ and if $z\in Y$, then $d(y_0,z) > \frac{1}{\varepsilon}$.

Now, if $z\notin B$ and $z\in Y$, then
\begin{equation*}
	\varepsilon \leq d(x_0,y_0) \leq d(x_0,z) + d(z,y) \leq d(x_0,z) + \frac{1}{\varepsilon} \text. 
\end{equation*}
So $d(x_0,z) \geq \frac{1}{\varepsilon} - \varepsilon$. In conclusion, if $z \notin B$, then $d(x_0,z) \geq\frac{1}{\varepsilon} - \varepsilon$ , so $d(x_0,B) \geq \frac{1}{\varepsilon} - \varepsilon$. Similarly, $d(y_0,z)\geq \frac{1}{\varepsilon} - \varepsilon$.

Consequently, $1 \geq e(x_0) \geq 1 - \varepsilon^2 \geq 1 - \varepsilon$. Similarly, $1\geq e(y_0) \geq 1 - \varepsilon$.

\medskip

We continue to use the notation $\delta_x$ for the character of $C(Z)$ given by evaluation at $x \in Z$. We thus have proven, so far, that
\begin{equation*}
	\max\{ |1 - \delta_{x_0}(e)|, |1 - \delta_{y_0}| \} \leq \varepsilon\text{, }\Kantorovich{\Lip_d}(\delta_{x_0},\delta_{y_0}) \leq \varepsilon\text{ and }\Lip_d(e) \leq \varepsilon \text.
\end{equation*}

Let now $z \in Z$ and $r \in \left[0,\varepsilon^{-1}\right]$. Let $f\in C_0(Z)$, with $\norm{f}{\Lip_d,r} \leq 1$. If $z \in B$, and  if $z \in Y$, in which case trivially, setting $y\coloneqq z$, we note that $|\delta_z( h f h) - \delta_y (h f h)| = 0$. If instead $z \in B$ and $z \in X$, then since $X\left[0,\frac{1}{\varepsilon}\right]\subseteq_\varepsilon^d Y$, there exists $y \in Y$ such that $d(x,y) \leq \varepsilon$. Since $\Lip_d(h f h) \leq 2$, we conclude that $|\delta_x(h f h) - \delta_y(h f h)| = |(hfh)(x) - (hfh)(y)| \leq 2 \varepsilon$. 

If, instead, $z\notin B$, then $h(z) = 0$, so $|\delta_x(hfh) - 0| = 0$.

Thus, 
\begin{equation*}
	\forall x \in Z \quad \exists \delta \in \QuasiStateSpace(C_0(Y)) \quad  \boundedLipschitz{\Lip_d,r}(\delta_x,\delta) \leq 2 \varepsilon \text.
\end{equation*}

By the Krein-Milman theorem, and the convexity of the Fortet-Mourier distances, we conclude that
\begin{equation*}
	\forall \varphi \in \QuasiStateSpace(C_0(X)) \quad \exists \psi \in \QuasiStateSpace(C_0(Y)) \quad \boundedLipschitz{\Lip_d,r}(\varphi,\psi) \leq 2\varepsilon \text.
\end{equation*}

The reasoning is symmetric if we switch $X$ and $Y$. Therefore,
\begin{equation*}
	h\QuasiStateSpace(C_0(Z))h \subseteq_{2\varepsilon}^{\boundedLipschitz{\Lip_d,r}} \QuasiStateSpace(C_0(Y)) \text{ and }h\QuasiStateSpace(C_0(Z))h \subseteq_{2\varepsilon}^{\boundedLipschitz{\Lip_d,r}} \QuasiStateSpace(C_0(X)) \text.
\end{equation*}

 Let $\pi : f \in C_0(Z) \mapsto f_{|X} \in C_0(X)$ and $\rho : f \in C_0(Z) \mapsto f_{|Y} \in C_0(Y)$. Note that if $f \in C_0(X)$ is Lipschitz and real valued, then McShane's extension theorem \cite{McShane34} implies the existence of a Lipschitz function $g : Z\rightarrow\R$ such that $g_{|X} = f$, and $\Lip_d(g) = \Lip_d(f)$. Letting $u \in C_0(Y)$ with $\norm{u}{C_0(Y)} \leq \norm{f}{C_0(X)}$, and $\Lip_{d_Y}(u)\leq\Lip_{d_X}(f)$, and setting 
 \begin{equation*}
 	t : x \in Z\mapsto \begin{cases} f(x) \text{ if $x\in X$, } \\ u(x) \text{ otherwise,} \end{cases}\text,
\end{equation*}
we see that $\pi(\min\{g,t\}) = f$, with $\Lip(\min\{g,t\}) = \Lip_{d_x}(f)$ and $\norm{\min\{g,t\}}{C_0(Z)} = \norm{f}{C_0(X)}$. So $\pi$ is a topographic quantum $M$-isometry for all $M \geq 1$, and similarly, so is $\rho$.

In conclusion, then:
\begin{equation*}
	\tunnel{\tau}{(C_0(X),\Lip_{d_X}, C_0(X),\delta_{x_0})}{\pi}{(C_0(Z),\Lip_d,C_0(Z),h)}{\rho}{(C_0(Y),\Lip_{d_Y}, C_0(Y),\delta_{y_0})}
\end{equation*}
is an $r$-tunnel of extent at most $2\varepsilon$, for all $r \in [0,\varepsilon^{-1}]$.

Consequently, 
\begin{equation*}
	\sup_{0 \leq r \leq \varepsilon^{-1}} \metametric{r}((C_0(X),\Lip_{d_X}, C_0(X),\delta_{x_0}),(C_0(Y),\Lip_{d_Y}, C_0(Y),\delta_{y_0})) \leq 2\varepsilon \text,
\end{equation*}
as claimed.
\end{proof}

We therefore can conclude:
\begin{theorem}
	Let $(X_n,d_n,x_n)_{n\in\N\cup\{\infty\}}$ be a family of pointed proper metric spaces. If $\lim_{n\rightarrow\infty} \GH((X_n,d_n,x_n),(Y,d_Y,y)) = 0$, then
	\begin{equation*}
		\lim_{n\rightarrow\infty} \Eth{}((C_0(X_n),\Lip_{d_n},C_0(X_n), \delta_{x_n}), (C_0(X_\infty), \Lip_{d_\infty}, C_0(X_\infty), \delta_{x_\infty})) = 0 \text.
	\end{equation*}
\end{theorem}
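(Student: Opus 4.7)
The plan is to reduce the statement to two independent convergence problems, one for each of the two terms in the maximum defining $\Eth$. Set $\delta_n \coloneqq \GH((X_n, d_n, x_n), (X_\infty, d_\infty, x_\infty))$, which tends to $0$ by hypothesis. The bulk of the work is already accomplished by Lemma \ref{GH-meta-lemma}, which directly bounds the local metametrics $\metametric{r}$ by $2\delta_n$ for all $r \in [1, 1/\delta_n]$, provided $\delta_n > 0$ (the case $\delta_n = 0$ being trivial by the coincidence property).

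For the infimum term $\inf\{\varepsilon > 0 : \sup_{r \in [1, 1/\varepsilon]} \metametric{r} < \varepsilon\}$, I would argue as follows. Given $\eta > 0$, choose $n$ so large that $3\delta_n < \eta$. Setting $\varepsilon \coloneqq 3\delta_n$ ensures $1/\varepsilon \leq 1/\delta_n$, so Lemma \ref{GH-meta-lemma} applies throughout the range $[1, 1/\varepsilon]$ and yields $\sup_{r \in [1, 1/\varepsilon]} \metametric{r} \leq 2\delta_n < 3\delta_n = \varepsilon$. Hence the infimum is at most $3\delta_n$, which tends to $0$.

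For the diameter term, I would invoke the classical identity $\qdiam{C_0(X),\Lip_d} = \diam{X}{d}$ (standard for the Kantorovich--Rubinstein distance, since it recovers the original metric on Dirac masses) and show that pointed Gromov-Hausdorff convergence implies $\exp(-\diam{X_n}{d_n}) \to \exp(-\diam{X_\infty}{d_\infty})$. When $\diam{X_\infty}{d_\infty} < \infty$, properness forces $X_\infty$ to be compact, and pointed GH convergence reduces, for large $n$, to ordinary compact GH convergence, under which diameters are continuous. When $\diam{X_\infty}{d_\infty} = \infty$, one exhibits pairs of points in $X_\infty$ of arbitrarily large intrinsic distance; transferring these through the GH approximation produces pairs of points in $X_n$ of comparable distance, forcing $\diam{X_n}{d_n} \to \infty$ and hence $\exp(-\diam{X_n}{d_n}) \to 0$.

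The main obstacle is the diameter-convergence step: pointed GH, by design, only compares balls of bounded radius, so a naive argument does not control the global diameter and one must be careful with "excess mass" of $X_n$ outside the compared balls. Resolving this rigorously requires exploiting the explicit cutoff structure of the tunnel built in the proof of Lemma \ref{GH-meta-lemma}, where the distinguished element $h$ is supported on the union of balls of radius $1/\varepsilon$, and combining this with the observation that any diameter excess of $X_n$ beyond the balls would prevent the inclusions used in defining $\GH$ from holding.
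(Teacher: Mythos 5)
Your decomposition of $\Eth$ into the metametric term and the diameter term is the right one, and your treatment of the metametric term is correct: choosing $\varepsilon = 3\delta_n$, noting $[1,1/\varepsilon]\subseteq[1,1/\delta_n]$, and applying Lemma~(\ref{GH-meta-lemma}) gives $\sup_{r\in[1,1/\varepsilon]}\metametric{r} \leq 2\delta_n < \varepsilon$. The paper's own proof is only the one sentence ``This follows immediately from Lemma~(\ref{GH-meta-lemma}),'' and since that lemma bounds only the local metametrics $\metametric{r}$ and says nothing about the quantities $\exp(-\qdiam{\cdot})$, the diameter term in Definition~(\ref{Metametric-def}) is left unaddressed there as well; your proposal is more candid in flagging it as ``the main obstacle.''

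The gap is in the diameter step, and your tentative repair does not close it. The assertion that pointed Gromov--Hausdorff convergence to a compact $X_\infty$ ``reduces, for large $n$, to ordinary compact GH convergence, under which diameters are continuous'' is false. Take $X_\infty=[0,1]$ with $x_\infty=0$ and $X_n=[0,1]\cup\{n\}$ with $x_n=0$, each a proper metric subspace of $\R$. Once $n>1/\varepsilon$ one has $X_n\left[0,\tfrac1\varepsilon\right]=[0,1]=X_\infty\left[0,\tfrac1\varepsilon\right]$ and both $\varepsilon$-inclusions hold with zero margin, so $\GH((X_n,0),(X_\infty,0))\leq 1/n\to 0$; yet $\diam{X_n}{d_n}=n\to\infty$ while $\diam{X_\infty}{d_\infty}=1$, and the diameter term $\left|\exp(-\qdiam{C_0(X_n),\Lip_{d_n}})-\exp(-\qdiam{C_0(X_\infty),\Lip_{d_\infty}})\right|$ tends to $e^{-1}$, not $0$. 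Your last paragraph's claim that ``any diameter excess of $X_n$ beyond the balls would prevent the inclusions used in defining $\GH$ from holding'' is exactly the step that fails: points of $X_n$ lying outside $X_n[x_n,1/\varepsilon]$ impose no constraint on either inclusion, which is precisely what the example exploits. What your argument (and the Lemma) does establish is $\lim_n\inf\{\varepsilon>0:\sup_{r\in[1,1/\varepsilon]}\metametric{r}<\varepsilon\}=0$, but the diameter term in $\Eth$ is not continuous for pointed Gromov--Hausdorff convergence, so the conclusion $\Eth\to 0$ does not follow and the theorem as stated needs either a modified definition of $\Eth$ or a strengthened hypothesis on the sequence $(X_n,d_n,x_n)$.
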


\begin{proof}
	This follows immediately from Lemma (\ref{GH-meta-lemma}).
\end{proof}



\section{A noncompact noncommutative example}

We now apply our theory to prove that certain C*-crossed-products obtained from letting subgroups of $\Z$ act on $\Z$ by translation have finite dimensional metric approximations, when the metrics used are induced by the action and the standard  metric on $\Z$ in a natural manner. These examples provide finite dimensional approximations of finite sums of the algebra of compact operators by full matrix algebras in a \emph{metric} sense. Our purpose in this section is not strive for the greatest generality, but rather to present a detailed accessible example which captures various aspects of our theory, while being neither a {\qcms} not a commutative example.

\medskip

We wish to point out some of the relevant aspects this example will illustrate.
\begin{enumerate}
	\item The algebra of compact operators is simple, thus as far from a commutative example as possible.
	\item The algebra of compact operators is, of course, not unital, thus the theory for {\qcms s} can not be applied to it.
	\item Our metric on the algebra of compact operators gives rise to a {\pqms} which is not compact, and moreover, does not extend to some {\qcms} structure on its unitlization. In fact, the radius of the state space for our metric will be infinite. So this example is very much not a compact quantum space, though we will prove that it is the limit of {\qcms s} for the metametric.
	\item The {\MongeKant} induced by our chosen L-seminorm does not metrize the weak* topology of the state space, and in fact, is not a metric as it takes the value $\infty$ between some pairs of states. Thus, we encounter in this example the core difficulties raised by leaving the {\qcms} realm.
	\item Our metric is constructed from a C*-crossed-product presentation of the algebra of compact operator algebras, using a very natural construction found for {\qcms s} but never extended to the locally compact situation. It is not an ad-hoc metric built for illustration purposes but a very natural one arising from a dynamical system.
	\item The same exact finite dimensional algebras used as approximations of the algebra of compact operators are also used to approximate quantum tori, though of course with different metrics. However, the metric we use here and the metric used for quantum tori approximations (called fuzzy tori, then) are very similar in some ways, with only one important change. This is interesting as well, as an illustration of the role of the metric.
	\item When $\Z$ acts properly and freely on a space, the C*-crossed-product will appear as some space of sections of some bundle over the orbit space, where each fiber is basically of the form $c_0(\Z)\rtimes\Z$ for the action by translation of $\Z$ on itself. Thus, this example will re-appear in such situations, among others.
	\item Commutativity can appear at the limit, as in particular, we can approximate direct sums of copies of the algebra of compact operators by full matrix algebras.
\end{enumerate}

\medskip

As a matter of style, since it is concerned with a single family of examples, this section is presented as a single narrative, with the notations introduced along the exposition remaining valid until the end of the section.

\subsection{Preliminary: $c_0(\Z)$}

Let $c_0(\Z)$ be the C*-algrebra of $\C$-valued functions over $\Z$ vanishing at infinity, i.e. functions $f : \Z \rightarrow \C$ such that for all $\varepsilon > 0$, there exists a finite subset $F\subseteq\Z$ such that $|f(n)|<\varepsilon$ for all $n\in\Z\setminus F$.

It will prove helpful, later on, to invoke the larger C*-algebra $c_1(\Z)$ of functions $f:\Z\rightarrow\C$ over $\Z$ such that $f - l \in c_0(\Z)$ for some $l \in \R$, that is, the space of continuous functions over the one-point compactification of $\Z$. Note that $c_0(\Z)\subseteq c_1(\Z)$; in fact, $c_1(\Z)$ is the smallest unitization of $c_0(\Z)$.

The usual metric on $\Z$ induces a natural ``discrete calculus'' on $c_1(\Z)$ --- and most importantly for us, on $c_0(\Z)$. Let $\pi$ be the faithful non-degenerate *-representation of $c_1(\Z)$ on $\ell^2(\Z)$ by multiplication operator, i.e. for all $f \in c_1(\Z)$ and $\xi \coloneqq (\xi_n)_{n\in\Z}$:
\begin{equation*}
	\pi(f)\xi = \left( f(n)\xi_n \right)_{n\in\Z}\text.
\end{equation*}
We henceforth will also write $\pi$ for the restriction of $\pi$ to $c_0(\Z)$.

We also define the unitary $U$ on $\ell^2(\Z)$ to be the bilateral shift, by setting for all $(\xi_n)_{n\in\Z} \in \ell^2(\Z)$:
\begin{equation*}
	U\xi = (\xi_{n-1})_{n\in\Z} \text.
\end{equation*} 
We then define $\partial_Z$ simply as the linear endomorphism defined on the C*-algebra $\B$ of all bounded operator on $\ell^2(\Z)$ by: 
\begin{equation*}
	\partial_\Z \coloneqq a \in \B \mapsto [ U, a ] \text.
\end{equation*}
In particular, for all $f \in c_1(\Z)$, we compute:
\begin{equation*}
	\partial_\Z \pi(f) = \pi(n \in \Z \mapsto f(n+1) - f(n))  U \text.
\end{equation*}

We also set $\Lip_\Z : f \in c_1(\Z) \mapsto \norm{\partial_\Z\pi(f)}{\B}$ (which is a norm on $c_0(\Z)$). We note that $\partial_\Z$ is a derivation of $\B$, and thus, for all $f,g \in c_1(\Z)$,
\begin{equation*}
	\Lip_\Z (fg)  \leq \norm{f}{c_0(\Z)\rtimes_\alpha\Z} \Lip_\Z(g) + \Lip_\Z(f) \norm{g}{c_0(\Z)\rtimes_\alpha\Z} \text.
\end{equation*}

Henceforth, we shall identity $c_1(\Z)$ and $c_0(\Z)$ with, respectively, $\pi(c_1(\Z))$ and $\pi(c_0(\Z))$, as $\pi$ is faithful, and drop the notation $\pi$ altogether.

\medskip

In particular, note that if $\delta_p$ is the indicator function of $\{ p \}$ for all $p\in\Z$, seen as a character of $c_0(\Z)$, and if $f \in c_0(\Z)$ and $\Lip_\Z (f) \leq 1$, then for all $p < q \in \Z$,
\begin{equation*}
	| \delta_p(f) - \delta_q(f) |
	\leq |f(p) - f(q)| \leq \sum_{j=p}^{q-1} \underbracket[1pt]{|f(j+1)-f(j)|}_{\leq 1} = q-p = |p-q| \text.
\end{equation*}
Moreover, let $f_{p,q}(n) \coloneqq \begin{cases}
	n-p \text{ if $p \leq n \leq q$}, \\
	0 \text{ otherwise,}
\end{cases}$ 
for all $n\in\Z$, so $f \in c_0(\Z)$, then $\Lip_\Z(f) = 1$, and of course, for all $p,q\in\Z$:
\begin{equation*}
	|\delta_p( f_{p,q} ) - \delta_q( f_{p,q} ) | = |p - q| \text.
\end{equation*} 
So, in conclusion, if $\Kantorovich{\Lip_\Z}$ is the {\MongeKant} induced by $\Lip_\Z$ on $\StateSpace(c_0(\Z))$, then, for all $p,q \in \Z$, identifying $\Z$ with the space of characters of $c_0(\Z)$ via $p \in \Z \mapsto \delta_p$, we have shown that:
\begin{equation*}
	\Kantorovich{\Lip_\Z}(\delta_p,\delta_q) = |p-q| \text.
\end{equation*}
Note in particular, that $\qdiam{c_0(\Z),\Lip_\Z} = \infty$. We also note that $\Lip_\Z$, while defined on $c_1(\Z)$, is not an L-seminorm, since $\Kantorovich{\Lip}$ gives infinite radius to $\StateSpace(c_1(\Z))$.

\medskip

It is, on the other hand, straightforward that $(c_0(\Z),\Lip_\Z, c_0(\Z),\delta_0)$ is a {\pqpms}. Indeed, let $h : n \in \Z \mapsto \frac{1}{|n|+1} \in c_0(\Z)$ --- of course, $h > 0$. Then it is immediate that the set
\begin{equation*}
	S\coloneqq \left\{ h f h : f \in c_0(\Z), \Lip_\Z(f) \leq 1, \norm{f}{c_0(\Z)} \leq 1 \right\}
\end{equation*}
is compact. Indeed, $S$ is closed (note that $\Lip_\Z$ is continuous with respect to $\norm{\cdot}{c_0(\Z)}$ here). For all $\varepsilon > 0$, choosing $N \in \N$ so that if $n\geq N$, then $\frac{1}{n^2} < \frac{\varepsilon}{2}$, then $|h f h (n)| = \frac{1}{n^2} |f(n)| < \frac{\varepsilon}{2}$. Let 
\begin{equation*}
	B_n \coloneqq \{ g \in c_0(\Z) : \norm{g}{c_0(\Z)}\leq 1\text{ and }\forall n\geq N \quad g(n) = 0 \}\text,
\end{equation*}
so that $\Haus{c_0(\Z)}(S,B_n) < \frac{\varepsilon}{2}$.

Now, $B_n$ is compact, hence totally bounded, since it is homeomorphic to the closed unit ball of the finite dimensional space $\C^n$. So there exists a finite set $F \subseteq B_n$ such that $\Haus{c_0(\Z)}(B_n, F) < \frac{\varepsilon}{2}$. Consequently, $\Haus{c_0(\Z)}(S,F) < \varepsilon$. Thus $S$ is totally bounded, since $\varepsilon > 0$ was arbitrary.

Moreover, if $\delta_0(f) = 0$ for $f \in c_1(\Z)$, and $\Lip_\Z(f) \leq 1$, then $|f(n)| = |f(n) - f(0)| \leq |n-0| = n$, and thus $|h f h (n)| \leq \frac{1}{|n|}$. So $\{ h f h : \Lip_\Z(f) \leq 1, \mu(f) = 0 \}$ is bounded. Hence, $(c_0(\Z),\Lip_\Z,\delta_0)$ is a ponted {\lcqms}.

Moreover, if we consider the sequence:
\begin{equation}\label{example-fk-eq}
	f_k : n \in \Z \mapsto
	\begin{cases}
		\frac{n + 1 - |k|}{|n|+1} \text{ if $|k| \leq n+1$, }\\
		0 \text{ otherwise.}
	\end{cases}
\end{equation}
then we note that $(f_k)_{k\in\N} \in c_0(\Z)^\N$, that $\norm{f_k}{c_0(\Z)} = 1$ while $\Lip_\Z(p_k) = \frac{1}{k}$ for all $k\in \N$, so $\lim_{n\rightarrow\infty} \Lip_\Z(f_k) = 0$. Moreover $\delta_0(f_k)=1$ for all $k\in\N$.  Thus, $(c_0(\Z),\Lip_\Z,c_0(\Z),\delta_0)$ is a {\pqpms}.

\subsection{$c_0(\Z)\rtimes_\alpha \Z$ as a {\pqpms}}\label{Zcross-sub}

Fix $t \in \N\setminus\{0\}$ henceforth. Let now $\alpha$ be the action of $\Z$ on $c_0(\Z)$ induced by the action of $t\Z$ on $\Z$ by translation, so for all $f\in c_0(\Z)$, and for all $m\in\Z$,
\begin{equation*}
	\alpha^m( f ) : n\in\Z\mapsto f(n - t m) \text.
\end{equation*}
Note that $\alpha$ acts by isometry on $c_0(\Z)$, more formally
\begin{equation*}
	\forall f\in c_0(\Z) \quad \forall m \in \Z \quad \partial_\Z \alpha^m(f) = \alpha^m(\partial_\Z f)
\end{equation*}
so $\Kantorovich{\Lip_\Z}(\varphi\circ\alpha^p, \psi\circ\alpha^p) = \Kantorovich{\Lip_\Z}(\varphi, \psi)$ and $\boundedLipschitz{\Lip_\Z}(\varphi\circ\alpha^p, \psi\circ\alpha^p) = \boundedLipschitz{\Lip_\Z}(\varphi,\psi)$ for all $\varphi,\psi \in \StateSpace(c_0(\Z))$ and $p\in\Z$.

\medskip

Now, a direct computation shows that, if we set 
\begin{equation*}
	u\coloneqq U^t \text,
\end{equation*}
then for all $f\in c_1(\Z)$,
\begin{align*}
	u f u^\ast \xi 
	&= u f (\xi_{n+t})_{n\in\Z}	\\
	&= u (f(n)\xi_{n+t})_{n\in\Z} \\
	&= (f(n-t)\xi_n)_{n\in\Z} = \alpha^1(f)\xi \text.
\end{align*}
Therefore, $(\pi,u)$ is a covariant pair for the dynamical system $(c_1(\Z),\alpha,\Z)$, so it gives a *-representation of $c_1(\Z)\rtimes\Z$. In fact, the C*-algebra generated by $c_0(\Z)$ and $u$ is *-isomorphic to the C*-crossed-product $c_0(\Z)\rtimes \Z$, and we identify them henceforth as well. By our construction, the set $\{ f u^j : f \in c_0(\Z), j \in \Z \}$ is total in $c_0(\Z)\rtimes_\alpha\Z$. 

Note that $\partial_Z$ acts on $c_0(\Z)\rtimes_\alpha\Z$, and in particular,
\begin{equation*}
	\partial_\Z( f u^j ) = \partial_\Z(f) u^j \text{ since $U$ commutes with $u=U^t$.}
\end{equation*}

\medskip

The dual action of $\T$ on $c_0(\Z)\rtimes_\alpha\Z$ is spatially implemented as follows. For each $g \in \T$,  $\xi=(\xi_n)_{n\in\Z}$, and any bounded operator $a$ on $\ell^2(\Z)$, we set
\begin{equation*}
	v^g \xi = (g^{n_t} \xi_n)_{n\in\Z} \text{ and }\beta^g (a) = v^g a (v^{g})^\ast \text,
\end{equation*}
where $n_t$ is uniquely defined as the quotient of $n$ by $t$ for the Euclidean division in $\Z$. Since $v^g$ is a unitary for all $g \in \T$, the map $\beta^g$ is a *-automorphism of $\B$ and $g\in\T\mapsto \beta^g$ is a group morphism.

We check that, in particular, $\beta^g(f) = f$ for all $g \in \T$ and $f \in c_1(\Z)$, since $v^g$ commutes with $c_1(\Z)$ for all $g \in \T$, while $\beta^g(u) = g u$. Thus $\beta^g(c_0(\Z)\rtimes_\alpha\Z) = c_0(\Z) \rtimes_\alpha\Z$. It is of course the so-called dual action of $\T$ on $c_0(\Z)\rtimes_\alpha\Z$.

\medskip

 There exists a dense *-subalgebra $\dom{\partial_\T}$ of $\B$ such that the following limit exists for $a\in\B$ if, and only if, $a\in\dom{\partial_\T}$, allowing the definition of the unbounded operator $\partial_\T$:
 \begin{equation*}
 	\partial_\T (a) \coloneqq \lim_{t\rightarrow 0} \frac{1}{t}\left(\beta^{\exp(it)}(a) - a\right) \text.
 \end{equation*}
In particular, $\partial_\T(f) = f$ for all $f \in c_1(\Z)$, and $\partial_\T(u) = u$. Since $\partial_T$ is a derivation, we conclude that
\begin{equation*}
	\forall f \in c_0(\Z) \quad \forall j \in \Z \quad \partial_\T (f U^j) = j f U^j \text.
\end{equation*}
In particular, the linear span of $\{ f U^j : f \in c_0(\Z), j \in \Z \}$, which is dense in $c_0(\Z)\rtimes_\alpha\Z$ by construction, is indeed a subspace of $\dom{\partial_\T}$.

\medskip

We now put $\partial_\Z$ and $\partial_\T$ together. Let $M_2$ be the C*-algebra of $2\times 2$ matrices over $\C$. Let $\gamma_1, \gamma_2 \in M_2$ be two $2\times 2$ self-adjoint anticommuting unitary matrices, and then define the following operator:
\begin{equation*}
	\gradiant \coloneqq \partial_\Z \otimes \gamma_1 + \partial_\T \otimes \gamma_2 
\end{equation*}
on $\dom{\partial_\T}$ with values in $\B \otimes M_2$.

Let 
\begin{equation*}
	\dom{\Lip} \coloneqq c_0(\Z)\rtimes_\alpha\Z \cap \dom{\partial_\T}
\end{equation*}
and, for all $a\in\dom{\Lip}$,
\begin{equation*}
	\Lip(a) \coloneqq \norm{\gradiant(a)}{\B\otimes M_2}\text.
\end{equation*}

Again, $\dom{\Lip}$ contains the dense *-subalgebra of all linear combinations of $\left\{ f U^j : f \in c_0(\Z), j \in \Z \right\}$, and thus is dense. In fact, it is a dense *-subalgebra of $c_0(\Z)\rtimes_\alpha\Z$.

\medskip

We record, for later use, that for all $a\in \dom{\Lip}$,
\begin{align*}
	\Lip_\Z(a) 
	&= \norm{\partial_\Z a}{\B} \\
	&= \norm{\frac{1}{2}\left(\gamma_1 \gradiant(a) + \gradiant(a) \gamma_1\right)}{\B\otimes M_2} \\
	&\leq \norm{\gradiant(a)}{\B} = \Lip(a) \text. 
\end{align*}
Similarly,
\begin{equation*}
	\norm{\partial_\T a}{\B} \leq \Lip(a) \text.
\end{equation*}

Moreover, let $a,b\in \dom{\Lip}$. It is immediate to check that $\gradiant{}$ is a derivation (since both $\partial_\Z$ and $\partial_\T$ are), so:
\begin{align*}
	\Lip(a b)
	&=\norm{\gradiant(ab)}{\B\otimes M_2} \\
	&=\norm{(a\otimes 1)\gradiant(b) + \gradiant(a)(b\otimes 1)}{\B} \\
	&\leq \norm{a}{\B} \norm{\gradiant(b)}{\B\otimes M_2} + \norm{b}{\B}\norm{\gradiant(a)}{\B\otimes M_2} \\
	&\leq \norm{a}{\B} \Lip(b) + \norm{b}{\B} \Lip(a) \text.
\end{align*}

\begin{remark}
We pause to note that, if we set $D \coloneqq \{ (\xi_n)_{n\in\Z} \in \ell^2(\Z) : (n \xi_n)_{n\in\Z} \in \ell^2(\Z) \}$, and we define on $D\otimes \C^2 \subseteq \ell^2(\Z)\otimes \C^2$ the operator:
\begin{equation*}
	\Dirac \coloneqq U\otimes\gamma_1 + \nabla \otimes\gamma_2
\end{equation*}
where $\nabla(\xi_n)_{n\in\Z} = (n_t \xi_n)_{n\in\Z}$ for all $(\xi_n)_{n\in\Z} \in D$, then a simple computation shows that:
\begin{equation*}
	\gradiant(a) = [\Dirac,a\otimes 1] \text{ so }\Lip(a) = \norm{[\Dirac,a\otimes 1]}{\B\otimes M_2}\text.
\end{equation*}

The domain of $\Dirac$ is densely-defined in $\ell^2(\Z)\otimes\C^2$, and thus our construction can be seen as akin to some spectral triple. However, we note that $\Dirac$ is not self-adjoint (though this could be easily fixed), and this triple would be defined for a non-unital C*-algebras. In any case, this remark provides additional reasons to believe our construction of a quantized calculus on $c_0(\Z)\rtimes_\alpha\Z$ above is natural, by connecting it with the usual construction for a spectral-triple on a C*-crossed-product by $\Z$ from a spectral triple on its base C*-algebra by a isometric action. In our case, if we look at $(c_0,\ell^2(\Z),U)$ as a spectral-triple like structure over $c_0(\Z)$ (again, with $U$ not self-adjoint, which again is a not a concern for our present purpose), whose Connes' metric, as we checked above, gives the usual metric over $\Z$, then this section introduces the induced spectral triple analogue on $c_0(\Z)\rtimes_\alpha \Z$. We will not further need this observation in this paper, however, as our focus is solely on the metric aspects of the underlying geometry of $c_0(\Z)\rtimes_\alpha\Z$.
\end{remark}

\medskip

We now wish to prove that $(c_0(\Z)\rtimes_\alpha \Z, \Lip)$ is a {\pqms}, with topography $c_0(\Z)$, and we also wish to fix some base state.

Again, we let $h : n \in \Z \mapsto \frac{1}{|n|+1}$, then $h \in c_0(\Z)\subseteq c_0(\Z)\rtimes \Z$, with $h > 0$. Moreover, $C^\ast(h) = c_0(\Z)$. Our first goal is to prove that $\{ h a h : \norm{a}{\Lip}\leq 1 \}$ is compact in $c_0(\Z)\rtimes_\alpha\Z$.

To this end, we shall use the following standard construction, which will also play a role in the proof of our convergence result. Fix the unique Haar probabily measure $\lambda$ on $\T$. We of course identify the Pontryagin dual $\widehat{T}$ of $\T$ with $\Z$ via $j \in \Z \mapsto (g\in\T\mapsto g^j) \in \widehat{T}$. For all $j \in \Z$, a trivial computation shows that
\begin{equation*}
	\int_\T g^j \beta^g(f u^k) \, d\lambda(g) = \delta_j^k f u^j \text.
\end{equation*}

In particular, $\mathds{E} : a\in c_0(\Z)\rtimes_\alpha \Z \mapsto \int_\T \alpha^g(a) \, d\lambda(g)$ is a conditional expectation onto $c_0(\Z)$. We thus set
\begin{equation*}
	\mu \coloneqq \delta_0 \circ \mathds{E}
\end{equation*}
and note that $\mu$ is a state of $c_0(\Z)\rtimes_\alpha\Z$.

\medskip

Let now $\varepsilon > 0$. As is well-known (see, e.g., \cite{Latremoliere05} for a construction), there exists a function $\varphi : \T \rightarrow [0,\infty)$ such that:
\begin{enumerate}
	\item $\varphi_\varepsilon$ is a linear combination of characters of $\T$, i.e. there exists a finite set $S_\varepsilon \subseteq \Z$ such that $\varphi_\varepsilon : g \in \T\mapsto \sum_{j \in S_\varepsilon} c_j g^j$ for some constants $(c_j)_{j \in S_\varepsilon}$,
	\item $\int_\T\varphi \,d\lambda = 1$,
	\item $\int_\T \varphi(g)|1-g|\,d\lambda(g) < \varepsilon$.
\end{enumerate}
By integration, we then define the following operator on $c_0(\Z)\rtimes_\alpha\Z$:
\begin{equation*}
	\beta^{\varphi_\varepsilon} : a\in c_0(\Z)\rtimes_\alpha\Z \mapsto \int_\T \varphi^\varepsilon(g) \beta^g(a) \, d\lambda(g) \text.
\end{equation*}
Of course, $\beta^{\varphi_\varepsilon}$ is a linear endomorphism of $c_0(\Z)\rtimes_\alpha\Z$ with 
\begin{equation*}
	\opnorm{\beta^{\varphi_\varepsilon}}{}{c_0(\Z)\rtimes_\alpha\Z} \leq \norm{\varphi_\varepsilon}{L^1(\T,\lambda)} = 1\text.
\end{equation*}

Let now $a \in \dom{\Lip}$ with $\Lip(f) \leq 1$. Then note that
\begin{equation*}
	\norm{a - \beta^{\exp(it)}(a)}{c_0(\Z)\rtimes_\alpha\Z} \leq \int_0^t \norm{\frac{d}{dt}\beta^{\exp(is)}(a)} \, ds \leq |1-\exp(it)| \text.
\end{equation*}
Therefore, by construction,
\begin{align*}
	\norm{a - \beta^{\varphi_\varepsilon}(a)}{} \leq \int_\T \varphi_\varepsilon(g) \norm{\beta^g(a)-a}{} \, d\lambda(g) \leq \int_\T \varphi^\varepsilon(g) |g-1| \, d\lambda(g) < \varepsilon \text. 
\end{align*}
Thus for all $a\in \dom{\Lip}$,
\begin{equation*}
	\norm{a-\beta^{\varphi_\varepsilon}(a)}{} \leq \varepsilon\Lip(a) \text.
\end{equation*}

Therefore, for all $a \in \dom{\Lip}$ with $\norm{a}{\Lip} \leq 1$, and for all $\varepsilon > 0$, we have shown that
\begin{equation*}
	\Haus{\B}(\{ h a h : a\in\dom{\Lip},\norm{a}{\Lip}\leq 1\}, \{ h \beta^{\varphi_\varepsilon}(a) h : a\in\dom{\Lip},\norm{a}{\Lip}\leq 1 \}) < \varepsilon \text.
\end{equation*}

On the other hand, $\beta^{\varphi_\varepsilon}(a)$ lies in the linear span of $\{ f u^k : f \in c_0(\Z), k \in S_\varepsilon \} \text.$

Moreover, $\Lip(\beta^{\varphi_\varepsilon}(a)) \leq \Lip(a)$. 

Let $b = f u^j$ for $j \in S_\varepsilon$ and $f \in c_0(\Z)$. A simple computation shows that
\begin{equation*}
	h ( f u^j ) h = (h f \alpha^{-j}(h))  u^j \text.
\end{equation*}

So 
\begin{equation*}
\{ h \beta^{\varphi_\varepsilon}(a) h : a\in\dom{\Lip},\norm{a}{\Lip}\leq 1 \} \subseteq \left\{ \sum_{j \in S_\varepsilon} (h f \alpha^{-j} (h)) u^j : \norm{f}{\Lip} \leq 1 \right\} \text.
\end{equation*}

With a similar reasoning as in the previous subsection, the set $\left\{ h f \alpha^{-j} (h) : \norm{f}{c_0(\Z)} \leq 1 \right\}$ is compact, therefore 
\begin{equation*}
	\left\{ \sum_{j \in S_\varepsilon} (h f \alpha^{-j} (h)) u^j : \norm{f}{\Lip} \leq 1 \right\}
\end{equation*} 
is as well, since $S_\varepsilon$ is finite. Hence $\{ h \beta^{\varphi_\varepsilon}(a) h : a\in\dom{\Lip},\norm{a}{\Lip}\leq 1 \}$ is totally bounded in $\B$. So for our $\varepsilon > 0$, there exists a $\varepsilon$-dense subset $F$ of $\{ h \beta^{\varphi_\varepsilon}(a) h : a\in\dom{\Lip},\norm{a}{\Lip}\leq 1 \}$, which in turn is a $2\varepsilon$-dense subset of $\{ h a h : a\in\dom{\Lip}, \norm{a}{\Lip} \leq 1\}$.

Therefore, $\{ h a h : a\in\dom{\Lip}, \norm{a}{\Lip} \leq 1\}$ is totally bounded. It is easy to check that this set is also closed, since $\norm{\cdot}{\Lip}$ extends to a lower semicontinuous function over $\B$ by setting it to $\infty$ whenever $a\notin  \dom{\Lip}$.

 As $c_0(\Z)\rtimes\Z$ is complete, $\{ h a h : a\in\dom{\Lip}, \norm{a}{\Lip} \leq 1\}$ is indeed compact, as required.

\medskip

Let $a \in \dom{\Lip}\oplus\C\unit$ with $\Lip(a) \leq 1$ and $\mu(a) = 0$. As above, 
\begin{equation*}
	\norm{a - \beta^{\varphi_1}(a)}{\B} \leq 1 \text,
\end{equation*}
and $\beta^{\varphi_1}(a) = \sum_{j \in S_1} c_j u^j$ for some $(c_j)_{j\in S_1} \in C^{S_1}$, with $\norm{\beta^{\varphi_1(a)}}{\Lip} \leq 1$.

Now, as above, $h (\sum_{j \in S_1} c_j u^j ) h = \sum_{j \in S_1} (h \alpha^{-j}(h) f_j) u^j$ so 
\begin{equation*}
	\norm{\beta^{\varphi_1}(a)}{} = \norm{h (\sum_{j \in S_1} f_j u^j ) h}{\B} \leq \sum_{j \in S_1} \norm{h \alpha^{-j}(h) f}{\B}\text. 
\end{equation*}

First, let $j \in S_1\setminus\{0\}$. Then 
\begin{equation*}
	\norm{j f u^j}{\B} = \norm{\mathds{E}(\partial_\T(\alpha^{\varphi_1}))}{\B} \leq \norm{\partial_\T(\alpha^{\varphi_1})}{\B} \leq \norm{\partial_\T(a)}{\B} \leq \Lip(a) \leq 1 \text.
\end{equation*}
Therefore, $\norm{f_j}{c_0(\Z)} \leq \frac{1}{j} \leq 1$.

On the other hand, $\norm{\partial_\Z(f_0)}{\B} \leq\Lip(f) \leq 1$, and by assumption, $f_0(0) = 0$. So for all $n\in\Z$, we conclude $|f(n)| \leq |n|$. Thus $|f_0 h(n)| \leq \frac{n}{2^n} \leq 1$.

Consequently, $\norm{\beta^{\varphi_1}(a)}{} \leq |S_1|$, and thus $\norm{a}{} \leq |S_1| + 1$. So
\begin{equation*}
	\left\{ h a h : a\in\dom{\Lip}, \Lip(a) \leq 1, \mu(a) = 0 \right\}
\end{equation*}
is bounded.

\medskip

The sequence $(f_k)_{k\in\N}$ in Expression \eqref{example-fk-eq} allow us, once more, to conclude that we have shown that $(c_0(\Z)\rtimes_\alpha\Z, c_0(\Z), \Lip,\mu)$ is a {\pqpms}.

\subsection{Approximations}

We now turn to finding natural, finite dimensional approximations for $c_0(\Z)\rtimes_\alpha\Z$. Let $\bigslant{\Z}{n}$ the cyclic group of order $n$, and let $c\left(\bigslant{\Z}{n}\right)$ be the (finite dimensional) C*-algebra of all $\C$-valued functions over $\bigslant{\Z}{n}$.

We propose to work with the C*-crossed-product 
\begin{equation*}
	\A_p \coloneqq  c\left(\bigslant{\Z}{p}\right)\rtimes_{\alpha_p} \bigslant{\Z}{p}
\end{equation*}
with $\alpha_p^z (f) : m \in \bigslant{\Z}{p} \mapsto f(m - [t]z)$, where $[t]$ denote the equivalence class of $t$ modulo $p$ in $\bigslant{\Z}{p}$.

As above, we identify $\A_p$ with the C*-algebra generated by $\pi_p(c(\bigslant{\Z}{p}))$ and the unitary $u_p$ on $\ell^2\left(\bigslant{\Z}{p}\right)$, where for all $f \in c(\bigslant{\Z}{p})$ and $(\xi_p)_{p\in\bigslant{\Z}{p}}$: 
\begin{equation*}
	\pi_p(f)(\xi_p)_{n\in\bigslant{\Z}{p}} = (f(p)\xi_p)_{p\in\bigslant{\Z}{p}}
\end{equation*}
and
\begin{equation*}
	u_p \coloneqq U_p^{[t]} \text{ where }U_p (\xi_m)_{m\in\bigslant{\Z}{p}} = (\xi_{m-1})_{m\in\bigslant{\Z}{p}} \text.
\end{equation*}

For each $n\in\N$, we denote the dual action of $\U_p \coloneqq \{ z\in \C : z^p = 1\}$ on $\A_p$ by $\beta_p$. As before, for any $\varphi \in L^1(\U_p)$, and denoting the unique Haar probability measure of $\U_p$ by $\lambda_p$, we also define $\beta_p^\varphi : a\in \A_p \mapsto \int_{\U_p} \varphi(g) \beta_p^g(a) \, d\lambda_p(g)$.

For each $p\in\N$, and for each $a\in \A_p$, we define
\begin{equation*}
	\partial_{\bigslant{\Z}{p}}(a) \coloneqq [u_p, a]
\end{equation*}
and
\begin{equation*}
	\partial_{\U_p}(a) \coloneqq [w_p, a]
\end{equation*}
where
\begin{equation*}
	w_p (\xi_{n\in\bigslant{\Z}{p}}) = \left( \frac{p}{2i\pi}\left( \exp\left(\frac{2 i \pi n}{p}\right) - 1\right)\xi(n) \right)_{n\in\bigslant{\Z}{p}} \text,
\end{equation*}
which is well defined since, for any two $n,m \in \Z$ with $n\equiv m \mod p$, then $\exp\left(\frac{2 i \pi n}{p}\right) = \exp\left(\frac{2 i \pi m}{p}\right)$. 
Note that $w_p$ commutes with $c\left(\bigslant{\Z}{p}\right)$, and thus $\partial_{\U_p}( f ) = f$ for all $f \in c\left(\bigslant{\Z}{p}\right)$.

We define, for all $a\in \A_p$,
\begin{equation*}
	\Lip_p (a) = \norm{ \partial_{\bigslant{\Z}{p}}(a) \otimes \gamma_1 + \partial_{\U_p}(a) \otimes \gamma_2 }{\A_p\otimes M_2} \text.
\end{equation*}

It is immediate that $(\A_p,\Lip_p)$ is a finite dimensional {\qcms}. Nonetheless, as we intend to employ the metametric for {\pqpms s}, we still need some additional structure.  For our choice of a topography of $\A_p$, we naturally choose $\alg{M}_p \coloneqq c\left(\bigslant{\Z}{p}\right)$. We also define
\begin{equation*}
	\mu_p : a\in \A_p \mapsto \delta_0\left(\int_{\U_p} \beta^g(a) \, d\lambda_p(g)\right)
\end{equation*}
so that $\mu_p \in \StateSpace(\A_p)$. It is now immediate that $(\A_p,\Lip,\M_p,\mu_p)$ is a {\pqpms} for all $p\in\N$ --- note, in particular, that $\A_p$ is unital and its unit has L-seminorm $0$.

\medskip

We are now ready to discuss the convergence of $(\A_p,\Lip_p,\M_p,\mu_p)_{p\in\N}$ to $(\A,\Lip,\M,\mu)$ for the metametric, where $\A\coloneqq (c_0(\Z)\rtimes_\alpha \Z$ and $\M = c_0(\Z)$. To this end, we will construct appropriate tunnels between these spaces.

\medskip

Let $\varepsilon > 0$. Our goal is to find $N \in \N$ such that, if $p\geq N$, there exists an $r$-tunnel $\tau_n$ from $(\A_p,\Lip_p,\M_p,\mu_p)$ to $(c_0(\Z)\rtimes_\alpha\Z, \Lip,c_0(\Z),\mu)$ of extent at most $\varepsilon$ for each $r\in \left[1,\frac{1}{\varepsilon}\right]$. To begin with, we shall henceforth use the following \emph{degenerate} representation of $\A_p$. Let
\begin{equation*}
	I_p \coloneqq \left\{ n \in \Z : \lceil{\frac{p}{2}}\rceil \leq n \leq \lceil \frac{p}{2} \rceil \right\} \text. 
\end{equation*}

If $\xi \coloneqq (\xi_n)_{n\in\bigslant{\Z}{p}} \in \ell^2(\bigslant{\Z}{p})$, we set
\begin{equation*}
	J\xi : n \in \Z\mapsto
	\begin{cases}
		\xi_{[n]} \text{ if $n \in I_p$, }\\
		0 \text{ otherwise. }
	\end{cases}
\end{equation*}
By construction, $J$ is an isometry from $\ell^2(\bigslant{\Z}{p})$ into $\ell^2(\Z)$. The restriction of $a \in \B(\ell^2(\bigslant{\Z}{p})) \mapsto J a J^\ast$ to $\A_p$ is a degenerate *-representation $\rho_p$ of $\A_p$ on $\ell^2(\Z)$. Note that $\rho_n(\A_p) \subseteq \A$, and $\rho_p$ is faithful. We now identify $\A_p$ with $\rho_p(\A_p)$ henceforth, and we also identify $\partial_{\bigslant{\Z}{p}}$ and $\partial_{\U_p}$ with $J\partial_{\bigslant{\Z}{p}}J^\ast$ and $J\partial_{\U_p} J^\ast$, respectively.

\medskip

We proceed as follows. 

Let $N_1 \in \N$ such that if $n \geq P_0$, then $\frac{1}{n} < \frac{\varepsilon}{12}$.

Let $\psi \coloneqq \varphi^{\frac{\varepsilon}{24}}$, using the notation in Subsection (\ref{Zcross-sub}). Let 
\begin{equation*}
	N_2 \coloneqq \max\left\{ N_1, |n| : n \in S_{\frac{\varepsilon}{24}} \right\} + 1 \text.
\end{equation*}

By \cite[Lemma 3.6]{Latremoliere05}, there exists $N_3 \in \N$ such that, if $p\geq N_3$, then
\begin{equation*}
	\int_{\U_p} \psi(g) |g-1| \, d\lambda_p(g) < \frac{\varepsilon}{12} \text.
\end{equation*}

Let now, for all $n\in\N\setminus\{0\}$, define:
\begin{equation*}
	h_n : z \in \Z \mapsto \begin{cases}
		1 \text{ if $|z|\leq n$, } \\
		1 - \frac{\varepsilon(z-n)}{n t} \text{ if $n\leq |z| \leq n(t\varepsilon^{-1}+1)$, } \\
		0 \text{ otherwise. }
	\end{cases}
\end{equation*}
By construction, $h_n \in \sa{c_0(\Z)} \subseteq c_0(\Z)\rtimes_\alpha\Z$, and $\partial_\Z(h_n) \leq \frac{\varepsilon}{n}$ while $\partial_{\T}(h_n) = 0$. So $\Lip(h_n) \leq \frac{1}{n}$ for all $n\in\N\setminus\{0\}$. In particular, $\Lip(h_{N_2}) = \frac{1}{N_2 r} < \frac{\varepsilon}{6}$.

We then let $N_4 = N_2(2 t+1)  \in\N$, so that $\alpha^j(h_{N_4}) h_{N_2} = h_{N_2}$ for all $|j| \leq N_2$. As a consequence, we note that
\begin{equation*}
	h_{N_4} u^j h_{N_2} - u^j h_{N_2}
	= u^j \alpha^j(h_{N_4}) h_{N_2}  
	= u^j h_{N_2} \text.
\end{equation*}
Hence, 
\begin{equation}\label{hn1hn3-commute-eq}
	\forall a \in \beta^\psi(\A) \quad h_{N_4} a h_{N_2} = a h_{N_2} \text.
\end{equation}
Also, we record that
\begin{equation*}
	\Lip(h_{N_4}) = \frac{1}{r N_4} \leq \frac{1}{2 N_2 t + N_2} \leq \frac{1}{2N_2} < \frac{\varepsilon}{24} \text.
\end{equation*}

\medskip

We set
\begin{equation*}
	N_5 \coloneqq 2 ( N_4(t+1) + N_2 t ) \text,
\end{equation*}
and note that by construction, $N_5 \geq \max\{N_1,N_2,N_3,N_4\}$.

Let us henceforth assume that $p > N_5$. Let $\xi \coloneqq (\xi_n)_{n\in\Z}$. By construction, 
\begin{equation*}
	h_{N_4} \xi : n \in \Z \mapsto 
	\begin{cases} 
		\xi_n \text{ if $|n| < N_4$, }\\
		\left(1-\frac{\varepsilon(n - N_4)}{N_4 t}\right)\xi_n \text{ if $N_4\leq |n|\leq N_4(t\varepsilon^{-1}+1)$,} \\
		0 \text{ otherwise. }
	\end{cases}
\end{equation*}
In particular, if $P$ is the projection in $\ell^2(\Z)$ on the space $\{ (\xi_n)_{n\in\Z} \in \ell^2(\Z) : \forall n \in \Z \quad |n|\geq N_5 \implies \xi_n = 0\}$, then by construction, since $p > N_5$, we note that $U_p^j P = U^j P$ for all $j\in\Z, |j|\leq N_2 t$, since 

Moreover $P h_{N_4} = h_{N_4}$, so
\begin{equation}\label{Uhn4-eq1}
	U^j h_{N_4} = U^j (P h_{N_4}) = (U_p^j P) h_{N_4} = U_p^j h_{N_4} 
\end{equation}
for all $j \in \Z$, $|j| \leq N_2 t$.

Furthermore, since $h_{N_4}$ is self-adjoint, it follows that
\begin{equation}\label{Uhn4-eq2}
	h_{N_4} U^j = (U^{-j} h_{N_4})^\ast = (U_p^{-j} h_{N_4})^\ast = h_{N_4} U_p^j 
\end{equation}
for all $j \in \Z$, $|j| \leq N_2$. (Note that $h_{N_4}$ does not commute with $U$ or $U_p$ though the commutator with either is the same).

In particular,
\begin{equation}\label{Uhn4-eq3}
	[h_{N_4}, U-U_p] = 0 \text. 
\end{equation}

Last, since $\frac{d\exp(2i\pi x j)}{dx}_{|x=0} = 2i \pi j$ and $\lim_{x\rightarrow 0}\exp(2i\pi x) = 1$, let $N_6 \in \N$ such that, for all $p \geq N_6$, and for all $j \in \Z$, $|j| \leq N_5$ and $|n|\leq N_2$, 
\begin{equation*}
	\left| \frac{p\exp\left(\frac{2i\pi n}{p}\right)}{2i\pi} \left(\exp\left(\frac{2i j \pi}{p}\right)-1\right) - j \right| < \frac{\varepsilon}{12(2N_2 + 1)} \text. 
\end{equation*}

We shall henceforth assume that $p\geq N_7 \coloneqq \max\{ N_5, N_6 \}$. Since $u = U^t$ and $u_p = U_p^t$, It follows from Exp. \eqref{Uhn4-eq1}, \eqref{Uhn4-eq2}, \eqref{Uhn4-eq3} that:
\begin{align*}
	\partial_\Z( h_{N_4} f u^j h_{N_4} ) &- \partial_{\bigslant{\Z}{p}}( h_{N_4} f \underbracket[1pt]{u_p^j h_{N_4}}_{=u^p h_{N_4}} ) \\ 
	&= [h_{N_4} f u^j h_{N_4}, U - U_p] \\
	&= h_{N_4} f u^j \underbracket[1pt]{[h_{N_4}, U-U_p]}_{=0 \text{ by \eqref{Uhn4-eq3}}} + [h_{N_4} f u^j, U-U_p] h_{N_4} \\
	&=\underbracket[1pt]{[h_{N_4} f, U-U_p]u^j}_{\text{since }[U,u_j] = 0} \\
	&=\underbracket[1pt]{[h_{N_4},U-U_p]}_{=0 \text{ by \eqref{Uhn4-eq3}}} f + h_{N_4}[f,U-U_p] \\
	&= f \underbracket[1pt]{h_{N_4}(U-U_p)}_{=0 \text{ by \eqref{Uhn4-eq1}}} + \underbracket[1pt]{h_{N_4}(U-U_p)}_{=0 \text{ by \eqref{Uhn4-eq2}}} f \\
	&=0\text.
\end{align*}

Moreover,
\begin{align*}
	(\partial_\T(u^j h_{N_4}) &- \partial_{\Z_n}(u_p^j h_{N_4}) (\xi_{n\in\Z}) \\
	&= \left(j u^j \xi(n) - \left[ \left(\frac{p}{2i\pi}\exp\left(\frac{2i\pi n}{p}\right)-1\right) -  \left(\frac{p}{2i\pi}\exp\left(\frac{2i\pi n - j t}{p}\right)-1\right) \right] (h_{N_4}\xi)_{n-jt}\right)_{n\in\Z} \\
	&= \left( \left( j  - \frac{p}{2i\pi}\left(\exp\left(\frac{2i\pi n}{p}\right) -  \exp\left(\frac{2i\pi n - j t}{p}\right) \right) \right) (h_{N_4}\xi)_{n - t j} \right)_{n\in\Z}\\
	&= \left( \left( j - \frac{p\exp\left(\frac{2i\pi n}{p}\right)}{2i\pi} \left(\exp\left(-\frac{2 i \pi j t}{p}\right) - 1 \right) \right) (h_{N_4}\xi)_{n-j t}\right)_{n\in\Z}\text. 
\end{align*}
So, for all $\xi = (\xi_n)_{n\in\Z} \in \ell^2(\Z)$ with $\norm{\xi}{\ell^2(\Z)} \leq 1$, we compute:
\begin{align*}
	&\norm{(\partial_\T(u^j h_{N_4}) - \partial_{\Z_n}(u_p^j h_{N_4}) \xi}{\ell^2(\Z)}^2  \\
	&\quad = \sum_{j \in \Z} \left| j - \frac{p\exp\left(\frac{2i\pi n}{p}\right)}{2i\pi} \left(\exp\left(-\frac{2 i \pi j t}{p}\right) - 1 \right) \right|^2 |(h_{N_4}(n-j t) \xi_{n-j t}|^2 \\
	&\quad = \sum_{j = -N_4 + jt}^{N_4 + jt} \left| j - \frac{p\exp\left(\frac{2i\pi n}{p}\right)}{2i\pi} \left(\exp\left(-\frac{2 i \pi j t}{p}\right) - 1 \right) \right|^2 |h_{N_4}(n-jt)\xi_{n-j t}|^2 \\
	&\quad \leq \frac{\varepsilon^2}{ 144(2N_2 + 1)^2 } \sum_{j=-N_5}^{N_5}  |h_{N_4}(n-jt)\xi_{n-jt}|^2 \\
	&\quad \leq \frac{\varepsilon^2}{144(2N_2 + 1)^2} \text.
\end{align*}
Therefore, 
\begin{equation*}
	\norm{\partial_\Z(u^j h_{N_4}) - \partial_{\U_n}(u_p^j h_{N_4})}{\B} \leq \frac{\varepsilon}{12 (2N_2 + 1)} \text.
\end{equation*}

Therefore, if $a \in \beta^\psi(\A) = \beta^\psi(\A_p)$ for any $p \geq N_6$, we note that 
\begin{equation}\label{lip-cont-eq}
	|\Lip_p(a) - \Lip(a)| < \frac{\varepsilon}{12}\Lip(a) \text.
\end{equation}

We now fix $p \geq N_6$. Let $\D_p \coloneqq \A \oplus \A_p$, $\alg{T}_p \coloneqq \M \oplus \M_p$, and $k_p \coloneqq (h_{N_1}, h_{N_1})$. For any $(d_1,d_2) \in \dom{\Lip}\oplus\dom{\Lip_p}$, we define:
\begin{equation*}
	\Lip[T]_p (d_1,d_2) \coloneqq \max\left\{ \Lip(d_1), \Lip_p(d_2), \frac{3}{\varepsilon} \norm{  h_{N_1}^2 d_1  - d_2 h_{N_1}^2 }{\B} \right\} \text.
\end{equation*}
We also define $\varpi_p : (d_1,d_2) \in \D \mapsto d_1 \in \A$ and $\varrho_p : (d_1,d_2) \in \D \mapsto d_2 \in \A_p$. 

We shall now prove that $\tau_p \coloneqq (\D_p, \Lip[T]_p, \alg{T}_p, k_p, \varpi_p, \varrho_p)$ is a tunnel of extent at most $\varepsilon$.

\medskip

It is immediate that $(\D_p,\Lip[T]_p)$ is a {\pqms}. First, the unit ball of $\Lip[T]$ is trivially closed. Moreover, if $y \coloneqq (h,1)$, then
\begin{multline*}
	\left\{ (y (a,b) y) : \Lip[T]_p(a,b) \leq 1, \norm{(a,b)}{\D_p} \leq 1\right\} 
	\subseteq \{ h a h : \Lip(a) \leq 1, \norm{a}{\A} \leq 1 \} \times \{ b : \Lip_p(b) \leq 1, \norm{b}{\A_p} \leq 1 \}
\end{multline*}
and the right hand side is compact as the product of two compact sets, so the left hand side is compact as well (as it is closed).

By construction, $\Lip[T]_p$ is Leibniz.

Let $(a,b) \in \D_p$ with $\Lip[T]_p(a,b) \leq 1$ and $\mu(a) = 0$. We already know that $\{ h a h : a\in\dom{\Lip},\Lip(a)\leq 1, \mu(a) = 0\}$ is bounded. 

Moreover, since $\mu(h_{N_2}^2) = 1$, we note that 
\begin{equation*}
	|\mu(b)| = |\mu(h_{N_2} b)| \leq |\mu(h_{N_2} b - a h_{N_2})| + |\mu(a h_{N_2})| \leq \frac{\varepsilon}{3} + 0 \text.
\end{equation*}
Since $(\A_n,\Lip_n)$ is a {\qcms}, the set $\{ b \in \dom{\Lip_n} : |\mu(b)| \leq \frac{\varepsilon}{3}, \Lip_n(b) \leq 1 \}$ is compact, hence bounded. So 
\begin{equation*}
	\left\{ y (a,b) y : (a,b)\in\dom{\Lip[T]_p}, \Lip[T]_p(a,b) \leq 1, \mu(a) = 0\right\}
\end{equation*}
is bounded. 

\medskip

Fix $r \in \left[1, \frac{1}{\varepsilon}\right]$.

\medskip

Now, let $a \in \A$ with $\norm{a}{\Lip,r} \leq 1$. As seen above, $\norm{a - \beta^{\psi}(a)}{\B} < \frac{\varepsilon}{12}\Lip(a)$ while $\norm{\beta^\psi(a)}{\B} \leq r$ and $\Lip_\Z(\beta^\psi(a)) \leq \Lip(a)$. Then
\begin{equation*}
	\Lip( h_{N_4} \beta^\psi(a) h_{N_4} ) \leq \norm{h_{N_4}}{\B}\left( 2\Lip(h_{N_4}) \norm{\beta^\psi(a)}{\B} + \norm{h_{N_4}}{\B} \Lip(\beta^\psi(a))\right) \leq \frac{\varepsilon}{12} + 1 \text.
\end{equation*}

Let now $a \in \beta^\psi(\A)$, so that $a = \sum_{j \in S} f_j u^j$ for some $(f_j)_{j \in S} \in c_0(\Z)^S$. We then estimate: 
\begin{align}\label{hn1-commute-eq}
	\norm{[h_{N_2}^2, a]}{\B} 
	&\leq \sum_{j \in S\setminus\{0\}} \norm{f_j [h_{N_2}^2,u^j]}{\B} \text{ since $[f_j,h_{N_1}] = 0$, } \nonumber \\
	&\leq \sum_{j \in S} \norm{f_j}{\B} \norm{[h_{N_2}^2,u^j]}{\B} \\
	&\leq \sum_{j \in S} \frac{1}{N_2^2} 1 = \frac{1}{N_2} \leq \frac{\varepsilon}{12}  \nonumber \text.
\end{align}

Moroever, by Expression \eqref{lip-cont-eq}, we also have
\begin{align*}
	\norm{h_{N_2}^2 a - h_{N_4} \beta^\psi(a) h_{N_4} h_{N_2}^2}{\B}
	&\leq \norm{h_{N_2}^2 (a - \beta^\psi(a)) }{\B} + \norm{ h_{N_2}^2 \beta^\psi(a) - h_{N_4} \beta^\psi(a)) h_{N_2}^2 }{\B}   \\
	&\leq \frac{\varepsilon}{12} + \norm{ h_{N_2}^2 \beta^\psi(a) - \beta^\psi(a) h_{N_2}^2}{\B} \text{ by Exp. \eqref{hn1hn3-commute-eq}, } \\
	&\leq \frac{\varepsilon}{12} + \norm{ [h_{N_2}^2,\beta^\psi(a)]}{\B} \\
	&\leq \frac{\varepsilon}{12} + \frac{\varepsilon}{12} = \frac{\varepsilon}{6} \text{ by Exp. \eqref{hn1-commute-eq} .}
\end{align*}

It follows that:
\begin{align*}
	\norm{a h_{N_2} - h_{N_2} \frac{6}{6+\varepsilon} h_{N_4}\beta^\psi(a) h_{N_4} }{\B}
	&\leq \norm{a h_{N_2} - \frac{6}{6+\varepsilon} a h_{N_2}}{\B} \\
	&\quad + \frac{6}{6+\varepsilon} \norm{a h_{N_2} - h_{N_2}  h_{N_4}\beta^\psi(a) h_{N_4} }{\B} \\
	&\leq \frac{\varepsilon}{6} + \frac{\varepsilon}{6} = \frac{\varepsilon}{3} \text.
\end{align*}

Therefore, we have shown that:
\begin{align*}
	\Lip[T]_p\left( a, \frac{6}{6+\varepsilon} h_{N_4} \beta^\psi(a) h_{N_4} \right) \leq 1 \text,
\end{align*}
while of course $\norm{(a,\frac{6}{6+\varepsilon} h_{N_4} \beta^\psi(a) h_{N_4})}{\D_p} \leq 1$.

This proves that $\varpi_p$ is a quantum $M$-isometry. Now, let $a \in c_0(\Z)$. We note that the restriction $b$ of $a$ to $I_p$ is an element of $c\left(\bigslant{\Z}{p}\right)$ with $\Lip_\Z(b) = \Lip_\Z(a)$ and $\Lip_\T(b) = 0 = \Lip_\T(a)$. So $\varpi_p$ is a topographic quantum $M$-isometry.

\medskip

If $a \in \dom{\Lip_p}$ with $\norm{a}{\Lip_p} \leq 1$ and $p \geq N_6$, then $\norm{h_{N_3} \beta_p^\psi(a) h_{N_3}}{\Lip_p} \leq 1 + \varepsilon$. Furthermore, $\norm{h_{N_3} \beta_p^\psi(a) h_{N_3}}{\Lip} \leq 1 + 2\varepsilon$.

We also note that
\begin{align*}
	\norm{ h_{N_1}^2 h_{N_3} \beta_p^\psi(a) h_{N_3} - a h_{N_1}^2 }{\B}
	&= \norm{h_{N_1}^2 \beta^\psi(a) h_{N_3} - a h_{N_1}^2}{\B} \\
	&= \norm{\beta^\psi(a) h_{N_1}^2 - a h_{N_1}^2}{\B} \\
	&=  \norm{\beta^\psi(a) - a}{\B} \norm{h_{N_1}^2}{\B} \\
	&\leq \varepsilon \text.
\end{align*}

Again, we obtain that $\Lip[T]_p(h_{N_3} \beta^\psi(a) h_{N_3}, a) \leq 1$. If $a\in c\left(\bigslant{\Z}{p}\right)$, then we identify it with a function over $I_p$. There exists an element $b \in c_0(\Z)$ with $\Lip_\Z(b) = \Lip_\Z(a)$, $\Lip_\T(b) = 0$, and which restricts to $a$ on $I_p$.

Thus, $\varrho_p$ is also a topographic quantum $M$-isometry.  

Thus $\tau_p$ is indeed a tunnel.

\medskip

By construction, $\Lip[T]_p(k_p) = \frac{1}{N_1} < \varepsilon$. Moreover $\delta_0(h_{N_2}) = 1$.

\medskip

Let $\varphi \in \QuasiStateSpace(\A)$. Let $(d_1,d_2) \in \D_p$ with $\norm{(d_1,d_2)}{\Lip[T]_p} \leq 1$. Let $\psi \coloneqq \varphi(h_{N_2}\cdot h_{N_2}) \in \QuasiStateSpace(\A_p)$. 
\begin{align*}
	|\varphi(h_{N_2} d_1 h_{N_2}) - \psi(h_{N_2} d_2 h_{N_2})|
	&=|\varphi(h_{N_2} d_1 h_{N_2}) - h_{N_2} d_2 h_{N_2})| \\
	&\leq \norm{h_{N_2} (d_1 - d_2) h_{N_2}}{\B} \\
	&\leq \norm{h_{N_2}(d_1 - \beta^\psi(d_1)) h_{N_2}}{\B} + \norm{h_{N_2}(d_2 - \beta^\psi(d_2)) h_{N_2}}{\B}\\
	&\quad + \norm{h_{N_2}(\beta^\psi(d_1) - \beta^\psi(d_2)) h_{N_2}}{\B} \\
	&\leq 2\frac{\varepsilon}{12} + 2\frac{\varepsilon}{12} + \norm{h_{N_2}^2 d_1 - d_2 h_{N_2}^2}{\B} \\
	&\leq \frac{\varepsilon}{6} + \frac{\varepsilon}{6} + \frac{\varepsilon}{3} < \varepsilon \text.
\end{align*}
Hence
\begin{equation*}
	\left\{\varphi(h_{N_2} \cdot h_{N_2}) : \varphi \in \StateSpace(\A) \right\} \subseteq^{\varepsilon} \QuasiStateSpace(\B) \text.
\end{equation*}

A similar reasoning works if $\psi \in \QuasiStateSpace(\A_n)$,  by setting $\varphi:a \in \A\mapsto \psi(h_{N_4} a h_{N_4})$, to show that
\begin{equation*}
	\left\{\psi(h_{N_2} \cdot h_{N_2}) : \psi \in \StateSpace(\B) \right\} \subseteq^{\varepsilon} \QuasiStateSpace(\A) \text.
\end{equation*}

Therefore, the extent of $\tau_p$, as an $r$-tunnel, is at less than $\varepsilon$. This completes our proof, so that we can conclude that if $p\geq N_6$, for all $r \in \left[1,\frac{1}{\varepsilon}\right]$, we have:
\begin{equation*}
	\metametric{r}\left((c_0(\Z)\rtimes_\alpha\Z, \Lip, c_0(\Z),\mu),  \left( c\left(\bigslant{Z}{p}\right) \rtimes_{\alpha_p} \bigslant{\Z}{p}, \Lip_p, c\left(\bigslant{\Z}{p}\right), \mu_p\right) \right) < \varepsilon \text.
\end{equation*}

Therefore:
\begin{equation*}
	\lim_{p\rightarrow\infty} \Eth\left((c_0(\Z)\rtimes_\alpha\Z, \Lip, c_0(\Z),\mu),  \left( c\left(\bigslant{Z}{p}\right) \rtimes_{\alpha_p} \bigslant{\Z}{p}, \Lip_p, c\left(\bigslant{\Z}{p}\right), \mu_p\right) \right) = 0 \text.
\end{equation*}

\bibliographystyle{amsplain} \bibliography{./thesis}

@article{Latremoliere15d,
	author = {{K}. {A}guilar and {F}. {L}atrémolière},
	journal = {Studia Math.},
	note = {ArXiv: 1511.07114},
	number = {2},
	owner = {dr-frederic},
	pages = {149–194},
	timestamp = {2015.12.04},
	title = {Quantum Ultrametrics on {AF} Algebras and the {G}romov–{H}ausdorff propinquity},
	volume = {231},
	year = {2015}
}

@article{Akemann89,
	author = {{C.} {A}kemann and {J}. {A}nderson and {G.} {K.} {P}edersen},
	journal = {J. Operator Theory},
	owner = {frederic},
	pages = {255–271},
	timestamp = {2012.08.07},
	title = {Approaching infinity in {$C^\ast$-algebras}},
	volume = {21},
	year = {1989}
}

@article{Antonescu04,
	author = {{C}. {A}ntonescu and {E}. {C}hristensen},
	journal = {J. Oper. Theory},
	note = {arXiv: 0309044},
	number = {1},
	owner = {frederic},
	pages = {17–46},
	timestamp = {2015.06.05},
	title = {Spectral Triples for AF {$C^\ast$}-algebras and metrics on the Cantor Set},
	volume = {56},
	year = {2006}
}

@article{Rieffel15b,
	author = {{M}. {C}hrist and {M.} {A.} {R}ieffel},
	journal = {Canad. Math. Bull.},
	note = {arXiv: 1508.00980},
	number = {1},
	owner = {frederic},
	pages = {77–94},
	timestamp = {2015.08.28},
	title = {Nilpotent group {$C^\ast$-algebras}-algebras as compact quantum metric spaces},
	volume = {60},
	year = {2017}
}

@article{Connes80,
	author = {{A}. {C}onnes},
	journal = {{C}. {R}. de l'Academie des Sciences de Paris},
	number = {series A-B},
	pages = {290},
	series = {Ser. A-B},
	title = {{C*}–Algèbres et Géométrie Differentielle},
	year = {1980}
}

@book{Connes,
	author = {A. {C}onnes},
	publisher = {Academic Press, San Diego},
	title = {Noncommutative Geometry},
	year = {1994}
}

@article{Connes89,
	author = {A. {C}onnes},
	journal = {Ergodic Theory Dynam. Systems},
	number = {2},
	pages = {207–220},
	title = {Compact Metric Spaces, {F}redholm Modules and Hyperfiniteness},
	volume = {9},
	year = {1989}
}

@book{Dixmier,
	author = {J. {D}ixmier},
	comment = {Also: Éditions Jacques Gabay (1996)},
	note = {(reprint) Editions Jacques Gabay, 1996},
	publisher = {Gauthier-Villars},
	title = {Les {C*–}Algebres et Leur Représentations},
	year = {1969}
}

@article{Dobrushin70,
	author = {{R}. {L}. {D}obrushin},
	journal = {Theory Probab. Appl.},
	number = {3},
	owner = {frederic},
	pages = {459–486},
	timestamp = {2012.08.07},
	title = {Prescribing a system of random variables by conditional probabilities},
	volume = {15},
	year = {1970}
}

@inproceedings{Edwards75,
	author = {David A. Edwards},
	booktitle = {Studies in topology ({P}roc. {C}onf., {U}niv. {N}orth {C}arolina, {C}harlotte, {N}. {C}., 1974; dedicated to {M}ath. {S}ect. {P}olish {A}cad. {S}ci.)},
	mrclass = {83.58},
	mrnumber = {0401069},
	mrreviewer = {Nicholas M. J. Woodhouse},
	pages = {121–133},
	title = {The structure of superspace},
	year = {1975}
}

@article{Fortet-Mourier-53,
	author = {{R}. {F}ortet and {E}. {M}ourier},
	journal = {Ann. Scient. Ec. Norm. Sup},
	number = {3},
	owner = {frederic},
	pages = {pp. 267–285},
	timestamp = {2015.05.26},
	title = {Convergence de la répartition empirique vers la répartition théorique},
	volume = {70},
	year = {1953}
}

@book{Gromov,
	author = {M. {G}romov},
	publisher = {Birkhäuser},
	series = {Progress in Mathematics},
	title = {Metric Structures for {R}iemannian and Non-{R}iemannian Spaces},
	year = {1999}
}

@article{Gromov81,
	author = {M. {G}romov},
	journal = {Publ. Math. Inst. Hautes Études Sci.},
	owner = {frederic},
	pages = {53–78},
	timestamp = {2012.09.10},
	title = {Groups of polynomial growth and expanding maps},
	volume = {53},
	year = {1981}
}

@book{Hausdorff,
	author = {{F}. {H}ausdorff},
	owner = {frederic},
	publisher = {Verlag Von Veit und Comp.},
	timestamp = {2014.03.22},
	title = {{G}rundzüge der {M}engenlehre},
	year = {1914}
}

@article{Kantorovich40,
	author = {{L}. {V}. {K}antorovich},
	journal = {Dokl. Akad. Nauk. USSR},
	owner = {frederic},
	pages = {212–215},
	timestamp = {2012.08.08},
	title = {On one effective method of solving certain classes of extremal problems},
	volume = {28},
	year = {1940}
}

@article{Kantorovich58,
	author = {{L}. {V}. {K}antorovich and {G}. {Sh}. {R}ubinstein},
	journal = {Vestnik Leningrad Univ., Ser. Mat. Mekh. i Astron.},
	note = {In Russian.},
	number = {7},
	owner = {frederic},
	pages = {52–59},
	timestamp = {2012.08.08},
	title = {On the space of completely additive functions},
	volume = {13},
	year = {1958}
}

@article{Kerr02,
	author = {D. {K}err},
	journal = {J. Funct. Anal.},
	note = {math.OA/0207282},
	number = {1},
	pages = {132–167},
	title = {Matricial Quantum {G}romov-{H}ausdorff Distance},
	volume = {205},
	year = {2003}
}

@article{Latremoliere14,
	author = {{F}. {L}atrémolière},
	journal = {Indiana Univ. Math. J.},
	note = {arXiv: 1404.6633},
	number = {1},
	owner = {frederic},
	pages = {297–313},
	timestamp = {2014.05.11},
	title = {The Triangle Inequality and the Dual {G}romov-{H}ausdorff Propinquity},
	volume = {66},
	year = {2017}
}

@article{Latremoliere13,
	author = {{F}. {L}atrémolière},
	journal = {Trans. Amer. Math. Soc.},
	number = {1},
	optnote = {electronically published on May 22, 2015, http://dx.doi.org/10.1090/tran/6334, arXiv: 1302.4058},
	owner = {frederic},
	pages = {365–411},
	quality = {1},
	timestamp = {2013.09.02},
	title = {The Quantum {G}romov-{H}ausdorff Propinquity},
	volume = {368},
	year = {2016}
}

@article{Latremoliere15,
	author = {{F}. {L}atrémolière},
	journal = {Indiana Univ. Math. J.},
	note = {arXiv: 1501.06121},
	number = {5},
	owner = {frederic},
	pages = {1707–1753},
	timestamp = {2015.05.24},
	title = {A Compactness Theorem for the Dual {G}romov-{H}ausdorff Propinquity},
	volume = {66},
	year = {2017}
}

@article{Latremoliere16b,
	author = {{F}. {L}atrémolière},
	journal = {J. Math. Anal. Appl.},
	note = {arXiv: 1604.00755},
	owner = {dr-frederic},
	pages = {1179–1195},
	timestamp = {2016.07.28},
	title = {Equivalence of quantum metrics with a common domain},
	volume = {443},
	year = {2016}
}

@article{Latremoliere13b,
	author = {{F}. {L}atrémolière},
	journal = {J. Math. Pures Appl.},
	note = {arXiv: 1311.0104},
	number = {2},
	owner = {frederic},
	pages = {303–351},
	timestamp = {2013.11.19},
	title = {The Dual {G}romov–{H}ausdorff Propinquity},
	volume = {103},
	year = {2015}
}

@article{Latremoliere13c,
	author = {{F}. {L}atrémolière},
	journal = {Münster J. Math.},
	note = {arXiv: math/1312.0069},
	number = {1},
	owner = {frederic},
	pages = {57–98},
	timestamp = {2013.11.19},
	title = {Convergence of Fuzzy Tori and Quantum Tori for the quantum {G}romov–{H}ausdorff propinquity: an explicit approach.},
	volume = {8},
	year = {2015}
}

@inproceedings{Latremoliere15b,
	author = {{F}. {L}atrémolière},
	booktitle = {Noncommutative geometry and optimal transport},
	journal = {Noncommutative Geometry and Optimal Transport},
	note = {arXiv: 150604341},
	number = {676},
	organization = {Amer. Math. Soc.},
	owner = {frederic},
	pages = {47–133},
	series = {Contemp. Math.},
	timestamp = {2015.07.25},
	title = {Quantum Metric Spaces and the {G}romov-{H}ausdorff Propinquity},
	year = {2015}
}

@article{Latremoliere15c,
	author = {{F}. {L}atrémolière},
	journal = {J. Math. Phys.},
	note = {arXiv: 1507.08771},
	number = {12},
	owner = {frederic},
	pages = {123503, 16 pages},
	timestamp = {2015.08.28},
	title = {Curved Noncommutative Tori as {L}eibniz Compact Quantum Metric Spaces},
	volume = {56},
	year = {2015}
}

@article{Latremoliere12b,
	author = {{F}. {L}atrémolière},
	journal = {J. of Funct. Anal.},
	note = {arXiv: 1208.2398},
	number = {1},
	owner = {frederic},
	pages = {362–402},
	timestamp = {2012.08.13},
	title = {Quantum Locally Compact Metric Spaces},
	volume = {264},
	year = {2013}
}

@article{Latremoliere05b,
	author = {{F}. {L}atrémolière},
	journal = {Tawainese J. Math.},
	note = {math.OA/0510340},
	number = {2},
	pages = {447–469},
	title = {Bounded-Lipschitz Distances on the State Space of a {C*}-Algebra},
	volume = {11},
	year = {2007}
}

@article{Latremoliere05,
	author = {{F}. {L}atrémolière},
	journal = {J. Funct. Anal.},
	note = {math.OA/0310214},
	pages = {365–395},
	title = {Approximation of the Quantum Tori by Finite Quantum Tori for the Quantum {G}romov-{H}ausdorff Distance},
	volume = {223},
	year = {2005}
}

@article{Latremoliere16,
	author = {{F}. {L}atrémolière and {J}. {P}acker},
	journal = {Proc. Amer. Math. Soc.},
	note = {arXiv: 1601.02707},
	number = {5},
	owner = {dr-frederic},
	pages = {1179–1195},
	timestamp = {2016.03.01},
	title = {Noncommutative Solenoids and the {G}romov-{H}ausdorff Propinquity},
	volume = {145},
	year = {2017}
}

@article{Latremoliere17a,
	author = {{F}. {L}atrémolière},
	journal = {Canad. J. Math},
	note = {arXiv: 1703.07073},
	number = {4},
	owner = {dr-frederic},
	pages = {1044–1081},
	timestamp = {2017.03.05},
	title = {{H}eisenberg modules over quantum {$2$}-tori are metrized quantum vector bundles},
	volume = {72},
	year = {2020}
}

@article{McShane34,
	author = {{E}. {J}. {M}c{S}hane},
	journal = {Bull. Amer. Math. Soc.},
	number = {12},
	owner = {frederic},
	pages = {825–920},
	timestamp = {2014.03.12},
	title = {Extension of range of functions},
	volume = {40},
	year = {1934}
}

@article{Ozawa05,
	author = {{N}. {O}zawa and M. A. {R}ieffel},
	journal = {Canad. J. of Math.},
	note = {arXiv: math/0302310},
	owner = {frederic},
	pages = {1056–1079},
	timestamp = {2012.08.07},
	title = {Hyperbolic group {$C\sp\ast$}-Algebras and free product {$C\sp\ast$}-Algebras as compact quantum metric spaces},
	volume = {57},
	year = {2005}
}

@book{Pedersen79,
	author = {{G}. {K}. {P}edersen},
	publisher = {Academic Press},
	title = {{C}*-{A}lgebras and their Automorphism Groups},
	year = {1979}
}

@article{Rieffel00,
	author = {M. A. {R}ieffel},
	journal = {Mem. Amer. Math. Soc.},
	note = {math.OA/0011063},
	number = {796},
	pages = {1–65},
	title = {{G}romov-{H}ausdorff Distance for Quantum Metric Spaces},
	volume = {168},
	year = {2004}
}

@article{Rieffel09,
	author = {M. A. {R}ieffel},
	journal = {Proc. Sympos. Pure Math.},
	note = {arXiv: 0910.1968},
	owner = {frederic},
	pages = {173–180},
	timestamp = {2013.02.11},
	title = {Distances between matrix alegbras that converge to coadjoint orbits},
	volume = {81},
	year = {2010}
}

@article{Rieffel10,
	author = {M. A. {R}ieffel},
	journal = {J. {K}-theory},
	note = {arXiv: math/0608266},
	owner = {frederic},
	pages = {39–103},
	timestamp = {2012.08.07},
	title = {Vector Bundles and {G}romov-{H}ausdorff distance},
	volume = {5},
	year = {2010}
}

@article{Rieffel01,
	author = {M. A. {R}ieffel},
	journal = {Mem. Amer. Math. Soc.},
	note = {math.OA/0108005},
	number = {796},
	pages = {67–91},
	title = {Matrix Algebras Converge to the Sphere for Quantum {G}romov–{H}ausdorff Distance},
	volume = {168},
	year = {2004}
}

@article{Rieffel02,
	author = {M. A. {R}ieffel},
	journal = {Doc. Math.},
	note = {arXiv: math/0205195},
	owner = {frederic},
	pages = {605–651},
	timestamp = {2012.08.07},
	title = {Group {$C\sp\ast$}-Algebras as compact quantum metric spaces},
	volume = {7},
	year = {2002}
}

@article{Rieffel99,
	author = {M. A. {R}ieffel},
	journal = {Doc. Math.},
	note = {math.OA/9906151},
	pages = {559–600},
	title = {Metrics on State Spaces},
	volume = {4},
	year = {1999}
}

@article{Rieffel98a,
	author = {M. A. {R}ieffel},
	journal = {Doc. Math.},
	note = {math.OA/9807084},
	pages = {215–229},
	title = {Metrics on States from Actions of Compact Groups},
	volume = {3},
	year = {1998}
}

@book{Wegge-Olsen92,
	author = {N. E. {W}egge-{O}lsen},
	editor = {Oxford University Press},
	publisher = {Oxford University Press},
	series = {Oxford Science Publications},
	title = {{K}-Theory and {C}*-{A}lgebras},
	year = {1992}
}

@article{Latremoliere18a,
	author = {{F}. {L}atrémolière},
	journal = {J. Operator Theory},
	number = {1},
	owner = {frederic},
	pages = {211–237},
	timestamp = {2018.03.07},
	title = {Convergence of {H}eisenberg modules for the Modular {G}romov-{H}ausdorff Propinquity},
	volume = {84},
	year = {2020}
}

@article{Latremoliere18b,
	author = {{F}. {L}atrémolière},
	journal = {Studia Math.},
	note = {arXiv: 1805.11229},
	number = {2},
	owner = {frederic},
	pages = {135–169},
	timestamp = {2018.04.17},
	title = {The covariant {G}romov-{H}ausdorff Propinquity},
	volume = {251},
	year = {2020}
}

@article{Latremoliere18c,
	author = {{F}. {L}atrémolière},
	journal = {J. Math. Anal. Appl.},
	note = {arXiv: 1806.04721},
	number = {1},
	owner = {frederic},
	pages = {378–404},
	timestamp = {2018.06.10},
	title = {Convergence of {C}auchy Sequences for the covariant {G}romov-{H}ausdorff propinquity},
	volume = {469},
	year = {2019}
}

@Article{Latremoliere18g,
  author    = {{F}. {L}atrémolière},
  journal   = {Adv. Math.},
  title     = {The {G}romov-{H}ausdorff propinquity for metric Spectral Triples},
  year      = {2022},
  pages     = {Paper No. 108393, 56},
  volume    = {404},
  doi       = {10.1016/j.aim.2022.108393},
  owner     = {frederic},
  timestamp = {2018.08.13},
  url       = {https://doi.org/10.1016/j.aim.2022.108393},
}

@article{Latremoliere18d,
	author = {{F}. {L}atrémolière},
	journal = {{J}. {N}oncomm. {G}eometry},
	number = {1},
	optpages = {36 pages},
	owner = {frederic},
	pages = {347–398},
	timestamp = {2018.08.18},
	title = {The Dual-Modular {G}romov-{H}ausdorff propinquity and completeness},
	volume = {115},
	year = {2021}
}

@article{Latremoliere20a,
	author = {{T}. {Landry} and {M}. {L}apidus and {F}. {L}atrémolière},
	journal = {Adv. Math.},
	pages = {Paper No. 107771, 43 pp.},
	title = {Metric Approximations of the Spectral Triple on the {S}ierpinki gasket and other fractals},
	volume = {385},
	year = {2021}
}

@article{Kaad18,
	author = {{K}. {A}guilar and {J}. {K}aad},
	journal = {J. Geom. Phys.},
	pages = {260–278},
	title = {The Podleś sphere as a spectral metric space.},
	volume = {133},
	year = {2018}
}

@article{Latremoliere21a,
	author = {{F}. {L}atrémolière},
	journal = {Comm. Math. Phys.},
	note = {arXiv: 2102.03729},
	number = {2},
	pages = {1049–1128},
	title = {Convergence of Spectral Triples on Fuzzy Tori to Spectral Triples on Quantum Tori},
	volume = {388},
	year = {2021}
}

@article{Kaad21,
	author = {{J}. Kaad and {D}. {K}yed},
	journal = {Ergodic Theory Dynam. Systems},
	number = {7},
	title = {Dynamics of compact quantum metric spaces},
	volume = {41},
	year = {2021}
}

@Article{Latremoliere20b,
  author  = {{K}. {A}guilar and {F}. {L}atrémolière and {T}. {R}ainone},
  journal = {Integral Equations Operator Theory},
  title   = {Bunce-Deddens algebras as quantum Gromov-Hausorff distance limits of circle algebras},
  year    = {2022},
  note    = {ArXiv: 2008.07676},
  number  = {1},
  pages   = {Paper no. 2, 42pp},
  volume  = {94},
}

@article{Latremoliere21c,
	author = {{K}. {A}guilar and {S}. {B}rooker and {F}. {L}atrémolière and {A}. {L}ópez},
	journal = {Notices of the AMS},
	month = {September},
	number = {3},
	pages = {1269–1281},
	title = {An Ideal Convergence},
	volume = {68},
	year = {2021}
}

@Article{Latremoliere22,
  author  = {{L}atr{\'e}moli{\`e}re, {F}.},
  journal = {Math. Ann.},
  title   = {Continuity of the Spectrum of Dirac Operators of Spectral Triples for the Spectral Propinquity},
  year    = {2024},
  note    = {ArXiv: 2112.11000},
  number  = {1},
  pages   = {765--817.},
  volume  = {389},
}

@Article{Latremoliere23a,
  author  = {{C}. {F}arsi; {F}. {L}atr{\'e}moli{\`e}re; {J}. {P}acker},
  journal = {Adv. Math.},
  title   = {Convergence of inductive sequences of spectral triples for the spectral propinquity},
  year    = {2024},
  note    = {arXiv:2301.00274},
  pages   = {paper 109442, 59 pp.},
  volume  = {437},
}

@Article{Latremoliere23b,
  author  = {{J}. {B}assi; {R}. {C}onti; {C}. {F}arsi and {F}. {L}atr{\'e}moli{\`e}re},
  journal = {J. London Math. Soc.},
  title   = {Isomertry groups of inductive limits of metric spectral triples and {G}romov-{H}ausdorff convergence},
  year    = {2023},
  note    = {arXiv:2302.09117},
  number  = {4},
  pages   = {1488--1530},
  volume  = {108},
}

@Article{Latremoliere24a,
  author  = {{C}. {F}arsi; {F}. {L}atr{\'e}moli{\`e}re; {J}. {P}acker},
  journal = {Proc. Amer. Math. Soc.},
  title   = {Spectral Triples on noncommutative solenoids from the standard spectral triples on quantum tori},
  year    = {2024},
  note    = {arXiv:2403.16323},
  pages   = {14 pages},
  volume  = {Accepted},
}

@Article{Latremoliere24b,
  author  = {{C}. {F}arsi and {F}. {L}atr{\'e}moli{\`e}re},
  journal = {Submitted},
  title   = {Collapse in noncommutative geometry},
  year    = {2024},
  note    = {arXiv:2404.00240},
  pages   = {41 pages},
}

@Article{Xia09,
  author  = {The geodesic problem in quasimetric spaces},
  journal = {Journal of geometric analysis},
  title   = {{Q}. {X}ia},
  year    = {2009},
  number  = {2},
  pages   = {452--479},
  volume  = {19},
}

@Book{Heinonen,
  author    = {{J}. {H}einonen},
  publisher = {Springer-Verlag},
  title     = {Lectures on Analysis on Metric Spaces},
  year      = {2001},
}

\vfill
\end{document}